\documentclass[11pt]{amsart}

\usepackage{epigamath}


\usepackage[english]{babel}


\numberwithin{equation}{section}


\usepackage{enumitem}


\newtheorem*{theorem*}{Theorem}
\newtheorem*{motto*}{Motto}

\newtheorem{maintheorem}{Theorem}[section]

\newtheorem{theorem}{Theorem}[section]
\newtheorem{corollary}[theorem]{Corollary}
\newtheorem{lemma}[theorem]{Lemma}
\newtheorem{proposition}[theorem]{Proposition}

\theoremstyle{definition}

\newtheorem{definition}[theorem]{Definition}

\newtheorem{remark}[theorem]{Remark}


\usepackage{extarrows}
\usepackage{stmaryrd}
\usepackage[scr=boondoxo]{mathalpha} 

\usepackage{tikz}
\usetikzlibrary{matrix}
\usetikzlibrary{patterns}
\usetikzlibrary{matrix}
\usetikzlibrary{positioning}
\usetikzlibrary{decorations.pathmorphing}
\usetikzlibrary{cd}
\usetikzlibrary{calc, math}
\usetikzlibrary{decorations.markings}

\newcommand{\euflag}[1]{%
  \begin{tikzpicture}[evaluate={\c=#1;}, estar/.pic={%
      \foreach \i in {0, 1, ..., 4}{%
        \path ({\i*72 +90}: \c) coordinate (P\i);
      }
      \path (intersection of P0--P2 and P1--P3) coordinate (A4);
      \path (intersection of P1--P3 and P2--P4) coordinate (A0);
      \path (intersection of P2--P4 and P3--P0) coordinate (A1);
      \path (intersection of P3--P0 and P4--P1) coordinate (A2);
      \path (intersection of P4--P1 and P0--P2) coordinate (A3);
      \filldraw[yellow!85!red] (P0) -- (A3) -- (P1) -- (A4) -- (P2)
      -- (A0) -- (P3) -- (A1) -- (P4) -- (A2) -- cycle;        
    }]
    \filldraw[blue!75!green!80!black] (-15*\c, -10*\c) rectangle (15*\c, 10*\c);
    \foreach \i in {0, 1, ..., 11}{%
      \path (\i*30: 6.5*\c) pic {estar};
    }
  \end{tikzpicture}%
}

\newcommand{\myfatslash}{\mathbin{\mkern-6mu\fatslash}}


\usepackage[all]{xy}


\newcommand{\hooklongrightarrow}{\lhook\joinrel\longrightarrow}
\newcommand{\twoheadlongrightarrow}{\relbar\joinrel\twoheadrightarrow}

\newcommand{\R}{\mathbb{R}}

\newcommand{\Z}{\mathbb{Z}}
\newcommand{\ZZ}{\mathbb{Z}}

\newcommand{\N}{\mathbb{N}}

\newcommand{\NN}{\mathbb{N}}

\newcommand{\RR}{\mathbb{R}}
\newcommand{\A}{\mathbb{A}}
\newcommand{\GG}{\mathbb{G}}

\newcommand{\ov}{\overline}
\newcommand{\un}{\underline}
\newcommand{\wt}{\widetilde}

\newcommand{\cC}{\mathcal C}
\newcommand{\cD}{\mathcal D}
\newcommand{\cX}{\mathcal X}
\newcommand{\cY}{\mathcal Y}
\newcommand{\cU}{\mathcal U}
\newcommand{\cL}{\mathcal L}
\newcommand{\cI}{\mathcal I}
\newcommand{\cO}{\mathcal O}
\newcommand{\cA}{\mathcal A}

\newcommand{\Mbar}{\overline{M}}

\newcommand{\calMbar}{\overline{\mathcal{M}}}

\newcommand{\calE}{\mathcal{E}}

\newcommand{\calF}{\mathcal{F}}
\newcommand{\calG}{\mathcal{G}}

\newcommand{\calJ}{\mathcal{J}}
\newcommand{\calJbar}{\overline{\mathcal{J}}}

\newcommand{\calL}{\mathcal{L}}
\newcommand{\calM}{\mathcal{M}}
\newcommand{\calO}{\mathcal{O}}

\newcommand{\calS}{\mathcal{S}}

\newcommand{\calX}{\mathcal{X}}

\newcommand{\frakm}{\mathfrak{m}}

\newcommand{\frakp}{\mathfrak{p}}

\DeclareMathOperator{\Pic}{Pic}
\DeclareMathOperator{\Spec}{Spec}
\DeclareMathOperator{\Hom}{Hom}

\DeclareMathOperator{\Aut}{Aut}

\DeclareMathOperator{\colim}{colim}

\DeclareMathOperator{\val}{val}
\DeclareMathOperator{\opp}{opp}

\DeclareMathOperator{\id}{id}
\DeclareMathOperator{\HOM}{HOM}

\DeclareMathOperator{\trop}{trop}
\DeclareMathOperator{\Div}{Div}

\DeclareMathOperator{\Gm}{\mathbb{G}_m}
\DeclareMathOperator{\Str}{Str}
\DeclareMathOperator{\red}{red}
\DeclareMathOperator{\calTroPic}{\mathcal{T}ro\mathcal{P}ic}
\DeclareMathOperator{\calLogPic}{\mathcal{L}og\mathcal{P}ic}
\DeclareMathOperator{\TroPic}{TroPic}
\DeclareMathOperator{\LogPic}{LogPic}

\renewcommand{\sp}{sp}
\renewcommand{\Im}{\mathrm{Im}}

\DeclareMathOperator{\exc}{exc}
\DeclareMathOperator{\nex}{nex}
\DeclareMathOperator{\spl}{spl}
\DeclareMathOperator{\st}{st}
\DeclareMathOperator{\qs}{qs}
\DeclareMathOperator{\an}{an}
\DeclareMathOperator{\mon}{mon}

\let\sp\relax
\DeclareMathOperator{\sp}{sp}
\let\Jac\relax
\DeclareMathOperator{\Jac}{Jac}

\newcommand{\et}{\mathrm{\acute{e}t}}

\DeclareMathOperator{\SG}{\mathcal{SG}}
\DeclareMathOperator{\QSG}{\mathcal{QSG}}
\DeclareMathOperator{\QD}{\mathcal{QD}iv}

\newcommand{\Jt}{\mathcal{J}^{\mathrm{trop}}}
\newcommand{\Jtw}{\wt{\mathcal{J}}^{\,\mathrm{trop}}}
\newcommand{\Jts}{\mathcal{J}^{\mathrm{trop}, \,\mathrm{spl}}}
\newcommand{\JtR}{\mathcal{J}^{\mathrm{trop}, \,\RR}}
\newcommand{\JtsR}{\mathcal{J}^{\mathrm{trop}, \,\mathrm{spl}, \,\RR}}
\newcommand{\JJ}{J^{\mathrm{trop}}}
\newcommand{\JJb}{\ov{J}^{\,\mathrm{trop}}}
\newcommand{\JJs}{J^{\mathrm{trop}, \,\mathrm{spl}}}
\newcommand{\Jc}{\mathcal{J}^{\mathrm{comb}}}

\newcommand{\Mt}{\mathcal{M}^{\mathrm{trop}}}
\newcommand{\Mtw}{\wt{\mathcal{M}}^{\,\mathrm{trop}}}
\newcommand{\MtR}{\mathcal{M}^{\mathrm{trop}, \,\RR}}
\newcommand{\MM}{M^{\mathrm{trop}}}
\newcommand{\MMb}{\ov{M}^{\,\mathrm{trop}}}
\renewcommand{\Mc}{\mathcal{M}^{\mathrm{comb}}}

\newcommand{\J}{\mathcal{J}}
\newcommand{\M}{\mathcal{M}}
\newcommand{\Jb}{\overline{\mathcal{J}}}
\newcommand{\Mb}{\overline{\mathcal{M}}}

\DeclareMathOperator{\RPC}{\bf{RPC}}
\DeclareMathOperator{\RPCC}{\bf{RPCC}}
\DeclareMathOperator{\PC}{\bf{PC}}
\DeclareMathOperator{\TopE}{\bf{Top}}

\DeclareMathOperator{\LSch}{\bf{LSch}}
\DeclareMathOperator{\LSta}{\bf{LSta}}


\DeclareMathOperator{\Sta}{\bf{Sta}}


\EpigaVolumeYear{6}{2022} \EpigaArticleNr{15}
\ReceivedOn{August 11, 2021}
\InFinalFormOn{April 7, 2022}
\AcceptedOn{Mai 2, 2022}

\title{Tropicalization of the universal Jacobian}
\titlemark{Tropicalization of the universal Jacobian}

\author{Margarida Melo}
\address{Dipartimento di Matematica Universit\`a Roma Tre,
  Largo San Leonardo Murialdo, I-00146 Roma, Italy}
\email{\href{mailto:melo@mat.uniroma3.it }{melo@mat.uniroma3.it}}

\author{Samouil Molcho}
\address{Departement Mathematik,
  Eidgen\"ossische Technische Hochschule Z\"urich,
  8092 Z\"urich, Switzerland}
\email{\href{mailto:samouil.molcho@math.ethz.ch}{smolho1317@gmail.com}}

\author{Martin Ulirsch}
\address{Institut f\"ur Mathematik, Goethe-Universit\"at Frankfurt,
  60325 Frankfurt am Main, Germany}
\email{\href{mailto:ulirsch@math.uni-frankfurt.de}{ulirsch@math.uni-frankfurt.de}}

\author{Filippo Viviani}
\address{Dipartimento di Matematica Universit\`a Roma Tre,
  Largo San Leonardo Murialdo, I-00146 Roma, Italy}
\email{\href{mailto:viviani@mat.uniroma3.it }{viviani@mat.uniroma3.it}}

\authormark{M. Melo, S. Molcho, M. Ulirsch, and F. Viviani}

\AbstractInEnglish{%
  In this article we provide a stack-theoretic framework to study the
  universal tropical Jacobian over the moduli space of tropical
  curves. We develop two approaches to the process of tropicalization
  of the universal compactified Jacobian over the moduli space of
  curves -- one from a logarithmic and the other from a
  non-Archimedean analytic point of view. The central result from both
  points of view is that the tropicalization of the universal
  compactified Jacobian is the universal tropical Jacobian and that
  the  tropicalization maps in each of the two contexts are compatible
  with the tautological morphisms. In a sequel we will use the
  techniques developed here to provide explicit polyhedral models for
  the logarithmic Picard variety.
}

\MSCclass{14T15; 14T20; 14A21; 14H10; 14H40}

\KeyWords{universal compactified Jacobian; tropical universal
  Jacobian; tropicalization; tropical geometry; logarithmic geometry;
  non-archimedean geometry }


\acknowledgement{\scriptsize{The first and fourth authors received funds from MIUR via the Excellence Department Project awarded to the Department of Mathematics
and Physics of Roma Tre and by the project  PRIN2017SSNZAW: Advances
in Moduli Theory and Birational Classification and are members of the
Centre for Mathematics of the University of Coimbra --
UIDB/00324/2020), funded by the Portuguese Government through
FCT/MCTES.  The project has received funding from the European Research Council
(ERC) under the European Union Horizon 2020 research and innovation
program (grant agreement No.786580) and ERC Consolidator Grant 770922
- BirNonArchGeom. This project has received funding from the European Union's Horizon
2020 research and innovation program under the
Marie-Sk\l odowska-Curie Grant Agreement No. 793039\, \euflag{0.015}.
The third author also acknowledges support by the LOEWE-Schwerpunkt
``Uniformisierte Strukturen in Arithmetik und Geometrie'', by the
Deutsche Forschungsgemeinschaft (DFG, German Research Foundation),
project number 456557832, and the TRR 326 \textit{Geometry and
Arithmetic of Uniformized Structures}, project number 444845124.
}}




\begin{document}


\removeabove{0.6cm}
\removebetween{0.6cm}
\removebelow{0.6cm}

\maketitle

\begin{prelims}

\DisplayAbstractInEnglish

\bigskip

\DisplayKeyWords

\medskip

\DisplayMSCclass







\end{prelims}


\newpage

\setcounter{tocdepth}{1}

\tableofcontents


\section{Introduction}

The \emph{universal Jacobian} $\calJ_{g,n}$ over $\calM_{g,n}$ is the algebraic stack parametrizing pairs $(X,L)$ consisting of a smooth (projective and irreducible) curve $X$ of genus $g$ with $n$ marked (pairwise distinct) points and a line bundle $L$ on $X$. The stack $\calJ_{g,n}$ is smooth and it has a decomposition into connected components $\calJ_{g,n}=\coprod_{d\in \ZZ} \calJ_{g,n,d}$, according to the degree $d$ of the line bundles we are parametrizing. 
The rigidification $\calJ_{g,n}\myfatslash \Gm$ by the multiplicative group, that acts as scalar multiplication on line bundles, is representable over $\calM_{g,n}$ (and hence a DM=Deligne-Mumford stack). All of its connected components $\calJ_{g,n,d}\myfatslash \Gm$ are proper over $\calM_{g,n}$ with fiber over a geometric point $X\in \calM_{g,n}$ being the degree-$d$ Jacobian $J^d_X$ of $X$. 

The most na\"ive extension of $\calJ_{g,n}$ to the Deligne-Mumford compactification $\calM_{g,n}\subset \calMbar_{g,n}$ as a relative Jacobian is unfortunately neither universally closed (as some line bundles may not have any  limit in a stable degeneration) nor separated (as some line bundles may have different limits in a stable degeneration). 

Following \cite{Cap94, Cap08} (see also \cite{Cor89} for the  moduli space of spin curves), one can resolve the first problem by allowing for extra rational destabilizing components on stable degenerations of $X$. More precisely, we define the \emph{compactified universal Jacobian} $\Jb_{g,n}$ as the algebraic stack parametrizing pairs $(X,L)$ consisting of a quasi-stable curve $X$ of type $(g,n)$, \textit{i.e.} an $n$-pointed  genus-$g$ semistable curve whose destabilizing (or exceptional) components are isolated, and a  line bundle $L$ on $X$ which is admissible, \textit{i.e.} it has degree one on every destabilizing component of $X$ (see Definition \ref{UniCompJac}).  The compactified universal Jacobian $\Jb_{g,n}$ decomposes as well into smooth connected components $\Jb_{g,n}=\coprod_{d\in \ZZ} \Jb_{g,n,d}$, according to the degree of the line bundles, each containing $\calJ_{g,n,d}$ as dense and open substack with normal crossing boundary.
Moreover, the compactified universal Jacobian $\Jb_{g,n}$ is endowed with a  forgetful-stabilization  morphism that satisfies the existence part of the valuative criterion for properness (although it is not universally closed since it is not quasi-compact)
\begin{equation*}
\begin{aligned}
\Phi: \Jb_{g,n} & \longrightarrow \calMbar_{g,n}\\
(X,L) & \longmapsto X^{\st},
\end{aligned}
\end{equation*}
where  $X^{\st}$ is the stabilization of $X$. The DM locus of the rigidification $\Jb_{g,n}\myfatslash \Gm$ is the image of the open and dense subset  $ \Jb_{g,n}^{\,\spl}\subset \Jb_{g,n}$ parametrizing pairs $(X,L)$ such that $X$ is \emph{simple}, \textit{i.e.} such that $X$ remains connected when we remove the destabilizing rational components. 

However, the rigidification $\Jb_{g,n}\myfatslash \Gm$ is not separated nor are its connected components of finite type over $\Mb_{g,n}$. 
A classical idea to resolve this issue (that can be traced back to \cite{OdaSeshadri} in the case of a single curve) is to require the pair $(X,L)$ to be \emph{semistable} with respect to a \emph{universal stability condition} $\phi$ of type $(g,n)$ in the sense of \cite{KP} (also see \cite{Mel15} for an equivalent notion), see Definition \ref{D:stab-con}. Denote by $\Jb_{g,n}(\phi)$ the open substack of $\Jb_{g,n}$ that parametrizes pairs $(X,L)$ for which $L$ is $\phi$-semistable, see \eqref{E:opensub} and Definition \ref{D:admdiv}. The stack $\Jb_{g,n}(\phi)$ is of finite type and  universally closed over $\Mb_{g,n}$ and its $\Gm$-rigidification $\Jb_{g,n}(\phi)\myfatslash \Gm$ is a proper DM stack  if and only if $\phi$ is a \emph{general } stability condition, see Definition \ref{D:admdiv}.  
The space $V_{g,n}$ of all the universal stability conditions of type $(g,n)$ as well as its wall and chamber decomposition that determine the variation of the open substacks $\Jb_{g,n}(\phi)\subset \Jb_{g,n}$ have been explicitly  described  by Kass-Pagani in \cite[Theorems~1 and~2]{KP}.

We refer to Section \ref{S:comp-univ} for a more detailed account on the properties of $\Jb_{g,n}$, as well as for a description of the toroidal stratification of $\calJ_{g,n}\subset \Jb_{g,n}$ (see \S \ref{S:toro-J}) and of its category of strata (see \S\ref{S:catstraJ}).
\vspace{0,2cm}

The first goal of this article is to construct  a tropical analogue $\calJ_{g,n}^{\mathrm{trop}}$ of the compactified universal Jacobian $\Jb_{g,n}$, which we call the \emph{tropical universal Jacobian} (an object that was first envisioned in \cite{Len_tropicalBrillNoether}).
Following the framework of \cite{CCUW}, we define  $\calJ_{g,n}^{\mathrm{trop}}$ as the  category fibered in groupoids over the category $\RPC$ of rational polyhedral cones whose fiber over  $\sigma\in \RPC$ is the groupoid of pairs $(\Gamma/\sigma,D)$, where $\Gamma/\sigma$ is a quasi-stable tropical curve over $\sigma$ of type $(g,n)$ and $D$ is an admissible divisor on the underlying graph $\GG(\Gamma)$ of $\Gamma$, see Definition \ref{UnTrJac}. The category $\calJ_{g,n}^{\mathrm{trop}}$  comes equipped with a (forgetful-stabilization) morphism of categories fibered in groupoids 
\begin{equation*}
\begin{aligned}
\Phi^{\mathrm{trop}}\colon \Jt_{g,n} & \longrightarrow \Mt_{g,n}\\
(\Gamma/\sigma, D) & \longmapsto (\Gamma/\sigma)^{\st},
\end{aligned}
\end{equation*}
where $(\Gamma/\sigma)^{\st}=(\Gamma^{\st}/\sigma)$ is the stabilization of $\Gamma/\sigma$, see Definition \ref{UnTrJac-Mg}. 
In Theorem \ref{TrJacStack}, we show that $\calJ_{g,n}^{\mathrm{trop}}$ is a cone stack and that $\Phi^{\mathrm{trop}}$ is a morphism of cone stacks, in the sense of \cite[\S 2.1]{CCUW}.
Moreover,  $\calJ_{g,n}^{\mathrm{trop}}$ admits the cone substacks $\calJ_{g,n, d}^{\mathrm{trop}}$ (resp. $\calJ_{g,n}^{\mathrm{trop},\,\spl}$, resp. $\calJ_{g,n}^{\mathrm{trop}}(\phi)$ for any $\phi\in V_{g,n}$) parametrizing pairs 
$(\Gamma/\sigma,D)$ such that $\deg D=d$ (resp. $\GG(\Gamma)$ is simple in the sense of Definition \ref{D:spl-gr}, resp. $D$ is $\phi$-semistable  in the sense of Definition \ref{D:admdiv}\eqref{D:admdiv2}).

\vspace{0.1cm}

Our main goal is to justify the following expectation: 

\vspace{0.1cm}

\begin{motto*} The tropical universal  Jacobian $\calJ_{g,n}^{\mathrm{trop}}$ is the \emph{tropicalization} of  $\Jb_{g,n}$ in a way compatible with 
\begin{itemize}
\item the forgetful-stabilization morphisms $\Phi^{\mathrm{trop}}$ and $\Phi$, 
\item the restriction to the connected components $\calJ_{g,n,d}^{\mathrm{trop}}$ and $\Jb_{g,n, d}$ for any $d\in \ZZ$,
\item the restriction to the simple loci $\calJ_{g,n}^{\mathrm{trop}, \,\spl}$ and $\Jb_{g,n}^{\,\spl}$, 
\item the restriction to the $\phi$-semistable loci $\calJ_{g,n}^{\mathrm{trop}}(\phi)$ and  $\Jb_{g,n}(\phi)$ for  $\phi\in V_{g,n}$. 
\end{itemize}
\end{motto*}

There are two frameworks in which the above Motto can be translated into a precise Theorem: the first framework is logarithmic geometry, and the second one is non-Archimedean analytic geometry (\`a la Berkovich). In the non-Archimedean analytic framework, a part of this Motto involving special types of stability conditions has already been realized in \cite{API} (see Theorem \ref{mainthm_skeleton=trop} below and the discussion right after). In the logarithmic framework, and in the non-Archimedean one in the absence of stability conditions, our results appear to be new. 

\vspace{0.1cm}

\subsection{The logarithmic perspective} In the framework of logarithmic geometry, we compare two stacks over the category $\LSch$ of logarithmic schemes (fine, saturated and locally of finite type over a base field $k$) that are constructed from $\Jb_{g,n}$ and $\calJ_{g,n}^{\mathrm{trop}}$.  

The first stack is the  \emph{logarithmic universal Jacobian}  $\calJ_{g,n}^{\log}$ parametrizing pairs $(X\rightarrow S, \calL)$  consisting of a quasi-stable logarithmic curve $X\rightarrow S$ of type $(g,n)$ and an   admissible line bundle $\calL$ on $\underline{X}$, see Definition \ref{D:logJ}.  In Proposition \ref{P:logJ-boun}, we prove that $\J_{g,n}^{\log}$ is representable by the logarithmic algebraic stack $(\Jb_{g,n},M_{\partial \Jb_{g,n}})$, where $M_{\partial \Jb_{g,n}}$ is the divisorial logarithmic structure on $\Jb_{g,n}$ associated to the normal crossing divisor $\partial \Jb_{g,n}$. In particular, the algebraic stack $\un{\calJ_{g,n}^{\log}}$ underlying $\calJ_{g,n}^{\log}$ is  the compactified universal Jacobian $\Jb_{g,n}$ and hence we have a natural morphism of logarithmic algebraic stacks 
$$
\begin{aligned}
\Upsilon_{\calJ_{g,n}^{\log}}\colon \calJ_{g,n}^{\log}& \longrightarrow \Jb_{g,n}=\big(\Jb_{g,n}, \cO_{\Jb_{g,n}}^*\big), \\
(X\rightarrow S, \calL) & \longmapsto (\un X\to \un S, \calL).
\end{aligned}
$$
Moreover, $\calJ_{g,n}^{\log}$ admits open logarithmic algebraic substacks $\calJ_{g,n, d}^{\log}$ (resp. $\calJ_{g,n}^{\log, \,\spl}$, resp. $\calJ_{g,n}^{\log}(\phi)$ for any $\phi\in V_{g,n}$) parametrizing pairs 
 $(X\rightarrow S, \calL)$ such that $\calL$ has relative degree $d$ (resp. $\un X\to \un S$ is a family of simple quasi-stable curves, resp. $\calL$ is relatively $\phi$-semistable).

The second stack is the \emph{tropical universal Jacobian over $\LSch$}, denoted by $\Jtw_{g,n}$, whose fiber over $S\in \LSch$ is the groupoid whose objects  consists of a pair $(\Gamma_s/\ov M_{S,s},D_s)$ for each geometric point $s$ of $\un S$, where $\Gamma_s/\ov M_{S,s}$ is a quasi-stable tropical curve over the monoid $\ov M_{S,s}$ of type $(g,n)$ and $D_s$ is an admissible divisor on $\GG(\Gamma_s)$, subject to a natural compatibility relation with respect to \'etale specializations of geometric points of $\un S$, see Definition \ref{D:tropJnew}. In Proposition \ref{P:2LogJ}, we prove that  $\Jtw_{g,n}$  is the Artin fan (hence in particular a logarithmic algebraic stack) associated to the cone stack $\Jt_{g,n}$ under the natural equivalence of $2$-categories between cone stacks and Artin fans over $k$ (see \cite[Theorem~6.11]{CCUW}).

Recall now that in logarithmic geometry the tropicalization of a logarithmic algebraic stack $\cX$ is meant to be the  Artin fan $\cA_{\cX}$ with faithful monodromy together with the natural strict morphism of 
logarithmic algebraic stacks (that we baptize the \emph{functorial logarithmic tropicalization morphism} of $\cX$)
\begin{equation*}
\trop_{\cX}\colon\cX\longrightarrow \cA_{\cX}
\end{equation*} 
that is initial among all strict morphisms to an Artin fan with faithful monodromy  (see \cite[Proposition~3.2.1]{ACMW}). 

The relation between the above two stacks over $\LSch$ is  summarized in the next Theorem (see  Theorem \ref{T:log-trop} for a more precise version).

\begin{maintheorem}
\noindent 
\begin{enumerate}
\item There is a canonical isomorphism of Artin fans $\cA_{\J_{g,n}^{\log}}\cong \cA_{\Jtw_{g,n}}$ under which we have a factorization 
\begin{equation}\label{E:2trop-log}
\trop_{\J_{g,n}^{\log}}\colon \J_{g,n}^{\log}\xrightarrow{\wt\trop_{\J_{g,n}^{\log}}}\Jtw_{g,n} \xrightarrow{\trop_{\Jtw_{g,n}}}\cA_{\Jtw_{g,n}}  \cong \cA_{\J_{g,n}^{\log}},
\end{equation}
where the map $\wt\trop_{\J_{g,n}^{\log}}$ (that we baptize the \emph{modular logarithmic tropicalization map})  has a modular description $($see Definition \ref{D:log-trop}$)$ and it is  strict, smooth and surjective. 

Moreover,  the diagram \eqref{E:2trop-log} commutes, via the suitable forgetful-stabilization morphisms, with the analogous diagram for $\Mb_{g,n}$ established in \cite{CCUW} and \cite{Uli19}.


 \item The two morphisms of logarithmic algebraic stacks 
 \begin{equation}\label{E:corr-logIN}
\xymatrix{
\Jb_{g,n}&& \J_{g,n}^{\log}  \ar[ll]_{\Upsilon_{\J_{g,n}^{\log}}} \ar[rr]^{\wt{\trop}_{\J_{g,n}^{\log}}} && \Jtw_{g,n}
}
\end{equation}
are compatible with  the toroidal stratification  \eqref{E:strataJ} of $\Jb_{g,n}$  and the stratification \eqref{E:StrataL} of  $ \Jtw_{g,n}$ as an Artin fan. 

 Moreover, the diagram \eqref{E:corr-logIN} is compatible with the restrictions to $\Jb_{g,n, d}$, $\Jb_{g,n}^{\,\spl}$ and $\Jb_{g,n}(\phi)$ for any universal stability condition $\phi\in V_{g,n}$.
\end{enumerate}
\end{maintheorem}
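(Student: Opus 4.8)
The plan is to establish Theorem A in two halves, mirroring its two-part statement, and in each half to reduce everything to the already-established structural facts about $\Mb_{g,n}$ from \cite{CCUW}, \cite{Uli19} together with the modular descriptions of $\J_{g,n}^{\log}$ and $\Jtw_{g,n}$ constructed earlier (Proposition~\ref{P:logJ-boun} and Proposition~\ref{P:2LogJ}). First I would recall that, by Proposition~\ref{P:2LogJ}, $\Jtw_{g,n}$ is the Artin fan of the cone stack $\Jt_{g,n}$ under the equivalence of \cite[Theorem~6.11]{CCUW}; in particular $\Jtw_{g,n}$ has faithful monodromy, because the cone stack $\Jt_{g,n}$ — being built from rigidified combinatorial data (quasi-stable tropical curves with a fixed admissible divisor) — has no nontrivial automorphisms acting trivially on cones, which is exactly what faithful monodromy means on the Artin fan side. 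Granting this, the defining universal property of the tropicalization $\trop_{\J_{g,n}^{\log}}\colon \J_{g,n}^{\log}\to \cA_{\J_{g,n}^{\log}}$ as the initial strict morphism to an Artin fan with faithful monodromy (\cite[Proposition~3.2.1]{ACMW}) forces the canonical morphism $\wt\trop_{\J_{g,n}^{\log}}\colon \J_{g,n}^{\log}\to \Jtw_{g,n}$, and one gets a unique induced morphism $\cA_{\J_{g,n}^{\log}}\to \cA_{\Jtw_{g,n}}=\Jtw_{g,n}$. To see this is an isomorphism I would check that $\wt\trop_{\J_{g,n}^{\log}}$ is strict, smooth and surjective directly from the modular description: strictness is immediate because the source and target carry the same combinatorial data $\ov M_{S,s}$ at each geometric point; smoothness and surjectivity follow because locally on charts the map is a product of the (known) smooth surjective tropicalization map for $\Mb_{g,n}$ with an identity on the discrete divisor datum. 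The factorization \eqref{E:2trop-log} then follows from the universal property, and an Artin fan with faithful monodromy receiving a strict smooth surjection is canonically the tropicalization, giving $\cA_{\J_{g,n}^{\log}}\cong\cA_{\Jtw_{g,n}}$.

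For the compatibility with $\Mb_{g,n}$ in part (1), I would invoke functoriality of the Artin-fan/tropicalization construction applied to the forgetful-stabilization morphism $\Upsilon$ and $\Phi^{\log}$: since all the maps in sight (the $\calJ$-side and the $\Mb$-side tropicalizations, and $\Phi^{\mathrm{trop}}$) are initial strict morphisms to faithful-monodromy Artin fans, any square of strict morphisms between the underlying log stacks induces a unique square on tropicalizations, and uniqueness gives commutativity for free. The only genuine content is that $\Phi^{\log}$ is strict (so that it descends to the Artin fans), which one reads off from the modular description of stabilization of quasi-stable tropical curves: stabilization does not change $\ov M_{S,s}$, only the graph, so the characteristic monoid is preserved.

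For part (2), the compatibility with stratifications is essentially formal once one knows that both stratifications are the canonical ones attached to the log structures: the toroidal stratification \eqref{E:strataJ} of $\Jb_{g,n}$ is indexed by the category of strata (cf.\ \S\ref{S:catstraJ}), the Artin-fan stratification \eqref{E:StrataL} of $\Jtw_{g,n}$ is indexed by the cones of $\Jt_{g,n}$, and these index sets are identified by construction of $\Jt_{g,n}$; strict morphisms preserve such stratifications, and $\Upsilon_{\J_{g,n}^{\log}}$ is an isomorphism on underlying stacks (Proposition~\ref{P:logJ-boun}) hence identifies the boundary stratifications. For the compatibility with the restrictions to $\Jb_{g,n,d}$, $\Jb_{g,n}^{\,\spl}$, and $\Jb_{g,n}(\phi)$, I would observe that each of these is an open substack cut out by a condition (degree, simplicity, $\phi$-semistability) that is visibly a union of strata, hence its preimage under $\wt\trop$ and its image under $\Upsilon$ are again unions of strata corresponding to the analogously-defined cone substacks $\calJ_{g,n,d}^{\mathrm{trop}}$, $\calJ_{g,n}^{\mathrm{trop},\,\spl}$, $\calJ_{g,n}^{\mathrm{trop}}(\phi)$; the diagram \eqref{E:corr-logIN} then restricts to the analogous diagram for each.

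The main obstacle, I expect, will be verifying cleanly that $\Jtw_{g,n}$ has \emph{faithful} monodromy — that is, pinning down that the automorphism group of an object $(\Gamma_s/\ov M_{S,s}, D_s)$ acts faithfully on the associated cone $\sigma = \Hom(\ov M_{S,s},\RR_{\geq 0})$ — since an automorphism of a tropical curve fixing the underlying cone could a priori permute edges of the same length, and one must use the rigidifying role of the marked points together with the divisor $D$ (and the quasi-stable bookkeeping of exceptional components) to rule this out. Once faithful monodromy is secured, the rest is an orchestration of universal properties rather than hard computation; the modular description of $\wt\trop_{\J_{g,n}^{\log}}$ in Definition~\ref{D:log-trop} is what makes the strictness/smoothness/surjectivity checks routine rather than delicate.
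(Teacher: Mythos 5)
Your argument for part (1) rests on the claim that $\Jtw_{g,n}$ has faithful monodromy, i.e.\ that the cone stack $\Jt_{g,n}$ has no nontrivial automorphisms acting trivially on cones, and this claim is false. An automorphism of $(G,D)\in\QD_{g,n}$ that fixes every edge need not be trivial: swapping the two half-edges of a loop, or exchanging the two isomorphic halves of a graph joined by a ladder of bridges, gives a nontrivial automorphism acting trivially on $\RR_{\geq 0}^{E(G)}$ (the divisor $D$ lives on vertices and the markings on legs, so neither rigidifies these); the paper even devotes Figure~\ref{figure_automorphisms} and the quotient categories $\QD_{g,n}^E$, $\SG_{g,n}^E$ of Definition~\ref{CatE} to exactly this phenomenon. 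Consequently $\Jtw_{g,n}\neq\cA_{\Jtw_{g,n}}$ in general (which is why the statement itself contains a nontrivial map $\trop_{\Jtw_{g,n}}$), and your route to the isomorphism $\cA_{\J_{g,n}^{\log}}\cong\cA_{\Jtw_{g,n}}$ via the initial property of $\trop_{\J_{g,n}^{\log}}$ collapses. The actual content, which your proposal skips entirely, is the computation of both Artin fans and their identification: $\cA_{\J_{g,n}^{\log}}$ is the Artin fan of the combinatorial cone stack $\Gamma_{\Str(\Jb_{g,n})}$ built from the category of strata with its monodromy groups (\cite[Proposition~4.5]{Uli19}), $\cA_{\Jtw_{g,n}}$ is the Artin fan of $\Gamma_{\QD_{g,n}^E}$ (\cite[Corollary~4.6]{Uli19}), and matching them is precisely Proposition~\ref{P:CatStrata}, whose proof requires the $\Aut(G,D)$-Galois covers $\wt\J_{(G,D)}\to\J_{(G,D)}$ of Lemma~\ref{L:rigid-str} and an explicit computation that the monodromy of each stratum is $\Im\big((-)_E^*\colon\Aut(G,D)\to S_{E(G)}\big)$. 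Also note that $\wt\trop_{\J_{g,n}^{\log}}$ is not "forced" by any universal property; it must be constructed modularly (Definition~\ref{D:log-trop}), and its strictness is proved by a lifting argument for log structures as in \cite[Theorem~7.12]{CCUW}, not by the assertion that source and target "carry the same data."

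A second genuine error is your claim that $\Phi^{\log}$ is strict, from which you deduce the compatibility with the $\Mb_{g,n}$ diagram by formal functoriality. It is not strict: at a point with quasi-stable dual graph $G$ the characteristic monoid of $\J_{g,n}^{\log}$ is $\NN^{E(G)}$, while the pullback from $\M_{g,n}^{\log}$ is $\NN^{E(G^{\st})}$, and the comparison map sends the smoothing parameter of an exceptional edge $e$ of $G^{\st}$ to $e^1+e^2$, which is not an isomorphism whenever exceptional vertices are present. The paper explicitly flags this and instead constructs $\cA(\wt\Phi^{\mathrm{trop}})$ and $\cA(\Phi^{\log})$ from the commutative square of combinatorial cone stacks \eqref{E:com-combst} relating $\QD_{g,n}$, $\QD_{g,n}^E$, $\SG_{g,n}$, $\SG_{g,n}^E$. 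Your treatment of part (2) (stratifications and the restrictions to $\Jb_{g,n,d}$, $\Jb_{g,n}^{\,\spl}$, $\Jb_{g,n}(\phi)$) is essentially in line with the paper, which deduces it from the modular descriptions of $\Upsilon_{\J_{g,n}^{\log}}$ and $\wt\trop_{\J_{g,n}^{\log}}$, but part (1) as you have argued it contains the two gaps above and does not go through.
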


\vspace{0.1cm}

\subsection{The non-Archimedean perspective} In the framework of non-archimedean analytic geometry, we compare two topological spaces that are constructed from $\Jb_{g,n}$ and $\calJ_{g,n}^{\mathrm{trop}}$ (over $k=\ov k$ on which we put the trivial valuation).

The first topological space is the topological space underlying the \emph{beth-analytification} $\Jb_{g,n}^{\,\beth}$ of $\Jb_{g,n}$ and it admits the following description 
\begin{equation*}
\begin{aligned}
& \lvert\Jb_{g,n}^{\,\beth}\rvert:=\big\{\Spec R\to \Jb_{g,n}\big\}/\sim, \\
\end{aligned}
\end{equation*}
where $R$ varies among all the rank-$1$ valuation rings containing $k$ and the equivalence relation $\sim$  is defined as follows: we say that $\Spec R\to \cX$ is equivalent to  $\Spec R'\to \cX$ if there exists another rank-$1$ valuation ring $R''$ containing both $R$ and $R'$, and  such that the two natural morphisms $\Spec R''\to \Spec R\to \cX$ and $\Spec R''\to \Spec R'\to \cX$ coincide. 

The topological space $\big\vert \Jb_{g,n}^{\,\beth} \big\vert$ admits an anticontinuous surjective \emph{reduction map} to the topological space $\big\vert \Jb_{g,n} \big\vert$ underlying the stack $\Jb_{g,n}$ which is defined by 
\begin{equation*}
\begin{aligned}
\red_{\Jb_{g,n}}\colon \big\vert \Jb_{g,n}^{\,\beth}\big\vert & \longrightarrow \big\vert \Jb_{g,n}\big\vert \\
\big[\Spec R\to \Jb_{g,n}\big] & \longmapsto \big[\Spec R/\frakm_R\to \Spec R\to \Jb_{g,n}\big],
\end{aligned}
\end{equation*}
where $\frakm_R$ is the maximal ideal of $R$.

The second topological space is the \emph{generalized cone complex }$\JJ_{g,n}$ associated to $\calJ_{g,n}^{\mathrm{trop}}$ (in the sense of \cite{ACP}). As a topological space $\JJ_{g,n}$ parametrizes isomorphism classes of pairs consisting of a tropical curve $\Gamma$ of type $(g,n)$ and an admissible divisor $D$ on the underlying graph of $\Gamma$. Its \emph{canonical compactification} $\JJb_{g,n}$ parametrizes pairs consisting of an extended tropical curve (see Definition \ref{D:tropcurvR}) and an admissible divisor $D$ on the underlying graph $\GG(\Gamma)$. 
In Definition \ref{TropGCC}, $\JJ_{g,n}$ and $\JJb_{g,n}$ as the colimits
\begin{equation*}
\JJ_{g,n}=\varinjlim_{(G,D)\in \QD_{g,n}} \RR_{\geq 0}^{E(G)}\subset \JJb_{g,n}=\varinjlim_{(G,D)\in \QD_{g,n}} \big(\RR_{\geq 0}\cup\{+\infty\}\big)^{E(G)},
\end{equation*}
where the diagram of topological spaces comes from the description of $\calJ_{g,n}^{\mathrm{trop}}$ as a combinatorial cone stack, see Theorem \ref{TrJacStack}\ref{TrJacStack1}.
Moreover, $\JJ_{g,n}$ admits  generalized cone subcomplexes  $\JJ_{g,n, d}$ (resp. $\JJs_{g,n}$, resp. $\JJ_{g,n}(\phi)$ for any $\phi\in V_{g,n}$) parametrizing pairs 
$(\Gamma,D)$ such that $\deg D=d$ (resp. $\GG(\Gamma))$ is simple, resp. $D$ is $\phi$-semistable). And similarly for their compactifications inside $\JJb_{g,n}$. 

Recall now that in non-archimedean geometry, the tropicalization of a toroidal embedding of Artin stacks $(\cU\subset \cX)$  (or more generally a logarithmic algebraic stack) is meant to be a generalized  cone complex $\Sigma(\cX)$, and its natural compactification $\ov\Sigma(\cX)$ which is a generalized extended cone complex, together with a surjective proper continuous map (that we baptise \emph{functorial analytic tropicalization})  $\trop_{\cX}^{\an}:|\cX^{\beth}|\to \ov\Sigma(\cX)$ which is functorial with respect to locally toric morphisms,  see \cite{ACP} and \cite{Uli19}.

The relation between the above two topological spaces is summarized in the next Theorem (see Theorem \ref{T:ana-trop} for a more precise version).

\begin{maintheorem}
\label{mainthm_skeleton=trop}
\noindent 
\begin{enumerate}
\item There are canonical isomorphisms $\Psi_{\Jb_{g,n}}:\Sigma(\Jb_{g,n})\xrightarrow{\cong} \JJ_{g,n}$ and $\ov\Psi_{\Jb_{g,n}}:\ov\Sigma(\Jb_{g,n})\xrightarrow{\cong} \JJb_{g,n}$ of, respectively, generalized  cone complexes and generalized extended cone complexes, in such  a way that the map $($that we baptise the \emph{modular analytic tropicalization map}$)$
\begin{equation}\label{E:2trop-an}
\wt{\trop}^{\an}_{\Jb_{g,n}}=\ov \Psi_{\Jb_{g,n}}\circ \trop_{\Jb_{g,n}}\colon\big|\Jb_{g,n}^{\,\beth}\big|\xrightarrow{\trop^{\an}_{\Jb_{g,n}}}   \ov\Sigma(\Jb_{g,n}) \xrightarrow[\cong]{\ov\Psi_{\Jb_{g,n}}} \JJb_{g,n}
\end{equation}
has a modular description $($see Definition \ref{D:ana-trop}$\,)$.

Moreover, the diagram \eqref{E:2trop-an} commutes, via the suitable forgetful-stabilization morphisms, with the analogous diagram for $\Mb_{g,n}$ established in \cite{ACP}.
 \item  The two maps 
 \begin{equation}\label{E:corr-tropIN}
\xymatrix{
\big|\Jb_{g,n}\big| && \big|\Jb_{g,n}^{\,\beth}\big|  \ar[ll]_{\red_{\Jb_{g,n}}} \ar[rr]^{\wt{\trop}^{\an}_{\Jb_{g,n}}} && \JJb_{g,n}
}
\end{equation}
 are compatible with  the toroidal stratification  \eqref{E:strataJ} of $\Jb_{g,n}$ and the stratification \eqref{E:strataTr} of $\JJb_{g,n}$ as a generalized extended cone complex. 
 
 Moreover, the diagram \eqref{E:corr-tropIN} is compatible with the restrictions to $\Jb_{g,n, d}$, $\Jb_{g,n}^{\,\spl}$ and $\Jb_{g,n}(\phi)$ for any universal stability condition $\phi\in V_{g,n}$.
\end{enumerate}

\end{maintheorem}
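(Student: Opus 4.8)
The plan is to reduce both parts to the known case of $\Mb_{g,n}$ from \cite{ACP}, combined with an explicit identification of the cone structures. First I would establish the isomorphisms $\Psi_{\Jb_{g,n}}$ and $\ov\Psi_{\Jb_{g,n}}$ in part (1). The key point is that $\Jb_{g,n}$ carries a toroidal structure whose category of strata is precisely the category $\QD_{g,n}$ of pairs $(G,D)$ with $G$ a quasi-stable graph and $D$ an admissible divisor, with morphisms given by edge-contractions (this is the content of the stratification \eqref{E:strataJ} recalled in \S\ref{S:catstraJ}). By the general machinery of \cite{ACP} (see also \cite{Uli19}), the generalized cone complex $\Sigma(\Jb_{g,n})$ is the colimit over this category of strata of the cones $\RR_{\geq 0}^{E(G)}$, with the gluing maps induced by the face inclusions corresponding to edge-contractions. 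But by Theorem \ref{TrJacStack}\ref{TrJacStack1}, this is literally the defining colimit of $\JJ_{g,n}$ in Definition \ref{TropGCC}, so $\Psi_{\Jb_{g,n}}$ is the canonical identification of the two colimits; passing to extended cones $\big(\RR_{\geq 0}\cup\{+\infty\}\big)^{E(G)}$ gives $\ov\Psi_{\Jb_{g,n}}$ the same way. The modular description of $\wt{\trop}^{\an}_{\Jb_{g,n}}$ in Definition \ref{D:ana-trop} then follows by unwinding $\trop^{\an}_{\Jb_{g,n}}$ pointwise: a point of $\big|\Jb_{g,n}^{\,\beth}\big|$ is represented by $\Spec R \to \Jb_{g,n}$ for a rank-one valuation ring $R$, i.e.\ a quasi-stable curve over $R$ with an admissible line bundle; its image in $\Sigma(\Jb_{g,n})$ records the dual graph $G$ of the special fiber together with the edge-lengths $\val(\text{node-smoothing parameter}) \in \RR_{\geq 0}\cup\{+\infty\}$, and the divisor $D$ is carried along, which is exactly the pair $(\Gamma,D)$ produced in Definition \ref{D:ana-trop}. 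Commutativity with the $\Mb_{g,n}$-diagram of \cite{ACP} via $\Phi$ and $\Phi^{\mathrm{trop}}$ is then a diagram chase: the stabilization morphism on strata categories $\QD_{g,n}\to \G_{g,n}$ (forgetting $D$ and stabilizing $G$) induces the vertical maps on both sides, and functoriality of $\Sigma(-)$ for the locally toric morphism $\Phi$ gives the square.

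For part (2), the compatibility with stratifications is essentially formal once the identifications of part (1) are in place. The stratification of $\JJb_{g,n}$ as a generalized extended cone complex (the stratification \eqref{E:strataTr}) has its strata indexed by $\QD_{g,n}$ by construction, and the toroidal stratification \eqref{E:strataJ} of $\Jb_{g,n}$ has the same index category; I would check that both $\red_{\Jb_{g,n}}$ and $\wt{\trop}^{\an}_{\Jb_{g,n}}$ send the locus lying over (resp.\ mapping to) the stratum indexed by $(G,D)$ to the stratum indexed by $(G,D)$. For $\red_{\Jb_{g,n}}$ this is immediate: the reduction of $\Spec R\to \Jb_{g,n}$ lands in the stratum of $\Jb_{g,n}$ corresponding to the dual graph of the special fiber, which is the $(G,D)$-stratum; for $\wt{\trop}^{\an}_{\Jb_{g,n}}$ it is built into the modular description, since the open cone $\RR_{>0}^{E(G)}$ inside the $(G,D)$-cone is exactly the set of tropical pairs with combinatorial type $(G,D)$. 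The restriction statements to $\Jb_{g,n,d}$, $\Jb_{g,n}^{\,\spl}$, and $\Jb_{g,n}(\phi)$ follow because each of these is an open (or open-and-closed, in the degree case) substack that is a union of toroidal strata — namely those strata $(G,D)$ with $\deg D = d$, resp.\ $G$ simple, resp.\ $D$ being $\phi$-semistable — so the isomorphisms $\Psi, \ov\Psi$ restrict to the correspondingly-indexed subcolimits $\JJ_{g,n,d}$, $\JJs_{g,n}$, $\JJ_{g,n}(\phi)$, and the diagram \eqref{E:corr-tropIN} restricts accordingly.

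The main obstacle I anticipate is verifying that the toroidal/locally toric structure on $\Jb_{g,n}$ is genuinely what one needs for the $\Sigma(-)$ machinery of \cite{ACP} to apply, i.e.\ that $\Jb_{g,n}$ (or at least each finite-type piece $\Jb_{g,n,d}(\phi)$) is a toroidal Artin stack with the expected cone-complex combinatorics — the subtlety being the presence of the destabilizing exceptional components and the fact that $\Jb_{g,n}$ itself is not quasi-compact, so one must either work piece-by-piece with the $\phi$-semistable finite-type substacks and then glue, or invoke the logarithmic formalism of Theorem \ref{mainthm_skeleton=trop}'s companion (the logarithmic Theorem) to get the toroidal structure for free as the divisorial log structure $M_{\partial\Jb_{g,n}}$. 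A secondary technical point is checking that the edge-contraction maps between the cones $\RR_{\geq 0}^{E(G)}$ coming from the geometry (degeneration of the curve-plus-line-bundle) match the combinatorial face maps defining $\JJ_{g,n}$ in Definition \ref{TropGCC} on the nose, including the bookkeeping of how the admissible divisor $D$ transforms under contracting a non-exceptional edge versus an exceptional one — but this should be exactly the computation already carried out in establishing Theorem \ref{TrJacStack}, so it can be cited rather than redone.
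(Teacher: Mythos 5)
Your overall route coincides with the paper's proof (Theorem \ref{T:ana-trop}): identify the strata data of the toroidal embedding $\J_{g,n}\subset\Jb_{g,n}$ with the combinatorial category indexing $\JJ_{g,n}$, deduce $\Sigma(\Jb_{g,n})\cong\JJ_{g,n}$ and $\ov\Sigma(\Jb_{g,n})\cong\JJb_{g,n}$ by comparing colimits, obtain the modular description by evaluating $\trop^{\an}_{\Jb_{g,n}}$ on a point $[\Spec R\to\Jb_{g,n}]$ via the valuations of the node-smoothing parameters, and then get part (2) and the restriction statements essentially formally from the two modular descriptions and the fact that $\Jb_{g,n,d}$, $\Jb_{g,n}^{\,\spl}$ and $\Jb_{g,n}(\phi)$ are unions of strata.

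There is, however, a genuine gap in how you justify part (1). You assert that the category of strata is ``precisely $\QD_{g,n}$ with morphisms given by edge-contractions'' and attribute this to the stratification \eqref{E:strataJ}. But Proposition \ref{TorStrata} only identifies the \emph{poset} of strata with $|\QD_{g,n}|$, and the generalized (extended) cone complex is not determined by the poset: by \eqref{skele-deco} its strata are the quotients $\sigma(W)^o/H_W$ by the monodromy groups, so one must (a) identify the cone of each stratum $\J_{(G,D)}$ canonically with $\RR_{\geq 0}^{E(G)}$, (b) show that \'etale specializations between strata correspond exactly to edge contractions, and (c) compute the monodromy group $H_{\J_{(G,D)}}$ and show it equals $\Im\big(\Aut(G,D)\to S_{E(G)}\big)$. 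This is precisely the content of Proposition \ref{P:CatStrata} (note that the correct index category is the quotient $\QD_{g,n}^E$ of Definition \ref{CatE}, not $\QD_{g,n}$ with only contractions; the colimits over $\QD_{g,n}$ and $\QD_{g,n}^E$ agree only because the cone functor factors through the quotient, Remark \ref{R:conecomE}). The proof of \ref{P:CatStrata} is not formal: it rests on the deformation-theoretic computation of the stalks $E_{\Jb_{g,n},(C,L)}\cong\NN^{E(G(C))}$ and, crucially, on the rigidified covers $\wt\J_{(G,D)}\to\J_{(G,D)}$ of Lemma \ref{L:rigid-str}, shown there to be finite \'etale and $\Aut(G,D)$-Galois, which is what pins down the monodromy. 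Your proposal elides this step, and the ``main obstacle'' you do anticipate (non-quasi-compactness of $\Jb_{g,n}$) is not actually an issue, since the machinery of \cite{ACP} and \cite{Uli19} applies to toroidal stacks locally of finite type; likewise, Theorem \ref{TrJacStack} is purely combinatorial and cannot substitute for the geometric matching of specializations with contractions. Once Proposition \ref{P:CatStrata} is invoked explicitly in place of your appeal to \eqref{E:strataJ}, the remainder of your argument goes through as in the paper.
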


Theorem \ref{mainthm_skeleton=trop} contains as a special case the main result of \cite{API}: when $n=1$ and $\phi\in V_{g,1}$ is a suitable perturbation of the canonical stability condition (which is then general),  $\calJbar_{g,1}(\phi)$ is isomorphic (up to $\Gm$-rigidification) to the Esteves' compactified universal Jacobian stack $\calJbar_{\phi,g}$ of \textit{loc.~cit.} (see Remark~\ref{R:alternJ} below) and so Theorem \ref{mainthm_skeleton=trop} states that the non-Archimedean skeleton of Esteves' compactified universal Jacobian $\calJbar_{\phi,g}$ can be identified with $\JJb_{g,1}(\phi)$ (which can be shown to be isomorphic to the generalized extended cone complex $\JJb_{\phi,g}$ constructed in \textit{loc.~cit.})  making the natural diagram of tropicalization maps commute, which is exactly \cite[Theorem 6.9]{API}\footnote{However, it seems to us that  \cite[Theorem 6.9]{API} should work, with small changes in the proof, for any $n\geq 1$ and any  general universal stability condition $\phi\in V_{g,n}$.}. We believe that one advantage of our approach, with respect to the one of \cite{API}, is that the spaces $\JJb_{g,n}(\phi)$, as $\phi$ varies in $V_{g,n}$, are constructed as  generalized extended cone sub-complexes of $\JJb_{g,n}$, which should  be useful in order to study tropical wall-crossing phenomena (similar to \cite{KP}).


\subsection{Fibers of the forgetful-stabilization morphism}

In the last Section of the paper we study the fibers of the forgetful-stabilization morphism of cone stacks $\Phi^{\mathrm{trop}}:\Jt_{g,n} \rightarrow \Mt_{g,n}$ and of its realization as morphism of topological stacks $\big\vert\Phi^{\mathrm{trop}}
\big\vert\colon \big\vert\Jt_{g,n}\big\vert\to \big\vert\Mt_{g,n}\big\vert$. See Theorems \ref{T:fib-forget}, \ref{T:fibT-forget}, \ref{T:Jac-Pic} in Section \ref{S:top-Jac}.

\begin{maintheorem}
\label{mainthm_fibers}
\noindent 
\begin{enumerate}
\item \label{mainthm_fibers1} The fiber of the forgetful-stabilization morphism of cone stacks $\Phi^{\mathrm{trop}}:\Jt_{g,n} \rightarrow \Mt_{g,n}$ over a stable tropical curve $\ov \Gamma/\sigma\in \Mt_{g,n}(\sigma)$ is the Jacobian cone space $\Jac_{\ov \Gamma/\sigma}$ that we construct in Definition \ref{Jac-fiber}. 

Moreover, a similar result is true for the restriction of $\Phi^{\trop}$ to the cone substacks $\calJ_{g,n, d}^{\mathrm{trop}}$,  $\calJ_{g,n}^{\mathrm{trop}, \,\spl}$ and $\calJ_{g,n}^{\mathrm{trop}}(\phi)$ for any $\phi\in V_{g,n}$.

\item \label{mainthm_fibers2} Let $\ov \Gamma$ be a stable tropical curve of type $(g,n)$ with real edge lengths. 
\begin{enumerate}[label={\small\textrm{(\roman*)}}]
\item The fiber of the forgetful-stabilization morphism of topological stacks $\big\vert\Phi^{\mathrm{trop}}\big\vert\colon \big\vert\Jt_{g,n}\big\vert\to \big\vert\Mt_{g,n}\big\vert$ over  $\ov \Gamma$ is the Jacobian topological space $\Jac_{\ov \Gamma}$ that we construct in Definition \ref{JacTp-fiber}. 

Moreover, a similar result is true for the restriction of $\big\vert\Phi^{\mathrm{trop}}\big\vert$ to the topological substacks $\big\vert\Jt_{g,n,d}\big\vert$, $\big\vert\Jts_{g,n,(d)}\big\vert$ and $\big\vert\Jt_{g,n}(\phi)\big\vert$.

\item There exists a  surjective degree preserving continuous  map 
$$
\alpha_{\ov \Gamma}\colon \Jac_{\ov \Gamma}  \longrightarrow \Pic(\ov \Gamma)
$$
that restricts to an homeomorphism 
$$
\alpha_{\ov \Gamma}(\phi)\colon\Jac_{\ov \Gamma}(\phi) \xlongrightarrow{\cong} \Pic^{|\phi|}(\ov \Gamma).
$$
 for any general universal stability condition $\phi\in V_{g,n}$.
\end{enumerate}
\end{enumerate}
\end{maintheorem}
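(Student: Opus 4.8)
The plan is to prove Theorem~\ref{mainthm_fibers} by treating its three assertions in order, reducing each to the combinatorial/topological structure of the Jacobian of a fixed tropical curve.

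\textbf{Part \eqref{mainthm_fibers1}.} I would first unwind the definition of the forgetful-stabilization morphism $\Phi^{\mathrm{trop}}\colon\Jt_{g,n}\to\Mt_{g,n}$ at the level of the fiber over a point $\ov\Gamma/\sigma\in\Mt_{g,n}(\sigma)$. By the modular description of $\Jt_{g,n}$, an object of the fiber is a pair $(\Gamma/\sigma,D)$ with $\Gamma/\sigma$ quasi-stable of type $(g,n)$ whose stabilization $(\Gamma/\sigma)^{\st}$ is (isomorphic to) $\ov\Gamma/\sigma$, together with an admissible divisor $D$ on $\GG(\Gamma)$. So the fiber is precisely the groupoid of admissible pairs $(\Gamma,D)$ stabilizing to $\ov\Gamma$; this is, by construction, the Jacobian cone space $\Jac_{\ov\Gamma/\sigma}$ of Definition~\ref{Jac-fiber} — the point is to check that the combinatorial data assembled there (choices of exceptional vertices to insert on edges/legs of $\ov\Gamma$, together with the cone of edge lengths and the finitely many admissible degree distributions) is exactly the data of an object of the fiber, and that morphisms match. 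Since $\Phi^{\mathrm{trop}}$ is a morphism of cone stacks, its fiber is automatically a cone stack, and one verifies it is the one described. The restriction to $\calJ_{g,n,d}^{\mathrm{trop}}$, $\calJ_{g,n}^{\mathrm{trop},\spl}$, $\calJ_{g,n}^{\mathrm{trop}}(\phi)$ follows by imposing the corresponding conditions ($\deg D=d$; $\GG(\Gamma)$ simple; $D$ $\phi$-semistable) on the same combinatorial enumeration — these conditions are visibly compatible with passing to the fiber.

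\textbf{Part \eqref{mainthm_fibers2}(i).} Here I would pass from cone stacks to their topological realizations $|\,\cdot\,|$. The realization functor from cone stacks to topological stacks is (by the formalism of \cite{CCUW}) a colimit-preserving construction, and in particular sends a fiber product of cone stacks over a point to the corresponding fiber in topological stacks, at least after fixing $\ov\Gamma$ with \emph{real} edge lengths (so that $\ov\Gamma$ is an honest point of $|\Mt_{g,n}|$). Thus $|\Phi^{\mathrm{trop}}|^{-1}(\ov\Gamma)$ is the topological realization of the cone stack $\Jac_{\ov\Gamma/\sigma}$ from Part (i), which is by Definition~\ref{JacTp-fiber} the space $\Jac_{\ov\Gamma}$: concretely, it is obtained by gluing, over all ways of subdividing $\ov\Gamma$ by inserting exceptional components, the polytopes of lengths of the new edges (with the real length data of $\ov\Gamma$ fixed) decorated by the finite set of admissible divisors. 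The substack statements for $|\Jt_{g,n,d}|$, $|\Jts_{g,n,(d)}|$, $|\Jt_{g,n}(\phi)|$ are obtained exactly as in Part (i) by restricting the gluing diagram.

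\textbf{Part \eqref{mainthm_fibers2}(ii).} This is the substantive analytic statement, and the main obstacle. I would define $\alpha_{\ov\Gamma}$ by sending a pair $(\Gamma,D)$ — with $\Gamma$ a quasi-stable model of $\ov\Gamma$ carrying exceptional segments of prescribed lengths and an admissible divisor $D$ — to the class in $\Pic(\ov\Gamma)$ obtained by "contracting the exceptional components and recording the resulting divisor class up to linear equivalence," i.e. by pushing $D$ forward along the stabilization and taking its class in the tropical Jacobian $\Pic(\ov\Gamma)=\Div(\ov\Gamma)/\sim$. One must check this is well-defined (independent of the quasi-stable model, and constant on linear-equivalence/rational-equivalence of the chosen divisor data), degree-preserving, continuous, and surjective; surjectivity follows because every divisor class on $\ov\Gamma$ has an admissible representative on \emph{some} quasi-stable model (this is the tropical analogue of the classical fact underpinning the construction of $\Jb_{g,n}$). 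For the last claim, fix a general universal stability condition $\phi\in V_{g,n}$: by the tropical analogue of semistable reduction / the uniqueness in the valuative criterion that makes $\Jb_{g,n}(\phi)\myfatslash\Gm$ a \emph{separated} proper DM stack, each divisor class of the relevant degree $|\phi|$ on $\ov\Gamma$ has a \emph{unique} $\phi$-semistable admissible representative. This gives a set-theoretic inverse to $\alpha_{\ov\Gamma}(\phi)$; I would then check continuity of this inverse by a local analysis (on each cone of $\Jac_{\ov\Gamma}(\phi)$ the map is piecewise linear, and the stability chambers match the polyhedral decomposition of $\Pic^{|\phi|}(\ov\Gamma)$), concluding that $\alpha_{\ov\Gamma}(\phi)$ is a homeomorphism. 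The hard part will be establishing the "existence and uniqueness of a $\phi$-semistable admissible representative" in a way that is both combinatorially explicit and visibly continuous in the edge lengths of $\ov\Gamma$ — essentially a tropical incarnation of the Oda–Seshadri / Kass–Pagani polytopal decomposition — and verifying that the resulting section is continuous across the walls of $V_{g,n}$ within the fixed fiber.
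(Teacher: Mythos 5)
Your treatment of parts (1) and (2)(i) follows the paper's route. For (1), the paper (Theorem \ref{T:fib-forget}) uses the base-change property $\tau\times_\sigma \Jac_{\ov\Gamma/\sigma}\cong \Jac_{u^*(\ov\Gamma/\sigma)}$ to reduce to identifying the groupoid of sections of $\Jac_{\ov\Gamma/\sigma}\to\sigma$ with the fiber of $\Phi^{\trop}$ over $\sigma\xrightarrow{\ov\Gamma}\Mt_{g,n}$, translating a section of $C(G,D,\rho)\to\sigma$ into ordered pairs $(l_e^1,l_e^2)$ with $l_e^1+l_e^2=d_{\ov\Gamma}(\rho_E^*(e))$ for $e\in E_{\exc}(G^{\st})$ — exactly your ``insert exceptional vertices and split the edge length'' picture (small slip: exceptional vertices subdivide edges only, never legs, and the fiber also records the isomorphism $\rho$ with $\ov\Gamma$). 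For (2)(i), note that a curve with arbitrary real edge lengths is not an object of $\Mt_{g,n}(\sigma)$ for any rational cone $\sigma$, so one cannot literally form a fiber product of cone stacks over ``the point $\ov\Gamma$''; the paper (Theorem \ref{T:fibT-forget}) chooses a rational cone $\sigma$, a map $u\colon\RR_{\geq 0}\to\sigma$ and a curve $\wt\Gamma/\sigma$ with $u^*(\wt\Gamma)=\ov\Gamma$, applies (1) over $\sigma$, and then takes the fiber over $1$ in the topological realization. Your appeal to ``realization preserves the fiber'' has to be routed through such a factorization (or through the real cone stacks $\JtR_{g,n}\to\MtR_{g,n}$); as stated it is the technical point being assumed rather than proved.

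The genuine gap is in (2)(ii). First, $\alpha_{\ov\Gamma}$ is \emph{not} the pushforward of $D$ along a graph-theoretic stabilization morphism: contracting one of the two exceptional edges moves the chip at an exceptional vertex to a vertex of $\ov G$, forgetting the coordinate $x_{e_v}$; the resulting map is not well defined (the stabilization morphism is non-unique), not continuous, not surjective in general (its image consists of classes of vertex-supported divisors), and hopeless for the homeomorphism claim. The correct map, written explicitly in Theorem \ref{T:Jac-Pic}, sends $(x_e)_e\in P(G,D,\rho)$ to $\sum_{v\in V_{\exc}(G)}p_{x_{e_v}}+\sum_{v\in V_{\nex}(G)}D(v)$, so the exceptional chips sit at the interior points of $\ov\Gamma$ at distance $x_{e_v}$ along the corresponding edge; your later remark about piecewise linearity suggests you intend this, but your definition as written is at best ambiguous and must be corrected. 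Second, and more substantially, the bijectivity of $\alpha_{\ov\Gamma}(\phi)$ — existence and uniqueness of a $\phi$-semistable admissible representative in each degree-$|\phi|$ class — is the real content here, and your proposal leaves it unproved: invoking separatedness/properness of $\Jb_{g,n}(\phi)\myfatslash\Gm$ is only a heuristic, since turning the algebro-geometric valuative criterion into a statement about divisors on a fixed metric graph requires exactly the degeneration or direct combinatorial argument (\`a la Oda--Seshadri, or the break-divisor case of \cite{ABKS}) that is missing. The paper does not reprove it either; it quotes \cite[Theorem~5.6]{API} for bijectivity and then concludes that $\alpha_{\ov\Gamma}(\phi)$ is a homeomorphism by the soft argument that a continuous bijection from a compact space (a colimit of polytopes) to a Hausdorff space (a real torus) is a homeomorphism — simpler than your proposed wall-by-wall continuity analysis of the inverse.
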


It follows from the above Theorem that, for any general universal stability condition $\phi\in V_{g,n}$, the fiber of the forgetful-stabilization  morphism of generalized cone complexes (see \eqref{PhiTrop})
$$\Phi^{\mathrm{trop}}\colon\JJ_{g,n}(\phi)\longrightarrow \MM_{g,n}$$ 
over a point  $\ov \Gamma\in \MM_{g,n}$ is homeomorphic to $\Pic^{|\phi|}(\ov \Gamma)/\Aut(\ov \Gamma)$ (see Remark \ref{R:fib-ConeComplex}), thus recovering (and slightly extending to our more general setting) the result of Abreu-Pacini \cite[Theorem~5.14]{API}. The advantage of working with topological stacks as in Theorem \ref{mainthm_fibers}\eqref{mainthm_fibers2}, rather than generalized cone complexes, 
is that we do not have to quotient out by the automorphism group of $\ov \Gamma$ when describing the fiber of a point $\ov \Gamma$.

\subsection{A sequel on $\LogPic$ and $\TroPic$} In \cite{MW} the authors have shown that in the category of logarithmic schemes (and stacks), there is a unique minimal model of the universal logarithmic Jacobian that is not representable by an algebraic stack. The study of this so-called \emph{universal logarithmic Picard variety} $\calLogPic_{g,n,d}$ can be traced back to Illusie  \cite{Illusie} and has subsequently received attention in \cite{Kajiwara, Olsson, Bellardini, FRTU}. The authors of \cite{MW} also introduce the \emph{universal tropical Picard variety}, which universally over $\mathcal{M}_{g,n}^{\log}$ is denoted by $\calTroPic_{g,n,d}$. It parametrizes tropical curves together with a torsor over the sheaf of harmonic or linear functions on $\Gamma$ of degree $d$ and it naturally arises as the tropicalization of $\calLogPic_{g,n,d}$ via a natural tropicalization morphism $\calLogPic_{g,n,d}\to \calTroPic_{g,n,d}$.

In the sequel to this article we carefully study the relationship between this construction and what we have done in this article. The gist of this story is that, for a general universal stability condition $\phi\in V_{g,n}$, the cone stack $\Jtw_{g,n}(\phi)$ defines a proper subdivision of the universal tropical Picard variety $\calTroPic_{g,n,d}$. This induces a logarithmic modification  $\calJ_{g,n}^{\log}(\phi)\myfatslash \mathbb{G}_m \rightarrow \calLogPic_{g,n,d}$ -- effectively a blowup -- by base change along the tropicalization map. As an application, this approach allows one to readily relate the spaces constructed here with Abel-Jacobi theory (see \textit{e.g.} \cite{MarcusWise} and \cite{APII}). The perspective adopted in this paper and the sequel is for instance crucial in ongoing work of Holmes, Pandharipande, Pixton, Schmitt, and the second author regarding the double ramification cycle.

\subsection*{Acknowledgements}
We thank Alex Abreu, Karl Christ, David Holmes, Yoav Len, Marco
Pacini, Nicola Pagani, Aaron Pixton, Rahul Pandharipande, Dhruv
Ranganathan, and Johannes Schmitt for useful discussions regarding
this project. Jonathan Wise in particular helped at the beginning
stages of this project and has shared many of his ideas which we
intend to further explore in a follow-up article.  

Significant progress on this project has been made during the workshop
"Foundations of tropical schemes" at the American Institute of
Mathematics. We thank the organizers, Noah Giansiracusa, David Jensen,
Diane Maclagan, and Steffen Marcus, for providing us with this
opportunity.


\section{The universal tropical Jacobian}

\subsection{Quasi-stable graphs}


We will adopt the graph-theoretic terminology of  \cite[Section~3.1]{CCUW}, which we now briefly recall. A \emph{graph} $G$ is a triple  $(X(G),r_G,i_G)$ such that 
\begin{itemize}
\item $X(G)$ is a finite set;
\item $r_G:X(G)\to X(G)$ is an idempotent map (called the root map);
\item $i_G:X(G)\to X(G)$ is an involution whose fixed set contains the image of $r_G$.
\end{itemize}
We recover the more familiar definition of graphs in the following way. The image of $r_G$ is the vertex set $V(G)$ of $G$. Its complement $F(G):=X(G)\setminus V(G)$ is the set of flags of $G$ and the root map restricts to a map $r_G:F(G)\to V(G)$, which we think of as the map that sends a flag to the root from which it emanates. The involution $i_G$ restricts to an involution on $F(G)$: the fixed points of this involution are  the legs $L(G)$ of $G$, its non fixed points are the half-edges $H(G)$ of $G$. Hence we get that 
$$X(G)=V(G)\sqcup H(G)\sqcup L(G),$$
 the root map $r_G$ is the identity on $V(G)$ and it restricts to a map 
 $$r_G:H(G)\sqcup L(G)\to V(G),$$
  the involution $i_G$ is the identity on $V(G)\sqcup L(G)$ and it is  fixed-point free on $H(G)$. The quotient $E(G):=H(G)/i_G$ is the set of edges of $G$; explicitly, any edge $e$ of $G$ is equal to $e=\{h_1,h_2\}$ with $i_G(h_1)=h_2$ and we say that the half-edges $h_1$ and $h_2$ belong to the edge $e$ and that they are \emph{conjugate} half-edges. 

We will be dealing with \emph{$n$-marked vertex-weighted graphs} $G=(G,h,m)$, where  $h:V(G)\to \N$ is the vertex-weight function (also called the genus function) and $m:\{1,\ldots,n\}\xrightarrow{\cong} L(G)$ is a marking of the legs $L(G)$ of $G$. The genus of $G=(G,h,m)$ is 
$$g(G)=b_1(G)+\sum_{v\in V(G)} h(v).$$

A morphism $\pi:G_1=(G_1,h_1,m_1)\to G_2=(G_2,h_2,m_2)$ of ($n$-marked vertex-weighted) graphs consists of a function $\pi:X(G_1)\to X(G_2)$ with the property that $\pi\circ r_{G_1}=r_{G_2}\circ \pi$ and $\pi\circ i_{G_1}=i_{G_2}\circ \pi$, and which moreover satisfies the following additional properties:
\begin{itemize}
\item For any flag $f\in F(G_2)$, its inverse image $\pi^{-1}(f)$ has one element which is a flag of $G_1$.
\item The morphism restricts to a bijection $\pi_L:L(G_1)\xrightarrow{\cong} L(G_2)$ such that $\pi_L\circ m_1=m_2$.
\item For each vertex $v\in V(G_2)$, the vertex-weighted graph  $\pi^{-1}(v)$ is  connected  of genus $h_2(v)$. 
\end{itemize}
It turns out that a morphism of graphs $\pi:G_1=(G_1,h_1,m_1)\to G_2=(G_2,h_2,m_2)$ is the composition of a weighted edge contraction $G_1=(G_1,h_1,m_1)\to G_1/S=(G_1,h_1,m_1)/S$ for some $S\subseteq E(G_1)$ followed by an isomorphism of graphs $G_1/S=(G_1,h_1,m_1)/S\xrightarrow{\cong}G_2=(G_2,h_2,m_2)$. 
This implies that a morphism of graphs $\pi:G_1=(G_1,h_1,m_1)\to G_2=(G_2,h_2,m_2)$ induces a surjective map $\pi_V:V(G_1)\to V(G_2)$  and a map  $\pi_H:H(G_1)\to H(G_2)\sqcup V(G_2)$ with the property that $\pi_H^{-1}(H(G_2))$ is the set of half-edges of $G_1$ that are not contracted by $\pi$ and that $\pi_H$ restricts to a bijection  $\pi_H: \pi_H^{-1}\big(H(G_2)\big)\xrightarrow{\cong} H(G_2)$. In particular, we get an injection $\pi_H^*: H(G_2)\hookrightarrow H(G_1)$. The map $\pi_H$ induces a map $\pi_E:E(G_1)\to E(G_2)\sqcup V(G_2)$ having similar properties and inducing an injection $\pi_E^*: E(G_2)\hookrightarrow E(G_1)$. Note that a morphism $\pi:G_1\to G_2$ as above is uniquely determined by $\pi_V$ and $\pi_H$. 

Recall that a \emph{stable} graph of type $(g,n)$ is a $n$-marked vertex-weighted connected graph $G$ of total genus $g$ such that for all $v\in V(G)$:
$$2h(v)-2+\val(v)>0$$
where $\val(v)$ is the number of flags emanating from (or incident to) $v$. 
We now define a slight generalization of stable graphs, namely quasi-stable graphs. 

\begin{definition} \label{D:qs-graph}
Fix $(g,n)$ an hyperbolic pair, \textit{i.e.} a pair of integers $g,n\geq 0$ such that $2g-2+n>0$. 
 A \textbf{quasi-stable} graph of type $(g,n)$ is a $n$-marked vertex-weighted connected graph $G=(G,h,m)$ of total genus $g$ such that any vertex $v\in V(G)$ of genus zero has valence at least two and those vertices of genus zero and valence two, called \emph{exceptional vertices}, are such that:
\begin{itemize}
\item every exceptional vertex  has exactly two edges incident to it (in particular, there are no legs rooted at exceptional vertices);
\item two distinct exceptional vertices are not adjacent.
\end{itemize}
\end{definition}

Notice that quasistable graphs with no exceptional vertices correspond exactly to stable graphs.

We will denote the set of exceptional vertices of $G$ by $V_{\exc}(G)$ and the set of remaining vertices, called non-exceptional, by $V_{\nex}(G)$.
For any  $v\in V_{\exc}(G)$, we will denote by $\big\{h_v^1,h_v^2\big\}$ the two half-edges rooted at $v$ and we fix an order of them. The edges to which $h_v^1$ and $h_v^2$ belong are denoted, respectively, by $e_v^1=\big\{h_v^1,i_G(h_v^1)\big\}$ and $e_v^2=\big\{h_v^2,i_G(h_v^2)\big\}$. Moreover, for any $v\in V_{\exc}(G)$ and any $i=1,2$, we set $v^i:=r_G\big(i_G(h_v^i)\big)\in V_{\nex}(G)$, which is the non-exceptional vertex of $G$ which is incident to $e_ v^i$. 
The half-edges (resp. edges) of the form $h_v^1$ and $h_v^2$ (resp. $e_v^1$ and $e_v^2$), for some  $v\in V_{\exc}(G)$, are called exceptional and the set of all exceptional half-edges (resp. edges) is denoted by $H_{\exc}(G)$ (resp. $E_{\exc}(G)$); the remaining half-edges (resp. edges), called non-exceptional, are denoted by $H_{\nex}(G)$ (resp. $E_{\nex}(G)$).

\begin{definition}\label{D:spl-gr}
A quasi-stable graph $G$ is called \textbf{simple} if the graph obtained from $G$ by removing its exceptional vertices is connected. 
\end{definition}

 The \textbf{stabilization} of $G$, denote by $G^{\st}$, is the (connected) graph with 
 \begin{equation}\label{E:stab-gr}
 V(G^{\st}):=V_{\nex}(G)\quad \textrm{ as well as } \quad H(G^{\st}):=H_{\nex}(G)\quad \textrm{ and } \quad L(G^{\st}):=L(G),
 \end{equation}
 the root map $r_{G^{\st}}:H(G^{\st})\sqcup L(G^{\st})\to V(G^{\st})$ is the restriction of the root map $r_G$, the fixed-point free involution $i_{G^{\st}}$ on $H(G^{\st})$ is defined by setting $i_{G^{\st}}\big(i_G(h_v^1)\big):=i_G\big(h_v^2\big)$ for any $v\in V_{\exc}(G)$ and $i_{G^{\st}}(h):=i_G(h)$ if $h\neq i_G\big(h_v^1\big)$ or $i_G(h_v^2)$ for any $v\in V_{\exc}(G)$.
 One can easily check that the graph $G^{\st}$ becomes a stable graph of type  $(g,n)$ with respect to the marking of legs
 $$m^{\st}:\{1,\ldots, n\}\xlongrightarrow{m} L(G)=L(G^{\st}),$$ 
 and the vertex-weight function 
 $$h^{\st}:V(G^{\st})=V_{\nex}(G)\xrightarrow{h_{|V_{\nex}(G)}} \ZZ.$$
  As a notational advice, given $v\in V_{\nex}(G)$ (resp. $h\in H_{\nex}(G)$, resp. $l\in L(G)$), we will denote the corresponding element of $G^{\st}$ by $\ov v$ (resp. $\ov h$, resp. $\ov l$). The edges of $G^{\st}$ come with a natural partition $E(G^{\st}):=E_{\nex}(G^{\st})\sqcup E_{\exc}(G^{\st})$ into exceptional and non-exceptional ones, that  can be described by the following bijections
 \begin{equation}\label{E:edg-stab}
  \begin{aligned}
 E_{\nex}(G)& \xlongrightarrow{\cong} E_{\nex}(G^{\st}) \\
e=\big\{h,i_G(h)\big\} & \longmapsto \ov e:=\big\{\ov h,  i_{G^{\st}}(\ov h)=\ov{i_G(h)}\big\} \\
\end{aligned}
\quad \textrm{ and }\quad
  \begin{aligned}
 V_{\exc}(G) & \xlongrightarrow{\cong}  E_{\exc}(G^{\st}), \\
v & \longmapsto e_v:=\big\{\ov{{i}_G(h_v^1)}, \ov{{i}_G(h_v^2)}\big\}.
\end{aligned}
 \end{equation} 
We will also use this notation: 
 \begin{itemize}
 \item given a non-exceptional edge $e\in E_{\nex}(G^{\st})$ we will denote by $\wt e\in E_{\nex}(G)$ the unique non-exceptional edge of $G$ such that $\ov{\wt e}=e$;
\item   given an exceptional edge $e=e_v\in E_{\exc}(G^{\st})$  we will denote the corresponding (ordered) exceptional edges of $G$ by  $e^i:=e_v^i$ for $i=1,2$.
  \end{itemize}
Note that, by contracting exactly one among the edges   $e^1$ and $e^2$ for any $e\in E_{\exc}(G)$, we get a (non-unique) stabilization morphism $\sigma: G\to G^{\st}$.

 Stable (resp. quasi-stable) graphs of type $(g,n)$ with respect to morphisms of graphs form a category, that we will denote by $\SG_{g,n}$ (resp. $\QSG_{g,n}$). The stabilization procedure gives rise to a \textbf{stabilization functor}
\begin{equation}\label{E:st-gr}
\begin{aligned}
\st:\QSG_{g,n}  \longrightarrow \SG_{g,n},
\end{aligned}
\end{equation}
given by
\begin{equation*}
\begin{aligned}
G & \longmapsto G^{\st} \quad \textrm{ and }\quad
\big(\pi:G_1\to G_2\big) & \longmapsto \big(\pi^{\st}:G_1^{\st}\to G_2^{\st}\big).
\end{aligned}
\end{equation*}
The stabilization  $\pi^{\st}:G_1^{\st}\to G_2^{\st}$ of a morphism $\pi:G_1\to G_2$ is defined as follows: given an exceptional edge $e_v\in E_{\exc}(G_1^{\st})$, if both $e_v^1$ and $e_v^2$ are mapped by $\pi$ to a vertex $w\in V(G_2)$, then $\pi^{\st}(e_v)=\overline{w}$; if instead $\pi$ maps at least one of $e_v^1$ and $e_v^2$ to an edge of $G_2$, then it corresponds to a unique edge $e$ in $G_2^{\st}$ and we set $\pi^{\st}(e_v)=e$.
See Figure \ref{figure_stabilization} for a picture of the stabilization functor.

\begin{figure}[h]\begin{tikzpicture}
\draw (1,0) -- (0,2);
\fill (0.5,1) circle (0.2 em);
\fill (1,0) circle (0.2 em);
\fill (0,2) circle (0.2 em);

\draw (0,2) -- (-0.25,2.25);
\draw (0,2) -- (0,2.25);
\draw (0,2) -- (0.25,2.25);

\draw (1,0) -- (0.75,-0.25);
\draw (1,0) -- (1,-0.25);
\draw (1,0) -- (1.25,-0.25);

\draw (0,3)  node {$G$};

\draw (5,0) -- (4,2);
\fill (5,0) circle (0.2 em);
\fill (4,2) circle (0.2 em);

\draw (4,2) -- (3.75,2.25);
\draw (4,2) -- (4,2.25);
\draw (4,2) -- (4.25,2.25);

\draw (5,0) -- (4.75,-0.25);
\draw (5,0) -- (5,-0.25);
\draw (5,0) -- (5.25,-0.25);

\draw (4,3)  node {$G^{\st}$};

\draw [->] (1.5,1) -- (3.5,1);
\draw (2.5,1.5) node {$\st$};

\end{tikzpicture}\caption{The stabilization functor}\label{figure_stabilization}
\end{figure}
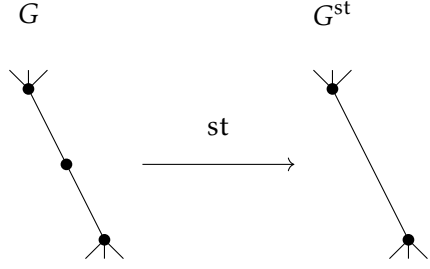

\subsection{Stability conditions} 

Let $G$ be a finite graph. A \emph{divisor} on $G$ is a finite formal sum 
\begin{equation*}
D=\sum_{v\in V(G)} a_v v
\end{equation*}
over the vertices of $G$ with $a_v\in\Z$. Divisors on $G$ form an abelian group, which we denote by $\Div(G)$. The \emph{degree} of a divisor $D=\sum a_v v$ is defined to be $\deg(D)=\sum_{v}a_v$. We write $\Div_d(G)$ for the divisors of degree $d$. The subset $\Div_0(G)$ is a subgroup of $\Div(G)$ and, for any $d\in\Z$, the subset $\Div_d(G)$ is naturally a torsor over $\Div_0(G)$. 

We now define what it means for a divisor on a quasi-stable graphs to be admissible and (semi)stable with respect to a (universal) stability condition. Recall from \cite{KP}:

\begin{definition}\label{D:stab-con}
 A \textbf{universal stability condition $\phi$} of type $(g,n)$ is an assignment of a function for any $G\in \SG_{g,n}$ (called the stability condition induced by $\phi$ on $G$)
$$\phi_G:V(G)\longrightarrow \R \: \text{ with } \: |\phi_G|:=\sum_{v\in V(G)} \phi_G(v)\in \Z$$
such that for any morphism $\pi:G_1\to G_2$ in $\SG_{g,n}$ we have that 
$$\phi_{G_2}(v)=\sum_{w\in \pi^{-1}(v)}\phi_{G_1}(w).$$ 
\end{definition}

We will often denote $\phi_G$ by $\phi$ if there is no danger of confusion. From the definitions, together with the fact that $\SG_{g,n}$ admits a final object (namely the graph with one vertex of genus $g$ and no edges), it follows that the integral number $|\phi_G|$ is independent from $G\in\SG_{g,n}$; it is therefore denoted by $|\phi|$ and called the \emph{degree} of $\phi$. 

\begin{remark}
The space of universal stability conditions of type $(g,n)$, denoted by $V_{g,n}$, is an abelian group with respect to the sum 
$$(\phi+\psi)_G(v):=\phi_G(v) +\psi_G(v) \text{ for any } G\in \SG_{g,n} \text{ and any }v\in V(G). $$
The subset of $V_{g,n}$ formed by the universal stability conditions of degree $d$ is denoted by $V_{g,n}^d$. 
Note that  $V_{g,n}^0$ is a subgroup of $V_{g,n}$ and $V_{g,n}^d$ is a torsor with respect to $V_{g,n}^0$. Moreover, $V_{g,n}^0$ is also a finite dimensional real vector space with 
respect to the scalar multiplication 
$$
 (\lambda\cdot \phi)_G(v):=\lambda\cdot \phi_G(v) \text{ for any } G\in \SG_{g,n} \text{ and any }v\in V(G). 
$$
Hence $V_{g,n}^d$ is a real affine space with respect to the vector space $V_{g,n}^0$. 
\end{remark}

Given $\phi\in V_{g,n}$, we can define a stability condition $\phi_G:V(G)\to \R$ on any graph $G$ that is quasi-stable of type $(g,n)$ by
\begin{equation}\label{E:stab-quasi}
\phi_G(v)=
\begin{cases}
0& \text{ if }v\in V_{\exc}(G),\\
\phi_{G^{\st}}(\ov v) & \text{ if }v\in V_{\nex}(G).  
\end{cases}
\end{equation}
Note that also for a quasi-stable graph $G$ we have that $|\phi_G|=|\phi|$.

\begin{definition}\label{D:admdiv}
Fix an hyperbolic pair $(g,n)$ and let $G$ be a quasi-stable graph of type $(g,n)$. 
\begin{enumerate}
\item \label{D:admdiv1} A divisor $D\in \Div(G)$ is called \textbf{admissible} if $D(v)=1$ for any $v\in V_{\exc}(G)$.
\item \label{D:admdiv2} Let $\phi\in V_{g,n}$ be a universal stability condition. 
\begin{itemize}
\item For a subset $S\subseteq V(G)$ we write $D(S):=\sum_{v\in S} D(v)$ as well as $\phi_G(S):=\sum_{v\in S} \phi_G(v)$. A divisor $D\in \Div(G)$ is called \textbf{$\phi$-semistable} if $D$ is admissible of degree $\deg D$ equal to $\vert\phi\vert$ and  the following inequalities
hold for any subset $S\subseteq V(G)$:
\begin{equation}\label{E:phistab}
\phi_G(S)-\frac{\big|E(S,S^c)\big|}{2}\leq D(S)\leq \phi_G(S)+\frac{\big|E(S,S^c)\big|}{2},
\end{equation}
where $E(S,S^c)$ is the set of edges joining a vertex in $S$ with a vertex in the complementary subset $S^c$. 
\item  A divisor $D\in \Div(G)$ is called \textbf{$\phi$-stable} if $D$ is $\phi$-semistable and both the inequalities \eqref{E:phistab} are strict unless $S$ or $S^c$ is a union (possibly empty) of exceptional vertices.
\end{itemize}
\item \label{D:admdiv3} A universal stability condition $\phi\in V_{g,n}$ is called \textbf{general}\footnote{This is called non-degenerate in \cite{KP}. We prefer to call it general according to the terminology used in \cite{MV12}, \cite{MRV0}, \cite{MRV1}, \cite{MRV2}, \cite{MSV}. Note that in the first two papers, the term non-degenerate is used for a slightly weaker condition.} if for any quasi-stable graph of type $(g,n)$ we have that for any subset $S\subseteq V(G)$ 
$$ \phi_G(S)+\frac{\big|E(S,S^c)\big|}{2}\in \ZZ \Rightarrow S \text{ or } S^c \text{ is a union of exceptional vertices.} $$ 

\end{enumerate}
\end{definition}

Some remarks on the above definition are in order.

\begin{remark}\label{R:stabdiv}
Let us keep the notation of the above Definition \ref{D:admdiv}. 
\begin{enumerate}[label={\small\textrm{(\roman*)}}]
\item \label{R:stabdiv1} The two inequalities in \eqref{E:phistab} for $S$ are equivalent to  the two analogous inequalities (but in reverse order)  for $S^c$. Hence it is enough to require one of the two inequalities in \eqref{E:phistab} for any $S\subseteq V(G)$. 
\item \label{R:stabdiv2} If $S$ (resp. $S^c$) is a union of exceptional vertices then the second (resp. the first) inequality in \eqref{E:phistab} is always an equality for any admissible divisor $D\in \Div(G)$ and for any universal stability condition $\phi$. Hence the definition of $\phi$-stability in \eqref{D:admdiv2} is sharp. 
\item \label{R:stabdiv2b}  If $S$ or $S^c$ is a union of exceptional vertices,  the quantity $\displaystyle \phi_G(S)+\frac{|E(S,S^c)|}{2}$ is always an integer for any universal stability condition $\phi$ (equal to $|S|$ or to $d+|S^c|$, respectively). Hence the definition of general universal stability condition in \eqref{D:admdiv3} is sharp.
\item \label{R:stabdiv3} It follows from the proof of \cite[Theorem~6.1(i), Proposition~7.3]{MV12} that if a quasi-stable graph $G$ of type $(g,n)$ admits a $\phi$-stable divisor for some $\phi\in V_{g,n}$  then $G$ has no separating exceptional vertices, \textit{i.e.}, $G$ is simple.
In particular, there are no exceptional vertices on bridges of $G^{\st}$ on the support of a $\phi$-stable divisor for a general stability condition.

\item \label{R:stabdiv4} It follows from the proofs of \cite[Theorem~6.1(iii) and Proposition~7.3]{MV12}  that $\phi\in V_{g,n}$ is general if and only if for any  quasi-stable graph $G$ of type $(g,n)$ every $\phi$-semistable divisor on $G$ is $\phi$-stable. 
This fact, combined  again with  \cite[Theorem~6.1(iii)]{MV12}, shows that our definition of general universal stability conditions coincides with the one of \cite[Definition~4.1]{KP}.
\end{enumerate}
\end{remark}


\subsection{The category $\QD_{g,n}$}
We now can define the category of quasi-stable graphs together with admissible divisors, and some variants of it, that will play a special role in what follows.

\begin{definition}[\textbf{The category $\QD_{g,n}$}]\label{Cat}
Fix an hyperbolic pair $(g,n)$. 
\begin{enumerate}
\item Let $\QD_{g,n}$ be the category whose objects are pairs $(G,D)$ consisting of a \emph{quasi-stable} graph $G$ of type $(g,n)$  and an \emph{admissible} divisor $D\in \Div(G)$, and whose morphisms $\pi:(G,D)\to (G', D')$ are the morphisms of the underlying graphs $\pi:G\to G'$ such that $\pi_*(D)=D'$, where $\pi_*(D)\in \Div(G')$ is such that 
$\displaystyle \pi_*(D)(v)=\sum_{w\in \pi^{-1}(v)}D(w), \forall v\in V(G')$.
\item We will consider the following full subcategories of $\QD_{g,n}$:
\begin{enumerate}[label={\small\textrm{(\roman*)}}]
\item $\QD_{g,n,d}$ is the full subcategory of $\QD_{g,n}$ whose objects are pairs $(G,D)\in \QD_{g,n}$ such that $\deg D=d$.
\item $\QD_{g,n}^{\spl}$ (resp. $\QD_{g,n, d}^{\spl}$) is the full subcategory of $\QD_{g,n}$ (resp. $\QD_{g,n,d}$) whose objects are pairs $(G,D)\in \QD$ such that $G$ is simple. 
\item For any  stability condition $\phi\in V_{g,n}$, 
$\QD_{g,n}(\phi)$ is the full subcategory of $\QD_{g,n}$ whose objects are pairs $(G,D)\in \QD_{g,n}$ such that $D$ is $\phi$-semistable. Note that, if $\phi$ is general, $\QD_{g,n}(\phi)$ is a full subcategory of $\QD_{g,n,d}^{\spl}$ where $|\phi|=d$.
\end{enumerate}
\end{enumerate}
\end{definition}


\begin{remark}\label{R:subcat}
It is easy to check that:
\begin{itemize}
\item $\QD_{g,n}^{\spl}$ and $\QD_{g,n}(\phi)$ are "under-closed" subcategories of $\QD_{g,n}$ in the following sense: if $\pi:(G,D)\to (G',D')$ is a morphism in $\QD_{g,n}$ and $(G,D)\in \QD_{g,n}^{\spl}$ (resp. $\QD_{g,n}(\phi)$), then also $(G',D')\in \QD_{g,n}^{\spl}$ (resp. $\QD_{g,n}(\phi)$).
\item  $\QD_{g,n,d}$ is a "morphism-closed"  subcategory of
  $\QD_{g,n}$ in the following sense: consider a morphism
  $\pi:(G,D)\to (G',D')$ in $\QD_{g,n}$.  If either $(G,D)$ or
  $(G',D')$  belongs to 
  $\QD_{g,n,d}$ then both of them belong to $\QD_{g,n,d}$. Similarly,
  $\QD_{g,n,d}^{\spl}$ is a morphism-closed  subcategory of
  $\QD_{g,n}^{\spl}$.
\end{itemize}
\end{remark}

There is a forgetful-stabilization functor 
\begin{equation}\label{E:funF}
F:\QD_{g,n}  \longrightarrow \QSG_{g,n}\xlongrightarrow{\st} \SG_{g,n}
\end{equation}
that sends an object $(G,D)\in \QD_{g,n}$ into the stabilization
$G^{\st}\in \SG_{g,n}$ of $G$ and a morphism \linebreak
$\pi:(G,D)\to (G',D')$ into the stabilization $\pi^{\st}:G^{\st}\to
G'^{\st}$ of $\pi:G\to G'$.  
The restrictions of $F$ to the full subcategories $\QD_{g,n,d}$,  $\QD_{g,n, (d)}^{\spl}$ and $\QD_{g,n}(\phi)$ will be denoted again by $F$.  

Throughout the text, and in order to shorten the statements, when we write $(d)$ in the context of any notation we mean that the sentence applies to the object in discussion either indicating $d$ or not. For instance, when we write $\QD_{g,n, (d)}^{\spl}$ we mean that it works for both $\QD_{g,n}^{\spl}$ and for $\QD_{g,n,d}^{\spl}$.

Given a stable graph $\ov G\in\SG_{g,n}$, we will now define  the essential fiber of $F:\QD_{g,n}  \to  \SG_{g,n}$ over $\ov G$ (and of its   
 subcategories $\QD_{g,n,d}$,  $\QD_{g,n, (d)}^{\spl}$ and $\QD_{g,n}(\phi)$). 

\begin{definition}[\textbf{The category $\QD_{\ov G}$}]\label{FibCat} 
For any $\ov G\in \SG_{g,n}$, let   $\QD_{\ov G}$ be the category such that 
\begin{itemize}
\item the objects of $\QD_{\ov G}$ are triples $(G,D,\rho)$ such that $(G,D)\in \QD_{g,n}$ and $\rho:\ov G\xrightarrow{\cong} G^{\st}$ is an  isomorphism in $\SG_{g,n}$;
\item the morphisms $\pi:(G,D,\rho)\to (G',D',\rho')$ are the morphisms $\pi:(G,D)\to (G',D')$ in $\QD_{g,n}$ such that $\pi^{\st}\circ \rho_1=\rho_2$. 
\end{itemize}
By imposing, in the above definition, that $(G,D)$ belongs to $\QD_{g,n,d}$,  $\QD_{g,n, (d)}^{\spl}$ or $\QD_{g,n}(\phi)$, we obtain, respectively, the full subcategories $\QD_{\ov G,d}$,  $\QD_{\ov G, (d)}^{\spl}$ or $\QD_{\ov G}(\phi)$ of $\QD_{\ov G}$.
\end{definition}

\begin{lemma}
The category $\QD_{\ov G}$ $($and hence its full subcategories $\QD_{\ov G,d}$,  $\QD_{\ov G, (d)}^{\spl}$ or $\QD_{\ov G}(\phi))$ does not have nontrivial automorphisms. 
\end{lemma}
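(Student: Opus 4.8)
The plan is to show that an automorphism of an object $(G,D,\rho)$ of $\QD_{\ov G}$ must be the identity, by using the rigidifying data $\rho$ together with the combinatorial structure of quasi-stable graphs. An automorphism $\pi\colon (G,D,\rho)\to (G,D,\rho)$ is in particular an automorphism $\pi\colon G\to G$ of the underlying graph satisfying $\pi^{\st}\circ\rho=\rho$, hence $\pi^{\st}=\id_{G^{\st}}$. So the first step is to record that $\pi$ restricts to the identity on all non-exceptional data: since $V(G^{\st})=V_{\nex}(G)$, $H(G^{\st})=H_{\nex}(G)$ and $L(G^{\st})=L(G)$ (see \eqref{E:stab-gr}), the equality $\pi^{\st}=\id$ forces $\pi$ to fix every non-exceptional vertex, every non-exceptional half-edge, and every leg of $G$; in particular $\pi$ fixes every non-exceptional edge of $G$ as well.

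The second step is to deal with the exceptional vertices and edges. Recall that $\pi$ is an isomorphism of graphs, so it permutes $V_{\exc}(G)$ and permutes $E_{\exc}(G)$, and it is compatible with incidences. Fix $v\in V_{\exc}(G)$ with its two incident edges $e_v^1, e_v^2$, whose other endpoints are the non-exceptional vertices $v^1, v^2$ (possibly equal). Because $\pi$ fixes $v^1$ and $v^2$, and because two distinct exceptional vertices are never adjacent (Definition \ref{D:qs-graph}), the exceptional vertex $\pi(v)$ is again an exceptional vertex incident to both $v^1$ and $v^2$. The key combinatorial point is that in a quasi-stable graph there can be at most one exceptional vertex joining a given ordered (or unordered) pair $\{v^1,v^2\}$ of non-exceptional vertices along a prescribed pair of non-exceptional edges — more precisely, the bijection \eqref{E:edg-stab} sending $v\mapsto e_v$ realizes $V_{\exc}(G)$ as the set of exceptional edges of $G^{\st}$, and $\pi^{\st}=\id$ fixes each such edge $e_v$; tracing the bijection backwards gives $\pi(v)=v$. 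Hence $\pi$ fixes every exceptional vertex.

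The third step finishes the argument on the level of flags/half-edges. For each $v\in V_{\exc}(G)$ we now know $\pi(v)=v$, and $\pi$ fixes the two non-exceptional half-edges $i_G(h_v^1)$ and $i_G(h_v^2)$ (they lie in $H_{\nex}(G)$, already handled in step one); since $\pi$ commutes with the involution $i_G$, it fixes their conjugates $h_v^1$ and $h_v^2$, i.e. it fixes the two exceptional half-edges rooted at $v$ (using that $\pi$ fixes $v$ and respects the root map, so it cannot swap $h_v^1$ with a half-edge rooted elsewhere; and it cannot swap $h_v^1$ with $h_v^2$ because their conjugates $i_G(h_v^i)$ are distinct fixed half-edges). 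Thus $\pi$ is the identity on $V(G)\sqcup H(G)\sqcup L(G)=X(G)$, so $\pi=\id_G$. Since a morphism in $\QD_{\ov G}$ is determined by the underlying graph morphism, $\pi$ is the identity automorphism. The same argument applies verbatim to the full subcategories $\QD_{\ov G,d}$, $\QD_{\ov G,(d)}^{\spl}$, $\QD_{\ov G}(\phi)$, since these impose no new morphisms.

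The main obstacle is the bookkeeping in step two: one must be careful that fixing $v^1$ and $v^2$ genuinely pins down the exceptional vertex — the clean way is exactly to invoke the bijection \eqref{E:edg-stab} between $V_{\exc}(G)$ and $E_{\exc}(G^{\st})$ and the functoriality of stabilization on morphisms (the description of $\pi^{\st}$ on exceptional edges given right after \eqref{E:st-gr}), rather than arguing ad hoc with valences. Everything else is a direct unwinding of the definition of $\QD_{\ov G}$ and of the stabilization functor.
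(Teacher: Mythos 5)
Your proof is correct and follows essentially the same route as the paper's: both start from $\pi^{\st}\circ\rho=\rho$ forcing $\pi^{\st}=\id$, hence the identity on all non-exceptional vertices, half-edges and legs, and both recover the exceptional half-edges from their non-exceptional conjugates using that $\pi$ commutes with $i_G$. The only (harmless) variation is how the exceptional vertices are pinned down: you use the bijection $V_{\exc}(G)\cong E_{\exc}(G^{\st})$ of \eqref{E:edg-stab} together with the description of $\pi^{\st}$ on exceptional edges, whereas the paper gets $\pi_V(v)=r_G(\pi_H(h_v^i))=v$ directly from root-map compatibility once $\pi_H=\id$ is known; your vaguer ``at most one exceptional vertex between a fixed pair of non-exceptional vertices'' remark is not needed (and would be false without the edge data), but you correctly discard it in favour of the clean bijection argument.
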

\begin{proof}
Let $\pi:(G,D,\rho)\to (G,D,\rho)$ be an automorphism in $\QD_{\ov G}$. The equality   $\pi^{\st}\circ \rho=\rho$ forces $\pi^{\st}:G^{\st}\to G^{\st}$ to be the identity.  
By the definition of the stabilization graph $G^{\st}$ (see \eqref{E:stab-gr}), we get that the restriction of $\pi_V$ to $V_{\nex}(G)$ is the identity and that the restriction of $\pi_H$ to $H_{\nex}(G)$ is the identity. Since any exceptional-half edge of $G$, \textit{i.e.} an half-edge of the form $h_v^i$ for $i=1,2$ and $v\in V_{\exc}(G)$, is conjugate to a non-exceptional half-edge, \textit{i.e.} $i_G(h_v^i)$ is non-exceptional, and using that $\pi_H$ commutes with $i_G$, we deduce that $\pi_H=\id$. Finally, using that $\pi:X(G)\to X(G)$ commutes with the root map $r_G$ and that $\pi_H=\id$, we get for any exceptional vertex $v\in V_{\exc}(G)$ and any $i=1,2$:
$$\pi_V(v)=\pi_V(r_G(h_v^i))=r_G(\pi_H(h_v^i))=r_G(h_v^i)=v,$$
and we deduce that $\pi_V=\id$. Since $\pi$ is uniquely determined by $\pi_V$ and $\pi_H$, we conclude that $\pi$ is the identity. 
\end{proof}


For later use, we will also need a quotient of the above categories in
which two morphisms of (quasi-)stable graphs (or pairs formed by a
quasi-stable graph and an admissible divisor) are identified if they
induce the same map on the edge sets. These categories are important
because they will turn out to be isomorphic to the category of strata
of the corresponding toroidal stacks. We refer the reader to Figure
\ref{figure_automorphisms} for the two types of automorphisms that do
not permute the edges. 

\begin{figure}
  \centering
  aa
  \begin{tikzpicture}[baseline=0pt, main/.style = {draw, circle}]
    \node[main] (1) at (0,0) {$G$}; 
    \draw (1) to [out=135,in=235,looseness=10] (1);
    \draw[thick,<->] (-1.75,-0.5) .. controls (-2,0)
    and (-2,0) .. (-1.75,0.5);
  \end{tikzpicture}
  \hspace{.2\textwidth}
  \begin{tikzpicture}[baseline=0pt, main/.style = {draw, circle}] 
    \node[main] (1) at (0,0) {$G$}; 
    \node[main] (2) at (2,0) {$G$};
    \draw (1) to [out=75,in=105] (2);
    \draw (1) to [out=30,in=150] (2);
    \draw (1) to [out=-75,in=255] (2);
    \node (3) at (1,-0.25) {$\vdots$};
    \draw[thick,<->] (0.5,1.5) .. controls (1,1.75) and (1,1.75) .. (1.5,1.5);
  \end{tikzpicture}
  \caption{The two automorphism types of graphs that do not permute
    the edges: a loop and a ladder of bridges connecting two
    identical graphs.}
  \label{figure_automorphisms}
\end{figure}
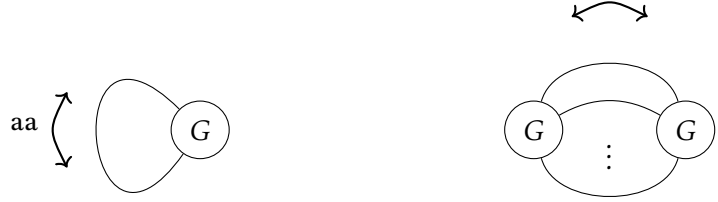

\begin{definition}[\textbf{The categories $\SG_{g,n}^E$, $\QSG_{g,n}^E$, $\QD^E_{g,n}$}]\label{CatE}
Fix an hyperbolic pair $(g,n)$. 
\begin{enumerate}[label={\small\textrm{(\roman*)}}]
\item Let $\QSG^E_{g,n}$ (resp. $\SG^E_{g,n}$)  be the category whose objects are quasi-stable (resp. stable) graphs of type $(g,n)$  and the morphisms from $G$ to $G'$ are the 
equivalence classes of morphisms $\pi:G\to G'$ in $\QSG_{g,n}$ (resp. in $\SG_{g,n}$) with respect to the equivalence relation 
$$\pi\sim \wt{\pi} \Leftrightarrow \pi_E^*=\wt{\pi}_E^*:E(G')\hookrightarrow E(G).$$
The equivalence class of a morphism $\pi:G\to G'$ in $\QSG_{g,n}$ (resp. in $\SG_{g,n}$) will be denoted by $[\pi]: G\to G'$.\footnote{In general, two equivalent morphisms of graphs map each pair of adjacent vertices to the same set. From this observation, one can check that given $\pi_1,\pi_2:G\to G'$, then $[\pi_1]=[\pi_2]$ if and only if there is an automorphism $\tau:G'\to G'$ with $[\tau]=[\rm id]$ such that $\pi_1=\tau\circ \pi_2$. We thank the referee for pointing out this clarification.}
\item Let $\QD^E_{g,n}$  be the category whose objects are pairs $(G,D)$ consisting of a quasi-stable graph $G$ of type $(g,n)$  and an admissible divisor $D\in \Div(G)$, and whose morphisms from $(G,D)$ to $(G', D')$ are the  equivalence classes of morphisms $\pi:(G,D)\to (G',D')$ in $\QD_{g,n}$ with respect to the equivalence relation 
$$\pi\sim \wt{\pi} \Leftrightarrow \pi_E^*=\wt{\pi}_E^*:E(G')\hookrightarrow E(G).$$
The equivalence class of a morphism $\pi:(G,D)\to (G',D')$ in $\QD_{g,n}$  will be denoted by $[\pi]: (G,D)\to (G',D')$. 

Similarly, we can define $\QD^E_{g,n,d}$, $\QD^{E,\spl}_{g,n, (d)}$, $\QD^E_{g,n}(\phi)$.
\end{enumerate}
\end{definition}

The  forgetful-stabilization functor \eqref{E:funF} induces the following diagram of functors
\begin{equation}\label{E:funFE}
\xymatrix{
F:\QD_{g,n}  \ar[r] \ar[d] &  \QSG_{g,n}\ar[r]^{\st} \ar[d] &  \SG_{g,n}\ar[d]\\
F^E:\QD^E_{g,n}  \ar[r]  &  \QSG^E_{g,n}\ar[r]^{\st^E} &  \SG^E_{g,n}\\
}
\end{equation}
where the vertical arrows are the essentially surjective and full (but non faithful) functors that are the identities on objects and send a morphism $\pi$ into $[\pi]$.   


Finally, the above categories induce some natural partially ordered sets (posets) that we now introduce.

\begin{definition}[\textbf{The posets $|\SG_{g,n}|$, $|\QSG_{g,n}|$, $|\QD_{g,n}|$}]\label{Pos}
Fix an hyperbolic pair $(g,n)$. 
\begin{enumerate}[label={\small\textrm{(\roman*)}}]
\item Let $\big|\QSG_{g,n}\big|$ (resp. $\big|\SG_{g,n}\big|$)  be the posets whose objects are isomorphism classes of quasi-stable (resp. stable) graphs of type $(g,n)$  and 
$$G\geq G' \Leftrightarrow \text{there exists a morphism } \pi:G\to G' \text{ in }\QSG_{g,n} (\text{resp. in} \SG_{g,n}).
$$
\item Let $\big|\QD_{g,n}\big|$  be the poset whose objects are isomorphism classes of objects of $\QD_{g,n}$ and
$$(G,D)\geq (G',D') \Leftrightarrow \text{there exists a morphism } \pi:(G,D)\to (G',D') \text{ in }\QD_{g,n}.
$$
Similarly, we can define the posets $\big|\QD_{g,n,d}\big|$, $\big|\QD^{\spl}_{g,n, (d)}\big|$ and $\big|\QD_{g,n}(\phi)\big|$.
\end{enumerate}
\end{definition}

In what follows we will often  pass, implicitly, to skeleton categories of each of the categories defined above, picking one object arbitrarily for each isomorphism class. In particular, we will identify the objects in each category with the elements in the associated poset.

\begin{remark}
Observe that the binary relations $\geq$ in the above definition are clearly reflexive and transitive, while they are antisymmetric because any morphism $\pi:G\to G'$ of ($n$-marked vertex-weighted) graphs is either an isomorphism  or is such that $\pi_E^*:E(G')\hookrightarrow E(G)$ is a proper inclusion. 

This also implies that the posets $|\SG_{g,n}|$, $|\QSG_{g,n}|$, $|\QD_{g,n, (d)}|$ and $|\QD_{g,n, (d)}^{\spl}|$ are graded with respect to the rank function given by the cardinality of the edge set. By following the lines of the proof of Proposition 4.11 and the first part of Theorem 4.15  in \cite{API}, one gets that $|\QD_{g,n}(\phi)|$ is also graded if $\phi$ is general (we do not include a proof here since it is not important for what follows).
The lengths of the above posets with respect to this rank function are equal to 
$$
\begin{aligned}
 l\big(|\SG_{g,n}|\big)&=3g-3+n, \\
 l\big(|\QSG_{g,n}|\big)&=l\big(|\QD_{g,n,(d)}|\big)=2(3g-3+n),\\
 l\big(|\QD^{\spl}_{g,n,(d)}|\big)&=l\big(|\QD_{g,n}(\phi)|\big)=4g-3+n \quad\text{if }\phi\:\text{is general.}
\end{aligned}
$$
\end{remark}
The  forgetful-stabilization functor \eqref{E:funF} induces the following diagram of posets
\begin{equation}\label{E:funFPos}
|F|:|\QD_{g,n}|  \longrightarrow  |\QSG_{g,n}| \xrightarrow{|\st|} |\SG_{g,n}|.
\end{equation}


\subsection{The universal tropical Jacobian as a cone stack}\label{S:conestackJ}


The aim of this subsection is to construct the universal tropical Jacobian as a cone stack, in the sense of \cite[Section~2.1]{CCUW}, endowed with a forgetful morphism to the cone stack 
$\Mt_{g,n}$ of tropical curves, constructed in \cite[Section~3]{CCUW}.

By slight abuse of notation, by a \emph{rational polyhedral cone} we will mean the data of a pair $(\sigma,N)$, consisting of a lattice $N$ and a full-dimensional, strictly convex rational polyhedral cone in $N_{\RR}$: an intersection $\sigma$ of finitely many half spaces in $N_\RR$ which contain no line and span all of $N_{\RR}$. The effect of this definition is that a rational polyhedral cone always comes with an integral structure, namely the intersection $N \cap \sigma$, and a dual monoid 

$$
S_{\sigma} := \Hom_{\textup{Mon}}(\sigma \cap N, \NN). 
$$
Conversely, the rational polyhedral cone is recovered from $S_\sigma$ as 
$$
\sigma = \Hom(S_\sigma, \mathbb{R}_{\ge 0})
\quad \textrm{ and } \quad
N = \Hom(S_{\sigma}^{\textup{gp}}, \ZZ).
$$
A morphism $(\sigma,N) \to (\sigma',N')$ is a homomorphism of lattices $N \to N'$ for which the induced homomorphism $N_\RR \to N'_\RR$ maps $\sigma$ into $\sigma'$. Equivalently, it corresponds to a homomorphism of monoids $S_{\sigma'} \to S_{\sigma}$. Denote the category of rational polyhedral cones by $\mathbf{RPC}$. We usually suppress the reference to $N$ and simply write $\sigma$ for a rational polyhedral cone. 

One may think of a rational polyhedral cone $\sigma$ as a combinatorial analogue of an affine scheme and of $S_\sigma$ as its ring of functions. Quite like a scheme is glued from affine schemes, we may think of a \emph{rational polyhedral cone complex} as a combinatorial object that is glued from rational polyhedral cones along their faces. Rational polyhedral cone complexes form a category that we denote by $\mathbf{RPCC}$. We refer the interested reader to \cite[Definition 2.1]{CCUW} for a precise definition of this notion. The category $\mathbf{RPCC}$ carries a natural Grothendieck topology that is generated by embeddings of rational polyhedral cones as faces, the \emph{face topology}.

Let us begin with the definition of the universal tropical Jacobian as a stack over the category $\RPCC$ of rational polyhedral  cone complexes endowed with the face topology, or equivalently as a category  fibered in groupoids over the category $\RPC$ of rational polyhedral cones (see \cite[Proposition~2.3]{CCUW}). We go through the effort of using this formalism, rather than, say, the formalism of generalized cone complexes, as this is the formalism that will allow us to compare the universal tropical Jacobian with its logarithmic analogue.



\begin{definition}\label{UnTrJac}
Fix an hyperbolic pair $(g,n)$. 
\begin{enumerate}[label={\small\textrm{(\roman*)}}]
\item Let $\Jt_{g,n}(\RPC)$ be the category fibered in groupoids over $\RPC$ such that:
\begin{itemize}
\item the objects are  triples $(\Gamma/\sigma, D)$, where $\Gamma/\sigma$ is a  \emph{quasi-stable tropical curve over $\sigma\in \RPC$} with $n$-markings and of genus $g$ (or simply of type $(g,n)$), \textit{i.e.} a pair consisting of  a  quasi-stable graph $\GG(\Gamma)$ of type $(g,n)$ (called the underlying graph) and a (generalized) metric $d:=d_{\Gamma}:E(\GG(\Gamma))\to S_{\sigma}\setminus\{0\}$, 
and $D$ is an admissible divisor on $\GG(\Gamma)$ (so that $(\GG(\Gamma),D)\in \QD_{g,n}$);


\item the morphisms are pairs $(f,\pi):( \Gamma/\sigma, D)\to (\Gamma'/\sigma',D')$ such that $f:\sigma\to \sigma'$ is a morphism in $\RPC$ and $\pi:(\GG(\Gamma'),D')\to (\GG(\Gamma),D)$ is a morphism in $\QD_{g,n}$
such that $\pi$ is compatible with $f$ as in \cite[Definition~3.2]{CCUW}, i.e. such that for any $e'\in E(\GG(\Gamma'))$ we have that
\begin{itemize}
\item $\pi$ contracts $e'$ if and only if $f^*(d_{\Gamma'}(e'))=0$;
\item if $\pi(e')=e\in E(\GG(\Gamma))$ then $f^*(d_{\Gamma'}(e'))=d_{\Gamma}(e)$.
\end{itemize}
\item the fibration $\Jt_{g,n}(\RPC)\to\RPC$ sends an object
  $(\Gamma/\sigma, D)$ into $\sigma$ and sends a morphism
  $(f,\pi):(\Gamma/\sigma, D)\to (\Gamma'/\sigma',D')$ into
  $f:\sigma\to \sigma'$.
\end{itemize}
\item 
Given $\sigma\in \RPC$, we will denote by $\Jt_{g,n}(\sigma)$ the essential fiber of $\Jt_{g,n}(\RPC)\to \RPC $ over $\sigma$, \textit{i.e.} the groupoid whose objects are $(\Gamma/\sigma, D)\in \Jt_{g,n}(\RPC)$ and whose morphisms are the ones of the form 
$\pi:=(\id_{\sigma}, \pi): (\Gamma/\sigma, D)\to (\Gamma'/\sigma,D')$
in $\Jt_{g,n}(\RPC)$. Note that any morphism $\pi:(\Gamma/\sigma,
D)\to (\Gamma'/\sigma,D')$ in $\Jt_{g,n}(\sigma)$ is such that
$\pi:\GG(\Gamma')\to \GG(\Gamma)$, the underlying morphism of graphs, is an isomorphism. 
\item The \textbf{universal tropical Jacobian} (over $\RPCC$), denoted by  $\Jt_{g,n}$, is the unique stack over $\RPCC$ (by \cite[Proposition~2.3]{CCUW}) whose restriction to $\RPC$ coincides with $\Jt_{g,n}(\RPC)$.
\end{enumerate}
Similarly, we can define the substacks $\Jt_{g,n,d}$, $\Jts_{g,n, (d)}$ and $\Jt_{g,n}(\phi)$ of $\Jt_{g,n}$ by taking only the objects $(\Gamma/\sigma,D)$ such that, respectively, $(\GG(\Gamma),D)\in \QD_{g,n,d}$, $\QD^{\spl}_{g,n,(d)}$ and $\QD_{g,n}(\phi)$. 
\end{definition}

The functor $\sigma \to S_\sigma$ gives an equivalence of categories between rational polyhedral cones and the category of sharp, fine and saturated monoids. We can thus also think of $\Jt_{g,n}$ (and similarly of $\Jt_{g,n,d}$, $\Jts_{g,n, (d)}$ and $\Jt_{g,n}(\phi)$) as a category fibered in groupoids over the category of sharp fine and saturated monoids. We may expand $\Jt_{g,n}$ to  a category fibered in groupoids over the category $\textbf{ShpMon}^{op}$ of sharp integral and saturated monoids, whose fiber over a sharp integral and saturated monoid $P$ is the groupoid of pairs $(\Gamma,D)$ consisting of a quasistable tropical curve $\Gamma$ of genus $g$ with $n$ marked legs, whose edges are metrized by the monoid $P$, and an admissible divisor $D$ on $\mathbb{G}(\Gamma)$. We refer the interested reader to \cite[Section 5.2]{CCUW} for more background on this procedure.

The universal tropical Jacobian comes equipped with a morphism towards the  stack $\Mt_{g,n}$  over $\RPCC$  of tropical curves, defined in \cite[Definition~3.3]{CCUW}. 

\begin{definition}\label{UnTrJac-Mg}
Fix an hyperbolic pair $(g,n)$.  Let 
\begin{equation}\label{Phitrop}
\Phi^{\mathrm{trop}}:\Jt_{g,n}\longrightarrow \Mt_{g,n}
\end{equation}
be the unique morphism of stacks over $\RPCC$ (called \emph{forgetful-stabilization morphism}) induced by the morphism $\Jt_{g,n}(\RPC)\to \Mt_{g,n}(\RPC)$  of categories fibered in groupoids over $\RPC$ that sends:
\begin{itemize}
\item  an object $(\Gamma/\sigma, D)\in \Jt_{g,n}(\RPC)$ to 
the \emph{stabilization} $(\Gamma/\sigma)^{\st}=(\Gamma^{\st}/\sigma)$ of $\Gamma/\sigma$, \textit{i.e.} the  stable tropical curve of type $(g,n)$ such that 
\begin{itemize}
\item the underlying graph is $\GG(\Gamma^{\st}):=\GG(\Gamma)^{\st}$,
\item the metric $d_{\Gamma^{\st}}:E(\GG(\Gamma)^{\st})\to S_{\sigma}$ is defined (using the notation below \eqref{E:edg-stab}) by 
$$d_{\Gamma^{\st}}(e)=
\begin{cases}
d_{\Gamma}(\wt e) & \text{ if } e\in E_{\nex}(\GG(\Gamma)^{\st}), \\
d_{\Gamma}(e^1)+d_{\Gamma}(e^2) & \text{ if } e\in E_{\exc}(\GG(\Gamma)^{\st}).
\end{cases}
$$
\end{itemize}
\item a morphism $(f,\pi):( \Gamma/\sigma, D)\to (\Gamma'/\sigma',D')$
  of $\Jt_{g,n}(\RPC)$ into the morphism $(f,\pi^{\st})$ of
  $\Mt_{g,n}(\RPC)$, where
  $(f,\pi^{\st}):\Gamma^{\st}/\sigma \to (\Gamma')^{\st}/\sigma'$ and
  $\pi^{\st}:\GG(\Gamma'^{\st})=\GG(\Gamma')^{\st}\to
  \GG(\Gamma)^{\st}=\GG(\Gamma^{\st})$ is the stabilization of $\pi$
  defined in \eqref{E:st-gr}.  
\end{itemize}

We will still denote by $\Phi^{\mathrm{trop}}$ the restriction of $\Phi^{\mathrm{trop}}$ to the substacks   $\Jt_{g,n,d}$, $\Jts_{g,n, (d)}$ or $\Jt_{g,n}(\phi)$ of $\Jt_{g,n}$. 
\end{definition}

We will now prove that the universal tropical Jacobian $\Jt_{g,n}$ is a cone stack, \textit{i.e.} a geometric stack over $\mathbf{RPCC}$ (see \cite[Definition 2.7]{CCUW}).  We will achieve this by describing it as a combinatorial cone stack in the sense of \cite[Definition~2.15]{CCUW}, and then use that cone stacks are equivalent to combinatorial cone stacks by \cite[Proposition~2.19]{CCUW}.
The analogous description for $\Mt_{g,n}$ is proved in \cite[Section~3.4]{CCUW} and it realizes $\Mt_{g,n}$ as the  cone stack associated to the combinatorial cone stack 
\begin{equation}\label{E:combMt}
\begin{aligned}
\Gamma_{\SG_{g,n}}: \SG_{g,n}^{\opp}& \longrightarrow \RPC^f\\
G & \longmapsto \RR_{\geq 0}^{E(G)},\\
  \big(\pi:G\to G' \big)& \longmapsto \big(\RR_{\geq 0}^{E(G')}\hookrightarrow \RR_{\geq 0}^{E(G)}\big)
\end{aligned}
\end{equation}
where the inclusion $\RR_{\geq 0}^{E(G')}\hookrightarrow \RR_{\geq 0}^{E(G)}$ is induced by the map $\pi_E^*:E(G')\hookrightarrow E(G)$.
Here $\RPC^f$ denotes the category of rational polyhedral cones with face inclusions as morphisms.

\begin{theorem}\label{TrJacStack}
Fix a hyperbolic pair $(g,n)$. 
\begin{enumerate}[label={\small\textrm{(\roman*)}}]
\item \label{TrJacStack1} $\Jt_{g,n}$ is the cone stack associated to the combinatorial cone stack 
\begin{equation}\label{E:combJt}
\begin{aligned}
\Gamma_{\QD_{g,n}}: \QD_{g,n}^{\opp} & \longrightarrow \RPC^f\\
(G,D) & \longmapsto \RR_{\geq 0}^{E(G)},\\
\big(\pi:(G,D)\to (G',D') \big)& \longmapsto \big(\RR_{\geq 0}^{E(G')}\hookrightarrow \RR_{\geq 0}^{E(G)}\big),
\end{aligned}
\end{equation}
where the inclusion $\RR_{\geq 0}^{E(G')}\hookrightarrow \RR_{\geq 0}^{E(G)}$ is induced by the map $\pi_E^*:E(G')\hookrightarrow E(G)$.
\item \label{TrJacStack2} The forgetful-stabilization morphism  $\Phi^{\trop}:\Jt_{g,n}\to \Mt_{g,n}$ coincides with the morphism of combinatorial cone stacks   from $(\Gamma_{\QD_{g,n}}:\QD_{g,n}^{\opp}\to \RPC^f)$ to $(\Gamma_{\SG_{g,n}}: \SG_{g,n}^{\opp}\to \RPC^f)$ induced by the functor $F:\QD_{g,n}\to \SG_{g,n}$ of \eqref{E:funF} together with the following collection of contravariant and surjective 
morphisms of cones $($for any $(G,D)\in \QD_{g,n})$
$$
\begin{aligned}
\phi_{(G,D)}:\RR_{\geq 0}^{E(G)} & \twoheadlongrightarrow \RR_{\geq 0}^{E(G^{\st})}\\
(x_e)_{e\in E(G)} & \longmapsto 
\left(y_f:=
\begin{cases}
x_{\wt f} & \text{if } f \in E_{\nex}(G^{\st})\\
x_{f^1}+x_{f^2} & \text{if } f\in E_{\exc}(G^{\st})
\end{cases}\right)_{f\in E(G^{\st})}.
\end{aligned}
$$
\end{enumerate}
\end{theorem}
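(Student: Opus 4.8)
The plan is to run the argument of \cite[\S 3.4]{CCUW}, which realises $\Mt_{g,n}$ as the cone stack of $\Gamma_{\SG_{g,n}}$, while carrying along the admissible divisor. For \ref{TrJacStack1} the first step is to verify that $\Gamma_{\QD_{g,n}}$ is a combinatorial cone stack in the sense of \cite[Definition~2.15]{CCUW}. Three points must be checked. (a)~Every morphism $\pi\colon(G,D)\to(G',D')$ of $\QD_{g,n}$ is a weighted edge contraction followed by an isomorphism (inherited from the corresponding statement for morphisms of graphs recalled before Definition~\ref{D:qs-graph}), and the complement of $\pi_E^*(E(G'))$ in $E(G)$ is exactly the set of contracted edges; hence $\pi_E^*\colon E(G')\hookrightarrow E(G)$ realises $\RR_{\geq 0}^{E(G')}$ as a face of $\RR_{\geq 0}^{E(G)}$ with its induced lattice, so $\Gamma_{\QD_{g,n}}$ really lands in $\RPC^f$. (b)~The category $\QD_{g,n}$ is closed under edge contractions: for quasi-stable $G$ and any $S\subseteq E(G)$ the quotient $G/S$ is again quasi-stable -- writing a vertex of $G/S$ as the contraction of a connected subgraph $T\subseteq G$ with $E(T)\subseteq S$, one sees that the genus-$0$ valence-$2$ vertices of $G/S$ are exactly the exceptional vertices of $G$ neither of whose two edges lies in $S$, so no new exceptional vertices appear and no two become adjacent -- and the pushforward of an admissible divisor along such a contraction is again admissible (its value at a surviving exceptional vertex is unchanged); consequently every face $\RR_{\geq 0}^{S}\hookrightarrow\RR_{\geq 0}^{E(G)}$ is realised by a morphism out of $(G,D)$. (c)~The uniqueness up to automorphism of such realisations, which is verified verbatim as in \cite[\S 3.4]{CCUW}. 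Given this, \cite[Proposition~2.19]{CCUW} produces an associated cone stack, and by \cite[Proposition~2.3]{CCUW} it suffices to identify it with $\Jt_{g,n}$ over $\RPC$; unwinding the construction of the associated cone stack and using the anti-equivalence $\sigma\mapsto S_{\sigma}$ together with $\RR_{\geq 0}^{E(G)}=\Hom(\NN^{E(G)},\RR_{\geq 0})$, a morphism $\sigma\to\RR_{\geq 0}^{E(G)}$ whose image is contained in no proper face is precisely a generalized metric $d\colon E(G)\to S_{\sigma}\setminus\{0\}$. Thus the fibre of the associated cone stack over $\sigma$ is exactly the groupoid $\Jt_{g,n}(\sigma)$ of Definition~\ref{UnTrJac}, and the conditions on morphisms coincide with the compatibility of \cite[Definition~3.2]{CCUW}. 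The statements for $\Jt_{g,n,d}$, $\Jts_{g,n,(d)}$ and $\Jt_{g,n}(\phi)$ follow by restricting to the full subcategories $\QD_{g,n,d}$, $\QD^{\spl}_{g,n,(d)}$, $\QD_{g,n}(\phi)$, using Remark~\ref{R:subcat}.

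For \ref{TrJacStack2} the plan is to check that $\big(F,\{\phi_{(G,D)}\}\big)$ is a morphism of combinatorial cone stacks and then to match the induced morphism with $\Phi^{\trop}$. That $F\colon\QD_{g,n}\to\SG_{g,n}$ is a functor is \eqref{E:funF}; each $\phi_{(G,D)}\colon\RR_{\geq 0}^{E(G)}\to\RR_{\geq 0}^{E(G^{\st})}$ is given by a linear map with nonnegative integer coefficients, hence is a morphism in $\RPC$, and it is surjective (split it by sending $(y_f)$ to the point with $x_{\wt f}=y_f$ for non-exceptional $f$ and $x_{f^1}=y_f$, $x_{f^2}=0$ for exceptional $f$). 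The only genuinely combinatorial point is naturality: for every $\pi\colon(G,D)\to(G',D')$ in $\QD_{g,n}$ the square formed by $\phi_{(G,D)}$, $\phi_{(G',D')}$, the face inclusion $\pi_E^*\colon\RR_{\geq 0}^{E(G')}\hookrightarrow\RR_{\geq 0}^{E(G)}$ and the face inclusion $(\pi^{\st})_E^*\colon\RR_{\geq 0}^{E(G'^{\st})}\hookrightarrow\RR_{\geq 0}^{E(G^{\st})}$ must commute; this is a direct coordinate chase using the description of $\pi^{\st}$ around \eqref{E:st-gr} and the edge bijections \eqref{E:edg-stab}, distinguishing whether $\pi$ contracts one of the edges $e^1,e^2$ lying over an exceptional edge of $G'^{\st}$ or not. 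Granting this, \cite[Proposition~2.19]{CCUW} together with the identification of $\Mt_{g,n}$ with the cone stack of $\Gamma_{\SG_{g,n}}$ produces a morphism $\Jt_{g,n}\to\Mt_{g,n}$, and it remains to recognise it as $\Phi^{\trop}$. This is immediate over $\RPC$: on an object $(\Gamma/\sigma,D)$ with $G=\GG(\Gamma)$ corresponding to $d\leftrightarrow x\colon\sigma\to\RR_{\geq 0}^{E(G)}$, the induced morphism returns $\big(G^{\st},\phi_{(G,D)}\circ x\big)$, and dualizing $\phi_{(G,D)}$ shows that $\phi_{(G,D)}\circ x$ corresponds to the metric on $G^{\st}$ sending $f$ to $d(\wt f)$, resp. to $d(f^1)+d(f^2)$, which is exactly $d_{\Gamma^{\st}}$ from Definition~\ref{UnTrJac-Mg}; on morphisms the induced map is $\pi\mapsto\pi^{\st}$ by construction. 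Restricting to the full subcategories gives the same statement for $\Phi^{\trop}$ on $\Jt_{g,n,d}$, $\Jts_{g,n,(d)}$ and $\Jt_{g,n}(\phi)$.

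Since the conceptual content is entirely imported from \cite{CCUW}, the only real labour is bookkeeping, and I expect the delicate step to be the naturality of $\{\phi_{(G,D)}\}$ in \ref{TrJacStack2}: it forces one to track simultaneously a morphism of quasi-stable graphs, its stabilization, and the partition of edges into exceptional and non-exceptional parts. The mild prerequisite for \ref{TrJacStack1}, that $\QD_{g,n}$ be stable under edge contraction, is easy but must not be overlooked, since it underlies the face axiom.
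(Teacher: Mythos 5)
Your proposal is correct and follows essentially the same route as the paper: construct the combinatorial cone stack $\Gamma_{\QD_{g,n}}$, invoke \cite[Propositions~2.19 and~2.3]{CCUW} to reduce to the restriction over $\RPC$, identify objects $(G,D,l\colon\sigma\to\RR_{\geq 0}^{E(G)})$ with pairs $(\Gamma/\sigma,D)$ via the dual-monoid dictionary $l\leftrightarrow d_\Gamma\colon E(G)\to S_\sigma\setminus\{0\}$, and then match the morphism induced by $(F,\{\phi_{(G,D)}\})$ with $\Phi^{\trop}$ on objects and morphisms. The only difference is that you make explicit some routine checks the paper leaves implicit (closure of $\QD_{g,n}$ under edge contraction, i.e.\ the fibered-in-groupoids condition, and the naturality of the $\phi_{(G,D)}$), and both checks go through as you indicate.
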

The above Theorem remains true for the cone substacks $\Jt_{g,n,d}$, $\Jts_{g,n, (d)}$ or $\Jt_{g,n}(\phi)$ of $\Jt_{g,n}$ by replacing $\QD_{g,n}$, respectively, with its  full subcategories $\QD_{g,n,d}$,  $\QD_{g,n, (d)}^{\spl}$ or $\QD_{g,n}(\phi)$.

\begin{proof}
Let us prove part \ref{TrJacStack1}. Denote temporarily by $\Jc_{g,n}$ the cone stack associated to the combinatorial cone stack \eqref{E:combJt} and let 
$\Jc_{g,n}(\RPC)$ be its restriction to $\RPC$. 

According to \cite[Definition~2.15,  Proposition~ 2.19]{CCUW}, an object of $\Jc_{g,n}(\RPC)$ is a triple $(G,D,l:\sigma\to \RR_{\geq 0}^{E(G)})$, where $(G,D)\in \QD_{g,n}$ and $l:\sigma\to \RR_{\geq 0}^{E(G)}$ is a morphism in $\RPC$ whose image is not contained in any proper face of $\RR_{\geq 0}^{E(G)}$. By composing with the projections along the coordinates, the morphism $l$ is equivalent to the datum of a collection of surjective morphisms $\{l_e:\sigma\to \RR_{\geq 0}\}_{e\in E(G)}$ in $\RPC$. By passing to the toric monoids, this is equivalent to giving a collection of injective morphisms  of monoids $\{l_e^*:\NN\to S_{\sigma}\}_{e\in E(G)}$, which is indeed determined  by the collection of non-zero elements $\{l_e^*(1)\in S_{\sigma}\}_{e\in E(G)}$. Therefore, the triple $(G,D,l:\sigma\to \RR_{\geq 0}^{E(G)})$ gives rise to (and it is completely determined by)  an object $(\Gamma/\sigma, D)$ of $\Jt_{g,n}(\RPC)$ such that $\Gamma/\sigma$ is the $n$-marked genus $g$ quasi-stable tropical curve whose underlying graph is $\GG(\Gamma):=G$ and whose metric is given by $d_{\Gamma}(e):=l_e^*(1) \in S_{\sigma}$ for any $e\in E(G)$. 

Similarly, a morphism $(G,D,l:\sigma\to \RR_{\geq 0}^{E(G)})\to (G',D',l':\sigma'\to \RR_{\geq 0}^{E(G')})$ in $\Jc_{g,n}(\RPC)$ is the datum of a morphism of cones $f:\sigma\to \sigma'$
and a morphism $\pi:(G',D')\to (G,D)$ in $\QD_{g,n}$ such that the following diagram commutes
$$
\xymatrix{ 
\sigma \ar[r]^{l} \ar[d]_f & \RR_{\geq 0}^{E(G)} \ar@{^{(}->}[d]\\
\sigma' \ar[r]^{l'}  &  \RR_{\geq 0}^{E(G')} 
}
$$
where the left arrow is induced by the inclusion $\pi_E^*:E(G)\hookrightarrow E(G')$ that identifies the edges of $G$ with the edges of $G'$ that are not contracted by $\pi$. 
In terms of the objects $(\Gamma/\sigma, D)$ and $(\Gamma'/\sigma', D')$ of $\Jt_{g,n}(\RPC)$ associated to, respectively, $(G,D,l:\sigma\to \RR_{\geq 0}^{E(G)})$ and $(G',D',l':\sigma'\to \RR_{\geq 0}^{E(G')})$ as explained above, the commutativity of the above diagram means exactly that $\pi$ is compatible with $f$ in the sense of Definition \ref{UnTrJac}, 
and therefore we get a (uniquely defined) morphism $(f,\pi):(\Gamma/\sigma, D)\to (\Gamma'/\sigma', D')$ in $\Jt_{g,n}(\RPC)$. 

The above discussion shows that we have an isomorphism $\Jc_{g,n}(\RPC)\cong \Jt_{g,n}(\RPC)$ of categories fibered in groupoids over $\RPC$, which then gives rise to an isomorphism of stacks $\Jc_{g,n}\cong \Jt_{g,n}$ by \cite[Proposition~2.3]{CCUW}. 

Let us now prove part \ref{TrJacStack2}. We will temporarily denote by $\Mc_{g,n}$ the cone stack associated to the combinatorial cone stack   \eqref{E:combMt},
by $\Mc_{g,n}(\RPC)$ its restriction to $\RPC$ and by $\Phi^{\rm comb}:\Jc_{g,n}\to \Mc_{g,n}$ the morphism of cone stacks induced by the morphism of combinatorial cone stacks 
described in part \ref{TrJacStack2}.  By definition, the morphism $\Phi^{\rm comb}$ is given on objects by 
\begin{equation*}
\begin{split}
\Jc(\RPC)&\longrightarrow \Mc_{g,n}(\RPC)\\ \big(G,D,l\colon\sigma\to \RR_{\geq 0}^{E(G)}\big) &\longmapsto \big(G^{\st}, l^{\st}\colon\sigma\xrightarrow{l} \RR_{\geq 0}^{E(G)}\xrightarrow{\phi_{(G,D)}} \RR_{\geq 0}^{E(G^{\st})}\big).
\end{split}\end{equation*}
 In terms of the isomorphisms $\Jc_{g,n}\cong \Jt_{g,n}$ and
 $\Mc_{g,n}\cong \Mt_{g,n}$, this corresponds to sending an object
 $(\Gamma/\sigma,D)\in \Jt_{g,n}(\RPC)$ into $\Gamma^{\st}/\sigma\in
 \Mt_{g,n}(\RPC)$.  Similarly, a morphism  
\[
  (f,\pi): (G,D,l:\sigma\to \RR_{\geq 0}^{E(G)})\to
  (G',D',l':\sigma'\to \RR_{\geq 0}^{E(G')}) 
\]
of $\Jc_{g,n}(\RPC)$ is sent via $\Phi^{\rm comb}$ into the morphism  
\[
  (f,\pi^{\st}): (G^{\st},l^{\st}:\sigma\to \RR_{\geq
    0}^{E(G^{\st})})\to (G'^{\st},l'^{\st}:\sigma'\to \RR_{\geq
    0}^{E(G'^{\st})}) 
\]
of $\Mc_{g,n}(\RPC)$.
 In terms of the isomorphisms $\Jc_{g,n}\cong \Jt_{g,n}$ and $\Mc_{g,n}\cong \Mt_{g,n}$, this corresponds to sending a morphism $(f,\pi):(\Gamma/\sigma,D)\to (\Gamma'/\sigma',D')$ in $\Jt_{g,n}(\RPC)$ into $(f,\pi^{\st}):\Gamma^{\st}/\sigma\to \Gamma'^{\st}/\sigma'$ in $\Mt_{g,n}$. 
 
 The above discussion shows that, under the isomorphisms  $\Jc_{g,n}\cong \Jt_{g,n}$ and $\Mc_{g,n}\cong \Mt_{g,n}$, the two morphisms $\Phi^{\trop}$ and $\Phi^{\rm comb}$ coincide when restricted to $\RPC$, and hence they coincide everywhere by \cite[Proposition~2.3]{CCUW}. 
\end{proof}



\section{Compactified universal Jacobian}
\label{S:comp-univ}


The aim of this section is to introduce and study the compactified universal Jacobian over the moduli stack of stable curves. In particular, we give a combinatorial description of its toroidal stratification. We refer the reader to \cite{CC} for an alternative description of the toroidal stratification in degrees $g$ and $g-1$ (without marked points). 


\begin{definition}\label{UniCompJac}
Fix a hyperbolic pair $(g,n)$. 
\begin{enumerate}[label={\small\textrm{(\roman*)}}]
\item The \textbf{compactified universal Jacobian} (of type $(g,n)$)  is the algebraic stack $\Jb_{g,n}$ parametrizing pairs $(\cC\to S, \cL)$ consisting of a family $\cC\to S$ of  quasi-stable curves of type $(g,n)$, \textit{i.e.} $n$-pointed nodal projective and connected curves of arithmetic genus $g$ whose dual graph is a quasi-stable graph,  and an admissible line bundle $\cL$ on $\cC$, \textit{i.e.} a line bundle that has degree $1$ on any exceptional component of every geometric fiber of $\cC\to S$.


\item The \emph{universal Jacobian} (of type $(g,n)$) is the open and dense substack $\J_{g,n}\subset \Jb_{g,n}$ parametrizing objects $(\cC\to S, \cL)\in \Jb_{g,n}(S)$ such that $\cC\to S$ is a family of smooth curves. 
\end{enumerate}

\end{definition}

The decomposition of $\Jb_{g,n}$ into connected components is 
\begin{equation}\label{E:connJac}
\Jb_{g,n}=\bigsqcup_{d\in \ZZ} \Jb_{g,n,d}
\end{equation}
where $\Jb_{g,n,d}$ is the algebraic stack parametrizing pairs $(\cC\to S, \cL)\in \Jb_{g,n}(S)$ such that  $\cL$ has relative degree $d$ on $\cC\to S$.

We will be considering the following open substacks (for any universal stability condition $\phi\in V_{g,n}$):
\begin{equation}\label{E:opensub}
 \Jb_{g,n}^{\,\spl} \subset \Jb_{g,n}\supset \Jb_{g,n}(\phi) 
 \end{equation}
where $\Jb_{g,n}(\phi)$ is the open substack parametrizing pairs $(\cC\to S, \cL)\in \Jb_{g,n}(S)$ such that for any geometric point $s$ of $S$ the line bundle $\cL_{s}$ is \emph{$\phi$-semistable} on $\cC_{s}$ (\textit{i.e.} its multidegree $\un{\deg}(\cL_{s})$ is a $\phi$-semistable divisor on the dual graph $G(\cC_{s})$ of the curve $\cC_{s}$),  while $ \Jb_{g,n}^{\,\spl} $ is the open substack parametrizing pairs $(\cC\to S, \cL)\in \Jb_{g,n}(S)$ such that for any geometric point $s$ of $S$ the quasi-stable curve $\cC_{s}$ is \emph{simple}, \textit{i.e.} it remains connected when we remove its exceptional components, or equivalently its dual graph $G(\cC_{s})$ is simple. The stack $\Jb_{g,n}(\phi)$ is connected and it is contained in $\Jb_{g,n,|\phi|}$, while $\Jb_{g,n}^{\,\spl}$ admits the decomposition into connected components 
$$\Jb_{g,n}^{\,\spl}=\bigsqcup_{d\in \ZZ} \Jb_{g,n,d}^{\,\spl}:=\bigsqcup_{d\in \ZZ}  (\Jb_{g,n,d}\cap  \Jb_{g,n}^{\,\spl}).$$

The stack $\Jb_{g,n}$ comes equipped with a forgetful-stabilization morphism to $\Mb_{g,n}$
\begin{equation}\label{PhiAlg}
\Phi\colon \Jb_{g,n}\to \Mb_{g,n},
\end{equation}
that sends an object $(\cC\to S, \cL)\in \Jb_{g,n}(S)$ into the stabilization $(\cC^{\st}\to S)\in \Mb_{g,n}(S)$ of the family $\cC\to S$ of quasi-stable curves. 
If we denote by $\Mb_{g,n}^{\,\qs}$ the stack of quasi-stable curves of type $(g,n)$, then the forgetful morphism $\Phi$ factors as 
 \begin{equation}\label{PhiAlg2}
\Phi\colon \Jb_{g,n}\xlongrightarrow{\Phi^{\qs}} \Mb_{g,n}^{\,\qs}\xlongrightarrow{\st} \Mb_{g,n},
\end{equation}
where the forgetful morphism $\Phi^{\qs}$ sends  $(\cC\to S, \cL)\in \Jb_{g,n}(S)$ into $(\cC\to S)\in \Mb_{g,n}^{\,\qs}(S)$ and the stabilization morphism  $\st$  sends $(\cC\to S)\in \Mb_{g,n}^{\,\qs}(S)$ into $(\cC^{\st}\to S)\in \Mb_{g,n}(S)$. 

The algebraic group $\Gm$ injects functorially into the automorphism group scheme $\Aut_S(\cC/S, \cL)$ of every object $(\cC\to S, \cL)\in \Jb_{g,n}(S)$ as scalar multiplication on the line bundle $\cL$. Hence, we may form the $\Gm$-rigidifications 
\begin{equation}\label{E:Gm-rigid}
 \Jb_{g,n}^{\,\spl}\myfatslash \Gm \subset \Jb_{g,n}\myfatslash \Gm \supset \Jb_{g,n}(\phi)\myfatslash \Gm.
 \end{equation}
Note that the forgetful morphism $\Phi^{\qs}$ (and hence also the forgetful-stabilization morphism $\Phi$), factors through the $\Gm$-rigidification.  By a slight abuse of notation, we will denote by $\Phi: \Jb_{g,n}\myfatslash \Gm\to \Mb_{g,n}$ the induced morphism, as well as its restrictions to  the open substacks $\Jb_{g,n}^{\,\spl}\myfatslash \Gm$ and  $\Jb_{g,n}(\phi)\myfatslash \Gm$.

\begin{remark}\label{R:alternJ}
 It follows from \cite{EP} that $\Jb_{g,n}$ is isomorphic to the algebraic stack parametrizing pairs $(\cX\to S, \cI)$ consisting of a family $\cX\to S$ of $n$-marked genus $g$ stable curves and a coherent sheaf $\cI$ on $\cX$, flat over $S$,  whose geometric fibers are rank-$1$ torsion-free sheaves.
In this alternative description:
\begin{itemize}
\item the open substack $\Jb_{g,n}^{\,\spl}$ parametrizes objects $(\cX\to S, \cI)$ as above such that the geometric fibers of $\cI$ are simple sheaves;
\item for any $\phi\in V_{g,n}$, the open substack $\Jb_{g,n}(\phi)$ corresponds to the stack defined in  \cite[Definition~4.2]{KP} (using \cite[Theorem~6.1]{MV12}).
\item if $\phi\in V_{g,n}$ is general, then $\Jb_{g,n}(\phi)$ is isomorphic to the Esteves' universal compactification $\Jb_{g,n}^{\,\mathcal E, \,\mathrm{ss}}$ (see \cite{Est01}) for a suitable choice of a universal vector bundle $\mathcal E$ on the universal family over $\Mb_{g,n}$ (see \cite[Proposition~4.17]{Mel15}).
\item the morphism $\Phi:\Jb_{g,n}\to \Mb_{g,n}$ is the morphism that sends  objects $(\cX\to S, \cI)$ as above into $(\cX\to S)\in \Mb_{g,n}$. 
\end{itemize}
\end{remark}

In the following Proposition, we collect all the properties of the stack $\Jb_{g,n}$ and of the forgetful morphism $\Phi$, that will be used in the sequel.

\begin{proposition}\label{P:propJ}
\noindent 
\begin{enumerate}

\item \label{P:propJ0} 
  The $\Gm$-gerbe $\Jb_{g,n,d}\to \Jb_{g,n,d}\myfatslash \Gm$ is trivial
  if and only if either $n>0$ or $n=0$ and \linebreak $\gcd(d-g+1,2g-2)=1$. 

In particular, under the above numerical conditions, the universal line bundle over the universal family of $\Jb_{g,n,d}$ descends to the universal family on the  rigidification $ \Jb_{g,n,d}\myfatslash \Gm$. 
\item \label{P:propJ1} 
The algebraic stack $ \Jb_{g,n}^{\,\spl}\myfatslash \Gm$ is the DM-locus of $\Jb_{g,n}\myfatslash \Gm$.
\item \label{P:propJ2}
The morphism $\Phi:\Jb_{g,n}\to \Mb_{g,n}$ of \eqref{PhiAlg}, as well as its restriction to the open subsets $\Jb_{g,n}^{\,\spl}$ and $\Jb_{g,n}(\phi)$  $($for any $\phi \in V_{g,n})$, satisfies the existence part of the valuative criterion for properness.

In particular, the stacks $\Jb_{g,n}$ and  $\Jb_{g,n}^{\,\spl}$  satisfy the existence part of the valuative criterion, and $\Jb_{g,n}(\phi)$ $($for any $\phi \in V_{g,n})$ is universally closed. 
\item \label{P:propJ3}
For a universal stability condition $\phi\in V_{g,n}$, the following conditions are equivalent
\begin{enumerate}[label={\small\textrm{(\roman*)}}]
\item \label{phi-cond1} $\phi$ is general; 
\item  \label{phi-cond2} $\Phi: \Jb_{g,n}(\phi)\myfatslash \Gm\to \Mb_{g,n}$ is separated $($and hence proper$)$;
\item \label{phi-cond3}  $\Jb_{g,n}(\phi)\myfatslash \Gm$ is separated $($and hence proper$)$;
\item \label{phi-cond4} $\Jb_{g,n}(\phi)\subset  \Jb_{g,n}^{\,\spl}$.
\end{enumerate}
\item  \label{P:propJ4} The  morphism $\Phi^{\qs}: \Jb_{g,n}\to \Mb_{g,n}^{\,\qs}$ of \eqref{PhiAlg2} is smooth.
\item \label{P:propJ5} The stack $\Jb_{g,n}$ is smooth and the boundary $\partial \Jb_{g,n}:=\Jb_{g,n}\setminus \J_{g,n}=\Phi^{-1}(\Mb_{g,n}^{\,\qs}\setminus \M_{g,n})$ is a normal crossing divisor.
\end{enumerate}
\end{proposition}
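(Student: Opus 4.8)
The plan is to establish \eqref{P:propJ4} and \eqref{P:propJ5} first, since they are the structural backbone, and then to derive the remaining items by combining the deformation theory of $\Jb_{g,n}$ with the theory of compactified Jacobians of a single curve from \cite{Cap94, Cap08, Mel15, KP, MV12}. Throughout I would work étale-locally over $\Mb_{g,n}^{\,\qs}$, where a universal quasi-stable curve $\mathcal C$ is available.

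\textbf{Parts \eqref{P:propJ4} and \eqref{P:propJ5}.} First I would note that $\Mb_{g,n}^{\,\qs}$ is smooth with normal crossing boundary $\Mb_{g,n}^{\,\qs}\setminus\M_{g,n}$: around a quasi-stable curve with nodes $p_1,\dots,p_k$ a versal deformation is smooth (a nodal curve has vanishing obstruction group $\Ext^2(\Omega,\mathcal O)$) and carries coordinates $t_1,\dots,t_k$ smoothing the $p_i$, so the singular locus is $\bigcup_i\{t_i=0\}$. Then I would identify $\Phi^{\qs}$ of \eqref{PhiAlg2} with the relative moduli stack of admissible line bundles on $\mathcal C/\Mb_{g,n}^{\,\qs}$; since the relative Picard space of a flat proper family of geometrically connected geometrically reduced nodal curves is smooth of relative dimension $g$ (again by $R^2\pi_*\mathcal O=0$ on fibres) and admissibility is an open condition on multidegrees, $\Phi^{\qs}$ is smooth of relative dimension $g$ — this is \eqref{P:propJ4}. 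Combining the two smoothness statements gives smoothness of $\Jb_{g,n}$, and since $\partial\Jb_{g,n}=(\Phi^{\qs})^{-1}(\Mb_{g,n}^{\,\qs}\setminus\M_{g,n})$ is the preimage of a normal crossing divisor under a smooth morphism, it is normal crossing, which is \eqref{P:propJ5}.

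\textbf{Part \eqref{P:propJ1}.} The rigidification $\Jb_{g,n}\myfatslash\Gm$ is Deligne--Mumford at $(\mathcal C,\mathcal L)$ iff $\Aut(\mathcal C,\mathcal L)/\Gm$ is finite and unramified; as its component group is always finite, this amounts to the identity component being the central $\Gm$. I would compute that component: an infinitesimal automorphism of $(\mathcal C,\mathcal L)$ is a scaling $s_v$ on each exceptional $\PP^1$ (these being the only non-rigid components) together with a scalar $c_u$ on $\mathcal L$ over each non-exceptional component $C_u$; gluing at the non-exceptional nodes forces $(c_u)_u$ to be constant on each connected component of $\GG(\mathcal C)\setminus V_{\exc}$, and gluing at the two nodes of each $E_v$ then determines $s_v$ from the ratio of the adjacent $c$'s (using that the two fixed-point weights of any linearization of $\mathcal O_{\PP^1}(1)$ differ by $1$). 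Thus the identity component of $\Aut(\mathcal C,\mathcal L)$ is a torus whose dimension is the number of connected components of $\GG(\mathcal C)\setminus V_{\exc}$, and this equals $1$ exactly when $\mathcal C$ is simple (Definition \ref{D:spl-gr}); this gives \eqref{P:propJ1}. Alternatively one cites \cite[\S5]{Cap94} and \cite{Mel15}.

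\textbf{Parts \eqref{P:propJ2}, \eqref{P:propJ3} and \eqref{P:propJ0}.} For \eqref{P:propJ2} I would run Caporaso's admissible reduction: given a DVR $R$ with fraction field $K$, a point $(X_K,L_K)$ of $\Jb_{g,n}$ over $K$ and a stable model $\mathcal X/R$ of $X_K^{\st}$, after a finite extension choose a regular semistable model $\mathcal Y/R$ dominating $\mathcal X$, extend $L_K$ to a line bundle on $\mathcal Y$, then twist by components of the special fibre and blow down to obtain a quasi-stable model $\mathcal C/R$ with $\mathcal C^{\st}\cong\mathcal X$ carrying an admissible line bundle; the refinements to $\Jb_{g,n}^{\,\spl}$ and to $\Jb_{g,n}(\phi)$ follow by keeping the blow-ups non-separating, resp.\ by twisting to the unique $\phi$-semistable representative of the twisting class (Oda--Seshadri, \cite{OdaSeshadri, MV12}), and quasi-compactness of $\Jb_{g,n}(\phi)$ over $\Mb_{g,n}$ — whence universal closedness — is boundedness of $\phi$-semistable sheaves, \cite{KP, MV12}. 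For \eqref{P:propJ3} I would prove the cycle $\ref{phi-cond1}\Rightarrow\ref{phi-cond4}\Rightarrow\ref{phi-cond3}\Leftrightarrow\ref{phi-cond2}\Rightarrow\ref{phi-cond1}$: the first implication is Remark \ref{R:stabdiv}\ref{R:stabdiv4} together with Remark \ref{R:stabdiv}\ref{R:stabdiv3}; $\ref{phi-cond4}\Rightarrow\ref{phi-cond3}$ uses \eqref{P:propJ1} to get Deligne--Mumfordness, \eqref{P:propJ2} for finite type and universal closedness, and the uniqueness of $\phi$-semistable limits for general $\phi$ (\cite[Theorem~6.1]{MV12}, \cite{KP}; equivalently, properness of $\Jb_{g,n}(\phi)\myfatslash\Gm$ as in \cite{Cap94, Mel15}) for separatedness; $\ref{phi-cond3}\Leftrightarrow\ref{phi-cond2}$ holds because $\Mb_{g,n}$ is separated (a morphism out of a separated stack is separated, and a separated morphism to a separated target has separated source), so that $\ref{phi-cond2}$ combined with \eqref{P:propJ2} also yields properness of $\Phi$; and $\ref{phi-cond3}\Rightarrow\ref{phi-cond1}$ is the contrapositive, realizing a strictly $\phi$-semistable divisor (which exists when $\phi$ is not general) as the special fibre of a one-parameter family with two non-isomorphic $\phi$-semistable limits (\cite{MV12, KP}). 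Finally \eqref{P:propJ0}: the $\Gm$-gerbe $\Jb_{g,n,d}\to\Jb_{g,n,d}\myfatslash\Gm$ is trivial iff some $\ZZ$-combination of tautological relative divisor classes on the universal curve over the rigidification has relative degree $1$; for $n>0$ the first marked section works unconditionally, while for $n=0$ the only such classes are generated by $\omega_\pi$ (relative degree $2g-2$) and the determinant of cohomology of a tautological bundle (relative degree $d-g+1$ by Riemann--Roch), so a degree-$1$ combination exists iff $\gcd(d-g+1,2g-2)=1$ — the universal Mestrano--Ramanan obstruction, for which I would cite \cite{Mel15, MRV1}.

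I expect the main obstacle to lie in part \eqref{P:propJ3}: one must reconcile our definition of \emph{general} stability condition (Remark \ref{R:stabdiv}) with the non-separatedness/uniqueness dichotomy and with the ``non-degenerate'' notion of \cite{KP}, which ultimately rests on the Oda--Seshadri-type lattice analysis of \cite{MV12}; a secondary delicate point is the bookkeeping in \eqref{P:propJ0}, where one must check that no extra tautological class of odd relative degree becomes available when $n=0$.
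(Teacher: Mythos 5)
Most of your proposal tracks the paper's (largely citation-based) proof. For \eqref{P:propJ4} and \eqref{P:propJ5} your argument is exactly the paper's: vanishing of the obstruction $H^2(C,\cO_C)$ gives smoothness of $\Phi^{\qs}$, and smoothness of $\Mb_{g,n}^{\,\qs}$ with normal crossing boundary then gives \eqref{P:propJ5}. For \eqref{P:propJ0}, \eqref{P:propJ1} and \eqref{P:propJ2} you replace the paper's citations ([MV14, Thm.~6.4] and [Mel19, Prop.~3.2]; [BFV, Lemma~2.11]; [Est01, Thm.~32] via Remark \ref{R:alternJ}) by the standard direct arguments (tautological-class/Mestrano--Ramanan count for the gerbe, the explicit stabilizer computation showing the identity component of $\Aut(\cC,\cL)$ is a torus of rank equal to the number of connected components of the non-exceptional locus, and Caporaso-style semistable reduction plus twisting); these are sound, provided the hard ``only if'' in \eqref{P:propJ0} and the boundedness of $\Jb_{g,n}(\phi)$ are indeed delegated to the literature as you indicate.

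The genuine gap is in \eqref{P:propJ3}. In your cycle \ref{phi-cond1}$\Rightarrow$\ref{phi-cond4}$\Rightarrow$\ref{phi-cond3}$\Leftrightarrow$\ref{phi-cond2}$\Rightarrow$\ref{phi-cond1}, the step \ref{phi-cond4}$\Rightarrow$\ref{phi-cond3} is justified by ``uniqueness of $\phi$-semistable limits \emph{for general} $\phi$'', i.e.\ it assumes \ref{phi-cond1}, which is not available when proving that implication: what your argument actually establishes is \ref{phi-cond1}$\Rightarrow$\ref{phi-cond3}. Together with \ref{phi-cond3}$\Leftrightarrow$\ref{phi-cond2} and your contrapositive argument for \ref{phi-cond3}$\Rightarrow$\ref{phi-cond1}, this yields the equivalence of \ref{phi-cond1}, \ref{phi-cond2}, \ref{phi-cond3} and the implication \ref{phi-cond1}$\Rightarrow$\ref{phi-cond4}, but no implication from \ref{phi-cond4} back into the cycle, so the equivalence with \ref{phi-cond4} is never proved. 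The missing content is precisely the nontrivial combinatorial fact that if every $\phi$-semistable divisor is supported on a simple quasi-stable graph then $\phi$ is general (equivalently, every $\phi$-semistable divisor is $\phi$-stable); this does not follow formally and is what the paper extracts from [MV12, Prop.~7.3] together with Remark \ref{R:alternJ}. To repair your scheme, either add \ref{phi-cond4}$\Rightarrow$\ref{phi-cond1} via that result, or close the cycle as the paper does: \ref{phi-cond1}$\Rightarrow$\ref{phi-cond2} by [KP, Cor.~4.4], \ref{phi-cond2}$\Rightarrow$\ref{phi-cond3} by properness of $\Mb_{g,n}$, \ref{phi-cond3}$\Rightarrow$\ref{phi-cond4} because a separated stack has finite stabilizers (here your own stabilizer computation from \eqref{P:propJ1}, or [BFV, Lemma~2.11], applies), and \ref{phi-cond4}$\Rightarrow$\ref{phi-cond1} by [MV12, Prop.~7.3].
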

\begin{proof}
Part \eqref{P:propJ0} follows from \cite[Theorem~6.4]{MV14} for $n=0$ and \cite[Proposition~3.2]{Mel19} for $n>0$.
Part \eqref{P:propJ1} follows from \cite[Lemma 2.11]{BFV}. Part \eqref{P:propJ2} follows from \cite[Theorem~32]{Est01} together with Remark \ref{R:alternJ}. 
The proof of part \eqref{P:propJ3} follows by the following cycle of implications:

\ref{phi-cond1} $\Rightarrow$ \ref{phi-cond2} follows from \cite[Cor. 4.4]{KP} and Remark \ref{R:alternJ}.

\ref{phi-cond2} $\Rightarrow$ \ref{phi-cond3} follows from the fact that $\Mb_{g,n}$ is proper.

\ref{phi-cond3} $\Rightarrow$ \ref{phi-cond4} follows from \cite[Lemma 2.11]{BFV} using that a separated algebraic stack has finite stabilizers at every geometric point. 

\ref{phi-cond4} $\Rightarrow$ \ref{phi-cond1} follows from \cite[Proposition~7.3]{MV12} and Remark \ref{R:alternJ}.

Part \eqref{P:propJ4} follows since the obstruction to deform a line
bundle on a quasi-stable curve $C$  lies on $H^2(C,\calO_C)$, which is
zero because $C$ has dimension one. 

Part \eqref{P:propJ5} follows from part \eqref{P:propJ4} and the
well-known fact that $\Mb_{g,n}^{\,\qs}$ is smooth and that 
$\partial \Mb_{g,n}^{\,\qs}$ defined as 
$\Mb_{g,n}^{\,\qs}\setminus \M_{g,n}$ is a normal crossing
divisor. Alternatively, one could use the explicit description of the
completed local rings of $\Jb_{g,n}$ given in \cite[Section~2.3]{BFV} or
\cite[Section~3]{CMKV2}
\end{proof}

\subsection{Toroidal stratification of the compactified universal Jacobian}\label{S:toro-J}

The pair $(\J_{g,n}\subset \Jb_{g,n})$ is a toroidal embeddings of algebraic stacks since $\Jb_{g,n}$ is smooth and $\Jb_{g,n}\setminus \J_{g,n}$ is a normal crossing divisor by Proposition \ref{P:propJ} \eqref{P:propJ5}. 
We now describe the toroidal stratification of $(\J_{g,n}\subset \Jb_{g,n})$. 

\begin{proposition}\label{TorStrata}
\noindent 
\begin{enumerate}[label={\small\textrm{(\roman*)}}]
\item \label{TorStrata1} The toroidal stratification of $(\J_{g,n}\subset \Jb_{g,n})$ is given by 
\begin{equation}\label{E:strataJ}
\Jb_{g,n}=\bigsqcup_{(G,D)\in \QD_{g,n}}  \J_{(G,D)},
\end{equation}
where $\J_{(G,D)}$ is the locally closed $($reduced$)$ substack of $\Jb_{g,n}$ whose $k$-points are $(C,L)\in \Jb_{g,n}(k)$ such that $(G(C),\un{\deg}(L))\cong (G,D)$. 
In particular, each stratum $\J_{(G,D)}$ is smooth, irreducible and of finite type over the base field $k$.
\item \label{TorStrata2} The poset of strata $\{\J_{G,D}\}_{(G,D)\in \QD_{g,n}}$ is anti-isomorphic to the poset $|\QD_{g,n}|$ of Definition \ref{Pos}, \textit{i.e.} 
$$\J_{(G,D)}\subset \ov{\J}_{(G',D')} \Leftrightarrow (G,D)\geq  (G',D') \text{ in } |\QD_{g,n}|.
$$
\item \label{TorStrata3} The forgetful-stabilization morphism $\Phi:\Jb_{g,n}\to \Mb_{g,n}$ of \eqref{PhiAlg} is toroidal and, for any $\ov G\in \SG_{g,n}$, we have that 
\begin{equation}\label{E:pb-stra}
\Phi^{-1}(\M_{\ov G})= \bigsqcup_{\stackrel{(G,D)\in \QD_{g,n} : }{G^{\st} \cong \ov G}} \J_{(G,D)},
\end{equation}
where $\M_{\ov G}$ is the locally closed (reduced) substack of $\Mb_{g,n}$ whose $k$-points consist of curves $C\in \Mb_{g,n}(k)$ such that $G(C)\cong \ov G$. 
\end{enumerate}
\end{proposition}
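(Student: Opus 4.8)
The plan is to deduce everything from the smoothness of $\Phi^{\qs}\colon \Jb_{g,n}\to \Mb_{g,n}^{\,\qs}$ (Proposition \ref{P:propJ}\eqref{P:propJ4}) together with the toroidal stratification of $(\M_{g,n}\subset \Mb_{g,n}^{\,\qs})$ by dual graph. First I would recall, exactly as in the case of $\Mb_{g,n}$, that $(\M_{g,n}\subset \Mb_{g,n}^{\,\qs})$ is a toroidal embedding whose strata $\M_G^{\,\qs}$ are indexed by the isomorphism classes of quasi-stable graphs $G$ of type $(g,n)$: the stratum $\M_G^{\,\qs}$ is irreducible and smooth; the completed local ring of $\Mb_{g,n}^{\,\qs}$ at one of its points is a power series ring in which $|E(G)|$ coordinates are the node-smoothing parameters cut out by the branches of $\partial\Mb_{g,n}^{\,\qs}$, so the local toric model is $\RR_{\geq 0}^{E(G)}$; the closure relation is $\M_G^{\,\qs}\subseteq \ov{\M_{G'}^{\,\qs}}$ iff there is an edge-contraction $G\to G'$; and the stabilization morphism $\st\colon \Mb_{g,n}^{\,\qs}\to \Mb_{g,n}$ is toroidal, étale-locally the product of a smooth morphism with the toric morphism $\RR_{\geq 0}^{E(G)}\to \RR_{\geq 0}^{E(G^{\st})}$ given by the formula of Theorem \ref{TrJacStack}\ref{TrJacStack2}. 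Since $\Phi^{\qs}$ is smooth with $\partial\Jb_{g,n}=(\Phi^{\qs})^{-1}(\partial\Mb_{g,n}^{\,\qs})$ and $\J_{g,n}=(\Phi^{\qs})^{-1}(\M_{g,n})$ by Proposition \ref{P:propJ}\eqref{P:propJ5}, the toroidal structure of $(\J_{g,n}\subset \Jb_{g,n})$ is the pullback of that of $\Mb_{g,n}^{\,\qs}$; hence the toroidal strata of $\Jb_{g,n}$ are the connected components of the preimages $(\Phi^{\qs})^{-1}(\M_G^{\,\qs})$, and the completed local ring of $\Jb_{g,n}$ at a point of such a component is obtained from that of $\Mb_{g,n}^{\,\qs}$ at its image by adjoining smooth coordinates, so the local toric model is again $\RR_{\geq 0}^{E(G)}$.

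For part \ref{TorStrata1} it remains to compute the connected components of $(\Phi^{\qs})^{-1}(\M_G^{\,\qs})$. Over the irreducible base $\M_G^{\,\qs}$, the relative moduli space of admissible line bundles is the disjoint union, over the admissible multidegrees $D$ on $G$, of the relative degree-$D$ Picard schemes, and for each quasi-stable curve $C$ with dual graph $G$ the fibre $\Pic^D(C)$ is a torsor under the connected (semiabelian) group scheme $\Pic^{\underline 0}(C)$; hence each such piece is connected, therefore irreducible, smooth over the smooth stack $\M_G^{\,\qs}$, of finite type over $k$, and equal to the locus of pairs $(C,L)$ with $(G(C),\un{\deg}(L))\cong (G,D)$. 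Letting $(G,D)$ range over $\QD_{g,n}$ yields the decomposition \eqref{E:strataJ} with the stated properties; the $\J_{(G,D)}$ are locally closed and reduced, being strata of a toroidal embedding.

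For part \ref{TorStrata2}, fix a point $x\in \J_{(G,D)}$; by the standard dictionary for toroidal embeddings, the strata whose closure contains $x$ are indexed by the faces of the local cone $\RR_{\geq 0}^{E(G)}$, i.e. by the subsets $S\subseteq E(G)$ (thinking of $S$ as the set of nodes of $C$ that get smoothed): the face indexed by $S$ corresponds to the stratum $\J_{(G/S,\,\pi_*D)}$, where $\pi\colon G\to G/S$ is the weighted edge contraction. Here $G/S$ is again quasi-stable and $\pi_*D$ is admissible (the exceptional vertices of $G/S$ are exactly the exceptional vertices of $G$ with neither incident edge in $S$, and they receive no other vertex), so $\pi\colon(G,D)\to (G/S,\pi_*D)$ is a morphism in $\QD_{g,n}$; conversely every morphism out of $(G,D)$ in $\QD_{g,n}$ is, up to isomorphism, of this form. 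Therefore $\J_{(G,D)}\subseteq \ov{\J_{(G',D')}}$ if and only if there is a morphism $(G,D)\to (G',D')$ in $\QD_{g,n}$, that is $(G,D)\geq (G',D')$ in $|\QD_{g,n}|$; combined with part \ref{TorStrata1} this exhibits the poset of strata as the opposite of $|\QD_{g,n}|$.

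For part \ref{TorStrata3}, write $\Phi=\st\circ\Phi^{\qs}$ as in \eqref{PhiAlg2}: near a point of $\J_{(G,D)}$ the morphism $\Phi^{\qs}$ is strict and smooth and $\st$ is étale-locally the toric morphism dual to $\RR_{\geq 0}^{E(G)}\to \RR_{\geq 0}^{E(G^{\st})}$, so $\Phi$ is étale-locally on source and target the product of this toric morphism with a smooth morphism, which is precisely the statement that $\Phi$ is toroidal. Finally, since $\Phi(C,L)=C^{\st}$ and $G(C^{\st})=G(C)^{\st}$, the preimage $\Phi^{-1}(\M_{\ov G})$ is the union of the strata $\J_{(G,D)}$ with $G^{\st}\cong \ov G$, which is \eqref{E:pb-stra}; the same arguments, applied to the appropriate unions of strata, give the analogous statements for $\Jb_{g,n,d}$, $\Jb_{g,n}^{\,\spl}$ and $\Jb_{g,n}(\phi)$. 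I expect the main obstacle to be the input recollection about $\Mb_{g,n}^{\,\qs}$ — the precise local toric model $\RR_{\geq 0}^{E(G)}$ at its boundary points, the toroidality of $\st$ with its combinatorial model, and the fact that $\partial\Jb_{g,n}$ is the \emph{full} preimage $(\Phi^{\qs})^{-1}(\partial\Mb_{g,n}^{\,\qs})$ so that exceptional $\PP^1$-loci hide no extra boundary; once this and the connectedness of fixed-multidegree Jacobians are in place, the rest is a formal manipulation of the toroidal dictionary, and alternatively the local model can be read off from the explicit completed local rings of $\Jb_{g,n}$ in \cite{BFV} and \cite{CMKV2}.
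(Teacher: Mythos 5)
Your proof is correct, and for parts \ref{TorStrata1} and \ref{TorStrata3} it follows essentially the same route as the paper: pull back the toroidal stratification of $(\M_{g,n}\subset \Mb_{g,n}^{\,\qs})$ along the smooth morphism $\Phi^{\qs}$, identify the connected components of $(\Phi^{\qs})^{-1}(\M_G)$ using the irreducibility of the fixed-multidegree Jacobians $\Pic^D(C)$, and deduce toroidality of $\Phi$ from that of the stabilization morphism $\st\colon \Mb_{g,n}^{\,\qs}\to \Mb_{g,n}$. The genuine difference is in part \ref{TorStrata2}: the paper does not argue directly there but cites Caporaso--Christ (Propositions 3.4.1 and 3.4.2 of \cite{CC}), whereas you prove the closure relation by hand via the local toric model, matching the faces of $\RR_{\geq 0}^{E(G)}$ at a point of $\J_{(G,D)}$ with the edge contractions $(G,D)\to (G/S,\pi_* D)$; this is the same local analysis (smooth charts with boundary branches $D_e$, $e\in E(G)$, and \'etale specializations indexed by $S\subseteq E(G)$) that the paper only carries out later, in the proof of Proposition \ref{P:CatStrata}, so your version buys a more self-contained treatment at the cost of redoing what is outsourced to \cite{CC}. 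Two small points you assert rather than check: that $(G/S,\pi_*D)$ is again an object of $\QD_{g,n}$ (no new exceptional vertices appear under contraction of a quasi-stable graph, and the surviving exceptional vertices keep degree $1$) --- this is true and easy, and can also be seen with no combinatorics at all, since the partial smoothings occur inside the universal family over a chart of $\Jb_{g,n}$, whose fibers are by definition quasi-stable with admissible bundles; and that a smooth surjection with irreducible fibers onto the irreducible $\M_G$ has irreducible source, which is the same implicit step the paper uses.
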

Parts \ref{TorStrata1} and \ref{TorStrata2} have been proved for $\Jb_{g,n}(\phi)$, in the special case $n=1$ and for specific choices of $\phi\in V_{g,n}$, by Abreu-Pacini in \cite[Proposition~6.4]{API}.
\begin{proof}
Let us first prove part (\ref{TorStrata1}). Since the toroidal embedding $(\J_{g,n}\subset \Jb_{g,n})$ is the pull-back of the toroidal embedding  $(\M_{g,n}\subset \Mb_{g,n}^{\,\qs})$ via the smooth morphism $\Phi^{\qs}:\Jb_{g,n}\to \Mb_{g,n}^{\,\qs}$ (see Proposition \ref{P:propJ}), the toroidal stratification of  $(\J_{g,n}\subset \Jb_{g,n})$ is formed by the connected components of the pull-backs of the strata that form the toroidal stratification of $(\M_{g,n}\subset \Mb_{g,n}^{\,\qs})$. It is well-known that  the toroidal stratification of  $(\M_{g,n}\subset \Mb_{g,n}^{\,\qs})$  is given by 
$$
\Mb_{g,n}^{\,\qs}=\bigsqcup_{G\in \QSG_{g,n}}  \M_{G},
$$
where $\M_{G}$ is the locally closed (reduced) substack of $\Mb_{g,n}^{\,\qs}$ whose $k$-points are $C\in \Mb_{g,n}^{\,\qs}(k)$ such that $G(C)\cong G$. It remains to observe that we have a disjoint union 
$$
(\Phi^{\qs})^{-1}(\M_G)=\bigsqcup_{(G,D)\in \QD_{g,n}} \J_{(G,D)},
$$
and that each locally closed subset $ \J_{(G,D)}$ is irreducible since it is endowed with a surjective smooth morphism $\Phi^{\qs}_{|\J_{(G,D)}}: \J_{(G,D)}\to \M_G$ whose geometric fibers 
$$
(\Phi^{\qs}_{|\J_{(G,D)}})^{-1}(C)=\Pic^D(C):=\{L\in \Pic(C)\: : \underline{\deg} L=D\} \text{ for any } C\in \Mb_{g,n}^{\,\qs}(k)
$$  
are irreducible.

Part \ref{TorStrata2} follows from \cite[Proposition~3.4.1, Proposition~3.4.2]{CC}. 

Let us finally prove part \ref{TorStrata3}. Since the toroidal embedding $(\J_{g,n}\subset \Jb_{g,n})$ is the pull-back of the toroidal embedding  $(\M_{g,n}\subset \Mb_{g,n}^{\,\qs})$ via the smooth morphism $\Phi^{\qs}:\Jb_{g,n}\to \Mb_{g,n}^{\,\qs}$ (see Proposition~\ref{P:propJ}), the fact that $\Phi$ is a toroidal morphism follows from the well-known fact that the stabilization morphism $\st: (\M_{g,n}\subset \Mb_{g,n}^{\,\qs})\to (\M_{g,n}\subset \Mb_{g,n})$ is a toroidal morphism. The equality in \eqref{E:pb-stra} follows from the definition of the morphism $\Phi$.
\end{proof}


\subsection{The category of strata}\label{S:catstraJ}

We now want to show that the (anti-)isomorphism of posets in Proposition \ref{TorStrata} is induced by an (anti-)equivalence of categories. In order to do that, we have to show that the poset of strata $\{\J_{(G,D)}\}$ is induced by a category, namely the category of strata of $(\J_{g,n}\subset \Jb_{g,n})$, that we are now going to define in a more general setting.

Assume  that we have a toroidal embedding of Artin stacks $(\cU\subset \cX)$, locally of finite type over $k$. 
Consider the lisse-\'etale sheaves $D_{\cX}$ and $E_{\cX}$ over $\cX$ such that, for every smooth morphism $V\to \cX$ with $V$ a scheme, $D_{\cX}(V)$ (resp. $E_{\cX}(V)$) is the group of Cartier divisors on $V$ (resp. the submonoid of effective Cartier divisors on $V$) that are supported on $V\setminus U$ where $U:=V\times_{\cX} \cU$. 
Consider the strata $\{W_i\}$ of the toroidal embedding $\cU\subset \cX$ (in particular each $W_i$ is irreducible) and pick a geometric generic point $w_i$ in each stratum $W_i$.
The stalk $D_{\cX,w_i}$  is a finitely generated free abelian group and the stalk $E_{\cX,w_i}$ is a sharp, saturated, and finitely generated submonoid that generates $D_{\cX,w_i}$  as a group.

\begin{definition}
The \textbf{category of strata} of the toroidal embedding $(\cU\subset \cX)$, denoted by $\Str(\cU\subset \cX)$ or simply by $\Str(\cX)$, is the category whose objects are the toroidal strata $\{W_i\}$ and whose morphisms $W_i\to W_j$ are generated by the following two classes of morphisms:
\begin{itemize}
\item the \'etale specializations $w_i\rightsquigarrow w_j$  (see \cite[Appendix A]{CCUW}) whenever $W_i\neq W_j$; 
\item the image $H_{W_i}$ (called the \emph{monodromy group} of the stratum $W_i$) of the natural monodromy representation $\pi_1^{\et}(W_i,w_i)\to \Aut_{\mon}(E_{\cX,w_i})$ whenever $W_i=W_j$.
\end{itemize}
\end{definition}

The category $\Str(\cX)$ comes also equipped with the structure of a category fibered in groupoids over the category $\RPC^f$ of rational polyhedral cones with face inclusions as morphisms. 
Namely, to any stratum $W_i\in \Str(\cX)$, we associate the rational polyhedral cone  
$$\sigma(W_i):=\Hom_{\mon}(E_{\cX,w_i}, \R_{\geq 0}) \subset \Hom_{\mathrm{groups}}(D_{\cX,w_i}, \R).$$
Moreover, any morphism  $W_i\rightarrow W_j$ in $\Str(\cX)$ induces a surjective monoid homomorphism \linebreak$E_{\cX,w_j}\twoheadrightarrow E_{\cX,w_i}$ (and it is completely determined by it), and hence it induces a face morphism $\sigma(W_i)\hookrightarrow \sigma(W_j)$ of rational polyhedral cones. In this way we get a functor 
\begin{equation}\label{E:Gamma-Str}
\begin{aligned}
\Gamma_{\Str(\cX)}: \Str(\cX) & \longrightarrow \RPC^f \\
W_i & \longmapsto \sigma(W_i), \\
W_i\rightarrow W_j & \longmapsto \sigma(W_i)\hookrightarrow \sigma(W_j),
\end{aligned}
\end{equation}
which is easily checked to be a category fibered in groupoids.

In the following Proposition \ref{P:CatStrata}, we are going to relate $\Str(\Jb_{g,n})$ with the category $\QD_{g,n}^E$ introduced in Definition \ref{CatE}. Observe also that the category fibered in groupoids $\Gamma_{\QD_{g,n}}: \QD_{g,n}^{\opp}\to \RPC^f$ of \eqref{E:combJt} factors through a category fibered in groupoids 
\begin{equation}\label{E:Gamma-QDE}
\begin{aligned}
\Gamma_{\QD_{g,n}^E}: (\QD_{g,n}^E)^{\opp}& \longrightarrow \RPC^f\\
(G,D)& \longmapsto \RR_{\geq 0}^{E(G)},\\
[\pi]:(G,D)\to (G',D') & \longmapsto \RR_{\geq 0}^{E(G')}\hookrightarrow \RR_{\geq 0}^{E(G)},
\end{aligned}
\end{equation}
where the face inclusion $\RR_{\geq 0}^{E(G')}\hookrightarrow \RR_{\geq 0}^{E(G)}$ is induced by the injection $\pi_E^*:E(G')\hookrightarrow E(G)$.

The following Proposition \ref{P:CatStrata} is crucial for our understanding of both the analytic and the logarithmic tropicalization map.

\begin{proposition}\label{P:CatStrata}
Let  $\Str(\Jb_{g,n})$ be the category of strata of the toroidal embedding $(\J_{g,n}\subset \Jb_{g,n})$.
There is an equivalence of categories 
\begin{equation}\label{E:equivE}
\begin{aligned}
\calE: (\QD^E_{g,n})^{\opp} & \longrightarrow \Str(\Jb_{g,n})\\
(G,D) & \longmapsto \J_{(G,D)},
\end{aligned}
\end{equation}
such that $\Gamma_{\QD_{g,n}^E}=\Gamma_{\Str(\Jb_{g,n})}\circ \calE$.
\end{proposition}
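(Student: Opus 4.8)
The plan is to construct $\calE$ on objects and on morphisms separately and then observe that the compatibility $\Gamma_{\QD^E_{g,n}}=\Gamma_{\Str(\Jb_{g,n})}\circ\calE$ falls out of the construction. On objects one sets $(G,D)\mapsto\J_{(G,D)}$; Proposition~\ref{TorStrata}\ref{TorStrata1} makes this a bijection between isomorphism classes of objects of $\QD_{g,n}$ --- equivalently of $\QD^E_{g,n}$ --- and the strata of $(\J_{g,n}\subset\Jb_{g,n})$, and Proposition~\ref{TorStrata}\ref{TorStrata2} records the specialization order, so the remaining issue is to match $\Hom$-sets. Throughout I would exploit that, by Proposition~\ref{P:propJ}\eqref{P:propJ4}--\eqref{P:propJ5}, the toroidal embedding $(\J_{g,n}\subset\Jb_{g,n})$ is the pullback of $(\M_{g,n}\subset\Mb_{g,n}^{\,\qs})$ along the smooth and strict morphism $\Phi^{\qs}$.

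The crucial point is to compute the monodromy group of each stratum and show that $H_{\J_{(G,D)}}=\mathrm{im}\big(\Aut_{\QD_{g,n}}(G,D)\to\Sym(E(G))\big)=\Aut_{\QD^E_{g,n}}(G,D)$. For this I would present $\J_{(G,D)}$ as a quotient stack: writing $\widetilde{\M}_G$ for the product over the non-exceptional vertices of $G$ of the moduli stacks $\M_{h(v),\val(v)}$ (the exceptional $\PP^1$'s being rigid), one has $\M_G\cong[\widetilde{\M}_G/\Aut(G)]$, and pulling back along $\Phi^{\qs}$ and grouping the resulting Jacobian components by $\QD_{g,n}$-isomorphism class gives $\J_{(G,D)}\cong[\widetilde{\J}_{(G,D)}/\Aut_{\QD_{g,n}}(G,D)]$, where $\widetilde{\J}_{(G,D)}:=\Pic^{D}(\widetilde{\mathcal C}_G/\widetilde{\M}_G)$ is a torsor over the relative generalized Jacobian, hence connected (it is smooth over the connected $\widetilde{\M}_G$, with connected fibres). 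The branches of $\partial\Jb_{g,n}$ through the generic point of $\J_{(G,D)}$ are pulled back from the node-smoothing branches of $\partial\Mb_{g,n}^{\,\qs}$ on $\widetilde{\M}_G$, which --- the dual graph being rigidified there --- are individually well-defined and permuted by $\Aut(G)$ precisely through its action on $E(G)$; hence $\pi_1^{\et}(\widetilde{\J}_{(G,D)})$ acts trivially on $E_{\Jb_{g,n},w}\cong\NN^{E(G)}$. Combined with the surjection $\pi_1^{\et}(\J_{(G,D)})\twoheadrightarrow\Aut_{\QD_{g,n}}(G,D)$ coming from the quotient presentation this yields the claimed equality, and en route one reads off $\sigma(\J_{(G,D)})=\Hom_{\mon}(\NN^{E(G)},\RR_{\geq0})=\RR^{E(G)}_{\geq0}$, matching $\Gamma_{\QD^E_{g,n}}$ on objects.

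For the remaining (non-automorphism) morphisms I would descend to the base. Strictness and smoothness of $\Phi^{\qs}$ produce a functor $\Str(\Phi^{\qs})\colon\Str(\Jb_{g,n})\to\Str(\Mb_{g,n}^{\,\qs})$ over $\RPC^f$, and the analogue of the present statement for $\Mb_{g,n}^{\,\qs}$ --- obtained exactly as the analysis of the toroidal stratification of $\Mb_{g,n}$ in \cite{CCUW}, with stable graphs replaced by quasi-stable ones --- gives $\Str(\Mb_{g,n}^{\,\qs})\cong(\QSG^E_{g,n})^{\opp}$ compatibly with cones. Because near $w_{\J_{(G,D)}}$ the $\NN^{E(G)}$-directions of the étale-local model of $(\J_{g,n}\subset\Jb_{g,n})$ are pulled back from those of $(\M_{g,n}\subset\Mb_{g,n}^{\,\qs})$ near $w_{\M_G}$, an étale specialization out of $w_{\M_G}$ lifts uniquely to one out of $w_{\J_{(G,D)}}$ once its target stratum is prescribed, and that target is forced to be $\J_{(G/S,\,\pi_*D)}$ when the specialization smooths the edge-set $S$ along the contraction $\pi\colon G\to G/S$. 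Hence $\Str(\Phi^{\qs})$ is faithful on $\Hom$-sets, with image over $[\pi\colon G\to G']$ equal to those morphisms that lift to a $\QD_{g,n}$-morphism $(G,D)\to(G',D')$, i.e.\ with $\pi_*D=D'$; since $\Hom_{\Str(\Mb_{g,n}^{\,\qs})}(\M_{G'},\M_G)=\Hom_{\QSG^E_{g,n}}(G,G')$ and $\Hom_{\QD^E_{g,n}}((G,D),(G',D'))$ injects into $\Hom_{\QSG^E_{g,n}}(G,G')$ with exactly that image (the equivalence relation defining $\QD^E_{g,n}$ being internal to each $\QD_{g,n}$-$\Hom$-set), one obtains $\Hom_{\Str(\Jb_{g,n})}(\J_{(G',D')},\J_{(G,D)})\cong\Hom_{\QD^E_{g,n}}((G,D),(G',D'))$. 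The identity $\Gamma_{\QD^E_{g,n}}=\Gamma_{\Str(\Jb_{g,n})}\circ\calE$ is then immediate, as on both sides the face maps are those induced by $\pi_E^*$.

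The step I expect to be the real obstacle is the monodromy computation, and within it the inclusion $H_{\J_{(G,D)}}\subseteq\mathrm{im}(\Aut_{\QD_{g,n}}(G,D)\to\Sym(E(G)))$: it rests on the boundary branches of $\Jb_{g,n}$ being pulled back from the rigidified base $\widetilde{\M}_G$ and on the connectedness of $\widetilde{\J}_{(G,D)}$, so that the generalized-Jacobian fibre directions contribute no monodromy on the node-smoothing parameters. Everything else is a (somewhat lengthy) unwinding of the formalism of categories of strata, of étale specializations, and of $\QD^E_{g,n}$, running parallel to the already understood case of the moduli of curves.
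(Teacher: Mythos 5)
Your proposal is correct and follows essentially the same route as the paper: the crux you single out — the rigidified, connected, $\Aut(G,D)$-Galois cover $\wt\J_{(G,D)}\to\J_{(G,D)}$ (a relative Picard stack over a product of moduli of pointed curves) on which the sheaf $E_{\Jb_{g,n}}$ becomes constant, so that the monodromy group is exactly $\Im\big(\Aut(G,D)\to S_{E(G)}\big)$ — is precisely the paper's Lemma \ref{L:rigid-str} and the way it is used in the proof of Proposition \ref{P:CatStrata}. The only cosmetic deviations are that the paper constructs the specialization morphisms attached to edge contractions directly in smooth charts (as the generic points of intersections $\bigcap_{e\in S}D_e$) rather than lifting them from $\Str(\Mb_{g,n}^{\,\qs})$, and that it keeps the exceptional vertices (as $\M_{0,2}\cong B\Gm$ factors) in $\wt\M_G$ so that the cover is honestly finite and \'etale.
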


Before giving a proof, we will need the following Lemma where we describe a rigidification of each stratum $\J_{(G,D)}$ of $\Jb_{g,n}$. For a similar description of the strata of $\Mb_{g,n}$ (and also of their closure), see \cite[Chapter~XII, Proposition~(10.11)]{GAC2} (or \cite[Proposition~4.6]{EF} for a more algebraic proof).

Let $(G,D)\in \QD_{g,n}$. Consider the category fibered in groupoids $\wt\J_{(G,D)}$ over the category of schemes, whose fiber over a scheme $S$ consists of the groupoid of objects $(\cC\to S, \cL)\in \Jb_{g,n}(S)$ together with isomorphisms $\phi_{s}:(G,D)\xrightarrow{\cong} (G(\cC_{s}),\un \deg\cL_{s})$ in $\QD_{g,n}$ for every geometric point $s\to S$, that are compatible with \'etale specializations in the following sense: for any \'etale specialization $f\colon  t\rightsquigarrow s$ of geometric points, with induced morphism  $\pi_f:(G(\cC_{s}),\un \deg\cL_s)\to (G(\cC_t),\un \deg\cL_t)$ in $\QD_{g,n}$, we require that  $\phi_t=\pi_f\circ \phi_s$. 

The group $\Aut(G,D)$ (considered as a constant group scheme over $k$) acts on $\wt\J_{(G,D)}$ by precomposition with the isomorphisms $\phi_{s}$.

\begin{lemma}\label{L:rigid-str}
We have that:
\begin{enumerate}[label={\small\textrm{(\roman*)}}]
\item \label{L:rigid-str1} The category fibered in groupoids $\wt\J_{(G,D)}$ is a smooth and irreducible algebraic stack of finite type over the base field $k$.
\item \label{L:rigid-str2} The natural forgetful morphism $\Phi_{(G,D)}: \wt\J_{(G,D)}\to \J_{(G,D)}$ is a representable, surjective, finite and \'etale, $\Aut(G,D)$-Galois morphism. Hence, it factors through an  isomorphism 
$$\ov\Phi_{(G,D)}:\big[\wt\J_{(G,D)}/\Aut(G,D)\big]\longrightarrow \J_{(G,D)}.$$
\end{enumerate}
\end{lemma}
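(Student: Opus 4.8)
The plan is to exhibit $\wt\J_{(G,D)}$ as a fixed--multidegree piece of the base change of the smooth morphism $\Phi^{\qs}\colon\Jb_{g,n}\to\Mb_{g,n}^{\,\qs}$ (Proposition~\ref{P:propJ}\eqref{P:propJ4}) along the analogous rigidification of the stratum $\M_G\subseteq\Mb_{g,n}^{\,\qs}$, and then to read off every assertion from this description. Concretely, first I would introduce the curve--level analogue: let $\wt\M_G$ be the category fibered in groupoids over schemes whose $S$--points are families $\cC\to S$ of quasi--stable curves together with graph isomorphisms $\psi_s\colon G\xrightarrow{\cong}G(\cC_s)$ for every geometric point $s$, compatible with étale specializations exactly as in the definition of $\wt\J_{(G,D)}$. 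As in the classical normalization of the boundary strata of $\Mb_{g,n}$ (see \cite[Ch.~XII, Prop.~(10.11)]{GAC2}, \cite[Prop.~4.6]{EF}), I would check that $\wt\M_G\cong\prod_{v\in V_{\nex}(G)}\M_{h(v),\val(v)}\times\prod_{v\in V_{\exc}(G)}B\Gm$, the factors $B\Gm$ coming from the exceptional $\PP^1$'s rigidified by their two nodes; in particular $\wt\M_G$ is a smooth, irreducible algebraic stack of finite type over $k$, the forgetful morphism $\wt\M_G\to\M_G$ is representable, finite, étale and $\Aut(G)$--Galois, and $[\wt\M_G/\Aut(G)]\cong\M_G$.

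For part~\ref{L:rigid-str1}, consider $\mathcal Y:=\wt\M_G\times_{\Mb_{g,n}^{\,\qs}}\Jb_{g,n}$, whose $S$--points are triples $(\cC\to S,\cL,\{\psi_s\})$ with $(\cC\to S,\cL)\in\Jb_{g,n}(S)$; it is smooth over $\wt\M_G$ by base change from Proposition~\ref{P:propJ}\eqref{P:propJ4}. Since the $\psi_s$ trivialize the dual graph along $S$, the multidegree $(\psi_s^{-1})_*\underline{\deg}(\cL_s)\in\Div(G)$ is a locally constant function of $s$, so the locus $\mathcal Y_D$ where it equals $D$ is an open and closed substack of $\mathcal Y$; and $\mathcal Y_D$ is canonically $\wt\J_{(G,D)}$, because upgrading a graph isomorphism $\psi_s$ to an isomorphism $\phi_s\colon(G,D)\xrightarrow{\cong}(G(\cC_s),\underline{\deg}\cL_s)$ in $\QD_{g,n}$ amounts precisely to imposing $(\psi_s)_*D=\underline{\deg}\cL_s$, and the two compatibility conditions with étale specializations then coincide by functoriality of the pushforward of divisors. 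Hence $\wt\J_{(G,D)}\to\wt\M_G$ is smooth, and its geometric fibre over $(\cC,\psi)$ is the Picard scheme $\Pic^{(\psi)_*D}(\cC)$, a torsor under the connected algebraic group $\Pic^{\underline 0}(\cC)$ and therefore irreducible of dimension $g$. Since a smooth surjection onto an irreducible base with irreducible fibres has irreducible total space, and finite type is preserved under base change, this proves part~\ref{L:rigid-str1}.

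For part~\ref{L:rigid-str2}, observe that $\Aut(G)$ acts on $\mathcal Y$ through the $\psi_s$, permuting the pieces $\mathcal Y_{D'}$ according to the action $D'\mapsto\tau_*D'$ on admissible multidegrees, and that the stabilizer of $D$ is $\Aut(G,D)$; this is the $\Aut(G,D)$--action on $\wt\J_{(G,D)}=\mathcal Y_D$ of the statement. Now $[\mathcal Y/\Aut(G)]=[\wt\M_G/\Aut(G)]\times_{\Mb_{g,n}^{\,\qs}}\Jb_{g,n}=\M_G\times_{\Mb_{g,n}^{\,\qs}}\Jb_{g,n}=(\Phi^{\qs})^{-1}(\M_G)$, which by Proposition~\ref{TorStrata} is $\bigsqcup_{(G,D')}\J_{(G,D')}$; and quotienting the induced $\Aut(G)$--space $\bigsqcup_{\tau\in\Aut(G)/\Aut(G,D)}\tau\cdot\wt\J_{(G,D)}$ by $\Aut(G)$ returns $[\wt\J_{(G,D)}/\Aut(G,D)]$. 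Matching the pieces, we conclude that $\wt\J_{(G,D)}\to\J_{(G,D)}$ is an $\Aut(G,D)$--torsor: it is in particular representable, finite, étale, $\Aut(G,D)$--Galois, and surjective because any $(\cC,\cL)\in\J_{(G,D)}(k)$ admits some $\psi$ with $(\psi)_*D=\underline{\deg}\cL$. This yields the factorization through the isomorphism $\ov\Phi_{(G,D)}\colon[\wt\J_{(G,D)}/\Aut(G,D)]\xrightarrow{\cong}\J_{(G,D)}$.

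The step I expect to require the most care is the quasi--stable analogue of the classical description of $\wt\M_G$ used above -- especially the bookkeeping of the exceptional vertices and their $\Gm$--automorphisms -- together with the verification that the étale--specialization compatibility of the $\phi_s$ is literally equivalent to that of the underlying $\psi_s$; everything else is formal manipulation with base change of smooth morphisms and quotients by the finite group $\Aut(G)$. An alternative route to part~\ref{L:rigid-str2} that I would keep in reserve is to describe $\wt\J_{(G,D)}$ directly as the total space of the $\Aut(G,D)$--torsor on $\J_{(G,D)}$ attached to the locally constant étale sheaf $\underline{\Isom}\bigl((G,D),(G(\cC_\bullet),\underline{\deg}\cL_\bullet)\bigr)$, equivalently the finite étale cover classified by the monodromy representation $\pi_1^{\et}(\J_{(G,D)})\to\Aut(G,D)$.
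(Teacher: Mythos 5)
Your proposal is correct in substance and its skeleton matches the paper's: both introduce the rigidified stratum $\wt\M_G$ of marked quasi-stable curves, identify it (via normalization and clutching) with $\prod_{v\in V(G)}\M_{h(v),\val(v)}$ (your $B\Gm$ factors are exactly the $\M_{0,2}$'s for exceptional vertices), realize $\wt\J_{(G,D)}$ as an open-and-closed piece of a fiber product over $\wt\M_G$ on which the multidegree is constant, and get smoothness from Proposition \ref{P:propJ}\eqref{P:propJ4} and irreducibility from the irreducible Picard-type fibers over the irreducible base. Where you genuinely diverge is part \ref{L:rigid-str2}: you prove the torsor property at the curve level, asserting that $\wt\M_G\to\M_G$ is finite, \'etale and $\Aut(G)$-Galois with $[\wt\M_G/\Aut(G)]\cong\M_G$, and then transfer it to $\Phi_{(G,D)}$ by base change along $\Mb_{g,n}^{\,\qs}$ plus the orbit bookkeeping $\mathcal{Y}=\bigsqcup_{D'}\mathcal{Y}_{D'}$ with stabilizer $\Aut(G,D)$. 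The paper never establishes \'etaleness or the Galois property for $\wt\M_G\to\M_G$ itself: it only proves that map is representable, finite and unramified (finiteness via Knudsen's clutching morphisms), shows $\wt\J_{(G,D)}$ is an open and closed substack of $\J_{(G,D)}\times_{\M_G}\wt\M_G$, and then obtains flatness of $\Phi_{(G,D)}$ by miracle flatness at the Jacobian level, where smoothness and irreducibility of both source and target are already in hand; \'etale plus the fiberwise $\Aut(G,D)$-torsor property then give the Galois statement. So be aware that your ``in particular'' after the product decomposition is not free: finiteness (clutching) and flatness (miracle flatness, now applied at the curve level using smoothness of $\M_G$) still have to be supplied there, i.e. the same work the paper does, just relocated. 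Your route buys a cleaner formal deduction of part \ref{L:rigid-str2} by pure base change and a reusable statement about the strata of $\Mb_{g,n}^{\,\qs}$; the paper's route avoids having to prove the curve-level Galois statement at all and leans instead on the already-proved geometry of $\wt\J_{(G,D)}$ and $\J_{(G,D)}$. Your reserve argument via the locally constant sheaf $\underline{\Isom}\bigl((G,D),(G(\cC_\bullet),\un{\deg}\cL_\bullet)\bigr)$ is also fine and is essentially how the monodromy group is exploited later in Proposition \ref{P:CatStrata}.
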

A slightly different presentation of the strata of $\Jb_{g,n}(\phi)$, in the special case $n=1$ and for specific choices of $\phi$, is given in \cite[Proposition~6.1]{API}.
\begin{proof}
Let us first prove part \ref{L:rigid-str1}. 
Consider the category fibered in groupoids $\wt\M_{G}$ over the category of schemes, whose fiber over a scheme $S$ consists of families of quasi-stable curves $(\cC\to S) \in \Mb^{\,\qs}_{g,n}(S)$ together with an isomorphism $\phi_{s}:G\xrightarrow{\cong} G(\cC_{s})$ in $\QSG_{g,n}$ for every geometric point $s\to S$, that are compatible with \'etale specializations in the following sense: for any \'etale specialization $f\colon  t\rightsquigarrow s$ of geometric points, with induced morphism of graphs $\pi_f:G(\cC_{s})\to G(\cC_t)$, we require that  $\phi_t=\pi_f\circ \phi_s$. 

We claim that  we have an isomorphism of categories fibered in groupoids 
\begin{equation}\label{E:MGtilde}
\wt \M_G\cong \prod_{v\in V(G)} \M_{h(v), \val(v)},
\end{equation}
which implies that $\wt \M_G$ is a smooth and irreducible algebraic stack of finite type over $k$. 

Indeed, we have a functor $\wt \M_G\to  \prod_{v\in V(G)} \M_{h(v), \val(v)}$ that sends  $(\cC\to S, \{\phi_{s}\}_s)\in\wt\M_{G}(S)$ into the relative normalization $(\cC^{\nu}\to S)$ which is seen as an element of $\prod_{v\in V(G)} \M_{h(v), \val(v)}(S)$ via the compatible isomorphisms $\{\phi_s\}_{s\in S}$. Conversely, we have a functor  $\prod_{v\in V(G)} \M_{h(v), \val(v)}\to \wt M_G$ that sends 
$\cD\to S\in \prod_{v\in V(G)} \M_{h(v), \val(v)}(S)$ into the family of quasi-stable curves $\cC\to S$ of type $(g,n)$ that is obtained from $\cD\to S$ by gluing the pairs of marked sections of  $\cD\to S$ that correspond to the pairs of half-edges of $G$ that are exchanged by $i_G$, together with the family of compatible  isomorphisms $\{\phi_s:G \xrightarrow{\cong}G(\cC_s)\}_s$ that is induced by the above gluing procedure. Clearly the two functors are inverse of each other and they define the isomorphism \eqref{E:MGtilde}.  

\vspace{0.1cm}

The algebraic stack $\wt \M_G$ comes with a universal family $\pi_G:\wt \cC_G\to \wt \M_G$ of quasi-stable curves of type $(g,n)$ together with compatible  isomorphisms $\{\phi_s:G\xrightarrow{\cong}G(\wt \cC_{G,s})\}_s$, where $s$ runs over all geometric points of $\wt\M_G$, and $\wt \cC_{G,s}$ is the geometric fiber of $\pi_G$ over the  geometric point $s\to \wt\M_G$. 
The category fibered in groupoids $\wt\J_{(G,D)}$, endowed  with its natural forgetful morphism $\wt\Psi_{(G,D)}: \wt\J_{(G,D)}\to \wt\M_G$, can be naturally identified with the connected component of the Picard stack of the family $\pi_G:\wt \cC_G\to \wt \M_G$ that parametrizes line bundles $\cL$ on $\wt\cC_G$ whose multidegree satisfies $\un \deg(\cL_s)=\phi_s(D)$ for any geometric point $s\to\wt\M_G$, where $\cL_s$ is the restriction of $\cL$ to the geometric fiber $\wt \cC_{G,s}$. Hence, we deduce that $\wt\J_{(G,D)}$  is a smooth and irreducible algebraic stack of finite type over $k$.



We now prove part \ref{L:rigid-str2}. We begin by establishing several properties of the forgetful morphism $\Phi_{(G,D)}:\wt\J_{(G,D)}\to \J_{(G,D)}$. 
\begin{enumerate}[label={\textrm{(\alph*)}}]
\item \label{prop-a} $\Phi_{(G,D)}$ is surjective and the fiber over a geometric point $\Spec K \to \J_{(G,D)}$ is a torsor under  the constant finite group scheme $\Aut(G,D)_K$. 

Indeed, if $(C,L)\in \J_{(G,D)}(K)$ then the fiber of $\Phi_{(G,D)}$ over $(C,L)$ is the set of all the isomorphisms $(G,D)\to (G(C),\un \deg L)$, which is non-empty because of our assumption that 
$(C,L)\in \J_{(G,D)}(K)$ and it is clearly a torsor for $\Aut(G,D)$. 

\item  \label{prop-b}  $\Phi_{(G,D)}$ is representable, finite and unramified. 

In order to prove this, consider the commutative diagram 
\begin{equation}\label{E:wtJ-wtM}
\xymatrix{
\wt\J_{(G,D)}\ar@/^/[rrrd]^{\wt\Psi_{(G,D)}}\ar@/_/[ddr]_{\Phi_{(G,D)}} \ar[dr]^{F_{(G,D)}}& & & \\
& \J_{(G,D)}\times_{\M_G}\wt\M_G\ar[rr]^{\ov \Psi_{(G,D)}}\ar[d]^{\ov\Phi_G}\ar@{}[rrd]|\square& & \wt\M_G \ar[d]^{\Phi_G}\\
& \J_{(G,D)} \ar[rr]_{\Psi_{(G,D)}}& & \M_G
}
\end{equation}
where $\M_G$ is the locally closed (reduced) substack of $\Mb_{g,n}^{\,\qs}$ whose $k$-points are $C\in \Mb_{g,n}^{\,\qs}(k)$ such that $G(C)\cong G$, the morphism $\Phi_G$ forgets the isomorphisms $\{\phi_s\}$, the morphism $\Psi_{(G,D)}$ forgets the line bundle, the square is cartesian  and the morphism $F_{(G,D)}$ is induced by the universal property of the fibered product. 
Now, property \ref{prop-b} will follow from the following two properties:
\begin{enumerate}[label={\small\textrm{(b\arabic*)}}]
\item $\ov\Phi_{G}$ is representable, finite and unramified. 

Indeed, it is enough to show that $\Phi_G$ is representable, finite and unramified. It follows from \eqref{E:MGtilde} (and its proof) that $\Phi_G$ is a composition of clutching morphisms which are known to be representable, finite and unramified by \cite[Cor. 3.9]{Knu2}.
 
\item $F_{(G,D)}$ is a representable closed  embedding. 

Indeed, the fibered product $\J_{(G,D)}\times_{\M_G}\wt\M_G$ is the algebraic stack whose fiber over a scheme $S$ consists of $(\cC\to S, \cL, \{\phi_{s}:G\xrightarrow{\cong} G(\cC_{s})\}_s)$, where 
$(\cC\to S, \{\phi_{s}:G\xrightarrow{\cong} G(\cC_{s})\}_s)\in \wt\M_G$ and $\cL$ is an admissible line bundle on $\cC\to S$ such that $(G,D)\cong (G(\cC_s),\un\deg\cL_s)$. 
This implies that $\ov\Psi_{(G,D)}:\J_{(G,D)}\times_{\M_G}\wt\M_G\to \wt\M_G$ is the union of the connected components of the Picard stack of the family $\pi_G:\wt \cC_G\to \wt \M_G$ that parametrize line bundles $\cL$ on $\wt\cC_G$ whose multidegree is such that there exists an automorphism $\phi$ of $G$ such that  $(\phi_s)^{-1}(\un \deg(\cL_s))=\phi_*(D)$
for every geometric point $s\to\wt\M_G$. By what is proved in part \ref{L:rigid-str1}, it follows that $F_{(G,D)}$ identifies $\wt\J_{(G,D)}$ as one of the connected components of $\J_{(G,D)}\times_{\M_G}\wt\M_G\to \wt\M_G$, namely the one corresponding to $\phi=\id$, and this implies that $F_{(G,D)}$ is a representable closed (and also open) embedding. 

\end{enumerate}

\item  \label{prop-c}  $\Phi_{(G,D)}$ is flat. 

This follows by applying the flatness criterion \cite[Theorem~23.1]{Mat} to the representable morphism $\Phi_{(G,D)}$ and using that $\Phi_{(G,D)}$ is  surjective and finite (by \ref{prop-a} and \ref{prop-b}) from the smooth and irreducible (by part \ref{L:rigid-str1}) algebraic stack $\wt\J_{(G,D)}$ to the smooth and irreducible (by Proposition~\ref{TorStrata}\ref{TorStrata1}) algebraic stack $\J_{(G,D)}$.

\end{enumerate}

By using the above properties of the morphism $\Phi_{(G,D)}$, we can now conclude the proof of part \ref{L:rigid-str2}. Properties \ref{prop-b} and \ref{prop-c} and the first part of property \ref{prop-a} imply that $\Phi_{(G,D)}$ is a representable, surjective, finite and \'etale morphism. The second part of property \ref{prop-a} implies that $\Phi_{(G,D)}$ is a torsor under the constant finite group scheme $\Aut(G,D)_k$, which implies that $\Phi_{(G,D)}$ is a $\Aut(G,D)$-Galois morphism. 
\end{proof}

\begin{proof}[Proof of Proposition \ref{P:CatStrata}]
First of all, we need a local description of the boundary $\partial \Jb_{g,n}\subset \Jb_{g,n}$ around a geometric point $(C,L)\in \Jb_{g,n}(K)$. 
The morphism $\Phi^{\qs}:\Jb_{g,n}\to \Mb_{g,n}^{\,\qs}$ is smooth and the boundary $\partial \Jb_{g,n}$ of $\Jb_{g,n}$ is the pull-back of the boundary $\partial \Mb_{g,n}^{\,\qs}$ of $\Mb_{g,n}$. From the well-known deformation theory of nodal curves it follows that $(C,L)$ admits a smooth chart $V_{(C,L)}\to \Jb_{g,n}$ such that the pull-back $\partial V_{(C,L)}$ of the boundary $\partial \Jb_{g,n}$ is a normal crossing divisor whose irreducible components are $\{D_e\}_{e\in E(G(C))}$, where $D_e$ parametrizes all the partial smoothings of $(C,L)$ for which the node $n_e$ corresponding to the edge $e\in E(G(C))$ persists.   

From this local description of $\partial \Jb_{g,n}$, it follows that the stalk of the sheaf $E_{\Jb_{g,n}}$ at the geometric point $(C,L)\in \Jb_{g,n}(K)$ is canonically isomorphic to
\begin{equation}\label{E:stalkE}
E_{\Jb_{g,n}, (C,L)}=\NN^{E(G(C))}. 
\end{equation}
In particular, the pull-back of the sheaf $E_{\Jb_{g,n}}$ to the finite and \'etale cover $\wt\J_{(G,D)}\to \J_{(G,D)}$ of Lemma \ref{L:rigid-str} is equal to the constant sheaf 
\begin{equation}\label{E:stalkwtJ}
(E_{\Jb_{g,n}})_{|\wt\J_{(G,D)}}=\un{\NN}^{E(G)}. 
\end{equation}
Denote by $\eta_{(G,D)}$ the geometric generic point of the stratum $\J_{(G,D)}$.  Since $\eta_{(G,D)}$ is also the geometric generic point of the finite and \'etale cover $\wt\J_{(G,D)}\to \J_{(G,D)}$, \eqref{E:stalkwtJ} gives a canonical identification 
\begin{equation}\label{E:stalk-gen}
E_{\Jb_{g,n}, \eta_{(G,D)}}=\NN^{E(G)}. 
\end{equation}
This implies that the cone associated to $\J_{(G,D)}$ is canonically isomorphic to
\begin{equation}\label{E:conestra}
\sigma\big(\J_{(G,D)}\big):=\Hom_{\mon}\big(E_{\Jb_{g,n}, \eta_{(G,D)}}, \RR_{\geq 0}\big)=\Hom_{\mon}\big(\NN^{E(G)}, \RR_{\geq 0}\big)=\RR_{\geq 0}^{E(G)}. 
\end{equation}

Recall that our goal is to define an equivalence of categories $\calE: (\QD^E_{g,n})^{\opp}  \longrightarrow \Str(\Jb_{g,n})$ such that we have the equality of functors
\begin{equation}\label{E:commG}
\Gamma_{\QD_{g,n}^E}=\Gamma_{\Str(\Jb_{g,n})}\circ \calE. 
\end{equation}
We define $\calE$ on objects by sending $(G,D)\in \QD_{g,n}^E$ into $\J_{(G,D)}\in \Str(\Jb_{g,n})$, so that $\calE$ will be essentially surjective by Proposition \ref{TorStrata}\ref{TorStrata1}.
Moreover, \eqref{E:conestra} says that equality \eqref{E:commG} holds true at the level of objects. 

It remains to define $\calE$ on morphisms in such a way that it is fully faithful and \eqref{E:commG} is satisfied at the level of morphisms. 
Since any morphism in $\QD_{g,n}$ (and hence also on $\QD_{g,n}^E$) is a composition of edge contractions and automorphisms and any morphism in $\Str(\Jb_{g,n})$ is a composition of \'etale specializations (between the geometric generic points of the strata) and automorphisms, it will be enough to treat these two classes of morphisms separately. 

\textbf{Edge contractions and \'etale specializations}. Fix an element
$(G,D)\in \QD_{g,n}^E$. Given a  subset \linebreak
$S\subseteq E(G)$, denote by
$[\pi_S]:(G,D)\to (G',D')$ the unique morphism in $\QD_{g,n}^E$
induced by the morphism of graphs $\pi_S:G\to G/S:=G'$ which is the
contraction of the edges belonging to $S$ (so that \linebreak
$D':=(\pi_S)_*(D)$). Note that  
$$\Gamma_{\QD_{g,n}^E}\big([\pi_S]\big)\colon\RR_{\geq 0}^{E(G')}\hooklongrightarrow \RR_{\geq 0}^{E(G)}$$
 is induced by the inclusion $(\pi_S)_E^*\colon E(G')=E(G)\setminus S\hookrightarrow E(G)$. 

On the other hand, to a subset $S\subseteq E(G)$ we can associate the \'etale specialization  $\sp_S:\eta_{(G',D')} \rightsquigarrow \eta_{(G,D)}$ such that, on a smooth chart $V_{(G,D)}$ of the geometric point $\eta_{(G,D)}$ (as explained at the beginning of the proof), the pull-back of $\eta_{(G',D')}$ is the geometric generic point of the intersection $\bigcap_{e\in S} D_e$ (that is irreducible if the chart is small enough). 
Using the identification \eqref{E:stalk-gen}, the associated surjective monoid homomorphism 
$$
\sp_S^*\colon \NN^{E(G)}=E_{\Jb_{g,n},\eta_{(G,D)}}\twoheadlongrightarrow E_{\Jb_{g,n},\eta_{(G',D')}}=\NN^{E(G')}
$$
is induced by the inclusion $E(G')=E(G)\setminus S\hookrightarrow
E(G)$. In this way, we get $f_S:\J_{(G',D')}\to \J_{(G,D)}$, a morphism
in $\Str(\Jb_{g,n})$ corresponding to the \'etale specialization
$\sp_S$, with the property that, using the identification
\eqref{E:conestra}, the face inclusion of cones 
$$\Gamma_{\Str(\Jb_{g,n})}(f_S)\colon\RR_{\geq 0}^{E(G')}=\sigma\big(\J_{(G',D')}\big)\hooklongrightarrow \sigma\big(\J_{(G,D)}\big)=\RR_{\geq 0}^{E(G)}$$
 is induced by the inclusion $E(G')=E(G)\setminus S\hookrightarrow E(G)$. 

We now define $\calE([\pi_S])=f_S$ for any $S\subseteq E(G)$ and we are done since any edge contraction in $\QD_{g,n}^E$ with domain $(G,D)$ is equal to $[\pi_S]$ for a unique $S\subseteq E(G)$ and any morphism in $\Str(\Jb_{g,n})$ with codomain $\J_{(G,D)}$ and which is induced by \'etale specializations (between the geometric generic points of the strata) is equal to $f_S$ for a unique $S\subseteq E(G)$. 

\textbf{Automorphisms}. By the definition of the morphisms in $\QD_{g,n}^E$ (see Definition \ref{CatE}), the automorphism group in $\QD_{g,n}^E$ of an object $(G,D)\in \QD_{g,n}^E$ is equal to
\begin{equation}\label{E:autoQD}
\Aut_{\QD^E_{g,n}}(G,D)=\Im\big((-)_E^*\colon\Aut(G,D)\to S_{E(G)}\big)\subseteq S_{E(G)}, 
\end{equation}
where $\Aut(G,D)$ is the automorphism group of $(G,D)$ in $\QD_{g,n}$ and $S_{E(G)}$ is the permutation group on the set $E(G)$. Moreover, by the definition \eqref{E:Gamma-QDE} of the functor $\Gamma_{\QD^E_{g,n}}$,  the homomorphism  
$$\Gamma_{\QD^E_{g,n}}(-)\colon\Aut_{\QD^E_{g,n}}(G,D)\to \Aut_{\RPC^f}(\RR_{\geq 0}^{E(G)}),$$
is given by the restriction of the natural action of $S_{E(G)}$ on $\RR_{\geq 0}^{E(G)}$  by permutation of the extremal rays (note that indeed we have that $\Aut_{\RPC^f}\big(\RR_{\geq 0}^{E(G)}\big)= S_{E(G)}$).  

On the other hand, by the definition of the category $\Str(\Jb_{g,n})$ and using the canonical identification \eqref{E:stalk-gen}, the automorphism group in $\Str(\Jb_{g,n})$ of the stratum $\J_{(G,D)}$  is equal to
$$
\Aut_{\Str(\Jb_{g,n})}\big(\J_{(G,D)}\big)=\Im\big(\pi_1^{\et}(\J_{(G,D)},\eta_{(G,D)}\big)\to \Aut_{\mon}(\NN^{E(G)})=S_{E(G)}\big).
$$
By \eqref{E:stalkwtJ} and the definition of the action of $\Aut(G,D)$ on $\wt\J_{(G,D)}$ (see Lemma \ref{L:rigid-str}), the monodromy representation $\pi_1^{\et}(\J_{(G,D)},\eta_{(G,D)})\to \Aut_{\mon}(\NN^{E(G)})=S_{E(G)}$ factors through the quotient $\pi_1^{\et}(\J_{(G,D)},\eta_{(G,D)})\twoheadrightarrow \Aut(G,D)$ corresponding to the (representable, finite and \'etale, $\Aut(G,D)$-Galois) morphism $\wt\J_{(G,D)}\to \J_{(G,D)}$ followed by the homomorphism $(-)_E^*:\Aut(G,D)\to S_{E(G)}$. Hence, we deduce that 
\begin{equation}\label{E:autoStr}
\Aut_{\Str(\Jb_{g,n})}(\J_{(G,D)})=\Im\big((-)_E^*\colon\Aut(G,D)\to S_{E(G)}\big)\subseteq S_{E(G)}.
\end{equation}
Moreover, using the canonical identification \eqref{E:conestra} and the definition \eqref{E:Gamma-Str} of the functor $\Gamma_{\Str(\Jb_{g,n})}$,  the homomorphism  
$$\Gamma_{\Str(\Jb_{g,n})}(-)\colon\Aut_{\Str(\Jb_{g,n})}(\J_{(G,D)})\longrightarrow \Aut_{\RPC^f}(\RR_{\geq 0}^{E(G)}),$$
is again given by the restriction of the natural action of $S_{E(G)}$ on $\RR_{\geq 0}^{E(G)}$  by permutation of the extremal rays.

Hence, by comparing \eqref{E:autoQD} and \eqref{E:autoStr}, we can define a canonical isomorphism 
$$\calE(-)\colon \Aut_{\QD^E_{g,n}}(G,D)\xlongrightarrow{\cong}\Aut_{\Str(\Jb_{g,n})}(\J_{(G,D)}),
$$
which moreover satisfies \eqref{E:commG} by what we said above.  
\end{proof}

\begin{remark}\label{R:CatStrM}
Let  $\Str(\Mb_{g,n})$ be the category of strata of the toroidal embedding $(\M_{g,n}\subset \Mb_{g,n})$. Analogously to Proposition \ref{P:CatStrata}, there is an equivalence of categories 
\begin{equation}\label{E:equivF}
\begin{aligned}
\calF: (\SG^E_{g,n})^{\opp} & \longrightarrow \Str(\Mb_{g,n})\\
 G & \longmapsto \M_{ G},
\end{aligned}
\end{equation}
such that $\Gamma_{\SG_{g,n}^E}:=\Gamma_{\Str(\Jb_{g,n})}\circ \calF: (\SG_{g,n}^E)^{\opp} \longrightarrow \RPC^f$ is the factorization of the category fibered in groupoids $\Gamma_{\SG_{g,n}}$ of \eqref{E:combMt} through the quotient category $\SG_{g,n}^{\opp}\to (\SG_{g,n}^E)^{\opp}$.


Moreover, the two equivalences of categories \eqref{E:equivE} and \eqref{E:equivF} are compatible with the functor $F^E:\QD_{g,n}^E\to \SG_{g,n}^E$ of \eqref{E:funFE} and the functor 
$$
\begin{aligned}
\Str(\Phi)\colon \Str(\Jb_{g,n})& \longrightarrow \Str(\Mb_{g,n})\\
\J_{(G,D)} & \longmapsto \M_{G^{\st}}
\end{aligned}
$$ 
induced by the toroidal morphism $\Phi:\Jb_{g,n}\to \Mb_{g,n}$ (see Proposition~\ref{TorStrata}\ref{TorStrata3}).

\end{remark}


\section{Logarithmic tropicalization}\label{Sec:log-trop}


The aim of this section is to define and study a logarithmic tropicalization map from a logarithmic universal Jacobian to a universal tropical Jacobian defined over the category of logarithmic schemes. 

\subsection{The logarithmic universal Jacobian}

 Throughout the section, all logarithmic schemes (or stacks) are fine, saturated and locally of finite type over the base field $k$. A logarithmic scheme (or stack) will be denoted by  
 $X=(\un X, \alpha_X:M_X\to \cO_X)$, or simply by $X=(\un X, M_X)$, where $\un X$ is the underlying scheme, $M_X$ is a sheaf of monoids  and  $\alpha_X:M_X\to (\cO_{\un X},\cdot)$ is a morphism of sheaves of monoids which is an isomorphism over the subsheaf  $(\cO_{\un X}^*,\cdot)\subset (\cO_{\un X},\cdot)$ of invertible functions. We will denote by $\ov M_X:=M_X/\alpha_X^{-1}(\cO_{\un X}^*)$
  the characteristic monoid sheaf of $X$.  The category of logarithmic schemes (resp. logarithmic algebraic stacks) will be denoted by $\LSch$ (resp. $\LSta$). 


The forgetful functor from logarithmic algebraic stacks to algebraic stacks 
$$
\begin{aligned}
\LSta & \longrightarrow \Sta\\
X=(\un X,M_X) & \longmapsto \un X,
\end{aligned}
$$
admits a right adjoint 
$$
\begin{aligned}
\Sta & \longrightarrow \LSta\\
\un X& \longmapsto \un X:=(\un X,\cO_{\un X}^*\hookrightarrow \cO_{\un X}),
\end{aligned}
$$
where $\cO_{\un X}^*\hookrightarrow \cO_{\un X}$ is also called the trivial logarithmic structure on $\un X$. The unit of the above adjoint pair of functors gives rise to a morphism of logarithmic algebraic stacks 
\begin{equation}\label{E:morUps}
\Upsilon_X: X=(\un X,M_X)\longrightarrow \un X=(\un X,\cO_{\un X}^*)
\end{equation}
for any logarithmic algebraic stack $X$. 

As usual, the $2$-category $\LSta$ embeds into the $2$-category of categories fibered in groupoids over $\LSch$ by sending $X\in \LSta$ into $ \HOM_{\LSta}( -, X)$. Observe that the category fibered in groupoids over $\LSch$ associated to  $\un X$ is given by $\un X(S)=\un X(\un S)$ for any $S\in \LSch$.

\begin{definition}\label{D:logJ}
The \textbf{logarithmic universal Jacobian}, denoted by $\calJ_{g,n}^{\log}$, is the category fibered in groupoids over $\LSch$ whose fiber over  $S$ is the groupoid whose objects are pairs consisting of
\begin{itemize}
\item a \emph{quasi-stable logarithmic curve} $X\rightarrow S$ of type $(g,n)$, \textit{i.e.} an integral and saturated logarithmically smooth morphism $X\rightarrow S$ such that the underlying morphism $\underline{X}\rightarrow \underline{S}$ is a family of quasi-stable curves of type $(g,n)$;
\item an admissible line bundle $\calL$ on $\underline{X}$. 
 \end{itemize}
\end{definition}


\begin{remark}\label{R:logcurves}
Logarithmic curves (quasi-stable or not) have been characterized in \cite[Theorem~1.3]{Kat}. In particular, given a logarithmic curve $\pi: X\rightarrow S$, the stalk of the characteristic monoid $\ov M_X$  at a geometric point $x$ of $\un X$ has one of the following forms:
\begin{itemize}
\item $\ov M_{X,x}\cong \ov M_{S,\pi(x)}$ if $x$ is a smooth point of $\un X\to \un S$ which is not a marked point;
\item $\ov M_{X,x}\cong \ov M_{S,\pi(x)} \oplus \N v$ if $x$ is a marked point of  $\un X\to \un S$;
\item $\displaystyle \ov M_{X,x}\cong \frac{\ov M_{S,\pi(x)} \oplus \N \alpha\oplus \N \beta}{(\alpha+\beta=\delta_x)}$ if $x$ is a node of  $\un X\to \un S$, where $\delta_x$ is a non-zero element of $\ov M_{S,\pi(x)}$ which is called the smoothing parameter of the node $x$. 
\end{itemize}
\end{remark}

Denote by $\calMbar_{g,n}^{\,\qs}$ the algebraic stack of quasi-stable curves of type $(g,n)$ and by $\calM_{g,n}^{\log,\,\qs}$ the logarithmic algebraic stack of quasi-stable logarithmic curves of type $(g,n)$. If we consider $\Jb_{g,n}$ and $\calMbar_{g,n}^{\,\qs}$ as logarithmic algebraic stacks (by endowing them with the trivial logarithmic structure, as explained above), then it is an immediate consequence of 
Definition \ref{D:logJ} that 
\begin{equation}\label{E:Jlog=prod}
\J_{g,n}^{\log}=\Jb_{g,n}\times_{\Mb_{g,n}^{\,\qs}}\M_{g,n}^{\log,\,\qs} \ .
\end{equation}

In the next proposition, we will denote by $M_{\partial \Jb_{g,n}}$ the divisorial logarithmic structure on $\Jb_{g,n}$ associated to the normal crossing divisor $\partial \Jb_{g,n}$ (see Proposition~\ref{P:propJ}\ref{P:propJ5}).


\begin{proposition}\label{P:logJ-boun}
The logarithmic universal Jacobian $\J_{g,n}^{\log}$ is representable by the logarithmic algebraic stack $(\Jb_{g,n},M_{\partial \Jb_{g,n}})$, \textit{i.e.} we have an equivalence of categories fibered in groupoids
\begin{equation*}
\J_{g,n}^{\log}\simeq \HOM_{\LSta}\big( -, (\Jb_{g,n},M_{\partial \Jb_{g,n}})\big).
\end{equation*}
Moreover, the natural morphism $\Upsilon_{\J_{g,n}^{\log}}$ of \eqref{E:morUps} is given by 
$$
\begin{aligned}
\Upsilon_{\J_{g,n}^{\log}}:\J_{g,n}^{\log}(S) & \longrightarrow \Jb_{g,n}(S)=\Jb_{g,n}(\un S)\\
(X\to S, \cL) & \mapsto (\un X\to \un S, \cL),
\end{aligned}
$$
for any logarithmic scheme $S$. 
\end{proposition}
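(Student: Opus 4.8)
The plan is to exhibit an equivalence between the fibered category $\J_{g,n}^{\log}$ of Definition~\ref{D:logJ} and the one represented by the logarithmic algebraic stack $(\Jb_{g,n}, M_{\partial\Jb_{g,n}})$, by leveraging the product description \eqref{E:Jlog=prod}. The key input is the analogous well-known statement at the level of $\calM_{g,n}^{\log,\,\qs}$: the logarithmic algebraic stack of quasi-stable logarithmic curves of type $(g,n)$ is represented by $(\Mb_{g,n}^{\,\qs}, M_{\partial\Mb_{g,n}^{\,\qs}})$, where the latter carries the divisorial logarithmic structure associated to the (normal crossing) boundary $\partial\Mb_{g,n}^{\,\qs}=\Mb_{g,n}^{\,\qs}\setminus\M_{g,n}$. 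This is essentially the content of Kato's description of log curves \cite{Kat} (recalled in Remark~\ref{R:logcurves}) combined with the fact that the minimal/canonical log structure on the quasi-stable curve moduli is the divisorial one, since $\partial\Mb_{g,n}^{\,\qs}$ is normal crossing. First I would record this as the statement we are allowed to cite, and I would also recall the standard fact (fiber-product compatibility of divisorial log structures under log-smooth or more simply under strict pullbacks) that allows base change of divisorial log structures along morphisms for which the boundary pulls back to the boundary.

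The main body of the argument then proceeds as follows. By Proposition~\ref{P:propJ}\ref{P:propJ4} the forgetful morphism $\Phi^{\qs}\colon\Jb_{g,n}\to\Mb_{g,n}^{\,\qs}$ is smooth, and by Proposition~\ref{P:propJ}\ref{P:propJ5} we have $\partial\Jb_{g,n}=(\Phi^{\qs})^{-1}(\partial\Mb_{g,n}^{\,\qs})$, so that the boundary of $\Jb_{g,n}$ is the (scheme-theoretic) preimage of the boundary of $\Mb_{g,n}^{\,\qs}$ along a smooth morphism. Consequently the divisorial log structure pulls back correctly:
\begin{equation*}
M_{\partial\Jb_{g,n}} \cong (\Phi^{\qs})^* M_{\partial\Mb_{g,n}^{\,\qs}},
\end{equation*}
which says precisely that the morphism of log algebraic stacks $(\Jb_{g,n},M_{\partial\Jb_{g,n}})\to(\Mb_{g,n}^{\,\qs},M_{\partial\Mb_{g,n}^{\,\qs}})$ lifting $\Phi^{\qs}$ is strict. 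On the other hand, the morphism $(\Jb_{g,n},M_{\partial\Jb_{g,n}})\to\Mb_{g,n}^{\,\qs}$ obtained by composing with $\Mb_{g,n}^{\,\qs}=(\Mb_{g,n}^{\,\qs},\cO^*)$ factors through $(\Mb_{g,n}^{\,\qs},M_{\partial\Mb_{g,n}^{\,\qs}})$, since the latter is the pushforward of the trivial log structure's associated divisorial one; equivalently we use the universal property of the divisorial/compactifying log structure. Now form the fiber product in $\LSta$:
\begin{equation*}
(\Jb_{g,n},M_{\partial\Jb_{g,n}}) \;\cong\; (\Mb_{g,n}^{\,\qs},M_{\partial\Mb_{g,n}^{\,\qs}}) \times_{\Mb_{g,n}^{\,\qs}} \Jb_{g,n},
\end{equation*}
using that the underlying fiber product of schemes/stacks is $\Jb_{g,n}$ itself (since $\Phi^{\qs}$ is strict with respect to the boundary log structure). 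Combining this with the identification of $\calM_{g,n}^{\log,\,\qs}$ with $(\Mb_{g,n}^{\,\qs},M_{\partial\Mb_{g,n}^{\,\qs}})$ and with \eqref{E:Jlog=prod}, we obtain
\begin{equation*}
\J_{g,n}^{\log} = \Jb_{g,n}\times_{\Mb_{g,n}^{\,\qs}}\calM_{g,n}^{\log,\,\qs} \cong \Jb_{g,n}\times_{\Mb_{g,n}^{\,\qs}}(\Mb_{g,n}^{\,\qs},M_{\partial\Mb_{g,n}^{\,\qs}}) \cong (\Jb_{g,n},M_{\partial\Jb_{g,n}}),
\end{equation*}
as fibered categories over $\LSch$, which is the first assertion. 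For the description of $\Upsilon_{\J_{g,n}^{\log}}$, I would simply trace through the definition of the counit \eqref{E:morUps}: under the equivalence just established, an object of $\J_{g,n}^{\log}(S)$ is a pair $(X\to S,\calL)$ with $X\to S$ a quasi-stable log curve and $\calL$ admissible on $\un X$, and $\Upsilon$ forgets the log structures on both source and target, i.e.\ records the strict morphism $\un S\to\Jb_{g,n}$ classifying $(\un X\to\un S,\calL)$; this is exactly $(X\to S,\calL)\mapsto(\un X\to\un S,\calL)$.

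The step I expect to be the main obstacle is establishing carefully that the canonical (divisorial) log structure on $\calM_{g,n}^{\log,\,\qs}$ is $M_{\partial\Mb_{g,n}^{\,\qs}}$ and that it base changes along $\Phi^{\qs}$ to $M_{\partial\Jb_{g,n}}$ — in other words, verifying the compatibility of the minimal log structure on log curves with the smooth base change $\Phi^{\qs}$. Concretely, one must check that the smoothing parameters of the nodes (the elements $\delta_x\in\ov M_{S,\pi(x)}$ of Remark~\ref{R:logcurves}) on a family $X\to S$ with $\un X\to\un S$ pulled back from $\Jb_{g,n}$ agree with those coming from $\Mb_{g,n}^{\,\qs}$; this follows from the fact that the deformation theory of a quasi-stable curve and of the pair (quasi-stable curve, admissible line bundle) agree in the node-smoothing directions, since line bundles on a one-dimensional scheme are unobstructed (Proposition~\ref{P:propJ}\ref{P:propJ4}), so that the local chart $V_{(C,L)}\to\Jb_{g,n}$ splits the deformation space as node-smoothing directions times line-bundle directions and the boundary divisors $D_e$ are exactly the pullbacks of those on $\Mb_{g,n}^{\,\qs}$. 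Once this local analysis is in hand — and it is essentially the same local picture used in the proof of Proposition~\ref{P:CatStrata} — the rest is formal manipulation of fiber products of log stacks.
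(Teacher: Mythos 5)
Your proposal is correct and follows essentially the same route as the paper: cite Kato's result that $\calM_{g,n}^{\log,\,\qs}$ is represented by $(\Mb_{g,n}^{\,\qs},M_{\partial\Mb_{g,n}^{\,\qs}})$, then combine the fiber-product description \eqref{E:Jlog=prod} with the fact that $\partial\Jb_{g,n}$ is the preimage of $\partial\Mb_{g,n}^{\,\qs}$ under the smooth morphism $\Phi^{\qs}$, so that the divisorial log structure pulls back and the strict base change identifies $\J_{g,n}^{\log}$ with $(\Jb_{g,n},M_{\partial\Jb_{g,n}})$. You simply spell out the fiber-product manipulation in $\LSta$ and the local deformation-theoretic check that the paper leaves implicit in its citation of \cite{Kat} and Proposition~\ref{P:propJ}.
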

\begin{proof}
The arguments in \cite{Kat} imply that  $\calM_{g,n}^{\log,\,\qs}$ is representable by the logarithmic stack $(\Mb_{g,n}^{\,\qs}, M_{\partial \Mb_{g,n}^{\,\qs}})$ and that the natural morphism $\Upsilon_{\calM_{g,n}^{\log,\,\qs}}: \calM_{g,n}^{\log,\,\qs} \to \Mb_{g,n}^{\,\qs}$ sends a quasi-stable logarithmic curve $(X\to S)\in \calM_{g,n}^{\log,\,\qs}$ into its underlying family of quasi-stable curves $(\un X\to \un S)\in \Mb_{g,n}^{\,\qs}(\un S)=\Mb_{g,n}^{\,\qs}(S)$.  

The conclusion now follows using  \eqref{E:Jlog=prod}  and the fact that $\partial \Jb_{g,n}$ is the pull-back of $\partial \Mb_{g,n}^{\,\qs}$ via the smooth morphism $\Phi^{\qs}:\Jb_{g,n}\to \Mb_{g,n}^{\,\qs}$ (see Proposition \ref{P:propJ}).
\end{proof}

\begin{remark}\label{R:OtherLog}\begin{enumerate}[label={\small\textrm{(\roman*)}}]
\item The decomposition \eqref{E:connJac} of $\Jb_{g,n}$ into connected components induces the following decomposition of $\J_{g,n}^{\log}$ into connected components 
\begin{equation*}
\J_{g,n}^{\log}=\bigsqcup_{d\in \ZZ} \J_{g,n, d}^{\log}:=\bigsqcup_{d\in \ZZ} \Jb_{g,n, d}\times_{\Mb_{g,n}^{\,\qs}}\M_{g,n}^{\log,\,\qs}.
\end{equation*}
Explicitly, the stack $\J_{g,n,d}^{\log}$ parametrizes those objects $(X\rightarrow S,\calL)$ in $\calJ_{g,n}^{\log}$ such that for every geometric point $s$ of $S$ the line bundle $\calL_s$ has total degree $d$ on $\underline{X}_s$.

\item The open inclusions in \eqref{E:opensub} induce the following open inclusions  (for any universal stability condition $\phi\in V_{g,n}$)
 \begin{equation*}
\calJ_{g,n,(d)}^{\log,\,\spl}:=\Jb_{g,n,(d)}^{\,\spl}\times_{\Mb_{g,n}^{\,\qs}}\M_{g,n}^{\log,\,\qs}\subseteq \calJ_{g,n}^{\log}\supseteq \calJ_{g,n}^{\log}(\phi):=\Jb_{g,n}(\phi)\times_{\Mb_{g,n}^{\,\qs}}\M_{g,n}^{\log,\,\qs}.
\end{equation*}
Explicitly, the stack $\calJ_{g,n}^{\log}(\phi)$ parametrizes those objects $(X\rightarrow S,\calL)$ in $\calJ_{g,n}^{\log}$ such that for every geometric point $s$ of $S$ the line bundle $\calL_s$ on $\underline{X}_s$ is $\phi$-semistable, while the stack $\calJ_{g,n,(d)}^{\log,\,\spl}$ parametrizes those objects $(X\rightarrow S,\calL)$ in  $\calJ_{g,n}^{\log}$ such that for every geometric point $s$ of $S$ the curve $\un X_s$ remains connected when we remove its exceptional components (resp. and the line bundle $\calL_s$ has total degree $d$ on $\underline{X}_s$).

\item Proposition \ref{P:logJ-boun} implies that $\J_{g,n,d}^{\log}$, $\J_{g,n,(d)}^{\log,\,\spl}$ and  $ \calJ_{g,n}^{\log}(\phi)$ are represented, respectively, by $\Jb_{g,n,d}$, $\Jb_{g,n,(d)}^{\,\spl}$ and  $ \Jb_{g,n}(\phi)$, with the divisorial logarithmic structures associated to the normal crossing boundary divisors.
\end{enumerate}
\end{remark}

The forgetful-stabilization morphism \eqref{PhiAlg2} induces a logarithmic forgetful-stabilization morphism 
 \begin{equation}\label{PhiLog}
\Phi^{\log}:\J_{g,n}^{\log}\xrightarrow{\Phi^{\log,\,\qs}} \M_{g,n}^{\log,\,\qs}\xrightarrow{\st^{\log}} \M_{g,n}^{\log},
\end{equation}
where  $\M_{g,n}^{\log}$ is the logarithmic algebraic stack parametrizing stable logarithmic curves of type $(g,n)$, which is representable by $(\Mb_{g,n},M_{\partial \Mb_{g,n}})$ by \cite{Kat}, and the logarithmic stabilization morphism  $\st^{\log}$ is defined in \cite[Section 8.4]{CCUW}.

\subsection{Lifting the universal tropical Jacobian}
We next define a tropical universal Jacobian as a category fibered in groupoids over the category of logarithmic schemes. 

\begin{definition}\label{D:tropJnew}
The \textbf{tropical universal Jacobian} over $\LSch$, denoted by $\Jtw_{g,n}$, is the category fibered in groupoids over the category of logarithmic schemes such that the fiber over $S$ is the groupoid whose objects consists of
\begin{itemize}
\item a pair $(\Gamma_s/\ov M_{S,s},D_s)$ for each geometric point $s$ of $\un S$, where $\Gamma_s/\ov M_{S,s}$ is a \emph{quasi-stable tropical curve over $\ov M_{S,s}$} of type $(g,n)$, \textit{i.e.}  a quasi-stable graph $\GG(\Gamma_s)$ of type $(g,n)$ (called the underlying graph) and a (generalized) metric $d_{\Gamma_s}:E(\GG(\Gamma_s))\to \ov M_{S,s}\setminus \{0\}$, and $D_s$ is an admissible divisor on $\GG(\Gamma_s)$; 
\item for every \'etale specialization $f\colon t\rightsquigarrow s$ of geometric points of $\un S$, a morphism of graphs $\pi_f: \GG(\Gamma_s)\to \GG(\Gamma_t)$ such that $(\pi_f)_*(D_s)=D_t$ (so that  $\pi_f:(\GG(\Gamma_s),D_s)\to (\GG(\Gamma_t),D_t)$ is a morphism in $\QD_{g,n}$) and such that $\pi_f$ is compatible with $f^*:\ov M_{S,s}\to \ov M_{S,t}$, \textit{i.e.} such that for any 
$e\in E(\GG(\Gamma_s))$ we have that
\begin{itemize}
\item $\pi_f$ contracts $e$ if and only if $f^*(d_{\Gamma_s}(e))=0$;
\item if $\pi_f(e)=e'\in E(\GG(\Gamma_t))$ then $f^*(d_{\Gamma_s}(e))=d_{\Gamma_t}(e')$.
\end{itemize}
\end{itemize}
We can define the subcategories $\Jtw_{g,n,d}$, $\wt{\calJ}_{g,n,(d)}^{\mathrm{trop},\,\spl}$, $\Jtw_{g,n}(\phi)$ (for any universal stability condition $\phi\in V_{g,n}$) of $\Jtw_{g,n}$ by requiring that  the fibers over $S$ are the collections  $\big\{(\Gamma_s/\ov M_{S,s},D_s)\big\}_s$ as above with the extra property that $(\GG(\Gamma_s),D_s)$ belongs to, respectively, $\QD_{g,n,d}$, $\QD_{g,n,(d)}^{\spl}$, $\QD_{g,n}(\phi)$.
\end{definition}

The universal tropical Jacobian $\Jtw_{g,n}$ over $\LSch$ comes equipped with a forgetful-stabilization morphism to the stack $\Mtw_{g,n}$ of tropical curves over $\LSch$, as defined in \cite[Definition~7.4]{CCUW}:
\begin{equation}\label{Phitrop2}
\begin{aligned}
\wt{\Phi}^{\mathrm{trop}}:\Jtw_{g,n}& \longrightarrow \Mtw_{g,n}\\
\big\{(\Gamma_s/\ov M_{S,s},D_s)\big\}_s & \longmapsto \big\{\Gamma_s^{st}/\ov{M}_{S,s}\big\}_s,\\
\big\{\pi_f: \GG(\Gamma_s)\to \GG(\Gamma_t)\big\}_{f\colon t\rightsquigarrow s} & \longmapsto \big\{\pi_f^{st}: \GG(\Gamma_s)^{st}\to \GG(\Gamma_t)^{st}\big\}_{f\colon t\rightsquigarrow s} 
\end{aligned}
\end{equation}
where $\Gamma_s^{st}/\ov{M}_{S,s}$ is the stable tropical curve over $\ov{M}_{S,s}$ of type $(g,n)$ such that 
\begin{itemize}
\item the underlying graph is $\GG(\Gamma_s^{\st}):=\GG(\Gamma_s)^{\st}$,
\item the metric $d_{\Gamma_s^{\st}}:E(\GG(\Gamma_s)^{st})\to \ov M_{S,s}\setminus \{0\}$ is defined (using the notation below \eqref{E:edg-stab}) by 
$$d_{\Gamma_s^{\st}}(e)=
\begin{cases}
d_{\Gamma_s}(\wt e) & \text{ if } e\in E_{\nex}(\GG(\Gamma_s)^{\st}), \\
d_{\Gamma_s}(e^1)+d_{\Gamma_s}(e^2) & \text{ if } e\in E_{\exc}(\GG(\Gamma_s)^{\st}).
\end{cases}
$$
\end{itemize}


There is a close relationship between the universal tropical Jacobian $\Jt_{g,n}$ over $\RPCC$ (see Definition \ref{UnTrJac}) and the above defined  universal tropical Jacobian $\Jtw_{g,n}$ over $\LSch$, that we now want to explain. As explained in \cite[Section~6]{CCUW}, there is an equivalence of $2$-categories 
\begin{equation}\label{cone-Artin}
a^*:\big\{\text{Cone stacks}\big\} \xlongrightarrow{\cong}\big\{\text{Artin fans over } k\big\},
\end{equation}
 where an \emph{Artin fan} over $k$ is a logarithmic algebraic stack that is \'etale locally isomorphic to the stack quotient of a toric $k$-variety by its big torus (endowed with its natural  toric logarithmic structure).  Explicitly, the above equivalence $a^*$ sends a rational polyhedral cone $\sigma$ to 
 $$a^\ast \sigma=\big[\Spec k[S_\sigma]\big/\Spec k[S_\sigma^{gp}]\big].$$
More generally, the equivalence $a^*$ sends the cone stack associated to a combinatorial cone stack $\Psi:\cC\to \RPC^f$ (in the sense of \cite[Section~2]{CCUW})  to the Artin fan which is equal to the following colimit in the category of logarithmic algebraic stacks
\begin{equation}\label{colim-Artin}
\varinjlim_{C\in \cC} \big[\Spec k[S_{\Psi(C)}]\big/\Spec k[S_{\Psi(C)}^{gp}]\big],
\end{equation}
where all the morphisms appearing in the above colimit are open embeddings. 

We refer the interested reader to \cite{ACMW, ACMUW, Uli19, CCUW, Uli20} for more details on the theory of Artin fans. 

\begin{proposition}\label{P:2LogJ}
The universal tropical Jacobian $\Jtw_{g,n}$ over $\LSch$ is an Artin fan naturally isomorphic to $a^*\Jt_{g,n}$ and, moreover, we have $\wt{\Phi}^{\mathrm{trop}}\cong a^*\Phi^{\mathrm{trop}}$.  
\end{proposition}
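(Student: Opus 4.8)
The plan is to reduce the statement to the combinatorial descriptions already at hand — Theorem \ref{TrJacStack}\ref{TrJacStack1} for $\Jt_{g,n}$, Theorem \ref{TrJacStack}\ref{TrJacStack2} for $\Phi^{\mathrm{trop}}$, and Definition \ref{D:tropJnew} for $\Jtw_{g,n}$ — and then to match them via the explicit colimit presentation \eqref{colim-Artin} of the functor $a^*$, following verbatim the way \cite[Sections~6--7]{CCUW} identify the stack of tropical curves over $\LSch$ with $a^*\Mt_{g,n}$. Concretely, since by Theorem \ref{TrJacStack}\ref{TrJacStack1} the stack $\Jt_{g,n}$ is the cone stack associated to the combinatorial cone stack $\Gamma_{\QD_{g,n}}\colon\QD_{g,n}^{\opp}\to\RPC^f$ of \eqref{E:combJt}, whose value at $(G,D)$ is $\RR_{\geq 0}^{E(G)}$ with dual monoid $\NN^{E(G)}$, formula \eqref{colim-Artin} gives
\[
a^*\Jt_{g,n}\ =\ \varinjlim_{(G,D)\in\QD_{g,n}^{\opp}}\big[\A^{E(G)}/\Gm^{E(G)}\big],
\]
the transition maps being the open immersions induced by the face inclusions $\pi_E^*\colon E(G')\hookrightarrow E(G)$ attached to the morphisms of $\QD_{g,n}$; in particular $a^*\Jt_{g,n}$ is an Artin fan over $k$.

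Next I would unwind the functor of points of this colimit. As in \cite[Section~6]{CCUW}, a morphism $S\to a^*\Jt_{g,n}$ from a fine saturated log scheme $S$ amounts to the following data: for each geometric point $s$ of $\un S$, an object of the cone stack $\Jt_{g,n}$ over the sharp monoid $\ov M_{S,s}$, which by the discussion following Definition \ref{UnTrJac} (cf.\ also the proof of Theorem \ref{TrJacStack}) is a pair $(\Gamma_s/\ov M_{S,s},D_s)$ consisting of a quasi-stable tropical curve of type $(g,n)$ metrized by $\ov M_{S,s}$ and an admissible divisor $D_s$ on $\GG(\Gamma_s)$; together with, for each étale specialization $f\colon t\rightsquigarrow s$, a morphism of $\Jt_{g,n}$ lying over $f^*\colon\ov M_{S,s}\to\ov M_{S,t}$, which is the same thing as a morphism $\pi_f\colon(\GG(\Gamma_s),D_s)\to(\GG(\Gamma_t),D_t)$ in $\QD_{g,n}$ compatible with $f^*$ in the sense of Definition \ref{UnTrJac}; all subject to the cocycle condition along composable specializations. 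This is exactly the data of an object of $\Jtw_{g,n}(S)$ in the sense of Definition \ref{D:tropJnew}. The same dictionary on morphisms of log schemes, together with the evident naturality in $S$, then yields a natural isomorphism $\Jtw_{g,n}\cong a^*\Jt_{g,n}$; in particular $\Jtw_{g,n}$ is an Artin fan.

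For the last assertion I would note that, by Theorem \ref{TrJacStack}\ref{TrJacStack2}, $\Phi^{\mathrm{trop}}\colon\Jt_{g,n}\to\Mt_{g,n}$ is the morphism of cone stacks induced by the functor $F\colon\QD_{g,n}\to\SG_{g,n}$ of \eqref{E:funF} together with the surjections $\phi_{(G,D)}\colon\RR_{\geq 0}^{E(G)}\twoheadrightarrow\RR_{\geq 0}^{E(G^{\st})}$. Since $\Mtw_{g,n}\cong a^*\Mt_{g,n}$ by \cite[Section~7]{CCUW}, applying the $2$-functor $a^*$ to this morphism and feeding the result through the dictionary above shows that $a^*\Phi^{\mathrm{trop}}$ sends a family $\{(\Gamma_s/\ov M_{S,s},D_s)\}_s$ to the family whose $s$-th member is the stabilization $(\Gamma_s^{\st}/\ov M_{S,s})$ — with underlying graph $\GG(\Gamma_s)^{\st}$ and with the metric on $\GG(\Gamma_s)^{\st}$ induced from $d_{\Gamma_s}$ through $\phi_{(G_s,D_s)}$, which is precisely the metric $d_{\Gamma_s^{\st}}$ of \eqref{Phitrop2} — and a specialization datum $\pi_f$ to $\pi_f^{\st}$. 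Hence $a^*\Phi^{\mathrm{trop}}\cong\wt{\Phi}^{\mathrm{trop}}$, completing the proof.

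The only genuine content lies in the functor-of-points identification of the second paragraph, and this is where I expect the main — essentially bookkeeping — obstacle: one has to control the étale-local rigidification of the tropical data and its behaviour under generization exactly as in \cite{CCUW}. What makes this step routine here is that the admissible divisor $D_s$ is purely discrete data, merely transported along the morphisms $\pi_f$ and imposing no constraint on the metrics; consequently every step of the argument of \cite[Section~7]{CCUW} for the stack of tropical curves goes through after replacing $\SG_{g,n}$ by $\QD_{g,n}$ throughout.
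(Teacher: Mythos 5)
Your proposal is correct and follows essentially the same route as the paper: the paper's proof is precisely a generalization of \cite[Lemma 7.9]{CCUW}, comparing $a^*\Jt_{g,n}$ with $\Jtw_{g,n}$ via sections of $\Jt_{g,n}$ over the characteristic monoid and an \'etale-local chart/rigidification argument (using \cite[Lemma 7.8]{CCUW}), with the admissible divisor carried along as discrete data, and deducing $\wt{\Phi}^{\mathrm{trop}}\cong a^*\Phi^{\mathrm{trop}}$ from compatibility with stabilization. The only caveat is that your second paragraph states the functor-of-points description of $a^*\Jt_{g,n}$ as if it were formal, whereas it is exactly the content needing the chart-and-generization argument — but you flag this yourself in the last paragraph and identify the correct reason it goes through, which is what the paper writes out.
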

 \begin{proof} 
 Our proof is a generalization
 of the proof of \cite[Lemma 7.9]{CCUW}. 
Let $S$ be a logarithmic scheme. 
Following the discussion after Definition \ref{UnTrJac} we may consider $\Jt_{g,n}$ as a category over $\textbf{ShpMon}^{op}$. Then $\Jt_{g,n}(\Gamma(S,\Mbar_S))$ consists of pairs $(\Gamma,D)$ with $\Gamma$ metrized by the monoid $\Gamma(S,\Mbar_S)$.
We can therefore associate to $S$
a natural map $\Jt_{g,n}(\Gamma(S,\Mbar_S))\rightarrow \Jtw_{g,n}(S)$
that is given by associating to $(\Gamma,D)$ in
$\Jt_{g,n}(\Gamma(S,\Mbar_S))$ the collection $\big\{(\Gamma_s/\ov
M_{S,s}, D_s)\big\}_s$ of contractions of $(\Gamma,D)$ induced by \linebreak
$\Gamma(S,\Mbar_S)\rightarrow\Mbar_{S,s}$ (for each geometric point
$s$ of $\un S$) together with the obvious morphisms
$\pi_f:\GG(\Gamma_s)\to \GG(\Gamma_t)$ induced by the generalization
maps $\ov M_{S,s}\to \ov M_{S,t}$ (for every \'etale specialization
$f\colon t\rightsquigarrow s$ of geometric points of $\un S$). 
 This map is natural in $S$ and therefore, since $a^\ast\Jt_{g,n}$ is a stack, it induces a morphism $a^\ast\Jt_{g,n}\rightarrow \Jtw_{g,n}$. 
 
 We argue that this is an isomorphism, \textit{i.e.} that every $\big\{(\Gamma_s/\ov M_{S,s},D_s)\big\}_s \in\Jtw_{g,n}(S)$ lifts uniquely, up to unique isomorphism, to an object in $a^\ast \Jt_{g,n}$. Since both $a^\ast\Jt_{g,n}$ and $\Jtw_{g,n}$ are stacks, this is an \'etale local assertion on $S$. So we may assume that there is a geometric point $s_0$ of $S$ and a chart $\Mbar_{S,s_0}\xrightarrow{\cong} \Gamma(S,\Mbar_{S})$.
 
 Using this homomorphism, 
 we obtain a natural lift $(\Gamma',D')\in\Jt_{g,n}(\Gamma(S,\Mbar_S))$ of $\big\{(\Gamma_s/\ov M_{S,s},D_s)\big\}_s$. 
 This lift is unique since the  composition 
 \begin{equation*}
 \Jt_{g,n}(\Gamma(S,\Mbar_S))\rightarrow\Jtw_{g,n}(S)\rightarrow\Jtw_{g,n}(s_0)=\Jt_{g,n}(\Mbar_{S,s_0})
 \end{equation*} 
 is an isomorphism. Let $\big\{(\Gamma''_s/\ov M_{S,s},D_s'')\big\}_s$ be the image of $(\Gamma',D')$ in $\Jtw_{g,n}(S)$. Then, by \cite[Lemma 7.8]{CCUW}, the isomorphism $(\Gamma''_{s_0}/\ov M_{S,s_0},D_{s_0}'')\simeq (\Gamma_{s_0}/\ov M_{S,s_0},D_{s_0})$ extends to an \'etale neighborhood of $s_0$ and so the lift $(\Gamma',D')$ is unique up to unique isomorphism. 

The second assertion, namely that $\wt{\Phi}^{\mathrm{trop}}\cong a^*\Phi^{\mathrm{trop}}$, follows from the compatibility of this isomorphism with stabilization. 
 \end{proof}
 
\begin{corollary}\label{C:StrataLog}
The universal tropical Jacobian $\Jtw_{g,n}$ over $\LSch$ admits a stratification into locally closed subsets 
\begin{equation}\label{E:StrataL}
\Jtw_{g,n}=\bigsqcup_{(G,D)\in \QD_{g,n}}  \Jtw_{(G,D)},
\end{equation}
with the property that $\un{\Jtw_{(G,D)}}$ is isomorphic to the classifying stack $B(\Gm^{E(G)}):=\big[\Spec k/\Gm^{E(G)}\big]$.  

Moreover, the poset of strata $\big\{\Jtw_{G,D}\big\}_{(G,D)\in \QD_{g,n}}$ is anti-isomorphic to the poset $|\QD_{g,n}|$ of Definition \ref{Pos}, \textit{i.e.} 
$$\Jtw_{(G,D)}\subset \ov{\Jtw_{(G',D')}} \Leftrightarrow (G,D)\geq  (G',D') \text{ in } |\QD_{g,n}|.
$$
\end{corollary}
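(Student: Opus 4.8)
The plan is to read the stratification off the description of $\Jtw_{g,n}$ as an Artin fan, thereby reducing everything to the torus-orbit stratification of affine toric stacks. By Proposition \ref{P:2LogJ} we have $\Jtw_{g,n}\cong a^*\Jt_{g,n}$, and by Theorem \ref{TrJacStack}\ref{TrJacStack1} the cone stack $\Jt_{g,n}$ is the one associated to the combinatorial cone stack $\Gamma_{\QD_{g,n}}\colon\QD_{g,n}^{\opp}\to\RPC^f$, $(G,D)\mapsto\RR_{\geq 0}^{E(G)}$. Since $S_{\RR_{\geq 0}^{E(G)}}=\NN^{E(G)}$ and $(S_{\RR_{\geq 0}^{E(G)}})^{\gp}=\ZZ^{E(G)}$, the colimit formula \eqref{colim-Artin} gives a presentation
$$
\Jtw_{g,n}=\varinjlim_{(G,D)\in\QD_{g,n}}\big[\A^{E(G)}\big/\Gm^{E(G)}\big],
$$
in which the transition map attached to a morphism $\pi\colon(G,D)\to(G',D')$ of $\QD_{g,n}$ is the open immersion $[\A^{E(G')}/\Gm^{E(G')}]\hookrightarrow[\A^{E(G)}/\Gm^{E(G)}]$ induced by the face inclusion $\RR_{\geq 0}^{E(G')}\hookrightarrow\RR_{\geq 0}^{E(G)}$ coming from $\pi_E^*\colon E(G')\hookrightarrow E(G)$; concretely it identifies the source with the torus-invariant open substack $\{x_e\neq 0\colon e\notin\pi_E^*(E(G'))\}$ of the target. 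In particular these charts form an open cover of $\Jtw_{g,n}$.

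Next I would analyse a single chart $[\A^{E(G)}/\Gm^{E(G)}]$ through its torus-orbit decomposition. Its orbits are indexed by the faces of $\RR_{\geq 0}^{E(G)}$, hence by subsets $T\subseteq E(G)$: the orbit $O_T$ attached to the face $\RR_{\geq 0}^{T}$ is $O_T=\{x\colon x_e=0\iff e\in T\}\cong\Gm^{E(G)\setminus T}\times\{0\}^{T}$, on which $\Gm^{E(G)\setminus T}$ acts freely and transitively while $\Gm^{T}$ acts trivially, so that $[O_T/\Gm^{E(G)}]\cong B\Gm^{T}$. The unique closed stratum is the one with $T=E(G)$, namely $B\Gm^{E(G)}$; under the dictionary of Theorem \ref{TrJacStack}\ref{TrJacStack1} the face $\RR_{\geq 0}^{T}$ is the cone of the object obtained from $(G,D)$ by contracting the edges of $E(G)\setminus T$, so that $T=E(G)$ corresponds to $(G,D)$ itself. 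By the orbit-cone correspondence, $[O_{T'}/\Gm^{E(G)}]$ lies in the closure of $[O_T/\Gm^{E(G)}]$ if and only if $\RR_{\geq 0}^{T}$ is a face of $\RR_{\geq 0}^{T'}$, i.e.\ $T\subseteq T'$; in terms of objects of $\QD_{g,n}$ this says exactly that the $T'$-object dominates the $T$-object, i.e.\ the order of the poset $|\QD_{g,n}|$.

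Finally I would glue. The transition maps above are torus-equivariant and respect the orbit decompositions, so the closed stratum $B\Gm^{E(G')}$ of the chart of $(G',D')$ is carried, whenever $(G,D)\geq(G',D')$, onto the stratum $[O_{\pi_E^*(E(G'))}/\Gm^{E(G)}]$ of the chart of $(G,D)$; moreover two such strata coincide precisely when the underlying objects of $\QD_{g,n}$ are isomorphic (the stabilizer $\Gm^{E(G')}$ of the corresponding residual-gerbe point determines the number of edges, and the only identifications imposed by the colimit among the distinguished closed strata are the isomorphisms of $\QD_{g,n}$). Hence the cover by the charts $[\A^{E(G)}/\Gm^{E(G)}]$ induces a well-defined decomposition $\Jtw_{g,n}=\bigsqcup_{(G,D)\in|\QD_{g,n}|}\Jtw_{(G,D)}$ into locally closed substacks with $\un{\Jtw_{(G,D)}}\cong B\Gm^{E(G)}$ — locally closed because $B\Gm^{E(G)}$ is closed in the open chart $[\A^{E(G)}/\Gm^{E(G)}]$ — and the closure relations, being local on the cover and already computed chart-by-chart in the previous step, assemble to the asserted anti-isomorphism $\Jtw_{(G,D)}\subset\ov{\Jtw_{(G',D')}}\Leftrightarrow(G,D)\geq(G',D')$ with $|\QD_{g,n}|$. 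The statements for $\Jtw_{g,n,d}$, $\wt{\calJ}_{g,n,(d)}^{\mathrm{trop},\,\spl}$ and $\Jtw_{g,n}(\phi)$ follow verbatim upon restricting the colimit to the full subcategories $\QD_{g,n,d}$, $\QD_{g,n,(d)}^{\spl}$ and $\QD_{g,n}(\phi)$, using the corresponding part of Theorem \ref{TrJacStack}.

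The genuinely delicate point is not the local toric computation but the bookkeeping in the gluing step: one must check that \eqref{colim-Artin} produces exactly one stratum per isomorphism class of $\QD_{g,n}$, with underlying stack precisely $B\Gm^{E(G)}$ — so that the automorphisms of $(G,D)$ acting trivially on $E(G)$ (the loops and ladders of Figure \ref{figure_automorphisms}) contribute no gerbe structure, and those acting through genuine permutations of $E(G)$ only identify faces rather than quotienting a chart — and that the face-containment order on each chart is compatible with the global specialization order. This is the exact analogue for $\Jtw_{g,n}$ of the stratification of $\Mtw_{g,n}$ worked out in \cite{CCUW}, and I would run that argument in the present setting.
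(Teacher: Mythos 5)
Your proposal follows essentially the same route as the paper's proof: reduce via Proposition \ref{P:2LogJ} and the colimit presentation of Theorem \ref{TrJacStack}\ref{TrJacStack1} together with \eqref{colim-Artin}, take the stratum $\Jtw_{(G,D)}$ to be the (image of the) closed orbit $[0/\Gm^{E(G)}]$ in the chart $[\A^{E(G)}/\Gm^{E(G)}]$, and read off the closure relations from the fact that all transition morphisms in the colimit are open embeddings. The only difference is that you spell out the intermediate orbit--cone bookkeeping inside each chart and the gluing identifications, which the paper leaves implicit, so the argument is correct and matches the paper's.
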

\begin{proof}
Using Proposition \ref{P:2LogJ}, it is enough to exhibit the desired stratification for $a^*\Jt_{g,n}$.  Using Theorem~\ref{TrJacStack}\ref{TrJacStack1} and \eqref{colim-Artin}, we get that  
$a^*\Jt_{g,n}$ is equal to the following colimit in $\LSta$
\begin{equation}\label{E:colimitJ}
a^*\Jt_{g,n}=\varinjlim_{(G,D)\in \QD_{g,n}^{opp}} \big[\A^{E(G)}\big/\Gm^{E(G)}\big],
\end{equation}
where the morphisms in $\LSta$ appearing in the above colimits are given as follows: to a morphism $\pi:(G,D)\to (G',D')$ of $\QD_{g,n}$ we associate the morphism in $\LSta$  
$$\ov\pi: \left[\A^{E(G')}\big/\Gm^{E(G')}\right]=\left[(\A^{E(G')} \times \Gm^{E(G)\setminus E(G')})\big/\Gm^{E(G)}\right]\to \left[\A^{E(G)}\big/\Gm^{E(G)}\right]$$
 induced by the map $\pi_E^*:E(G')\hookrightarrow E(G)$ that identifies $E(G')$ with the subset of $E(G)$ consisting of edges of $G$ that are not contracted by $\pi: G\to G'$.

Therefore, we get a stratification of $a^*\Jt_{g,n}$ into locally closed subsets by considering, for any $(G,D)\in \QD_{g,n}$, the image in  $\big[\A^{E(G)}\big/\Gm^{E(G)}\big]$ of the unique closed $\Gm^{E(G)}$-orbit of $\A^{E(G)}$, namely ${\Jtw}_{(G,D)}:=\big[0\big/\Gm^{E(G)}\big]$ where $0$ is the origin of $\A^{E(G)}$.  

Moreover, since the morphisms appearing in the colimit are open embeddings, we get (for any $(G,D), (G',D')\in \QD_{g,n}$):
\begin{equation*}
{\Jtw}_{(G,D)}\subset \ov{{\Jtw}_{(G',D')}} \Leftrightarrow  \text{ there exists a morphism } \pi:(G,D)\to (G',D') \text{ in } \QD_{g,n}.
\end{equation*}
\end{proof}

 
\begin{remark}\label{R:OtherTr2}
It follows from Remark \ref{R:subcat}, together with the fact that the maps appearing in the diagram \eqref{E:colimitJ} are open inclusions, that the subcategories of $\Jtw_{g,n}$ introduced in Definition \ref{D:tropJnew} are Artin subfans, hence open substacks,  of  $\Jtw_{g,n}$ and they can be represented as
$$\Jtw_{g,n,d}=a^*\big(\Jt_{g,n,d}\big), \quad \wt{\calJ}_{g,n,(d)}^{\mathrm{trop}, \spl}=a^*\big(\calJ_{g,n,(d)}^{\mathrm{trop},\,\spl}\big), \quad \textrm{ and }\quad\Jtw_{g,n}(\phi)=a^*\big(\Jt_{g,n}(\phi)\big). $$
Moreover, the colimit description \eqref{E:colimitJ} and the stratification \eqref{E:StrataL} hold true for the above mentioned subcategories provided that we substitute the category $\QD_{g,n}$ by, respectively, $\QD_{g,n,d}$, $\QD_{g,n,(d)}^{\spl}$ or $\QD_{g,n}(\phi)$.
\end{remark}

\subsection{A modular tropicalization morphism}
We can now define a modular logarithmic tropicalization map.

\begin{definition}\label{D:log-trop}
The \textbf{modular logarithmic tropicalization map} (for $\Jb_{g,n}$) is the morphism of categories fibered in groupoids over $\LSch$ (and hence of logarithmic algebraic stacks)
\begin{equation}\label{E:trop-log}
\wt{\trop}_{\J_{g,n}^{\log}}\colon\J_{g,n}^{\log}\longrightarrow \Jtw_{g,n}
\end{equation}
defined (on objects) as follows. Consider an object $(\pi\colon X\to S,\cL)\in \J_{g,n}^{\log}(S)$. For every geometric point $s\to \un S$, we define a metric on the dual graph $\GG(\un X_s)$ of the geometric fiber of $\un X\to \un S$ over $s$ by 
$$
\begin{aligned}
d_{\pi}\colon E(\GG(\un X_s))& \longrightarrow \ov M_{S,s}\setminus \{0\}\\
e & \longmapsto \delta_{n_e},
\end{aligned}
$$
where $\delta_{n_e}$ is the smoothing parameter (in the sense of Remark \ref{R:logcurves}) of the node $n_e$ of $\un X_s$ corresponding to $e$. The pair $(\GG(\un X_s),d_{\pi})$ defines a quasi-stable tropical curve over $\ov M_{S,s}$ of type $(g,n)$, that we denote by $\Gamma(\un X_s)/\ov M_{S,s}$. We denote by $\un \deg(\cL_s)$ the multidegree of the line bundle $\cL_s:=\cL_{|\un X_s}$ on $\un X_s$, which is naturally an admissible divisor on $\GG(\un X_s)$. Moreover, given an \'etale specialization $f\colon t\rightsquigarrow s$ of geometric points of $\un S$, 
the family $\pi:X\to S$ induces a natural morphism of graphs $\pi_f: \GG(\un X_s)\to \GG(\un X_t)$ that is compatible with $f^*:\ov M_{S,s}\to \ov M_{S,t}$ and 
such that $(\pi_f)_*(\un\deg(\cL_s))=\un \deg(\cL_t)$. Then we define
\begin{equation}\label{E:trop-log2}
\wt{\trop}_{\J_{g,n}^{\log}}(\pi:X\to S, \cL):=\left(\big\{\Gamma(\un X_s)/\ov M_{S,s},\un \deg(\cL_s)\big\}_s,\big\{\pi_f:\GG(\un X_s)\to \GG(\un X_t)\big\}_{f\colon t\rightsquigarrow s}\right). 
\end{equation}
\end{definition}

From the above definition of $\wt{\trop}_{\J_{g,n}^{\log}}$, the explicit descriptions of the maps $\Phi^{\log}$ of \eqref{PhiLog} and $\wt{\Phi}^{\trop}$ of \eqref{Phitrop2}, we get the commutativity of the following  diagram 
\begin{equation}\label{diag-logtr}
\xymatrix{
\J_{g,n}^{\log}\ar[rr]^{\wt{\trop}_{\J_{g,n}}^{\log}} \ar[d]^{\Phi^{\log}}&& \Jtw_{g,n}\ar[d]^{\wt\Phi^{\trop}} \\
\M_{g,n}^{\log}\ar[rr]^{\wt{\trop}_{\M_{g,n}^{\log}}}&& \Mtw_{g,n}\\
}
\end{equation}
where $\wt \trop_{\M_{g,n}^{\log}}: \M_{g,n}^{\log}\to \Mtw_{g,n}$ is the modular logarithmic tropicalization map for $\M_{g,n}^{\log}$ defined in \cite[Section~7.2]{CCUW} (where it is called  tropicalization morphism and denoted by $\trop_{g,n}$). 

Before proving the main properties of the map $\wt{\trop}_{\J_{g,n}^{\log}}$ in the following Theorem, recall \cite[Proposition~3.2.1]{ACMW} that for any logarithmic stack $\cX$ (fine, saturated and locally of finite type over a base field $k$) there exists an Artin fan $\cA_{\cX}$ with faithful monodromy together with a natural strict morphism of logarithmic algebraic stacks (that we like to call the \textbf{functorial logarithmic tropicalization morphism} of $\cX$)
\begin{equation}\label{logtropfun}
\trop_{\cX}:\cX\longrightarrow \cA_{\cX}
\end{equation} 
that is initial among all strict morphisms to an Artin fan with faithful monodromy. Clearly, the map $\trop_{\cX}$ is functorial with respect to strict morphisms of logarithmic algebraic stacks, \textit{i.e.} for any strict morphism $\phi:\cX\to \cY$ of logarithmic stacks there exists a strict morphism $\cA(\phi):\cA_{\cX}\to \cA_{\cY}$ of Artin fans that fits into a commutative diagram 
$$
\xymatrix{
\cX \ar[d]_{\trop_{\cX}} \ar[r]^{\phi}&\cY  \ar[d]^{\trop_{\cY}}  \\
 \cA_{\cX}  \ar[r]_{\cA(\phi)}  & \cA_{\cY} 
}
$$

\begin{theorem}\label{T:log-trop}
\noindent 
\begin{enumerate}[label={\small\textrm{(\roman*)}}]
\item \label{T:log-trop1} The modular logarithmic tropicalization map $\wt{\trop}_{\J_{g,n}^{\log}}:\J_{g,n}^{\log}\to \Jtw_{g,n} $ is strict, smooth and surjective. 

\item \label{T:log-trop2} The morphism of Artin fans
$$\cA(\wt{\trop}_{\J_{g,n}^{\log}}):\cA_{\J_{g,n}^{\log}}\to \cA_{\Jtw_{g,n}}$$ 
is an isomorphism. In particular, we have a commutative diagram 
\begin{equation}\label{E:diagtropJ}
\xymatrix{
\J_{g,n}^{\log} \ar[rr]^{\wt\trop_{\J_{g,n}^{\log}}} \ar[d]_{\trop_{\J_{g,n}^{\log}}}&&\Jtw_{g,n}  \ar[d]^{\trop_{\Jtw_{g,n}}}  \\
 \cA_{\J_{g,n}^{\log}}  \ar[rr]_{\cA(\wt{\trop}_{\J_{g,n}^{\log}})}^{\cong}  && \cA_{\Jtw_{g,n}} 
}
\end{equation}

\item \label{T:log-trop2bis}
There exist morphisms of Artin fans $\cA(\wt \Phi^{\mathrm{trop}}): \cA_{\Jtw_{g,n}}\to \cA_{\Mtw_{g,n}}$ and $\cA(\Phi^{\log}):  \cA_{\J_{g,n}^{\log}}\to  \cA_{\M_{g,n}^{\log}}$ sitting into the commutative diagram 
\begin{equation}\label{E:diagtropJM}
\xymatrix{
 \J_{g,n}^{\log}\ar[rr]_{\wt\trop_{\J_{g,n}^{\log}}} \ar@/^2pc/[rrrrrr]^{\trop_{\J_{g,n}^{\log}}} \ar[d]^{\Phi^{\log}} && \Jtw_{g,n} \ar[rr]_{\trop_{\Jtw_{g,n}}}  \ar[d]^{\wt \Phi^{\mathrm{trop}}}&& \cA_{\Jtw_{g,n}}   \ar[d]^{\cA(\wt \Phi^{\mathrm{trop}})}&& \ar[ll]_{\cong}^{\cA(\wt{\trop}_{\J_{g,n}^{\log}})} \cA_{\J_{g,n}^{\log}} \ar[d]^{\cA(\Phi^{\log})},\\ 
  \M_{g,n}^{\log}\ar[rr]^{\wt\trop_{\M_{g,n}^{\log}}} \ar@/_2pc/[rrrrrr]_{\trop_{\J_{g,n}^{\log}}} && \Mtw_{g,n} \ar[rr]^{\trop_{\Mtw_{g,n}}} && \cA_{\Mtw_{g,n}}  && \ar[ll]^{\cong}_{\cA(\wt{\trop}_{\M_{g,n}^{\log}})} \cA_{\M_{g,n}^{\log}},\\ 
}
\end{equation}
where the first row is the diagram \eqref{E:diagtropJ} and the second row is the analogous diagram established in  \cite[Theorem~1.3]{Uli19} $($building upon \cite[\S 7.2]{CCUW}$)$.

 \item \label{T:log-trop3} The two morphisms of logarithmic algebraic stacks 
 \begin{equation}\label{E:corr-log}
\xymatrix{
\Jb_{g,n}&& \J_{g,n}^{\log}  \ar[ll]_{\Upsilon_{\J_{g,n}^{\log}}} \ar[rr]^{\wt{\trop}_{\J_{g,n}^{\log}}} && \Jtw_{g,n}
}
\end{equation}
are compatible with  the stratifications \eqref{E:strataJ}  and \eqref{E:StrataL}, \textit{i.e.}
 $$
 (\wt{\trop}_{\J_{g,n}^{\log}})^{-1}(\Jtw_{(G,D)})=(\Upsilon_{\J_{g,n}^{\log}})^{-1}(\J_{(G,D)})
 $$
 for every $(G,D)\in \QD_{g,n}$.
 In particular, we have that 
 \begin{itemize}
  \item $(\wt{\trop}_{\J_{g,n}^{\log}})^{-1}(\Jtw_{g,n,d})=\J_{g,n,d}^{\log}=(\Upsilon_{\J_{g,n}^{\log}})^{-1}(\Jb_{g,n,d})$ for any $d\in \ZZ$.
 \item $(\wt{\trop}_{\J_{g,n}^{\log}})^{-1}(\wt{\calJ}_{g,n,(d)}^{\,\mathrm{trop},\,\spl})=\J_{g,n,(d)}^{\log,\,\spl}=(\Upsilon_{\J_{g,n}^{\log}})^{-1}(\Jb_{g,n,(d)}^{\,\spl})$. 
  \item $(\wt{\trop}_{\J_{g,n}^{\log}})^{-1}(\Jtw_{g,n}(\phi))= \calJ_{g,n}^{\log}(\phi)=(\Upsilon_{\J_{g,n}^{\log}})^{-1}(\Jb_{g,n}(\phi))$ for any universal stability condition $\phi\in V_{g,n}$.  
 \end{itemize}
\end{enumerate}
\end{theorem}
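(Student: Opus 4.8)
The plan is to prove part \ref{T:log-trop1} by a local chart computation, after which parts \ref{T:log-trop2}, \ref{T:log-trop2bis} and \ref{T:log-trop3} become essentially formal, using the corresponding statements for $\Mb_{g,n}$ together with Propositions~\ref{P:2LogJ} and~\ref{P:CatStrata}. For part \ref{T:log-trop1} I would show that, \'etale-locally on the source, $\wt\trop_{\J_{g,n}^{\log}}$ is the chart morphism into one of the affine toric charts $[\A^{E(G)}/\Gm^{E(G)}]$ of the Artin fan $\Jtw_{g,n}=a^*\Jt_{g,n}$ (Proposition~\ref{P:2LogJ} and \eqref{E:colimitJ}). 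Fix a geometric point $(C,L)\in\Jb_{g,n}$, set $G=G(C)$ and $D=\un\deg(L)$, so $(G,D)\in\QD_{g,n}$. By the smoothness of $\Phi^{\qs}$ (Proposition~\ref{P:propJ}\eqref{P:propJ4}) and the deformation theory of nodal curves, exactly as used in the proof of Proposition~\ref{P:CatStrata}, there is a smooth chart $V\to\Jb_{g,n}$ through $(C,L)$ on which $\partial\Jb_{g,n}$ pulls back to a normal crossing divisor $\sum_{e\in E(G)}D_e$ whose components form part of a regular system of parameters. The divisorial log structure $M_{\partial\Jb_{g,n}}$ then realizes the map $V\to\A^{E(G)}$ cut out by local equations of the $D_e$ as a strict smooth morphism, and composing with $\A^{E(G)}\to[\A^{E(G)}/\Gm^{E(G)}]$ gives a strict, smooth morphism, surjective onto the chart of $\Jtw_{g,n}$ indexed by $(G,D)$ (surjective because the image is open and contains the closed point $[0]$, which is the image of $(C,L)$). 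Unwinding Definition~\ref{D:log-trop}, this morphism is precisely $\wt\trop_{\J_{g,n}^{\log}}|_V$: the metric $d_\pi(e)=\delta_{n_e}$ records the image in $\ov M_S$ of the $e$-th coordinate of $\A^{E(G)}$, and the divisor is the fixed admissible $D$. Since such charts cover $\J_{g,n}^{\log}$, and since every chart $[\A^{E(G)}/\Gm^{E(G)}]$ of $\Jtw_{g,n}$ is met because the stratum $\J_{(G,D)}\subset\Jb_{g,n}$ is non-empty for each $(G,D)\in\QD_{g,n}$, strictness, smoothness and surjectivity of $\wt\trop_{\J_{g,n}^{\log}}$ follow. (An alternative I would keep in reserve is to bootstrap from the strict, smooth, surjective tropicalization map for $\M_{g,n}^{\log,\,\qs}$, the quasi-stable analogue of \cite[\S7.2]{CCUW}, along the strict smooth morphism $\Phi^{\log,\,\qs}$ of \eqref{PhiLog} via the relative Jacobian, but the chart argument seems most direct.)

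Granting part \ref{T:log-trop1}, the map $\wt\trop_{\J_{g,n}^{\log}}$ is strict, so by the universal property of \cite[Proposition~3.2.1]{ACMW} it induces $\cA(\wt\trop_{\J_{g,n}^{\log}})\colon\cA_{\J_{g,n}^{\log}}\to\cA_{\Jtw_{g,n}}$ together with the factorization~\eqref{E:diagtropJ} (note $\Jtw_{g,n}\to\cA_{\Jtw_{g,n}}$ is strict since $\Jtw_{g,n}$ is itself an Artin fan). To see it is an isomorphism I would identify both sides combinatorially: $\cA_{\Jtw_{g,n}}$ is the faithful-monodromy reduction of $\Jtw_{g,n}=a^*\Jt_{g,n}$, which on the combinatorial side replaces the indexing category $\QD_{g,n}$ by its quotient $\QD^E_{g,n}$ (the same argument that identifies $\cA_{\Mtw_{g,n}}$ via $\SG^E_{g,n}$ in \cite{CCUW, Uli19}), so $\cA_{\Jtw_{g,n}}=a^*(\Gamma_{\QD_{g,n}^E})$ in the notation of \eqref{E:Gamma-QDE}; while by the general theory of Artin fans of toroidal embeddings \cite{Uli19, ACMUW}, $\cA_{\J_{g,n}^{\log}}$ is the colimit over the category of strata $\Str(\Jb_{g,n})$ of the associated affine Artin cones, which by Proposition~\ref{P:CatStrata} equals $\varinjlim_{(G,D)\in(\QD^E_{g,n})^{\opp}}[\A^{E(G)}/\Gm^{E(G)}]=a^*(\Gamma_{\QD_{g,n}^E})$. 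Under these identifications $\cA(\wt\trop_{\J_{g,n}^{\log}})$ is the identity, hence an isomorphism.

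Part \ref{T:log-trop2bis} is then formal: functoriality of $\cA(-)$ applied to $\wt\Phi^{\mathrm{trop}}$ and to $\Phi^{\log}$ produces $\cA(\wt\Phi^{\mathrm{trop}})$ and $\cA(\Phi^{\log})$; the left square of \eqref{E:diagtropJM} is \eqref{diag-logtr}, the two middle squares commute by functoriality of $\trop_{(-)}$ and of $\cA(-)$, the right-hand squares commute by construction, the bottom row together with $\cA(\wt\trop_{\M_{g,n}^{\log}})$ being an isomorphism is \cite[Theorem~1.3]{Uli19} (on \cite[\S7.2]{CCUW}), and the agreement of the two outer arcs is again \cite[Proposition~3.2.1]{ACMW}. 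For part \ref{T:log-trop3}: $\Upsilon_{\J_{g,n}^{\log}}$ is the identity on underlying stacks, so $(\Upsilon_{\J_{g,n}^{\log}})^{-1}(\J_{(G,D)})$ is the toroidal stratum $\J_{(G,D)}$; and the chart description from part \ref{T:log-trop1} identifies $\wt\trop_{\J_{g,n}^{\log}}$ near a point of $\J_{(G,D)}$ with $V\to[\A^{E(G)}/\Gm^{E(G)}]$, under which the toroidal strata of $V$ (loci where prescribed subsets of the $D_e$ vanish) map to the torus-orbit strata of the chart, so comparing Proposition~\ref{TorStrata} with Corollary~\ref{C:StrataLog} (both indexed by $\QD_{g,n}$ through edge contraction) yields $(\wt\trop_{\J_{g,n}^{\log}})^{-1}(\Jtw_{(G,D)})=\J_{(G,D)}$, which is the asserted equality. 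The three bullet points follow since $\Jtw_{g,n,d}$, $\wt{\calJ}_{g,n,(d)}^{\,\mathrm{trop},\,\spl}$ and $\Jtw_{g,n}(\phi)$ are the unions of the strata $\Jtw_{(G,D)}$ indexed by $\QD_{g,n,d}$, $\QD_{g,n,(d)}^{\spl}$ and $\QD_{g,n}(\phi)$ (Remark~\ref{R:OtherTr2}), while $\Jb_{g,n,d}$, $\Jb_{g,n,(d)}^{\,\spl}$ and $\Jb_{g,n}(\phi)$ are the corresponding unions of toroidal strata $\J_{(G,D)}$ by their very definitions.

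The real content lies in part \ref{T:log-trop1} — the claim that $\J_{g,n}^{\log}$ is \'etale-locally strict and smooth over the toric chart $[\A^{E(G)}/\Gm^{E(G)}]$ in a way that literally computes $\wt\trop_{\J_{g,n}^{\log}}$ — which I expect to be the main obstacle, as it rests on the smoothness of $\Phi^{\qs}$ together with the precise local structure of $\Jb_{g,n}$ and its boundary; and, secondarily, in the bookkeeping of part \ref{T:log-trop2}, where Proposition~\ref{P:CatStrata} does the essential work of matching the category of strata of the toroidal embedding $(\J_{g,n}\subset\Jb_{g,n})$ with the faithful-monodromy reduction $\QD^E_{g,n}$. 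Everything else reduces formally to the analogous, already-known statements for $\Mb_{g,n}$.
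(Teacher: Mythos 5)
Your treatment of parts (i), (ii) and (iv) is essentially sound. For part (i) you take a genuinely different route from the paper: the paper argues modularly, following the proof of \cite[Theorem~7.12]{CCUW} (surjectivity by constructing a quasi-stable log curve with admissible bundle realizing any given $(\Gamma,D)$, strictness by the lifting property for enlargements $M_S\to M_S'$ of the log structure with $L'=L$, and smoothness from ``strict $+$ log smooth $\Rightarrow$ smooth''), whereas you compute \'etale-locally with toric charts $V\to \A^{E(G)}\to[\A^{E(G)}/\Gm^{E(G)}]$ built from the smoothing parameters. Your chart argument works, granted the (routine but necessary) verification that the chart morphism agrees with the modular map of Definition~\ref{D:log-trop}, i.e.\ that at every geometric point of $V$ the induced metrized contraction of $G$ and the pushforward $\pi_*(D)$ coincide with the dual graph data and multidegree of the restricted family; this is the same kind of local analysis the paper carries out in the analytic setting. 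Part (ii) is argued exactly as in the paper, via Proposition~\ref{P:CatStrata} and the identification of $\cA_{\Jtw_{g,n}}$ with the Artin fan of $\Gamma_{\QD_{g,n}^E}$, and part (iv) reduces, as in the paper, to the modular descriptions of $\Upsilon_{\J_{g,n}^{\log}}$ and $\wt{\trop}_{\J_{g,n}^{\log}}$ together with Remarks~\ref{R:OtherLog} and~\ref{R:OtherTr2}.

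The genuine gap is in part (iii). You produce $\cA(\wt\Phi^{\mathrm{trop}})$ and $\cA(\Phi^{\log})$ ``by functoriality of $\cA(-)$'', and you invoke the same functoriality for the commutativity of the middle squares. But the assignment $\cX\mapsto\cA_{\cX}$ is only functorial for \emph{strict} morphisms: $\trop_{\cX}$ is initial among strict morphisms to Artin fans with faithful monodromy, and the composite $\J_{g,n}^{\log}\to\M_{g,n}^{\log}\to\cA_{\M_{g,n}^{\log}}$ (resp.\ $\Jtw_{g,n}\to\Mtw_{g,n}\to\cA_{\Mtw_{g,n}}$) is not strict because $\Phi^{\log}$ and $\wt\Phi^{\mathrm{trop}}$ are not strict, so the universal property of \cite[Proposition~3.2.1]{ACMW} does not apply; indeed the Artin fan construction is known to fail to be functorial for general logarithmic morphisms, and the theorem's accompanying remark in the paper flags exactly this point. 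The paper instead constructs $\cA(\wt\Phi^{\mathrm{trop}})$ by hand, from the commutative square of combinatorial cone stacks relating $\Gamma_{\QD_{g,n}}$, $\Gamma_{\QD_{g,n}^E}$, $\Gamma_{\SG_{g,n}}$, $\Gamma_{\SG_{g,n}^E}$ (the functor $F^E:\QD_{g,n}^E\to\SG_{g,n}^E$ of \eqref{E:funFE} respecting the cone assignments), checks the middle square using the explicit descriptions of $\trop_{\Jtw_{g,n}}$ and $\trop_{\Mtw_{g,n}}$, and then \emph{defines} $\cA(\Phi^{\log})$ as the unique morphism making the right square commute, using that $\cA(\wt\trop_{\J_{g,n}^{\log}})$ and $\cA(\wt\trop_{\M_{g,n}^{\log}})$ are isomorphisms. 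Your argument can be repaired along these lines—you already have all the combinatorial input from part (ii)—but as written the appeal to functoriality is not a valid step.
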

Note that the morphisms of logarithmic algebraic stacks $\Phi^{\log}$ and $\wt \Phi^{\mathrm{trop}}$ are not strict, so that the existence of the morphisms of Artin fans $\cA(\wt \Phi^{\mathrm{trop}})$ and $\cA(\Phi^{\log})$ in part \ref{T:log-trop2bis} does not come automatically from the functoriality of the logarithmic tropicalization morphism $\trop_{\cX}$. 
\begin{proof}
Part \ref{T:log-trop1} follows along the lines of the proof of \cite[Theorem~7.12]{CCUW}. Given a quasi-stable tropical curve $\Gamma$ over a monoid $P$ together with an admissible divisor $D$ on $\GG(\Gamma)$, we can always find a quasi-stable logarithmic curve $X$ together with an admissible line bundle $L$ such that the dual tropical curve of $X$ is equal to $\Gamma$ and the multidegree of $L$ is equal to $D$. This shows that $\wt{\trop}_{\J_{g,n}^{\log}}$ is surjective. 

In order to show that $\wt{\trop}_{\J_{g,n}^{\log}}$ is strict, we need to check that for every scheme $S$ together with a morphism $M_S\rightarrow M_S'$ of logarithmic structures, any diagram
\begin{equation*}\begin{tikzcd}
(S,M_S)\arrow[rr]\arrow[d] &&\calJ_{g,n}^{\log}\arrow[d,"\wt{\trop}_{\J_{g,n}^{\log}}"]\\
(S,M_S')\arrow[rr]\arrow[rru,dashed]&&\wt{\calJ}_{g,n}^{\,\mathrm{trop}}
\end{tikzcd}\end{equation*}
has a unique lift as pictured. In other words: we have a pair $(\Gamma',D')$ consisting of a quasi-stable tropical curve over $(S,M_S')$ and an admissible divisor $D'$ on $\Gamma'$ and a pair $(X,M_X, L)$ consisting of a quasi-stable logarithmic curve $(X,M_X)$ over $(S,M_S)$ and an admissible line bundle $L$ such that the family $\Gamma_X$ of dual tropical curves of $X$ over $S$ is the family $\Gamma/(S,M_S)$ of tropical curves that is induced from $\Gamma'/(S,M_S)$. We wish to show that there is a quasi-stable logarithmic curve $(X,M_X')$ over $(S,M_S')$ and an admissible line bundle $L'$ on $X$ which induces both $(X,L)$ over $(S,M_S)$ and $(\Gamma',D')$ in $\wt{\calJ}_{g,n}^{\,\mathrm{trop}}(S,M_S
')$. From the proof of \cite[Theorem~7.12]{CCUW} we obtain such an $(X,M_X')$ by modifying the logarithmic structure on $(X,M_X)$ and we set $L'=L$ in order to find an $(X,M_X',L')$ with the desired properties.

Since $\wt{\calJ}_{g,n}^{\,\mathrm{trop}}$ is logarithmically \'etale over the base, we have that $\wt{\trop}_{\J_{g,n}^{\log}}$ is logarithmically smooth, since $\calJ_{g,n}^{\log}$ is logarithmically smooth over $k$. Since $\wt{\trop}_{\J_{g,n}^{\log}}$ is also strict, the map $\wt{\trop}_{\J_{g,n}^{\log}}$ is smooth (in the classical non-logarithmic sense), since strict logarithmically smooth morphisms are automatically smooth.

Let us now prove part \ref{T:log-trop2}. It follows from \cite[Proposition~4.5]{Uli19} that $ \cA_{\J_{g,n}^{\log}}$ is equal to the Artin fan associated, via the equivalence \eqref{cone-Artin}, to the (combinatorial) cone stack  $\Gamma_{\Str(\Jb_{g,n})}:\Str(\Jb_{g,n})\to \RPC^f$ of \eqref{E:Gamma-Str}. On the other hand, since $\Jtw_{g,n}$ is the Artin fan associated to the (combinatorial) cone stack $\Gamma_{\QD_{g,n}}:\QD_{g,n}^{\opp}\to \RPC^f$ of \eqref{E:combJt} (by Proposition \ref{P:2LogJ}), it follows from \cite[Cor. 4.6]{Uli19} that $\cA_{\Jtw_{g,n}}$ is the Artin fan associated to the (combinatorial) cone stack $\Gamma_{\QD_{g,n}^E}:(\QD_{g,n}^E)^{\opp}\to \RPC^f$ of \eqref{E:Gamma-QDE}. Proposition \ref{P:CatStrata}, together with the modular description of $\wt{\trop}_{\J_{g,n}^{\log}}$, implies that the map $\cA(\wt{\trop}_{\J_{g,n}^{\log}}):\cA_{\J_{g,n}^{\log}}\to \cA_{\Jtw_{g,n}}$ is an isomorphism of Artin fans. 

 Let us prove part \ref{T:log-trop2bis}. The commutativity of the left square of \eqref{E:diagtropJM} has been already observed in~\eqref{diag-logtr}. 
 
 We now define a morphism $\cA(\wt \Phi^{\mathrm{trop}}): \cA_{\Jtw_{g,n}}\to \cA_{\Mtw_{g,n}}$ of Artin fans making commutative the central square of \eqref{E:diagtropJM}.
 Consider the commutative diagram of  combinatorial cone stacks 
 \begin{equation}\label{E:com-combst}
 \xymatrix{
 \left(\Gamma_{\QD_{g,n}}: \QD_{g,n}^{\opp}  \longrightarrow \RPC^f \right) \ar[r] \ar[d] & \left(\Gamma_{\QD_{g,n}^E}: (\QD_{g,n}^E)^{\opp} \longrightarrow \RPC^f\right) \ar[d] \\
  \left(\Gamma_{\SG_{g,n}}: \SG_{g,n}^{\opp}  \longrightarrow \RPC^f \right) \ar[r] & \left(\Gamma_{\SG_{g,n}^E}: (\SG_{g,n}^E)^{\opp} \longrightarrow \RPC^f\right) 
 }
 \end{equation}
 where  the two horizontal arrows are the natural factorizations of $\Gamma_{\QD_{g,n}}$ (see \eqref{E:combJt}) and $\Gamma_{\SG_{g,n}}$ (see \eqref{E:combMt}) through the quotient categories in Definition \ref{CatE}, the left vertical arrow is given in Theorem~\ref{TrJacStack}\ref{TrJacStack2} and the right vertical arrow is the induced morphism. 
 Passing to the Artin fans associated to the above combinatorial cone stacks, we get the required commutative diagram 
   \begin{equation}\label{E:com-ArtJM}
 \xymatrix{
\Jtw_{g,n} \ar[rr]^{\trop_{\Jtw_{g,n}}} \ar[d]_{\wt \Phi^{\mathrm{trop}}} && \cA_{\Jtw_{g,n}} \ar[d]^{\cA(\wt \Phi^{\mathrm{trop}})} \\
\Mtw_{g,n} \ar[rr]^{\trop_{\Mtw_{g,n}}} && \cA_{\Mtw_{g,n}}  \\
 }
 \end{equation}
 where we used the description of $\wt \Phi^{\mathrm{trop}}$ in Proposition \ref{P:2LogJ} and Theorem~\ref{TrJacStack}\ref{TrJacStack2}, the description of  $\trop_{\Jtw_{g,n}}$ given in part~\ref{T:log-trop2} and the analogous description of $\trop_{\Mtw_{g,n}}$ given in \cite[\S 4.3]{Uli19}.

 Finally, since the morphisms  $\cA(\wt{\trop}_{\J_{g,n}^{\log}})$ and $\cA(\wt{\trop}_{\M_{g,n}^{\log}})$ are isomorphisms of Artin fans, there exists a unique morphism $\cA(\Phi^{\log}):  \cA_{\J_{g,n}^{\log}}\to  \cA_{\M_{g,n}^{\log}}$ making commutative the right square of \eqref{E:diagtropJM}.

Part~\ref{T:log-trop3}: the first assertion follows from the modular description of $\Upsilon_{\J_{g,n}^{\log}}$ given in Proposition~\ref{P:logJ-boun} and the definition of $\wt{\trop}_{\J_{g,n}^{\log}}$. 
The second assertion follows from the first one together with Remarks~\ref{R:OtherLog} and~\ref{R:OtherTr2}.
\end{proof}


\section{Analytic tropicalization}

The aim of this section is to describe the relation between the universal tropical Jacobian and the  analytification of the universal compactified Jacobian. In this section, we work over a fixed algebraically closed field $k$ on which we put the trivial valuation. 

\subsection{Analytification and functorial analytic tropicalization}\label{S:ana-ske}

In this subsection, we are going to review the definition of the (Berkovich) analytification of an Artin stack and of the functorial analytic tropicalization map of a  toroidal embedding of Artin stacks.
We will mostly follow the presentation and notation of \cite{Uli19} (that generalizes the previous works of \cite{Thu} and \cite{ACP}) although we will describe the tropicalization map in the special case of toroidal embeddings of stacks and not for arbitrary  logarithmic stacks. 


Let $\cX$ be an Artin stack locally of finite type over $k$. It is shown in \cite[Section~5.1]{Uli19} (generalizing the construction of Thuillier \cite{Thu} for schemes) that one can associate to $\cX$ (in a functorial way) a strict $k$-analytic stack $\cX^{\beth}$, \textit{i.e.} a stack in the category of strict analytic $k$-spaces endowed with the  G-\'etale topology having representable diagonal and admitting a G-smooth atlas. We will call $\cX^{\beth}$ the \textbf{beth-analytification} of $\cX$.

We  refrain from giving the definition of $\cX^{\beth}$ (since we will not need it in this paper), but we recall that the   topological space associated to  $\cX^{\beth}$ admits the following explicit description 
\begin{equation}\label{ana-beth}
\begin{aligned}
& \big\lvert\cX^{\beth}\big\rvert:=\big\{\Spec R\to \cX\big\}\big/\sim, \\
\end{aligned}
\end{equation}
where $R$ varies among all the rank-$1$ valuation rings containing $k$ and the equivalence relation $\sim$  is defined as follows: we say that $\Spec R\to \cX$ is equivalent to  $\Spec R'\to \cX$ if there exists another rank-$1$ valuation ring $R''$ containing both $R$ and $R'$, and a $2$-isomorphism between the two natural morphisms $\Spec R''\to \Spec R\to \cX$ and $\Spec R''\to \Spec R'\to \cX$. In particular, note that every point of $\cX^{\beth}$ can be represented by a morphism $\Spec R\to \cX$ where  $R$ is a complete rank-$1$ valuation ring with an algebraically closed residue field.  

The topological space $\vert \cX^{\beth}\vert$ admits an anticontinuous surjective map, called the \textbf{reduction map}, to the topological space $\vert \cX\vert$ underlying the stack $\cX$ which is defined by sending a point 
$[\phi:\Spec R\to \cX]\in \cX^{\beth}$ to the image via $\phi$ of the special point $o\in R$. More precisely, we have 
\begin{equation}\label{E:red-map}
\begin{aligned}
\red_{\cX}: \big\vert \cX^{\beth}\big\vert & \longrightarrow \vert \cX\vert \\
\big[\Spec R\to \cX\big] & \mapsto \big[\Spec R/\frakm_R\to \Spec R\to \cX\big],
\end{aligned}
\end{equation}
where $\frakm_R$ is the maximal ideal of $R$.

\begin{remark}\label{R:beth-comp}
Note that if $\cX$ is proper then, by the valuative criterion of properness, we may describe the topological space underlying $\cX^{\beth}$ as 
\begin{equation}\label{E:beth-comp}
\begin{aligned}
& \big\lvert\cX^{\beth}\big\rvert:=\big\{\Spec K\to \cX\big\}/\sim, \\
\end{aligned}
\end{equation}
where $K$ varies among all the rank-$1$ valuation fields extending $k$ with trivial valuation, and the equivalence relation is defined as above. This is the underlying topological space $\vert \calX^{\an}\vert$ of the non-Archimedean analytification $\calX^{\an}$ associated to $\calX$ in the sense of \cite[Section 2.3]{Uli17}.
\end{remark}


Assume now that we have a toroidal embedding of Artin stacks $(\cU\subset \cX)$, locally of finite type over $k$. Associated to $(\cU\subset \cX)$ there is a canonical topological space $\Sigma(\cX)$, together with a canonical compactification $\ov\Sigma(\cX)$, and a functorial analytic tropicalization map $\trop_{\cX}^{\an}:|\cX^{\beth}|\to \ov\Sigma(\cX)$, that we are now going to review following \cite{Uli19} (which deals with the more general situation of logarithmic Artin stacks) and \cite{ACP} (which deals with the special case of toroidal embeddings of DM-stacks).





As in \S \ref{S:catstraJ}, consider the lisse-\'etale sheaves $D_{\cX}$ and $E_{\cX}$ over $\cX$ such that, for every smooth morphism $V\to \cX$ with $V$ a scheme, $D_{\cX}(V)$ (resp. $E_{\cX}(V)$) is the group of Cartier divisors on $V$ (resp. the submonoid of effective Cartier divisors on $V$) that are supported on $V\setminus U$ where $U:=V\times_{\cX} \cU$. 
Consider the category of strata $\Str(\cX)=\{W_i\}$ of the toroidal embedding $\cU\subset \cX$ (as defined in \S\ref{S:toro-J}) and pick a geometric generic point $w_i$ in each stratum $W_i$.
By composing the functor \eqref{E:Gamma-Str} with the natural inclusion $\RPC^f\subset \TopE$, we get a functor 
\begin{equation}\label{func-str1}
\begin{aligned}
\Gamma_{\cX}:\Str(\cX) & \longrightarrow \TopE \\
W_i & \longmapsto \sigma(W_i):=\Hom_{\mon}(E_{\cX,w_i}, \R_{\geq 0}), \\
(W_i\rightarrow W_j) & \longmapsto \big(\sigma(W_i)\hookrightarrow \sigma(W_j)\big).
\end{aligned}
\end{equation}
We can also upgrade the above functor $\Gamma_{\cX}$ to a functor taking values on compact topological spaces as follows. To any stratum $W_i\in \Str(\cX)$, we associate the  extended cone 
$$\ov{\sigma}(W_i):=\Hom_{\mon}\big(E_{\cX,w_i}, \R_{\geq 0}\cup\{+\infty\}\big),$$ 
which is a canonical compactification of $\sigma(W_i)$.
Moreover, any morphism  $W_i\rightarrow W_j$ in $\Str(\cX)$ induces a surjective monoid homomorphism $E_{\cX,w_j}\twoheadrightarrow E_{\cX,w_i}$, and hence it induces an extended face morphism $\ov{\sigma}(W_i)\hookrightarrow \ov{\sigma}(W_j)$ of extended cones. In this way, we get a functor 
\begin{equation}\label{func-str2}
\begin{aligned}
\ov\Gamma_{\cX}:\Str(\cX) & \longrightarrow \TopE \\
W_i & \longmapsto \ov\sigma(W_i), \\
(W_i\rightarrow W_j) & \longmapsto \big(\ov\sigma(W_i)\hookrightarrow \ov\sigma(W_j)\big).
\end{aligned}
\end{equation}

The \textbf{generalized cone complex} $\Sigma(\cX)$ and the \textbf{generalized extended cone complex} $\ov{\Sigma}(\cX)$ (in the terminology of \cite[Section~2]{ACP}) of the toroidal embedding $\cU\subset \cX$ are given by 
\begin{equation}\label{skele}
\Sigma(\cX):=\colim_{W_i\in \Str(\cX)} \sigma(W_i)\subseteq \ov{\Sigma}(\cX):=\colim_{W_i\in \Str(\cX)} \ov{\sigma}(W_i).
\end{equation}
where the colimit is with respect to the two functors \eqref{func-str1} and \eqref{func-str2}.
We have a  stratification into locally closed subsets (see \cite[Proposition~2.6.2]{ACP})
\begin{equation}\label{skele-deco}
\Sigma(\cX)=\bigsqcup_{W_i\in \Str(\cX)} \sigma(W_i)^o/H_{W_i} \subset \ov{\Sigma}(\cX)=\bigsqcup_{W_i\in \Str(\cX)} \ov{\sigma}(W_i)^o/H_{W_i}
\end{equation}
where $\sigma(W_i)^o:=\Hom_{\mon}(E_{\cX,w_i}, \R_{>0})\subset \sigma(W_i)$,  $\ov{\sigma}(W_i)^o:=\Hom_{\mon}(E_{\cX,w_i}, \R_{>0}\cup\{+\infty\})\subset  \ov{\sigma}(W_i)$ and  $H_{W_i}$ is  the monodromy group of the stratum $W_i$.


The \textbf{functorial analytic tropicalization} map $\trop^{\an}_{\cX}:\left\vert \cX^{\beth} \right\vert\to \ov{\Sigma}(\cX)$  is defined as follows.  Consider a point $[\psi:\Spec R\to \cX]$ of $\vert \cX^{\beth} \vert$, where  $R$ is an integral domain which is complete with respect to a  rank-$1$ valuation $\val_R:R\to \RR_{\geq 0}\cup\{\infty\}$.  Let $x$ be the image via $\psi$ of the closed point of $\Spec R$ and let $W_i$ be the toroidal stratum containing $x$. Since $E_{\cX}$ is \'etale locally constant along each stratum $W_i$ (see \cite[Proposition~6.2.1]{ACP}), any \'etale specialization $s: w_i\rightsquigarrow x$ induces by pull-back an isomorphism  $s^*:E_{\cX,x}\xrightarrow{\cong}E_{\cX,w_i}$, which is well-defined up to the action of the monodromy group $H_{W_i}$ on $E_{\cX,w_i}$. 
Fix an \'etale specialization $s: w_i\rightsquigarrow x$ and consider the chain of monoid homomorphisms 
\begin{equation}\label{mon-hom}
E_{\cX,w_i}\xrightarrow[\cong]{(s^*)^{-1}}E_{\cX,x} \xrightarrow{\widetilde{\psi}} \big((R\setminus R^*)/R^*, \cdot\big) \xrightarrow{\ov{\val_R}} \RR_{>0}\cup\{\infty\}
\end{equation}
where the morphism $\widetilde{\psi}$ sends an effective Cartier divisor $D$ with local equation 
$f\in \widehat{\cO}_{\cX,x}$ at $x$  to the element $\psi^{\sharp}(f)\in R$ (well-defined up to units) which is not a unit of $R$ since $x\in W_i$, and $\ov{\val_R}$  is induced by the valuation $\val_R:R\to \RR_{\geq 0}\cup\{\infty\}$ using that the units $R^*$ are the only elements of $R$ having valuation zero.
Then $\trop^{\an}_{\cX}\big([\psi:\Spec R\to \cX]\big)$ is the image in $\ov{\Sigma}(\cX)$ of the monoid homomorphism \eqref{mon-hom} which is a well-defined element of $\ov{\sigma}(W_i)^o/H_{W_i}$.  
The map $\trop^{\an}_{\cX}$ is continuous, surjective and proper, and it is functorial with respect to toroidal morphisms.

\begin{remark}\label{R:retr}
The functorial analytic tropicalization map $\trop_{\cX}^{\an}$ has a section $J_{\cX}:\ov{\Sigma}(\cX)\to \left\vert \cX^{\beth} \right\vert$ such that the composition 
$$
\frakp_{\cX}=J_{\cX}\circ \trop_{\cX}^{\an}:\left\vert \cX^{\beth} \right\vert\longrightarrow \left\vert \cX^{\beth} \right\vert
$$ 
is a strong deformation retraction onto the non-archimedean skeleton of $\vert \cX^{\beth} \vert$, see \cite{Thu}, \cite{ACP}, and \cite[Proposition~6.3]{Uli19}, as well as \cite[Section 2.6]{Ranganathan_skeletonsofstablemapsII} for a generalization to toroidal Artin stacks.   
\end{remark}



\subsection{Analytic tropicalization of the universal Jacobian}

By applying the construction of the previous section \S \ref{S:ana-ske} to the forgetful morphism $\Phi:\Jb_{g,n}\to \Mb_{g,n}$, which is toroidal by Proposition~\ref{TorStrata}\ref{TorStrata3}, we get the following commutative diagram of continuous maps of topological spaces
\begin{equation}\label{diag-ana}
\xymatrix{
 \big\vert\Jb_{g,n}^{\,\beth}\big\vert \ar[rr]^{\trop^{\an}_{\Jb_{g,n}}} \ar[d]^{|\Phi^{\beth}|}&& \ov{\Sigma}(\Jb_{g,n})\ar[d]^{\Phi^{\Sigma}} \\
 \big|\Mb_{g,n}^{\,\beth} \big|\ar[rr]^{\trop^{\an}_{\Mb_{g,n}}}& &\ov{\Sigma}(\Mb_{g,n})\\
}
\end{equation}

\begin{remark}\label{R:OtherAna}
The decomposition \eqref{E:connJac} of $\Jb_{g,n}$ into connected components induces the following decomposition into connected components 
$$ \trop^{\an}_{\Jb_{g,n}}: \big\vert\Jb_{g,n}^{\,\beth}\big\vert=\bigsqcup_{d\in \ZZ}  \big\vert\Jb_{g,n,d}^{\beth}\big\vert \xrightarrow{\bigsqcup_{d\in \ZZ} \trop^{\an}_{\Jb_{g,n,d}}} \bigsqcup_{d\in \ZZ} \ov{\Sigma}(\Jb_{g,n,d})=\ov{\Sigma}(\Jb_{g,n})
$$
Moreover, we have that 
\begin{itemize}
\item  $\big\vert(\Jb_{g,n}^{\,\spl})^{\beth}\big\vert $ can be identified with the  closed subspace $\red_{\Jb_{g,n}}^{-1}\big(\vert\Jb_{g,n}^{\,\spl}\vert\big)\subset \big\vert\Jb_{g,n}^{\,\beth}\big\vert$;
\item  $\ov{\Sigma}(\Jb_{g,n}^{\,\spl})$ is a generalized extended cone subcomplex of  $\ov{\Sigma}(\Jb_{g,n})$;
\item   $(\trop^{\an}_{\Jb_{g,n}})^{-1}\big(\ov{\Sigma}(\Jb_{g,n}^{\,\spl})\big)=\big\vert(\Jb_{g,n}^{\,\spl})^{\beth}\big\vert$ and $\trop^{\an}_{\Jb_{g,n}^{\,\spl}}$ is the restriction of $\trop^{\an}_{\Jb_{g,n}}$ to $\big\vert(\Jb_{g,n}^{\,\spl})^{\beth}\big\vert$. 
\end{itemize}
Analogous statements hold for $\Jb_{g,n}(\phi)$ for any universal stability condition $\phi\in V_{g,n}$. 
\end{remark}

The aim of this subsection is to describe the  above diagram in terms of  tropical geometry. First of all, let us introduce the relevant tropical objects. 

\begin{definition}[(Quasi-)stable (extended) tropical curves]\label{D:tropcurvR}
\noindent 
\begin{enumerate}
\item A \emph{(quasi-)stable (resp. extended) tropical curve}  $\Gamma$ of type $(g,n)$ is  a (quasi-)stable graph $\GG\Gamma)$ of type $(g,n)$  together with a  metric $d_{\Gamma}:E(\GG(\Gamma))\to \RR_{>0}$ (resp. 
$d_{\Gamma}:E(\GG(\Gamma))\to \RR_{>0}\cup \{\infty\}$).
\item The \emph{stabilization} of a quasi-stable (resp. extended) tropical curve $\Gamma$ of type $(g,n)$ is the stable (resp. extended)  tropical curve $\Gamma^{st}$ of type $(g,n)$ such that 
\begin{itemize}
\item the underlying graph is $\GG(\Gamma^{\st}):=\GG(\Gamma)^{\st}$,
\item the metric $d_{\Gamma^{\st}}:E(\GG(\Gamma))\to \RR_{>0}$ (resp. $d_{\Gamma^{\st}}:E(\GG(\Gamma))\to \RR_{>0}\cup \{\infty\}$) is defined (using the notation below \eqref{E:edg-stab}) by 
$$d_{\Gamma^{\st}}(e)=
\begin{cases}
d_{\Gamma}(\wt e) & \text{ if } e\in E_{\nex}(\GG(\Gamma)^{\st}), \\
d_{\Gamma}(e^1)+d_{\Gamma}(e^2) & \text{ if } e\in E_{\exc}(\GG(\Gamma)^{\st}).
\end{cases}
$$
\end{itemize}
\end{enumerate}
\end{definition}

We now introduce the generalized (extended) cone complex associated to $\calJ_{g,n}^{\mathrm{trop}}$. 


\begin{definition}\label{TropGCC}
\noindent 
\begin{enumerate}
\item \label{TropGCC1} The 
 \textbf{generalized (resp. extended) cone complex} associated to $\calJ_{g,n}^{\mathrm{trop}}$ is defined as the colimit 
\begin{equation}\label{conecompl}
\JJ_{g,n}=\varinjlim_{(G,D)\in \QD_{g,n}} \RR_{\geq 0}^{E(G)}\subset \JJb_{g,n}=\varinjlim_{(G,D)\in \QD_{g,n}} \big(\RR_{\geq 0}\cup\{\infty\}\big)^{E(G)},
\end{equation}
which are obtained as colimits in the category of topological spaces $\TopE$ of, respectively, the diagram
\begin{equation}\label{E:GamQDtop}
\Gamma_{\QD_{g,n}}^{top}:\QD_{g,n}^{\opp}\xrightarrow{\Gamma_{\QD_{g,n}}} \RPC^f\subset \TopE,
\end{equation}
where $\Gamma_{\QD_{g,n}}$ is the functor \eqref{E:combJt},   and its natural extension
\begin{equation}\label{E:GamQDtop2}
\begin{aligned}
\ov \Gamma_{\QD_{g,n}}^{top}:\QD_{g,n}^{\opp}& \longrightarrow \TopE,\\
(G,D) & \longmapsto (\RR_{\geq 0}\cup\{+\infty\})^{E(G)},\\
\big(\pi:(G,D)\to (G',D')\big) & \longmapsto \big((\RR_{\geq 0}\cup\{+\infty\})^{E(G')}\hookrightarrow (\RR_{\geq 0}\cup\{+\infty\})^{E(G)}\big),
\end{aligned}
\end{equation}
using extended cones.

\item \label{TropGCC2} We denote by $\Phi^{\trop}$ the forgetful-stabilization morphism of generalized (resp. extended) cone complexes
\begin{equation}\label{PhiTrop}
\xymatrix{
\JJ_{g,n} \ar@{^{(}->}[r] \ar[d]^{\Phi^{\trop}}& \JJb_{g,n} \ar[d]^{\Phi^{\trop}} \\
 \MM_{g,n}:=\colim_{\ov G\in \SG_{g,n}} \RR_{\geq 0}^{E(\ov G)} \ar@{^{(}->}[r] &  \MMb_{g,n}:=\colim_{\ov G\in \SG_{g,n}} \ov{\RR_{\geq 0}^{E(\ov G)}}
}
\end{equation}
induced by, respectively,  the  morphism of diagrams 
\[
  \big(\Gamma_{\QD_{g,n}}:\QD_{g,n}^{\opp}\to \RPC^f\big)\to
  \big(\Gamma_{\SG_{g,n}}:\SG_{g,n}^{\opp}\to \RPC^f\big) 
\]
described in Theorem~\ref{TrJacStack}\ref{TrJacStack2} composed with the inclusion $\RPC^f\subset \TopE$, and its natural extension using extended cones. 
\end{enumerate}
\end{definition}


\begin{remark}\label{R:conecomE}
From the definition \eqref{E:combJt} of the functor  $\Gamma_{\QD_{g,n}}:\QD_{g,n}^{\opp}\to  \RPC^f$,  it follows that the morphism $\Gamma_{\QD_{g,n}}(\pi):\RR_{\geq 0}^{E(G')}\hookrightarrow \RR_{\geq 0}^{E(G)}$ in $\RPC^f$ induced by a morphism $\pi:(G,D)\to (G',D')$ in $\QD_{g,n}$ depends only from $\pi_E^*:E(G')\hookrightarrow E(G)$.
This implies that the functor $\Gamma_{\QD_{g,n}}$ factors through the functor $\Gamma_{\QD_{g,n}^E}$ of \eqref{E:Gamma-QDE} and hence that $\JJ_{g,n}$ can also be described as the colimit of the diagram 
\begin{equation}\label{E:GamQDEtop}
\Gamma_{\QD_{g,n}^E}^{top}:\QD_{g,n}^{\opp}\xrightarrow{\Gamma_{\QD^E_{g,n}}} \RPC^f\subset \TopE,
\end{equation}
Similarly, the functor $\ov \Gamma_{\QD_{g,n}}^{top}$ of \eqref{E:GamQDtop2} factors through a functor 
\begin{equation}\label{E:GamQDEtop2}
\ov \Gamma_{\QD_{g,n}^E}^{top}:\big(\QD_{g,n}^E\big)^{\opp} \longrightarrow \TopE
\end{equation}
whose limit is $\JJb_{g,n}$. 


\end{remark}

Note that $\JJ_{g,n}$ (resp. $\JJb_{g,n}$) naturally parametrizes pairs $(\Gamma,D)$, where  $\Gamma$ is a quasi-stable (resp. extended) tropical curve of type $(g,n)$  and $D$ is an admissible divisor on the underlying graph $\GG(\Gamma)$. Therefore, using also the presentations \eqref{conecompl}, $\JJ_{g,n}$ and $\JJb_{g,n}$  have a  stratification into locally closed subsets 
\begin{equation}\label{E:strataTr}
\JJ_{g,n}=\bigsqcup_{(G,D)\in \QD_{g,n}}  \JJ_{(G,D)} \subset \JJb_{g,n}=\bigsqcup_{(G,D)\in \QD_{g,n}} \JJb_{(G,D)}
\end{equation}
where $ \JJ_{(G,D)}\cong \RR_{>0}^{E(G)}/\Aut(G,D)$ (resp.  $\JJb_{(G,D)}\cong  (\RR_{>0}\cup\{\infty\})^{E(G)}/\Aut(G,D)$) is the locus parametrizing pairs $(\Gamma,D')\in \JJ_{g,n}$ (resp. $\JJb_{g,n}$) such that $(\GG(\Gamma),D')\cong (G,D)$. Moreover, since the maps appearing in the colimits \eqref{conecompl} are  face inclusions (and hence closed embeddings), we deduce that, given $(G,D), (G',D')\in \QD_{g,n}$, we have that 
\begin{equation}\label{E:strataTr2}
\JJ_{(G',D')}\subset \ov{J^{\mathrm{trop}}_{(G,D)}} \Leftrightarrow \JJb_{(G',D')}\subset \ov{\JJb_{(G,D)}} \Leftrightarrow (G,D)\geq (G',D') \text{ in } |\QD_{g,n}|.
\end{equation}

Similarly,  $\MM_{g,n}$ (resp. $\MMb_{g,n}$) naturally parametrizes stable (resp. extended) tropical curves of type $(g,n)$ (see \cite{BMV11} and \cite{ACP}) and, moreover, the map $\Phi^{\trop}$ sends $(\Gamma,D)\in \JJ_{g,n}$ (resp. $\in \JJb_{g,n}$)  into the stabilization $\Gamma^{\st}\in \MM_{g,n}$ (resp. $\in \MMb_{g,n}$) of $\Gamma$.


\begin{remark}\label{R:OtherTrop}
Similarly to Definition \ref{TropGCC}, we can define
$\JJ_{g,n,d}(\phi)\subset \JJb_{g,n,d}$,  $\JJs_{g,n,(d)}\subset
\ov{J}_{g,n,(d)}^{\mathrm{trop},\,\spl}$ and \linebreak $\JJ_{g,n}(\phi)\subset \JJb_{g,n}(\phi)$ for any universal stability condition $\phi\in V_{g,n}$, by replacing  $\QD_{g,n}$ with, respectively, $\QD_{g,n,d}$, $\QD_{g,n,(d)}^{\spl}$ and $\QD_{g,n}(\phi)$. The stratification \eqref{E:strataTr} and its modular description extends easily to these new generalized (resp. extended) cone complexes. 

From Remark \ref{R:subcat} and Definition \ref{TropGCC}, it follows that:
\begin{itemize}
\item  we have a decomposition into connected components 
$$\JJ_{g,n}=\bigsqcup_{d\in \ZZ} \JJ_{g,n,d} \subset \bigsqcup_{d\in \ZZ} \JJb_{g,n,d}=\JJb_{g,n}.
$$
\item $\JJs_{g,n,(d)}$ (resp. $\ov{J}_{g,n,(d)}^{\,\mathrm{trop},\,\spl}$) is a generalized (resp. extended) cone subcomplex, and hence a closed subspace,  of $\JJ_{g,n,(d)}$ (resp. $\JJb_{g,n,(d)}$).
\item  $\JJ_{g,n}(\phi)$ (resp. $ \JJb_{g,n}(\phi)$) is a connected generalized (resp. extended) cone subcomplex, and hence a closed subspace,  of $\JJ_{g,n,|\phi|}$ (resp. $\JJb_{g,n,|\phi|}$). 

\end{itemize}
\end{remark}

We now define a modular analytic tropicalization map.

\begin{definition}\label{D:ana-trop}
The \textbf{modular analytic tropicalization map} (for $\Jb_{g,n}$) is the map 
\begin{equation}\label{E:trop-ana}
\wt{\trop}^{\an}_{\Jb_{g,n}}\colon \left\vert \Jb_{g,n}^{\,\beth}\right\vert \longrightarrow \JJb_{g,n}
\end{equation}
defined as follows. Consider a point $\big[\Spec R\to \Jb_{g,n}\big]\in \big|\Jb_{g,n}^{\,\beth}\big|$, where $R$ is an integral domain which is complete with respect to a rank-$1$ valuation $\val_R:R\to \RR_{\geq 0}\cup\{\infty\}$ and having an algebraically closed residue field, 
and let $(\cC\to \Spec R, \cL)$ be the induced family of quasi-stable curves endowed with an admissible line bundle. 
On the dual graph $G(\cC_s)$ of the special fiber $\cC_s$ of $\cC\to \Spec R$ we consider the metric 
\begin{equation}\label{E:metricC}
\begin{aligned}
d_{\cC}\colon E(G(\cC_s))& \longrightarrow \big((R\setminus R^*)/R^*,\cdot\big) \xrightarrow{\ov\val_R} \RR_{>0}\cup\{\infty\}\\
e& \longmapsto [f_e] & 
\end{aligned}
\end{equation}
where $f_e$ is an element of $R$, well-defined up to units and which is not a unit, such that  an \'etale local equation for $\cC$ at the node $n_e$ of $\cC_s$ is given by
$xy=f_e$, and $\ov\val_R$ is induced by the valuation $\val_R$ using that the units $R^*$ of $R$ are the only elements of valuation zero. 
The pair $(G(\cC_s),d_{\cC})$ defines a quasi-stable extended tropical curve of type $(g,n)$, that we denote by 
$\Gamma(\cC_s)$, and  we set 
\begin{equation}\label{E:trop-ana2}
\wt{\trop}^{\an}_{\Jb_{g,n}}\big([\Spec R\to \Jb_{g,n}]\big):=\big(\Gamma(\cC_s), \un{\deg}(\cL_s)\big)\in \JJb_{g,n}.
\end{equation}
 \end{definition}
From the above definition of $\wt{\trop}^{\an}_{\Jb_{g,n}}$ and the explicit descriptions of the maps $\Phi$ of \eqref{PhiAlg} and $\Phi^{\trop}$ of \eqref{PhiTrop}, we get the commutativity of the following  diagram (of set-theoretic maps)
\begin{equation}\label{diag-anatr}
\xymatrix{
\big| \Jb_{g,n}^{\,\beth}\big|\ar[rr]^{\wt{\trop}^{\an}_{\Jb_{g,n}}} \ar[d]^{|\Phi^{\beth}|}&& \JJb_{g,n}\ar[d]^{\Phi^{\trop}} \\
\big|\Mb_{g,n}^{\,\beth}\big|\ar[rr]^{\wt{\trop}^{\an}_{\Mb_{g,n}}}&& \MMb_{g,n}\\
}
\end{equation}
where $\wt \trop_{\Mb_{g,n}}^{\an}: \big|\Mb_{g,n}^{\,\beth}\big|\to \MM_{g,n}$ is the modular analytic tropicalization map for $\Mb_{g,n}$ defined in \cite[Section~1.1]{ACP} (and therein called the naive set-theoretic tropicalization map). 

Note that we have similar diagrams with $\Jb_{g,n}$ and $\JJb_{g,n}$ replaced, respectively,  by either $\Jb_{g,n}^{\,\spl}$ and $\ov{J}_{g,n}^{\mathrm{trop},\,\spl}$, or $\Jb_{g,n}(\phi)$ and $\JJb_{g,n}(\phi)$ for any universal stability condition $\phi\in V_{g,n}$.

The main result of this subsection is an identification of the modular analytic tropicalization map $\wt{\trop}^{\an}_{\Jb_{g,n}}$ with the functorial analytic tropicalization map $\trop^{\an}_{\Jb_{g,n}}$. 

\begin{theorem}\label{T:ana-trop}
\noindent 
\begin{enumerate}[label={\small\textrm{(\roman*)}}]
\item \label{T:ana-trop0}  There are canonical isomorphisms $\Psi_{\Jb_{g,n}}:\Sigma(\Jb_{g,n})\xrightarrow{\cong} \JJ_{g,n}$ and $\ov\Psi_{\Jb_{g,n}}:\ov\Sigma(\Jb_{g,n})\xrightarrow{\cong} \JJb_{g,n}$ of, respectively, generalized  cone complexes and generalized extended cone complexes.

\item \label{T:ana-trop1} Using the above isomorphism $\ov\Psi_{\Jb_{g,n}}$, the map $\wt{\trop}^{\an}_{\Jb_{g,n}}:|\Jb_{g,n}^{\,\beth}|\to \JJb_{g,n} $ coincides with the map $\trop^{\an}_{\Jb_{g,n}}:|\Jb_{g,n}^{\,\beth}|\to \ov\Sigma(\Jb_{g,n})$, \textit{i.e.}  $ \wt{\trop}^{\an}_{\Jb_{g,n}}=\ov \Psi_{\Jb_{g,n}}\circ \trop_{\Jb_{g,n}}$. 
 In particular, $\wt{\trop}_{\Jb_{g,n}}^{\an}$ is continuous, surjective and proper. 
 
 \item \label{T:ana-trop2} Diagram \eqref{diag-anatr} coincides with Diagram \eqref{diag-ana}, \textit{i.e.} we have  the following commutative diagram 
\begin{equation}\label{BIG-anatrop}
\xymatrix{
\big| \Jb_{g,n}^{\,\beth}\big|\ar@(ur,ul)[rrrr]^{\wt\trop^{\an}_{\Jb_{g,n}}} \ar[d]^{|\Phi^{\beth}|} \ar[rr]_{\trop^{\an}_{\Jb_{g,n}}} && \ov\Sigma(\Jb_{g,n}) \ar[d]^{\Phi^{\Sigma}} \ar[rr]^{\ov\Psi_{\Jb_{g,n}}}_{\cong}&& \JJb_{g,n}\ar[d]^{\Phi^{\trop}} \\
\big| \Mb_{g,n}^{\,\beth}\big|\ar@(dr,dl)[rrrr]_{\wt\trop^{\an}_{\Mb_{g,n}}}\ar[rr]^{\trop^{\an}_{\Mb_{g,n}}} && \ov\Sigma(\Mb_{g,n}) \ar[rr]^{\ov\Psi_{\Mb_{g,n}}}_{\cong} && \MMb_{g,n}\\
}
\end{equation}
where $\ov\Psi_{\Mb_{g,n}}:\ov\Sigma(\Mb_{g,n})\xrightarrow{\cong} \MMb_{g,n}$ is the isomorphism of generalized extended cone complexes constructed in \cite[Theorem~1.2.1(1)]{ACP}. 
 
 \item \label{T:ana-trop3} The two maps 
 \begin{equation}\label{E:corr-trop}
\xymatrix{
\big|\Jb_{g,n}\big| && \big|\Jb_{g,n}^{\,\beth}\big|  \ar[ll]_{\red_{\Jb_{g,n}}} \ar[rr]^{\wt{\trop}^{\an}_{\Jb_{g,n}}} && \JJb_{g,n}
}
\end{equation}
 are compatible with the stratifications \eqref{E:strataJ}  and \eqref{E:strataTr}, \textit{i.e.}
 $$
 (\wt{\trop}^{\an}_{\Jb_{g,n}})^{-1}\big(\JJb_{(G,D)}\big)=(\red_{\Jb_{g,n}})^{-1}\big(|\J_{(G,D)}|\big)
 $$
 for every $(G,D)\in \QD_{g,n}$. 
  In particular, we have that 
 \begin{itemize}
  \item $(\wt{\trop}^{\an}_{\Jb_{g,n}})^{-1}\big(\JJb_{g,n,d}\big)=\big|\Jb_{g,n,d}^\beth\big|=(\red_{\Jb_{g,n}})^{-1}\big(|\Jb_{g,n,d}|\big)$ for any $d\in \ZZ$.
 \item $(\wt{\trop}^{\an}_{\Jb_{g,n}})^{-1}\big(\ov{J}_{g,n,(d)}^{\mathrm{trop},\,\spl}\big)=\big|(\Jb_{g,n,(d)}^{\,\spl})^\beth\big|=(\red_{\Jb_{g,n}})^{-1}\big(|\Jb_{g,n,(d)}^{\,\spl}|\big)$. 
  \item $(\wt{\trop}^{\an}_{\Jb_{g,n}})^{-1}\big(\JJb_{g,n}(\phi)\big)=\big|\Jb_{g,n}(\phi)^\beth\big|=(\red_{\Jb_{g,n}})^{-1}\big(|\Jb_{g,n}(\phi)|\big)$ for any universal stability condition $\phi\in V_{g,n}$.  
 \end{itemize}
\end{enumerate}
\end{theorem}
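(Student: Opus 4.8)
The plan is to follow, in the non-Archimedean setting, both the ACP strategy for $\Mb_{g,n}$ in \cite{ACP} and the logarithmic argument of Theorem~\ref{T:log-trop}, letting Proposition~\ref{P:CatStrata} carry the combinatorial content. For part~\ref{T:ana-trop0}, recall that $\Sigma(\Jb_{g,n})$ and $\ov\Sigma(\Jb_{g,n})$ are by \eqref{skele} the colimits in $\TopE$ of the functors $\Gamma_{\Jb_{g,n}}$ of \eqref{func-str1} and $\ov\Gamma_{\Jb_{g,n}}$ of \eqref{func-str2} on $\Str(\Jb_{g,n})$, and that these are the composites of $\Gamma_{\Str(\Jb_{g,n})}$ of \eqref{E:Gamma-Str} (respectively its extended variant) with the inclusion $\RPC^f\subset\TopE$. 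I would then invoke the equivalence $\calE\colon(\QD^E_{g,n})^{\opp}\xrightarrow{\cong}\Str(\Jb_{g,n})$ of Proposition~\ref{P:CatStrata}, for which $\Gamma_{\QD^E_{g,n}}=\Gamma_{\Str(\Jb_{g,n})}\circ\calE$. Since a colimit is unchanged by precomposition with an equivalence of categories, $\Sigma(\Jb_{g,n})$ is the colimit of $\Gamma_{\QD^E_{g,n}}^{top}$, which by Remark~\ref{R:conecomE} is $\JJ_{g,n}$; the same argument on extended cones gives $\ov\Sigma(\Jb_{g,n})\cong\JJb_{g,n}$. The resulting maps $\Psi_{\Jb_{g,n}}$ and $\ov\Psi_{\Jb_{g,n}}$ are isomorphisms of generalized (extended) cone complexes since the two diagrams agree cone by cone and face by face.

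For part~\ref{T:ana-trop1} the task is to unwind the definition of $\trop^{\an}_{\Jb_{g,n}}$ recalled in \S\ref{S:ana-ske} and identify it term by term with Definition~\ref{D:ana-trop}. Given $[\psi\colon\Spec R\to\Jb_{g,n}]$ with induced family $(\cC\to\Spec R,\cL)$ and special fiber $(C,L)=(\cC_s,\cL_s)$ of combinatorial type $(G,D)=(G(C),\un\deg L)$, the point $x=\red_{\Jb_{g,n}}([\psi])$ lies in the stratum $\J_{(G,D)}$. Using the local description of $\partial\Jb_{g,n}$ around $(C,L)$ from the proof of Proposition~\ref{P:CatStrata} — the boundary components through $x$ are the $D_e$, $e\in E(G)$, with $D_e$ cut out by the smoothing parameter $f_e$ appearing in \eqref{E:metricC} — together with the identification $E_{\Jb_{g,n},\eta_{(G,D)}}=\NN^{E(G)}$ of \eqref{E:stalk-gen}, the chain of monoid homomorphisms \eqref{mon-hom} sends the generator indexed by $e$ to $\ov{\val_R}([f_e])=d_{\cC}(e)$. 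Hence $\trop^{\an}_{\Jb_{g,n}}([\psi])$ is the class of $(\Gamma(\cC_s),\un\deg\cL_s)$ in $\ov\sigma(\J_{(G,D)})^{o}/H_{\J_{(G,D)}}$, which $\ov\Psi_{\Jb_{g,n}}$ identifies with the stratum $\JJb_{(G,D)}\cong(\RR_{>0}\cup\{\infty\})^{E(G)}/\Aut(G,D)$ of \eqref{E:strataTr}; the ambiguity in the choice of \'etale specialization is exactly the $\Aut(G,D)$-action absorbed into $\ov\Psi_{\Jb_{g,n}}$ via \eqref{E:autoStr}. This gives $\wt\trop^{\an}_{\Jb_{g,n}}=\ov\Psi_{\Jb_{g,n}}\circ\trop_{\Jb_{g,n}}$, and continuity, surjectivity and properness then follow from the corresponding general properties of $\trop^{\an}_{\cX}$.

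Part~\ref{T:ana-trop2} then amounts to bookkeeping: the left square of \eqref{BIG-anatrop} is \eqref{diag-ana} (functoriality of $\trop^{\an}$ along the toroidal morphism $\Phi$, Proposition~\ref{TorStrata}\ref{TorStrata3}), the outer square is \eqref{diag-anatr} (already noted after Definition~\ref{D:ana-trop}), and the right square commutes because the map $\Phi^\Sigma$ induced by $\Str(\Phi)$ corresponds under $\ov\Psi_{\Jb_{g,n}}$ and $\ov\Psi_{\Mb_{g,n}}$ to $\Phi^{\trop}$, by the compatibility of $\calE$ and $\calF$ with $F^E$ and $\Str(\Phi)$ from Remark~\ref{R:CatStrM} together with the description of $\Phi^{\trop}$ in Theorem~\ref{TrJacStack}\ref{TrJacStack2}. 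For part~\ref{T:ana-trop3}, the functorial map $\trop^{\an}_{\cX}$ sends $[\psi]$ into the stratum of \eqref{skele-deco} indexed by $W_i=\red_{\cX}([\psi])$; transporting along $\ov\Psi_{\Jb_{g,n}}$, which carries \eqref{skele-deco} to \eqref{E:strataTr}, this is precisely $(\wt\trop^{\an}_{\Jb_{g,n}})^{-1}(\JJb_{(G,D)})=\red_{\Jb_{g,n}}^{-1}(|\J_{(G,D)}|)$, and the three displayed consequences follow from Remarks~\ref{R:OtherAna} and~\ref{R:OtherTrop}. The main obstacle I anticipate is inside part~\ref{T:ana-trop1}: verifying that the local equation of the boundary divisor $D_e$ pulls back under $\psi^\sharp$ to precisely the analytic smoothing parameter $f_e$ of \eqref{E:metricC} (so that the combinatorial and analytic metrics coincide), and keeping careful track of the identification between the monodromy group $H_{W_i}$ and the image of $\Aut(G,D)$ in the permutation group $S_{E(G)}$ so that the tropicalization of $[\psi]$ lands, with no residual ambiguity, in the correct stratum of $\JJb_{g,n}$.
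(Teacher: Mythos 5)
Your proposal is correct and, for parts \ref{T:ana-trop0}, \ref{T:ana-trop1} and \ref{T:ana-trop3}, it follows essentially the same route as the paper: part \ref{T:ana-trop0} is exactly the combination of \eqref{skele}, Remark \ref{R:conecomE} and Proposition \ref{P:CatStrata} (colimits are invariant under precomposition with the equivalence $\calE$), and your unwinding of \eqref{mon-hom} against Definition \ref{D:ana-trop} — including the identification $E_{\Jb_{g,n},\eta_{(G,D)}}=\NN^{E(G)}$, the fact that the local equation of $D_e$ pulls back to the smoothing parameter $f_e$, and the absorption of the monodromy/$\Aut(G,D)$ ambiguity — is precisely the paper's argument for \ref{T:ana-trop1}; the "main obstacle" you single out is indeed the point the paper settles via the deformation-theoretic local description at the start of the proof of Proposition \ref{P:CatStrata}. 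The only genuine divergence is in part \ref{T:ana-trop2}: the paper gets the commutativity of the right square of \eqref{BIG-anatrop} cheaply, from the commutativity of the modular diagram \eqref{diag-anatr} and of the two triangles together with the surjectivity of $\trop^{\an}_{\Jb_{g,n}}$, whereas you propose a direct comparison of $\Phi^{\Sigma}$ with $\Phi^{\trop}$ via Remark \ref{R:CatStrM} and Theorem \ref{TrJacStack}\ref{TrJacStack2}. Your route works, but note that Remark \ref{R:CatStrM} only matches the strata categories and the functors $F^E$ and $\Str(\Phi)$; to identify the maps of (extended) cones you still need to check that, under the identifications \eqref{E:stalk-gen}, the pullback $E_{\Mb_{g,n},\eta_{G^{\st}}}\to E_{\Jb_{g,n},\eta_{(G,D)}}$ induced by $\Phi$ sends the boundary divisor of an edge $e$ of $G^{\st}$ to $D_{\wt e}$ (non-exceptional case) or to $D_{e^1}+D_{e^2}$ (exceptional case), i.e.\ that it is dual to the map $\phi_{(G,D)}$ of Theorem \ref{TrJacStack}\ref{TrJacStack2}; this is a short local computation with smoothing parameters, and the paper's surjectivity chase is the way it sidesteps writing it out. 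Your ordering (deducing \ref{T:ana-trop3} from \ref{T:ana-trop0} and \ref{T:ana-trop1}) is logically sound and involves no circularity, though the paper proves \ref{T:ana-trop3} first and directly from the modular descriptions.
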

Parts \ref{T:ana-trop0}, \ref{T:ana-trop1} and \ref{T:ana-trop2} for $\Jb_{g,n}(\phi)$, in the special case $n=1$ and for specific choices of $\phi\in V_{g,n}$, have been proved  by Abreu-Pacini in \cite[Theorem~6.9]{API}. Both their proof and ours follow the blueprint provided by \cite{ACP} and go by giving an explicit description of the toroidal stratification (see Proposition \ref{P:CatStrata} above).

\begin{proof}
Let us first prove part \ref{T:ana-trop3}: consider a point $\big[\Spec R\to \Jb_{g,n}\big]\in |\Jb_{g,n}^{\,\beth}|$, where $R$ is an integral domain which is complete with respect to a rank-$1$ valuation $\val_R:R\to \RR_{\geq 0}\cup\{\infty\}$ and having an algebraically closed residue field $k$,  and let $(\cC\to \Spec R, \cL)$ be the induced family of quasi-stable curves endowed with an admissible line bundle. By \eqref{E:red-map}, we have that 
\begin{equation}\label{E:mapJ1}
\red_{\Jb_{g,n}}\big([\Spec R\to \Jb_{g,n}]\big)=(\cC_s,\cL_s)\in \Jb_{g,n}(k)
\end{equation}
where $s$ is the special point of $\Spec R$. On the other hand, by Definition \ref{D:ana-trop}, we have that 
 \begin{equation}\label{E:mapJ2}
\wt{\trop}^{\an}_{\Jb_{g,n}}\big([\Spec R\to \Jb_{g,n}]\big):=\big(\Gamma(\cC_s), \un{\deg}(\cL_s)\big)\in \JJb_{g,n},
\end{equation}
where $\Gamma(\cC_s)$ is the quasi-stable tropical curve of type $(g,n)$ whose underlying graph is $G(\cC_s)$ and whose metric is the metric $d_{\cC}$ in \eqref{E:metricC}. 
In particular, we have that 
\begin{equation}\label{E:mapJ3}
\big(G(\cC_s),\un\deg(\cL_s)\big)=\big(\GG(\Gamma(\cC_s)), \un{\deg}(\cL_s)\big). 
\end{equation}
By combining \eqref{E:mapJ1}, \eqref{E:mapJ2} and \eqref{E:mapJ3}, and recalling the definition of $\J_{(G,D)}$ from Proposition~\ref{TorStrata}\ref{TorStrata1} and of $\JJb_{(G,D)}$ from \eqref{E:strataTr}, we conclude that  (for any given $(G,D)\in \QD_{g,n}$)  
$$
\big[\Spec R\to \Jb_{g,n}\big]\in  (\wt{\trop}^{\an}_{\Jb_{g,n}})^{-1}(\JJb_{(G,D)})\Leftrightarrow \big[\Spec R\to \Jb_{g,n}\big]\in (\red_{\Jb_{g,n}})^{-1}\big(|\J_{(G,D)}|\big).
$$
The last assertion follows from what we already proved together with Remarks \ref{R:OtherAna} and \ref{R:OtherTrop}.

Part \ref{T:ana-trop0} follows immediately by combining Remark \ref{R:conecomE}, the definitions \eqref{skele} and Proposition \ref{P:CatStrata}.

Let us now prove part \ref{T:ana-trop1}, using the notation already introduced in the proof of parts~\ref{T:ana-trop3} and~\ref{T:ana-trop0}. Fix $(G,D)\in \QD_{g,n}$ such that 
$(\cC_s,\cL_s)\in \J_{(G,D)}(k)$. From the  the proof of Proposition \ref{P:CatStrata} (see \eqref{E:stalkE} and the deformation-theoretic arguments  that precedes it), it follows that  we have a canonical identification $E_{\J_{(G,D)},(\cC_s,\cL_s)}=\NN^{E(G(\cC_s))}$  and that the monoid homomorphism appearing in \eqref{mon-hom}
$$
\NN^{E(G(\cC_s))}=E_{\J_{(G,D)},(\cC_s,\cL_s)}\xlongrightarrow{\wt\psi} ((R\setminus R^*)/R^*,\cdot)
$$
is induced by the morphism $E(G(\cC_s))\to (R\setminus R^*)/R^*$ that sends $e\in E(G(\cC_s))$ into $[f_e]\in  (R\setminus R^*)/R^*$ where $f_e\in R\setminus R^*$ is such that an \'etale local equation for $\cC$ at the node $n_e$ is given by $xy=f_e$. We deduce that the monoid homomorphism 
$$
\NN^{E(G(\cC_s))}=E_{\J_{(G,D)},(\cC_s,\cL_s)}\xlongrightarrow{\wt\psi} \big((R\setminus R^*)/R^*,\cdot\big)\xlongrightarrow{\ov{\val}_R} \RR_{>0}\cup \{\infty\}
$$
 is induced by the metric $d_{\cC}:E(G(\cC_s))\to \RR_{>0}\cup \{\infty\}$ of \eqref{E:metricC}. From this, it follows that the isomorphism $\ov\Psi_{\Jb_{g,n}}$ constructed in~\ref{T:ana-trop0} sends the element $\trop^{\an}_{\Jb_{g,n}}\big([\Spec R\to \Jb_{g,n}]\big)\in \ov{\sigma}(G,D)^o/H_{(G,D)}\subset \ov\Sigma(\Jb_{g,n})$ to the tropical curve $\wt{\trop}^{\an}_{\Jb_{g,n}}\big([\Spec R\to \Jb_{g,n}]\big)\in \JJb_{g,n}$.

Part \ref{T:ana-trop2}: the commutativity of the upper triangle is proved in part~\ref{T:ana-trop1}, the commutativity of the lower triangle is proved in \cite[Theorem~1.2.1(2)]{ACP}, the commutativity of the left square has already been observed in \eqref{diag-ana}, and the commutativity of the right square follows from the commutativity of the diagram \eqref{diag-anatr} together with the surjectivity of the map  $\trop^{\an}_{\Jb_{g,n}}$. 
\end{proof}


\section{Fibers of the universal tropical Jacobian and examples}


\subsection{Fibers of the universal tropical Jacobian}


In this subsection, we will study  the ''fibers'' of the
forgetful-stabilization morphism 
\[
  \Phi^{\trop}:\Jt_{g,n}\to \Mt_{g,n}.
\]
 More precisely, we will describe the 
fiber product of  $\Phi^{\trop}:\Jt_{g,n}\to \Mt_{g,n}$ with a morphism $\sigma\xrightarrow{\ov \Gamma} \Mt_{g,n}$ induced by a stable tropical curve $\ov \Gamma$ over $\sigma\in \RPC$. 

\begin{definition}\label{Jac-fiber}
Let $\ov \Gamma/\sigma$ be a stable tropical curve over $\sigma\in \RPC$ and let $\ov G:=\GG(\ov \Gamma)\in \SG_{g,n}$ be its underlying stable graph. 
The \emph{Jacobian cone space} of $\ov \Gamma/\sigma$ is the (combinatorial) cone space $\Jac_{\ov\Gamma/\sigma}$ associated to the category fibered in groupoids
$$\QD_{\ov G}^{\opp}\longrightarrow \RPC^f$$
given by the following:
\begin{itemize}
\item To any object $(G,D,\rho)\in \QD_{\ov G}$, we associate the following fibered product over $\sigma$ of rational polyhedral cones over $\sigma$:
$$C(G,D,\rho):=\prod_{e\in E(G^{\st})}C(G,D,\rho)_e:= \prod_{e\in E_{\exc}(G^{\st})} (\sigma\times_{\RR_{\geq 0}} \RR_{\geq 0}^2)_e \times \prod_{e\in E_{\nex}(G^{\st})} \sigma  $$
where the  fibered product  $(\sigma\times_{\RR_{\geq 0}} \RR_{\geq 0}^2)_e $ is with respect to the following morphism of cones:
the morphism $\RR_{\geq 0}^2\to \RR_{\geq 0}$ is the addition map that sends $(a,b)$ into $a+b$, while the morphism $\sigma\to \RR_{\geq 0}$ is dual to the following morphism of  toric monoids 
$$\begin{aligned}
S_{\RR_{\geq 0}}=\NN& \longrightarrow S_{\sigma}, \\
1&\longmapsto d_{\ov \Gamma}(\rho_E^{*}(e)),
\end{aligned}$$
where  $\rho_E^*(e)\in E(\ov G)$ is the inverse image of $e\in E_{\exc}(G^{\st})$ via $\rho:\ov G\xrightarrow{\cong} G^{\st}$ and $d_{\ov \Gamma}:E(\ov G)\to S_{\sigma}$ is the generalized metric corresponding to the tropical curve $\ov \Gamma/\sigma$.

\item Let $\pi:(G,D,\rho)\to (G',D',\rho')$ be a morphism in $\QD_{\ov G}$. For $e\in E(G^{st})$ and $e'\in E(G'^{st})$ with $e=(\pi^{\st})_E^*(e')$ we have non-zero  maps $C(\pi)_{e',e}: C(G',D',\rho')_{e'} \to C(G,D,\rho)_{e} $ such that the following holds:
\begin{itemize}
\item   If $e$ and $e'$ are either both non-exceptional or both exceptional then $C(\pi)_{e',e}=\id$;
 \item Otherwise, we must have that $e'\in E_{\nex}(G'^{\st})$ and $e\in E_{\exc}(G^{\st})$ and we define  
 $$C(\pi)_{e',e}\colon  \sigma \hooklongrightarrow (\sigma\times_{\RR_{\geq 0}} \RR_{\geq 0}^2)_{e}$$ 
 to be induced by the $i$-th face inclusion $\RR_{\geq 0}\hooklongrightarrow \RR_{\geq 0}^2$ where $i=1,2$ is the index such that $e^i$ is not contracted by $\pi$ (while $e^{3-i}$ is necessarily contracted by $\pi$). 
\end{itemize}
We associate to $\pi$ the morphism 
$$C(\pi)=\prod_{\substack{e\in E(G^{\st}) \\ e'\in E(G'^{\st})}} C(\pi)_{e',e}\colon C(G',D',\rho')
\longrightarrow C(G,D, \rho)
$$ 
induced by the $C(\pi)_{e,e'}$.
\end{itemize}

In a similar way, we can define $\Jac_{\ov\Gamma/\sigma, d}$, $\Jac_{\ov\Gamma/\sigma, (d)}^{\spl}$ or $\Jac_{\ov\Gamma/\sigma}(\phi)$ using, respectively, the full subcategories 
$\QD_{\ov G,d}$,  $\QD_{\ov G, (d)}^{\spl}$ or $\QD_{\ov G}(\phi)$ of $\QD_{\ov G}$.
\end{definition}




\begin{theorem}\label{T:fib-forget}
Let $\ov \Gamma$ be a stable tropical curve over $\sigma\in \RPC$  and let $\sigma\xrightarrow{\ov \Gamma} \Mt_{g,n}$ be its modular morphism. Then we have a cartesian diagram 
$$
\xymatrix{ 
\Jac_{\ov \Gamma/\sigma} \ar[r]\ar[d] & \Jt_{g,n}\ar[d]^{\Phi^{\trop}}\\
\sigma\ar[r]^{\ov\Gamma} & \Mt_{g,n}
}
$$ 
\end{theorem}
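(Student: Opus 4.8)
The statement is an identification of the fiber product $\Jt_{g,n}\times_{\Mt_{g,n}}\sigma$ with the cone space $\Jac_{\ov\Gamma/\sigma}$. Since all the objects in sight are cone stacks (or, more primitively, categories fibered in groupoids over $\RPC$) and fiber products of cone stacks are computed fiberwise over $\RPC$ by \cite[Proposition~2.3]{CCUW}, the plan is to work entirely with the restrictions to $\RPC$ and to exhibit a canonical equivalence of categories fibered in groupoids
$$
\Jac_{\ov\Gamma/\sigma}(\RPC)\xlongrightarrow{\ \cong\ } \Jt_{g,n}(\RPC)\times_{\Mt_{g,n}(\RPC)}\sigma,
$$
then invoke \cite[Proposition~2.3]{CCUW} to upgrade it to an isomorphism of stacks over $\RPCC$. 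First I would unwind the right-hand side: an object over $\tau\in\RPC$ is a triple consisting of a map $f\colon\tau\to\sigma$ in $\RPC$, a quasi-stable tropical curve with divisor $(\Gamma/\tau,D)\in\Jt_{g,n}(\tau)$, and an isomorphism in $\Mt_{g,n}(\tau)$ between $(\Gamma/\tau)^{\st}$ and the pullback $f^*\ov\Gamma$ of $\ov\Gamma$ along $f$. Using Definition \ref{UnTrJac-Mg}, this last datum says precisely that $\GG(\Gamma)^{\st}\cong\ov G$ (so that, choosing the isomorphism, $(\GG(\Gamma),D,\rho)\in\QD_{\ov G}$) and that, for each edge of $\ov G$, the metric of $f^*\ov\Gamma$ matches the stabilized metric of $\Gamma/\tau$: namely $f^*(d_{\ov\Gamma}(\rho_E^*(e)))=d_\Gamma(\wt e)$ for $e\in E_{\nex}(\GG(\Gamma)^{\st})$ and $f^*(d_{\ov\Gamma}(\rho_E^*(e)))=d_\Gamma(e^1)+d_\Gamma(e^2)$ for $e\in E_{\exc}(\GG(\Gamma)^{\st})$.

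The second step is to recognize that this system of data over $\tau$ is exactly a map $\tau\to C(G,D,\rho)$ where $C(G,D,\rho)$ is the cone of Definition \ref{Jac-fiber} attached to $(G,D,\rho):=(\GG(\Gamma),D,\rho)\in\QD_{\ov G}$. Indeed, giving $\Gamma/\tau$ with $\GG(\Gamma)=G$ amounts to giving its metric, i.e.\ a collection $\{d_\Gamma(e)\in S_\tau\setminus\{0\}\}_{e\in E(G)}$, equivalently a map $\tau\to\RR_{\geq 0}^{E(G)}$ whose image is not in a proper face (this is the description of $\Jt_{g,n}(\RPC)$ recalled in the proof of Theorem \ref{TrJacStack}). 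Organising the edges of $G$ by the fibers of the stabilization $\sigma\colon G\to G^{\st}$: a non-exceptional edge of $G$ corresponds bijectively to a non-exceptional edge of $G^{\st}$, contributing one coordinate constrained to equal $f^*(d_{\ov\Gamma}(\rho_E^*(e)))$, which is exactly the cone $\sigma\to_{\RR_{\geq 0}}$ — wait, more precisely it says the $\tau$-point of that $\RR_{\geq 0}$-coordinate is the composite $\tau\xrightarrow{f}\sigma\xrightarrow{d_{\ov\Gamma}(\rho_E^*(e))^\vee}\RR_{\geq 0}$, so this coordinate contributes the factor $\sigma$ of $C(G,D,\rho)_e$; an exceptional edge of $G^{\st}$ sits over two edges $e^1,e^2$ of $G$, contributing two coordinates $(a,b)=(d_\Gamma(e^1),d_\Gamma(e^2))$ whose sum must equal $f^*(d_{\ov\Gamma}(\rho_E^*(e)))$, which is exactly a $\tau$-point of the fibered product $(\sigma\times_{\RR_{\geq 0}}\RR_{\geq 0}^2)_e$ (with $\RR_{\geq 0}^2\to\RR_{\geq 0}$ the addition, and $\sigma\to\RR_{\geq 0}$ dual to $1\mapsto d_{\ov\Gamma}(\rho_E^*(e))$). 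Thus an object of the fiber product over $\tau$ with underlying pair $(G,D,\rho)$ is the same as a morphism $\tau\to C(G,D,\rho)$ in $\RPC$ (together with the compatible $f$, which is recovered as the composite to $\sigma$), and this is precisely an object of $\Jac_{\ov\Gamma/\sigma}(\RPC)$ lying over $(G,D,\rho)\in\QD_{\ov G}$.

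The third step is to check morphisms match. A morphism in the fiber product over a map $\tau\to\tau'$ is a morphism $(f,\pi)\colon(\Gamma/\tau,D)\to(\Gamma'/\tau',D')$ in $\Jt_{g,n}$ compatible with the maps to $\sigma$ and with the identifications of stabilizations; under the stabilization functor $\pi$ induces $\pi^{\st}\colon\GG(\Gamma')^{\st}\to\GG(\Gamma)^{\st}$, i.e.\ a morphism $(G,D,\rho)\to(G',D',\rho')$ in $\QD_{\ov G}$, and I would verify edge-by-edge (distinguishing the four cases non-exc/non-exc, exc/exc, and the mixed case where an exceptional edge of $G^{\st}$ collapses to a non-exceptional edge of $G'^{\st}$ because $\pi$ contracts exactly one of $e^1,e^2$) that the induced map on cones is precisely the map $C(\pi)$ of Definition \ref{Jac-fiber} — in the mixed case the surviving $e^i$ dictates which of the two face inclusions $\RR_{\geq 0}\hookrightarrow\RR_{\geq 0}^2$ appears, matching the definition. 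Finally one checks this assignment is functorial, essentially surjective (by the existence-of-lifts argument: any quasi-stable tropical curve over a monoid with prescribed stabilization can be produced), full, and faithful; faithfulness uses the Lemma that $\QD_{\ov G}$ has no nontrivial automorphisms together with the fact that a morphism of tropical curves is determined by $f$ and the underlying graph morphism. The restriction statements for $\Jac_{\ov\Gamma/\sigma,d}$, $\Jac_{\ov\Gamma/\sigma,(d)}^{\spl}$, $\Jac_{\ov\Gamma/\sigma}(\phi)$ follow verbatim by replacing $\QD_{\ov G}$ by the corresponding full subcategory and $\Jt_{g,n}$ by the corresponding substack, using Remark \ref{R:subcat} to see the relevant subcategories are stable under the maps involved.

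\textbf{Main obstacle.} The one genuinely delicate point is the bookkeeping in the mixed case of step three: verifying that the non-canonical choice of stabilization morphism $\sigma\colon G\to G^{\st}$ (which of $e^1,e^2$ is contracted) does not enter the statement, i.e.\ that the cone $C(G,D,\rho)$ and the transition maps $C(\pi)$ are canonically attached to $(G,D,\rho)$ and $\pi$ and assemble into an honest functor $\QD_{\ov G}^{\opp}\to\RPC^f$ matching the fiber product; this amounts to confirming that the symmetric role of $e^1,e^2$ in $(\sigma\times_{\RR_{\geq 0}}\RR_{\geq 0}^2)_e$ makes the ordering immaterial, and that compositions of morphisms in $\QD_{\ov G}$ are sent to compositions of face inclusions. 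Everything else is a direct unwinding of the definitions of $\Jt_{g,n}$, $\Mt_{g,n}$, $\Phi^{\trop}$, and the cone space $\Jac_{\ov\Gamma/\sigma}$, together with the fiberwise computation of fiber products of cone stacks from \cite[Proposition~2.3]{CCUW}.
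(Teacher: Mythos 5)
Your dictionary over a cone $\tau$ (exceptional edges of $G^{\st}$ contribute pairs summing to the pulled-back length, non-exceptional edges contribute the factor $\sigma$, and morphisms match the maps $C(\pi)$) is exactly the computation in the paper, but your plan to verify the equivalence directly over an arbitrary $\tau\in\RPC$ and arbitrary $f\colon\tau\to\sigma$ has a genuine gap on the degenerate locus. If $f$ lands in a proper face of $\sigma$ on which some edge length of $\ov\Gamma$ vanishes, \textit{i.e.} $f^*(d_{\ov\Gamma}(e))=0$ in $S_\tau$ for some edge $e$ of $\ov G:=\GG(\ov\Gamma)$, then the pullback $f^*\ov\Gamma$ in $\Mt_{g,n}(\tau)$ is the curve obtained by contracting those edges; an object $(f,(\Gamma/\tau,D),\alpha)$ of the fiber product then has $\GG(\Gamma)^{\st}$ isomorphic to a proper contraction of $\ov G$, so your assertion that the datum $\alpha$ ``says precisely that $\GG(\Gamma)^{\st}\cong\ov G$'' fails, no triple $(G,D,\rho)\in\QD_{\ov G}$ is attached to it, and your metric equations would force zero edge lengths. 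Such objects always exist (\textit{e.g.} take $f$ the map to the cone point), so your object inventory of the right-hand side is incomplete. The same problem occurs on the left: you implicitly describe objects of $\Jac_{\ov\Gamma/\sigma}(\tau)$ as maps $\tau\to C(G,D,\rho)$ with image not contained in a proper face, as in the proof of Theorem \ref{TrJacStack}; but that normalization is only legitimate when every face of every cone of the diagram is again a cone of the diagram, and $\QD_{\ov G}^{\opp}\to\RPC^f$ is not face-closed: the faces of $C(G,D,\rho)$ lying over proper faces of $\sigma$ where some $d_{\ov\Gamma}(\rho_E^*(e))$ vanishes are not of the form $C(G',D',\rho')$ with $(G',D',\rho')\in\QD_{\ov G}$ --- they are Jacobian cones of contracted curves.

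The paper's proof sidesteps exactly this point: it first records the base-change compatibility $\tau\times_{\sigma}\Jac_{\ov\Gamma/\sigma}\cong\Jac_{u^*(\ov\Gamma/\sigma)}$ for every $u\colon\tau\to\sigma$, which reduces the theorem to an isomorphism of the fiber groupoids over the \emph{identity} of $\sigma$, for all stable tropical curves $\ov\Gamma/\sigma$ at once. Over the identity no edge of $\ov\Gamma$ degenerates, the interiority normalization is valid (the only relevant faces, $l_e^i=0$, are realized inside $\QD_{\ov G}$ by contracting $e^i$), and the dictionary you describe --- ordered pairs of nonzero elements with $l_e^1+l_e^2=d_{\ov\Gamma}(\rho_E^*(e))$, plus the case analysis identifying $C(\pi)$ --- goes through verbatim; this part of your proposal matches the paper. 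To repair your argument you must either insert this base-change reduction or extend your dictionary to the degenerate objects on both sides, which amounts to proving the same base-change statement. By contrast, the issue you single out as the main obstacle (the roles of $e^1$ and $e^2$) is harmless, since the paper fixes an ordering of the two half-edges at each exceptional vertex once and for all.
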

\begin{proof}
Observe that by Definition \ref{Jac-fiber}, there is a natural morphism $\Jac_{\ov \Gamma/\sigma}\to \sigma$ and moreover, for any morphism $u:\tau \to \sigma$ in $\RPC$, we have that $\tau \times_{\sigma}\Jac_{\ov \Gamma/\sigma}\cong \Jac_{u^*(\ov \Gamma/\sigma)}$. Hence, in order to show the statement, it is enough to show that there is an isomorphism of 
groupoids 
$$\Jac_{\ov \Gamma/\sigma}(\sigma)\xlongrightarrow{\cong} \big(\sigma\times_{\Mt_{g,n}} \Jt_{g,n}\big)(\sigma),$$
between the fibers of $\Jac_{\ov \Gamma/\sigma}\to \sigma$ and of $\sigma\times_{\Mt_{g,n}} \Jt_{g,n}\to \sigma$ over the identity. 

By Definition \ref{Jac-fiber}, an object  of  $\Jac_{\ov \Gamma/\sigma}(\sigma)$ consists of  the datum
$$\big(G,D,\rho, s: \sigma\to C(G,D,\rho)\big),$$
where $(G,D,\rho)\in \QD_{\ov G}$ and $s$ is a section of the morphism $C(G,D,\rho)\to \sigma$  such that its image is not contained in any proper face of $C(G,D,\rho)$. By the definition of $C(G,D,\rho)\to \sigma$,  giving such a section $s: \sigma\to C(G,D,\rho)$ is equivalent, passing to the dual toric monoids, to specifying a collection of ordered pairs  of non-zero elements of $S_{\sigma}$ 
$$\big\{(l_e^1,l_e^2): l_e^1+l_e^2=d_{\ov \Gamma}(\rho_E^{*}(e))\big\}_{e\in E_{\exc}(G^{\st})}.
$$
The above collection of ordered pairs determines a tropical curve $\Gamma/\sigma$ such that $\GG(\Gamma)=G$ and the metric $d_{\Gamma}:E(G)\to S_{\sigma}$ is given by 
$$
\begin{cases}
d_{\Gamma}(e):=d_{\ov \Gamma}(\rho_E^{*}(\ov e)) & \text{ if } e\in E_{\nex}(G),\\
d_{\Gamma}(e^i):=l_{e}^i  &  \text{ if } e\in E_{\exc}(G^{\st}) \: \text{ and } \: i=1,2.
\end{cases}
$$
Moreover, by the above definition of $\Gamma/\sigma$,  the isomorphism of graphs $\rho:\ov G\xrightarrow{\cong} G^{\st}$ induces an isomorphism $\rho:\Gamma^{\st}/\sigma\xrightarrow{\cong} \ov \Gamma/\sigma$ of tropical curves over $\sigma$. Hence, we get an object 
$$\big(\Gamma/\sigma,D,\rho: \Gamma^{\st}/\sigma\xrightarrow{\cong} \ov \Gamma/\sigma\big)\in \big(\sigma\times_{\Mt_{g,n}} \Jt_{g,n}\big)(\sigma).$$ 

Moreover, again from Definition \ref{Jac-fiber}, a morphism 
\begin{equation}\label{E:morJacfib}
\big(G,D,\rho, s: \sigma\to C(G,D,\rho)\big)\longrightarrow \big(G',D',\rho', s': \sigma\to C(G',D',\rho')\big)
\end{equation}
in $\Jac_{\ov \Gamma/\sigma}(\sigma)$ consists of the datum of an isomorphism $\pi:(G',D',\rho')\to (G,D,\rho)$ in $\QD_{\ov G}$ with the property that $s'=C(\pi)\circ s$. In terms of the collections 
of non-zero elements of $S_{\sigma}$ associated to the objects in \eqref{E:morJacfib}
$$\big\{(l_e^1,l_e^2): l_e^1+l_e^2=d_{\ov \Gamma}(\rho_E^{*}(e))\big\}_{e\in E_{\exc}(G^{\st})} \quad \text{ and } \quad 
\big\{(l_{e'}'^1,l_{e'}'^2): l_{e'}'^1+l_{e'}'^2=d_{\ov \Gamma}((\rho')_E^{*}(e'))\big\}_{e'\in E_{\exc}(G'^{\st})},
$$
the  compatibility condition $s'=C(\pi)\circ s$ on sections translates into the equalities in $S_{\sigma}$
$$l'^i_{(\pi^{\st})_E^*(e)}=l_e^i \quad \text{ for any } i=1,2.$$
This condition then implies that  $\pi:(G',D',\rho')\to (G,D,\rho)$ gives rise to a morphism in $(\sigma\times_{\Mt_{g,n}} \Jt_{g,n})(\sigma)$ 
$$
\big(\Gamma/\sigma,D,\rho: \Gamma^{\st}/\sigma\xrightarrow{\cong} \ov \Gamma/\sigma\big) \longrightarrow \big(\Gamma'/\sigma,D',\rho': \Gamma'^{\st}/\sigma\xrightarrow{\cong} \ov \Gamma'/\sigma\big)
$$
between the objects associated to the two objects in \eqref{E:morJacfib}.

Putting everything together, we obtain a morphism of groupoids 
\begin{equation}\label{E:mor-grps}
\begin{aligned}
\Jac_{\ov \Gamma/\sigma}(\sigma)& \xlongrightarrow{} (\sigma\times_{\Mt_{g,n}} \Jt_{g,n})(\sigma),\\
\big(G,D,\rho, s: \sigma\to C(G,D,\rho)\big) & \longmapsto \big(\Gamma/\sigma,D,\rho: \Gamma^{\st}/\sigma\xrightarrow{\cong} \ov \Gamma/\sigma\big). 
\end{aligned}
\end{equation}
Since the above construction can be reversed, it follows that the morphism \eqref{E:mor-grps} is an isomorphism and we are done. 
\end{proof}

\subsection{The universal tropical Jacobian as a topological stack}\label{S:top-Jac}

The aim of this subsection is to construct a topological realization of $\Phi^{\trop}:\Jt_{g,n}\to \Mt_{g,n}$ and to study its fibers. 

As explained in \cite[Section~5.3]{CCUW}, we can extend \eqref{Phitrop} to a morphism of \emph{real cone stacks} (\textit{i.e.} geometric stacks over the category $\PC$ of (non necessarily rational) polyhedral cones): 
\begin{equation}\label{PhitropR}
\Phi^{\mathrm{trop},\,\RR}:\JtR_{g,n}\longrightarrow \MtR_{g,n},
\end{equation}
As in \cite[Proposition~5.9]{CCUW}, the fiber of the above morphism over a given  $\sigma\in \PC$ can be described as follows:
\begin{itemize}
\item $\JtR_{g,n}(\sigma)$ is the groupoid of pairs $(\Gamma,D)$ consisting of a quasi-stable tropical curve $\Gamma$ of type $(g,n)$ with edge lengths over the dual cone $\sigma^{{\vee}}$, \textit{i.e.} a quasi-stable graph $\GG(\Gamma)$ of type $(g,n)$ endowed with a generalized metric $d_{\Gamma}:E(\GG(\Gamma))\to \sigma^{{\vee}}\setminus \{0\}$,  and $D$ is a divisor on  $\GG(\Gamma)$ such that $(\GG(\Gamma),D)\in \QD_{g,n}$;
\item $\MtR_{g,n}(\sigma)$ is the groupoid of stable tropical curves $\ov \Gamma$ of type $(g,n)$ with edge lengths over the dual cone $\sigma^{{\vee}}$, \textit{i.e.} a stable graph $\GG(\ov \Gamma)$ of type $(g,n)$ 
endowed with a generalized metric $d_{\ov \Gamma}:E(\GG(\ov \Gamma))\to \sigma^{{\vee}}\setminus \{0\}$;
\item $\Phi^{\mathrm{trop},\,\RR}(\sigma)$ sends $(\Gamma, D)\in \JtR_{g,n}(\sigma)$ into the stabilization $\Gamma^{{\st}}\in \MtR_{g,n}(\sigma)$ of $\Gamma$, which is 
defined by  setting $\GG(\Gamma^{\st}):=\GG(\Gamma)^{\st}$ and  $d_{\Gamma^{\st}}:E(\GG(\Gamma))\to \sigma^{{\vee}}\setminus \{0\}$ equal to  (using the notation below \eqref{E:edg-stab})
$$
d_{\Gamma^{\st}}(e)=
\begin{cases}
d_{\Gamma}(\wt e) & \text{ if } e\in E_{\nex}(\GG(\Gamma)^{\st}), \\
d_{\Gamma}(e^1)+d_{\Gamma}(e^2) &  \text{ if } e\in E_{\exc}(\GG(\Gamma)^{\st}). 
\end{cases}
$$
\end{itemize}
In a similar way, we can define $\Phi^{\trop,\,\RR}:\JtR_{g,n,d}\to
\MtR_{g,n}$, $\Phi^{\trop,\,\RR}:\JtsR_{g,n,(d)}\to \MtR_{g,n}$ and
$\Phi^{\trop,\,\RR}:\JtR_{g,n}(\phi)\to \MtR_{g,n}$.

Going one step further as explained in \cite[Section~5.4]{CCUW}, we can take the topological realizations of the above real cone stacks obtaining a morphism of \emph{topological stacks} (\textit{i.e.} geometric stacks over the category $\TopE$ of topological spaces):
\begin{equation}\label{PhitropTop}
\big\vert\Phi^{\mathrm{trop}}\big\vert\colon \big\vert\Jt_{g,n}\big\vert\longrightarrow \big\vert\Mt_{g,n}\big\vert.
\end{equation}
It follows from \cite[Lemma 5.1]{CCUW} that at the level of points, \textit{i.e.} taking fibers over the topological space $\star$ with one point, we have that:
\begin{itemize}
\item $\big|\Jt_{g,n}\big|(\star)=\JtR_{g,n}(\RR_{\geq 0})$ is the groupoid of pairs $(\Gamma,D)$ consisting of a quasi-stable tropical curve $\Gamma$ of type $(g,n)$ with real edge lengths, \textit{i.e.} a quasi-stable graph $\GG(\Gamma)$ of type $(g,n)$ 
endowed with a  metric $d_{\Gamma}:E(\GG(\Gamma))\to \RR_{>0}$, and a divisor $D$  on  $\GG(\Gamma)$ such that $(\GG(\Gamma),D)\in \QD_{g,n}$;
\item $\big|\Mt_{g,n}\big|(\star)=\MtR_{g,n}(\RR_{\geq 0})$ is the groupoid of stable tropical curves $\ov \Gamma$ of type $(g,n)$ with real edge lengths, \textit{i.e.} a stable graph $\GG(\ov \Gamma)$ of type $(g,n)$ 
endowed with a metric $d_{\ov \Gamma}:E(\GG(\ov \Gamma))\to \RR_{>0}$;
\item $\big|\Phi^{\mathrm{trop}}\big|(\star)=\Phi^{\mathrm{trop},\,\RR}(\RR_{\geq 0})$ sends $(\Gamma, D)\in \JtR_{g,n}(\star)$ into the stabilization $\Gamma^{{\st}}\in \MtR_{g,n}(\star)$ of $\Gamma$.
\end{itemize}
In a similar way, we can define $|\Phi^{\mathrm{trop}}|:
\big|\Jt_{g,n,d}\big|\to \big|\Mt_{g,n}\big|$,
$\big|\Phi^{\trop}\big|: \big|\Jts_{g,n,(d)}\big|\to
\big|\Mt_{g,n}\big|$ and \linebreak $\big|\Phi^{\trop}\big|:
\big|\Jt_{g,n}(\phi)\big|\to \big|\Mt_{g,n}\big|$.

We now want to describe the fiber of $\big|\Phi^{\mathrm{trop}}\big|:\big|\Jt_{g,n}\big|\to \big|\Mt_{g,n}\big|$ over a tropical curve $\ov \Gamma\in \MtR_{g,n}(\star)$. 

\begin{definition}\label{JacTp-fiber}
Let $\ov \Gamma\in \MtR_{g,n}(\star) $ be a stable tropical curve with real edge lengths and let $\ov G:=\GG(\ov \Gamma)\in \SG_{g,n}$ be its underlying stable graph. 
The \emph{Jacobian topological space} of $\ov \Gamma$ is the topological space $\Jac_{\ov\Gamma}$ obtained as the colimit associated to the  functor 
$$\QD_{\ov G}^{\opp}\longrightarrow \TopE$$
defined by 
\begin{itemize}
\item to any object $(G,D,\rho)\in \QD_{\ov G}$, we associate the following polytope (seen as a topological space)
$$P(G,D,\rho):=\prod_{e\in E(G^{\st})}P(G,D,\rho)_e:= \prod_{e\in E_{\exc}(G^{\st})} \big[0,d(\rho_E^{*}(e))\big] \times \prod_{e\in E_{\nex}(G^{\st})}\star, $$
where  $\rho_E^*(e)\in E(\ov G)$ is the inverse image of $e$ via $\rho$ and  $d:E(\ov G)\to \RR_{>0}$ is the  metric corresponding to the tropical curve $\ov \Gamma$.
\item to any morphism $\pi:(G,D,\rho)\to (G',D',\rho')$ in $\QD_{\ov G}$, we associate the morphism 
$$P(\pi)=\prod_{\substack{e\in E(G^{\st}) \\ e'\in E(G'^{\st})}} P(\pi)_{e',e}:P(G',D',\rho')
\longrightarrow P(G,D, \rho)
$$ 
where the possibly non-zero  maps $P(\pi)_{e',e}: P(G',D',\rho')_{e'} \to P(G,D,\rho)_{e} $ are those for which  $e=(\pi^{\st})_E^*(e')$. In this case we have that 
\begin{itemize}
\item   if $e$ and $e'$ are either both non-exceptional or both exceptional then $P(\pi)_{e',e}=\id$;
 \item otherwise, we must have that $e'\in E_{\nex}(G'^{\st})$ and $e=e_v\in E_{\exc}(G^{\st})$ for $v\in V_{\exc}(G)$ and we define 
 $$\begin{aligned}
P(\pi)_{e',e}:  \star & \hooklongrightarrow [0,d(\rho_E^{*}(e)] \\
\star & \longmapsto 
\begin{cases} 
0 & \text{ if } e_{v}^1 \text{ is contracted by } \pi,\\
d(\rho_E^{*}(e)) & \text{ if } e_{v}^2 \text{ is contracted by } \pi.
\end{cases}
\end{aligned}$$ 
 \end{itemize}
\end{itemize}
In a similar way, we can define $\Jac_{\ov\Gamma, d}$, $\Jac_{\ov\Gamma, (d)}^{\spl}$ or $\Jac_{\ov\Gamma}(\phi)$ using, respectively, the full subcategories 
$\QD_{\ov G,d}$,  $\QD_{\ov G, (d)}^{\spl}$ or $\QD_{\ov G}(\phi)$ of $\QD_{\ov G}$.
\end{definition}

\begin{theorem}\label{T:fibT-forget}
Let $\ov \Gamma\in \big|\MtR_{g,n}\big|(\star) $ be a stable tropical curve with real edge lengths and let $\star\xrightarrow{\ov \Gamma} |\Mt_{g,n}|$ be its associated modular morphism. 
Then we have a cartesian diagram 
$$
\xymatrix{ 
\Jac_{\ov \Gamma} \ar[r]\ar[d] & \big|\Jt_{g,n}\big|\ar[d]^{|\Phi^{\trop}|}\\
\star \ar[r]^{\ov\Gamma} & \big|\Mt_{g,n}\big|
}
$$ 
\end{theorem}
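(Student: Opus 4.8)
The plan is to derive Theorem \ref{T:fibT-forget} from Theorem \ref{T:fib-forget} by passing to topological realizations, exactly in the way \eqref{PhitropTop} is obtained from \eqref{PhitropR}. Set $\ov G:=\GG(\ov\Gamma)\in\SG_{g,n}$, let $d:E(\ov G)\to\RR_{>0}$ be the metric of $\ov\Gamma$, and let $\sigma:=\RR_{\geq 0}^{E(\ov G)}\in\RPC$, with its standard lattice (so $S_\sigma=\NN^{E(\ov G)}$), carry the tautological stable tropical curve $\ov\Gamma_\sigma/\sigma$ of type $\ov G$ whose metric is the inclusion $E(\ov G)\hookrightarrow S_\sigma$. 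Applying Theorem \ref{T:fib-forget} to $\ov\Gamma_\sigma/\sigma$ gives a cartesian square of cone stacks over $\RPCC$, and applying to it the topological realization functor of \cite[Sections~5.3 and~5.4]{CCUW} — which factors through real cone stacks and sends cartesian squares to cartesian squares — yields a homeomorphism $\big\vert\Jac_{\ov\Gamma_\sigma/\sigma}\big\vert\cong\big\vert\sigma\big\vert\times_{\big\vert\Mt_{g,n}\big\vert}\big\vert\Jt_{g,n}\big\vert$ over $\big\vert\sigma\big\vert=\RR_{\geq 0}^{E(\ov G)}$. Since the modular morphism $\star\xrightarrow{\ov\Gamma}\big\vert\Mt_{g,n}\big\vert$ factors as $\star\xrightarrow{\,d\,}\big\vert\sigma\big\vert\xrightarrow{\ov\Gamma_\sigma}\big\vert\Mt_{g,n}\big\vert$, pulling back along $\star\to\big\vert\sigma\big\vert$ gives
$$\star\times_{\big\vert\Mt_{g,n}\big\vert}\big\vert\Jt_{g,n}\big\vert\;\cong\;\star\times_{\big\vert\sigma\big\vert}\big\vert\Jac_{\ov\Gamma_\sigma/\sigma}\big\vert ,$$
so it remains to identify the right-hand side with $\Jac_{\ov\Gamma}$.

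For this I would unwind the colimit presentations. By Definition \ref{Jac-fiber}, $\Jac_{\ov\Gamma_\sigma/\sigma}$ is the combinatorial cone space attached to the functor $(G,D,\rho)\mapsto C(G,D,\rho)$ on $\QD_{\ov G}^{\opp}$, and by construction each $C(G,D,\rho)$ carries a projection to $\sigma$. With our choice of $\ov\Gamma_\sigma$, the structure map of the $e$-th factor $(\sigma\times_{\RR_{\geq 0}}\RR_{\geq 0}^2)_e\to\sigma\to\RR_{\geq 0}$ is evaluation of the $\rho_E^{\ast}(e)$-coordinate, so the fibre of $C(G,D,\rho)\to\sigma$ over the point $d$ is
$$\prod_{e\in E_{\exc}(G^{\st})}\bigl\{(a,b)\in\RR_{\geq 0}^2:\ a+b=d(\rho_E^{\ast}(e))\bigr\}\;\times\;\prod_{e\in E_{\nex}(G^{\st})}\star\;\cong\;P(G,D,\rho),$$
the polytope of Definition \ref{JacTp-fiber}; and the face inclusions $C(\pi)$ restrict on these fibres to exactly the maps $P(\pi)$ (for a mixed pair $e'\in E_{\nex}$, $e\in E_{\exc}$, the $i$-th face inclusion $\RR_{\geq 0}\hookrightarrow\RR_{\geq 0}^2$ with $e^i$ not contracted becomes the inclusion of the endpoint of $[0,d(\rho_E^{\ast}(e))]$ prescribed by which of $e^1,e^2$ is contracted). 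Hence the diagram $\QD_{\ov G}^{\opp}\to\TopE$ obtained by taking fibres over $d$ is literally the one whose colimit defines $\Jac_{\ov\Gamma}$; as the transition maps in this colimit are face, hence closed, embeddings compatible with the projections to $\big\vert\sigma\big\vert$, pullback along $\star\xrightarrow{d}\big\vert\sigma\big\vert$ commutes with the colimit, and we conclude $\star\times_{\big\vert\sigma\big\vert}\big\vert\Jac_{\ov\Gamma_\sigma/\sigma}\big\vert\cong\Jac_{\ov\Gamma}$.

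The decorated versions are handled identically: replacing $\QD_{\ov G}$ throughout by $\QD_{\ov G,d}$, $\QD^{\spl}_{\ov G,(d)}$ or $\QD_{\ov G}(\phi)$ and invoking the corresponding form of Theorem \ref{T:fib-forget} yields the cartesian squares for $\Jac_{\ov\Gamma,d}$, $\Jac^{\spl}_{\ov\Gamma,(d)}$ and $\Jac_{\ov\Gamma}(\phi)$ over $\big\vert\Jt_{g,n,d}\big\vert$, $\big\vert\Jts_{g,n,(d)}\big\vert$ and $\big\vert\Jt_{g,n}(\phi)\big\vert$ respectively.

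The step I expect to be the main obstacle is the compatibility of topological realization with the relevant $2$-fibre products — concretely, the assertion that the fibre of a realized combinatorial cone space over a point of the realization of its base cone is the colimit in $\TopE$ of the fibres of the individual cones. This forces one to verify that the colimit defining a generalized cone complex is taken along closed embeddings that commute with the projections, so that pulling back a point may be moved inside the colimit. An alternative route that sidesteps the realization formalism is to argue directly at the level of test spaces $T$, as in the proof of Theorem \ref{T:fib-forget}: an object of $\bigl(\star\times_{\vert\Mt_{g,n}\vert}\vert\Jt_{g,n}\vert\bigr)(T)$ is a quasi-stable tropical curve $\Gamma$ over $T$ with an admissible divisor on $\GG(\Gamma)$ and an isomorphism $\Gamma^{\st}\cong\ov\Gamma_T$; this is the same datum as a $T$-point of $\Jac_{\ov\Gamma}$, namely a continuous choice, for every exceptional edge $e$ of $G^{\st}$, of a splitting $d(\rho_E^{\ast}(e))=l^1_e+l^2_e$ into nonnegative summands, the correspondence being reversible and functorial in $T$.
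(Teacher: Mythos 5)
Your proposal is correct and follows essentially the same route as the paper: the paper also factors the point $\star\xrightarrow{\ov\Gamma}|\Mt_{g,n}|$ through a rational polyhedral cone $\sigma$ carrying a model $\wt\Gamma/\sigma$ of $\ov\Gamma$ (your tautological choice $\sigma=\RR_{\geq 0}^{E(\ov G)}$ is just one such model), invokes Theorem \ref{T:fib-forget} there, passes to topological realizations, and identifies the fiber over the relevant point of $|\sigma|$ with $\Jac_{\ov\Gamma}$ by comparing Definitions \ref{Jac-fiber} and \ref{JacTp-fiber}. Your explicit fiberwise matching of $C(G,D,\rho)$ with $P(G,D,\rho)$ and of $C(\pi)$ with $P(\pi)$ simply spells out what the paper leaves as "by comparing" the two definitions, and the realization-compatibility point you flag is handled in the paper the same way, by following the argument of \cite[Proposition~5.12]{CCUW}.
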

\begin{proof}
The proof is similar to the proof of \cite[Proposition~5.12]{CCUW}.

We can choose a rational polyhedral cone $\sigma$, a map of cones $u: \RR_{\geq 0}\to \sigma$ and a stable tropical curve $\wt \Gamma/\sigma$ over $\sigma$ such that $\GG(\wt \Gamma)=\GG(\ov \Gamma)$ and the metric $d_{\ov \Gamma}$ of $\ov \Gamma$ is equal to the 
following composition 
$$d_{\ov \Gamma}\colon E(\GG(\ov \Gamma))=E(\GG(\wt \Gamma)) \xlongrightarrow{d_{\wt \Gamma}} S_{\sigma}\hooklongrightarrow \sigma^{\vee}\xlongrightarrow{u^{\vee}} (\RR_{\geq 0})^{\vee}\cong \RR_{\geq 0}.$$ 
This is equivalent to saying that the morphism of real cone stacks $\RR_{\geq 0}\xrightarrow{\ov \Gamma} \MtR_{g,n}$ corresponding to the tropical curve $\ov \Gamma\in  \big|\MtR_{g,n}\big|(\star)=\MtR_{g,n}(\RR_{\geq 0})$ factors as 
$$\RR_{\geq 0}\xlongrightarrow{u} \sigma \xlongrightarrow{\wt \Gamma} \MtR_{g,n}
$$
where the last morphism $\sigma \xrightarrow{\wt \Gamma} \MtR_{g,n}$ corresponds to the tropical curve $\wt \Gamma$ seen as an element of $\MtR_{g,n}(\sigma)$. 

Since the fiber of $\Phi^{\trop}$ over $\sigma \xrightarrow{\wt \Gamma} \Mt_{g,n}$ is equal to $\Jac_{\wt \Gamma/\sigma}\to \sigma$ by Theorem \ref{T:fib-forget}, we get that the fiber of $\Phi^{\trop,\,\RR}$ over $\RR_{\geq 0}\xrightarrow{\ov \Gamma} \MtR_{g,n}$ is equal to 
$$\RR_{\geq 0}\times_{\sigma} \left(\sigma\times_{\MtR_{g,n}} \JtR_{g,n}\right)=\RR_{\geq 0}\times_{\sigma} (\Jac_{\wt{\Gamma}/\sigma})^{\RR}\longrightarrow \RR_{\geq 0},$$
where $(\Jac_{\wt{\Gamma}/\sigma})^{\RR}$ is the real cone space associated to $\Jac_{\wt{\Gamma}/\sigma}$. 

Passing to the associated  topological stacks, we get that the desired fiber $|\Phi^{\trop}|^{-1}(\ov \Gamma)$ is equal to the fiber over $1\in |\RR_{\geq 0}|\xrightarrow{|u|} |\sigma|$ of the morphism of topological spaces $|\Jac_{\wt \Gamma/\sigma}|\to |\sigma|$, which is equal to 
$\Jac_{\ov \Gamma}$ as it follows by comparing Definitions \ref{Jac-fiber} and \ref{JacTp-fiber} and the fact that $u^*(\wt \Gamma)=\ov \Gamma$ by construction. 
 \end{proof}


We now want to relate the Jacobian topological space $\Jac_{\ov \Gamma}$ of $\ov \Gamma\in \big|\MtR_{g,n}\big|(\star) $ with the tropical Picard variety $\Pic(\ov\Gamma)=\bigsqcup_{d\in \Z} \Pic^d(\ov\Gamma)$ associated to $\ov \Gamma$. Recall (see \textit{e.g.} \cite{MZ08}) that $\Pic^d(\ov\Gamma)$ parametrizes linear equivalence classes of divisors of degree $d$ on $\ov \Gamma$ and that, by the  tropical Abel-Jacobi theorem, we have an isomorphism 
$$A^d_{\ov \Gamma}\colon \Pic^d(\ov\Gamma) \xlongrightarrow{\cong}\Jac(\ov \Gamma):=\frac{H_1(\ov \Gamma,\RR)}{H_1(\ov \Gamma, \ZZ)}.$$
In particular, $\Pic^d(\ov \Gamma)$ is a real torus of dimension equal to $b_1(\ov \Gamma):=\dim H_1(\ov \Gamma,\RR)$. 

\begin{theorem}\label{T:Jac-Pic}
Let $\ov \Gamma\in \big|\MtR_{g,n}\big|(\star) $ be a stable tropical curve with real edge lengths and let $\ov G:=\GG(\Gamma)\in \SG_{g,n}$ be its underlying stable graph. 
\begin{enumerate}[label={\small\textrm{(\roman*)}}]
\item \label{T:Jac-Pic1} There exists a  surjective degree preserving continuous  map 
$$
\begin{aligned}
\alpha_{\ov \Gamma}\colon \Jac_{\ov \Gamma} & \longrightarrow \Pic(\ov \Gamma) \\
P(G,D, \rho)\ni (x_e)_{e\in E_{\exc}(G^{\st})} &\longmapsto \sum_{v\in V_{\exc}(G)} p_{x_{e_v}} +  \sum_{v\in V_{\nex}(G)} D(v),
\end{aligned}
$$
where $p_{x_{e_v}}$ is the point of $\ov \Gamma$ at distance $x_{e_v}$ from $\rho_V^{-1}(v)$. 
\item \label{T:Jac-Pic2} For any general universal stability condition $\phi\in V_{g,n}$, the above map restricts to an homeomorphism 
$$
\alpha_{\ov \Gamma}(\phi)\colon\Jac_{\ov \Gamma}(\phi) \xlongrightarrow{\cong} \Pic^{|\phi|}(\ov \Gamma).
$$
\end{enumerate}
\end{theorem}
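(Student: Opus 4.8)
The plan is to build the map $\alpha_{\ov\Gamma}$ compatibly with the colimit presentation of $\Jac_{\ov\Gamma}$ from Definition \ref{JacTp-fiber}, and then to analyze its fibers and its restriction to the $\phi$-semistable locus. First I would check that the formula for $\alpha_{\ov\Gamma}$ is well defined on each piece $P(G,D,\rho)$: an object $(G,D,\rho)\in\QD_{\ov G}$ records, via the isomorphism $\rho\colon\ov G\xrightarrow{\cong}G^{\st}$, a collection of exceptional vertices $v\in V_{\exc}(G)$, each sitting on the edge $\rho_E^*(e_v)$ of $\ov\Gamma$; a point $(x_e)_e\in P(G,D,\rho)=\prod_{e\in E_{\exc}(G^{\st})}[0,d(\rho_E^*(e))]$ thus determines an actual point $p_{x_{e_v}}$ of $\ov\Gamma$ on that edge, and $D$ prescribes the coefficients $D(v)$ at the non-exceptional vertices (which are the honest vertices $\rho_V^{-1}(v)$ of $\ov\Gamma$). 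The resulting divisor $\sum_{v\in V_{\exc}(G)}p_{x_{e_v}}+\sum_{v\in V_{\nex}(G)}D(v)$ on $\ov\Gamma$ has degree $\deg D$, so $\alpha_{\ov\Gamma}$ is degree-preserving. Then I would verify compatibility with the transition maps $P(\pi)$: a morphism $\pi\colon(G,D,\rho)\to(G',D',\rho')$ in $\QD_{\ov G}$ either leaves an exceptional edge untouched (then $P(\pi)$ is the identity on that factor and the point $p_{x_{e_v}}$ is unchanged) or contracts one of the two half-edges $e_v^1,e_v^2$, pushing $x_{e_v}$ to an endpoint $0$ or $d(\rho_E^*(e_v))$, i.e.\ to a vertex of $\ov\Gamma$ — and $(\pi)_*(D)=D'$ precisely records that the coefficient $1$ of $p_{x_{e_v}}$ now lands on the corresponding non-exceptional vertex. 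Hence the two composites agree, so $\alpha_{\ov\Gamma}$ descends to the colimit, and it is continuous because it is continuous on each polytope $P(G,D,\rho)$. Finally, $\Aut(G,D)$ acts on $P(G,D,\rho)$ permuting coordinates and the formula is manifestly invariant, so $\alpha_{\ov\Gamma}$ is well defined on $\Jac_{\ov\Gamma}$.

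Next I would prove surjectivity in part \ref{T:Jac-Pic1}. Every divisor class in $\Pic^d(\ov\Gamma)$ has a representative supported on finitely many points of $\ov\Gamma$; after subdividing $\ov G$ one may assume the representative is an admissible divisor $D$ on a quasi-stable graph $G$ whose stabilization is $\ov G$ (place an exceptional vertex at each support point lying in the interior of an edge). This exhibits the class as $\alpha_{\ov\Gamma}$ of a point of $P(G,D,\rho)$. The subtle point is that different points of $\Jac_{\ov\Gamma}$ can map to the same class: $\alpha_{\ov\Gamma}$ collapses exactly the chip-firing moves, so its fibers are the equivalence classes of the relation "move chips along edges by linear equivalence". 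Here I would invoke the structure of the colimit and argue that two points of $\Jac_{\ov\Gamma}$ lie in the same fiber iff the associated divisors on $\ov\Gamma$ are linearly equivalent; the forward direction is clear since the transition maps $P(\pi)$ realize only trivial linear equivalences, but one genuinely needs that a point of $\Jac_{\ov\Gamma}$ can be moved to any linearly equivalent configuration within $\Jac_{\ov\Gamma}$, which again uses that $\QD_{\ov G}$ contains graphs with arbitrarily fine subdivisions.

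For part \ref{T:Jac-Pic2}, fix a general universal stability condition $\phi$ with $|\phi|=d$. By Remark \ref{R:stabdiv}\ref{R:stabdiv4} every $\phi$-semistable divisor is $\phi$-stable, and by Remark \ref{R:stabdiv}\ref{R:stabdiv3} the supporting graph is simple with no exceptional vertices on bridges of $\ov G$. The key input is the classical fact (from \cite{KP}, \cite{MV12}, and the tropical Abel--Jacobi picture of \cite{MZ08}) that for a general $\phi$ every degree-$d$ divisor class on $\ov\Gamma$ has a \emph{unique} $\phi$-stable representative up to the equivalence that moves chips along edges, which translates into: $\alpha_{\ov\Gamma}(\phi)\colon\Jac_{\ov\Gamma}(\phi)\to\Pic^{d}(\ov\Gamma)$ is a continuous bijection. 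I would then obtain that it is a homeomorphism either by a direct compactness argument — $\Jac_{\ov\Gamma}(\phi)$ is built from finitely many polytopes $P(G,D,\rho)$ with $(G,D)\in\QD_{\ov G}(\phi)$, and the poset $|\QD_{g,n}(\phi)|$ is finite (it is graded of finite length when $\phi$ is general), so $\Jac_{\ov\Gamma}(\phi)$ is compact, while $\Pic^d(\ov\Gamma)$ is Hausdorff — or, more conceptually, by exhibiting an explicit continuous inverse: the tropical Abel--Jacobi isomorphism $A^d_{\ov\Gamma}$ composed with the section assigning to a class its $\phi$-stable representative and reading off the positions $x_{e_v}$.

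The main obstacle I expect is the uniqueness statement underlying the bijectivity of $\alpha_{\ov\Gamma}(\phi)$: showing that for a general $\phi$ each class in $\Pic^d(\ov\Gamma)$ meets $\Jac_{\ov\Gamma}(\phi)$ in exactly one point requires carefully transporting the algebro-combinatorial results of \cite{MV12,KP} on $\phi$-(semi)stable divisors on finite graphs to the metric/tropical setting and controlling how the various polytopes $P(G,D,\rho)$ for different subdivisions $(G,D)$ glue along their boundary faces, so that no class is hit twice across strata and none is missed. Everything else — well-definedness, continuity, compatibility with degrees and with the forgetful-stabilization structure, and the upgrade from continuous bijection to homeomorphism — should be routine given the colimit description in Definition \ref{JacTp-fiber} and the finiteness of $|\QD_{g,n}(\phi)|$.
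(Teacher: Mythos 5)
Your proposal follows essentially the same route as the paper's proof: define $\alpha_{\ov\Gamma}$ polytope by polytope and check it is continuous and compatible with the colimit presentation of Definition \ref{JacTp-fiber}, get surjectivity from the fact that every class on $\ov\Gamma$ has an admissible representative on a quasistable model, and in part \ref{T:Jac-Pic2} obtain the homeomorphism from bijectivity plus the continuous-bijection-from-compact-to-Hausdorff argument. The bijectivity (and surjectivity) input that you rightly flag as the main obstacle is exactly what the paper imports from Abreu--Pacini \cite[Theorems~5.6 and~5.10]{API}, which is the metric-graph statement you would need rather than the finite-graph results of \cite{MV12,KP}; note also that merely subdividing $\ov\Gamma$ at the support of an arbitrary representative does not yield admissibility (interior coefficients must be exactly $+1$ and exceptional vertices non-adjacent), so genuine linear-equivalence moves, as in \textit{loc.~cit.}, are required there as well.
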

Part \ref{T:Jac-Pic2} has been proved by Abreu-Pacini \cite[Theorem~5.10]{API} using a slightly different language. In fact, in a sequel to this paper we show that the natural map from $\calJ_{g,n}^{\mathrm{trop}}$ the universal tropical Picard variety $\calTroPic_{g,n,d}$ is a proper subdivision of cone stacks. This will, in particular, imply that the map $\alpha_{\ov \Gamma}(\phi)$ is a polytopal subdivision. 


\begin{proof}
Let us prove part \ref{T:Jac-Pic1}.  Arguing as in \cite[Theorem~5.10]{API}, %
it can be proved that, for any $(G,D,\rho)\in \QD_{\ov G}$, the restriction map 
$$
(\alpha_{\ov \Gamma})_{|P(G,D, \rho)}\colon P(G,D,\rho)\longrightarrow \Pic^{\deg(D)}(\ov \Gamma)\xlongrightarrow[A^{\deg(D)}_{\ov \Gamma}]{\cong}\Jac(\ov \Gamma).
$$
is affine, and hence continuous. Since $\Jac_{\ov \Gamma}$ is defined as the colimit of the polytopes $P(G,D, \rho)$, as $(G,D,\rho)$ varies in $\QD_{\ov G}$, it follows that $\alpha_{\ov \Gamma}$ is continuous. The map $\alpha_{\ov \Gamma}$ preserves the degree since  $D(v)=1$ for every $v\in V_{\exc}(G)$. Finally, the fact that $\alpha_{\ov \Gamma}$ is surjective follows from the fact that every divisor $D$ on $\ov \Gamma$ is linearly equivalent to an admissible divisor on a quasistable model of $\Gamma$. 

Part \ref{T:Jac-Pic2}: the map $\alpha_{\ov \Gamma}(\phi)$ is continuous by part~\ref{T:Jac-Pic1} and it is bijective by \cite[Theorem 5.6]{API} (also see the sequel, where we generalize this), and it is a homeomorphism since $\Jac_{\ov \Gamma}(\phi)$ is compact (being a colimit of polytopes) and $\Pic^{|\phi|}(\ov \Gamma)$ is Hausdorff (being a real torus). 
\end{proof}

\begin{remark}\label{C:Namikawa}
The isomorphism of Theorem \ref{T:Jac-Pic}\ref{T:Jac-Pic2} induce an admissible polytopal decomposition of the real torus $\Pic^{|\phi|}(\ov \Gamma)$ (depending on the chosen $\phi$) which coincides with the Namikawa decomposition studied in \cite[\S 5.3]{ChristPayneShen} (generalizing the decompositions studied in \cite[\S 6]{OdaSeshadri}.
\end{remark}

By combining Theorem \ref{T:fibT-forget} and Theorem \ref{T:Jac-Pic}\ref{T:Jac-Pic2}, we obtain the following

\begin{corollary}\label{C:fibT-forget}
For any general universal stability condition $\phi\in V_{g,n}$, the fiber of the forgetfull-stabilization morphism  \eqref{PhitropTop} of topological stacks \eqref{PhitropTop}
\begin{equation*}
\big\vert\Phi^{\mathrm{trop}}\big\vert\colon \big\vert\Jt_{g,n}(\phi)\big\vert\longrightarrow \big\vert\Mt_{g,n}\big\vert.
\end{equation*}
over a point  $\ov \Gamma\in \big|\MtR_{g,n}\big|(\star) $ is canonically homeomorphic to $\Pic^{|\phi|}(\ov \Gamma)$.
\end{corollary}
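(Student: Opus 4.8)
The plan is to obtain the statement directly by chaining Theorem \ref{T:fibT-forget} and Theorem \ref{T:Jac-Pic}\ref{T:Jac-Pic2}. First I would apply Theorem \ref{T:fibT-forget} to the $\phi$-semistable locus: its proof works verbatim with $\QD_{\ov G}$ replaced by the full subcategory $\QD_{\ov G}(\phi)$ (and, since $\phi$ is general, by $\QD_{\ov G}^{\spl}$ as well), producing a cartesian square whose left column realizes the fiber of $\big\vert\Phi^{\mathrm{trop}}\big\vert\colon\big\vert\Jt_{g,n}(\phi)\big\vert\to\big\vert\Mt_{g,n}\big\vert$ over the point $\ov\Gamma\in\big|\MtR_{g,n}\big|(\star)$ as the Jacobian topological space $\Jac_{\ov\Gamma}(\phi)$ of Definition \ref{JacTp-fiber}. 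Then I would invoke Theorem \ref{T:Jac-Pic}\ref{T:Jac-Pic2}: for general $\phi$, the modularly defined, degree-preserving map $\alpha_{\ov\Gamma}(\phi)\colon\Jac_{\ov\Gamma}(\phi)\to\Pic^{|\phi|}(\ov\Gamma)$ is a homeomorphism. Composing the two identifications yields the desired homeomorphism between the fiber and $\Pic^{|\phi|}(\ov\Gamma)$; that it lands precisely in the degree-$|\phi|$ component of $\Pic(\ov\Gamma)$ is automatic because $\QD_{g,n}(\phi)\subseteq\QD_{g,n,|\phi|}$ and $\alpha_{\ov\Gamma}$ preserves degrees.

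The one point worth spelling out is why the fiber is an honest topological space rather than a topological stack with nontrivial inertia --- this is exactly what makes the present statement sharper than the generalized-cone-complex version $\Phi^{\mathrm{trop}}\colon\JJ_{g,n}(\phi)\to\MM_{g,n}$, whose fiber over $\ov\Gamma$ is $\Pic^{|\phi|}(\ov\Gamma)/\Aut(\ov\Gamma)$ (see Remark \ref{R:fib-ConeComplex}). Nothing new has to be verified here: Theorem \ref{T:fibT-forget} already presents the fiber as the topological realization of the combinatorial cone space $\Jac_{\wt\Gamma/\sigma}$ built from the category $\QD_{\ov G}$, which has no nontrivial automorphisms by the Lemma following Definition \ref{FibCat}; thus the colimit defining $\Jac_{\ov\Gamma}(\phi)$ is a colimit of genuine polytopes along genuine face inclusions, hence a space. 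I would also record that every identification used above is natural in $\ov\Gamma$, since all of them are defined purely modularly --- this is what justifies the word ``canonically''.

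Since the proof is a reference-chase, there is no genuinely hard step: all the content is imported through Theorem \ref{T:Jac-Pic}\ref{T:Jac-Pic2}, whose only non-formal ingredients are the bijectivity statement \cite[Theorem~5.6]{API} and the elementary topological upgrade from a continuous bijection to a homeomorphism (using compactness of $\Jac_{\ov\Gamma}(\phi)$ and Hausdorffness of the real torus $\Pic^{|\phi|}(\ov\Gamma)$). The only bookkeeping to be mildly careful about is that the presentation of $\ov\Gamma$ used in the proof of Theorem \ref{T:fibT-forget} --- a rational cone $\sigma$, a map $u\colon\RR_{\geq 0}\to\sigma$ and a tropical curve $\wt\Gamma/\sigma$ --- pulls $\Jac_{\wt\Gamma/\sigma}$ back to $\Jac_{\ov\Gamma}(\phi)$ compatibly with $\phi$-semistability; but this is immediate, as $\phi$-semistability of a divisor on a quasi-stable graph depends only on the graph and not on its metric.
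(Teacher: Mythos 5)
Your proposal is correct and follows exactly the paper's own argument: the Corollary is obtained precisely by combining Theorem \ref{T:fibT-forget} (restricted to the $\phi$-semistable locus, which the paper covers via the ``similar result'' statements for $\big\vert\Jt_{g,n}(\phi)\big\vert$ and $\Jac_{\ov\Gamma}(\phi)$) with the homeomorphism $\alpha_{\ov\Gamma}(\phi)$ of Theorem \ref{T:Jac-Pic}\ref{T:Jac-Pic2}. Your additional remarks on the absence of nontrivial automorphisms in $\QD_{\ov G}$, the metric-independence of $\phi$-semistability, and the naturality justifying ``canonically'' are accurate elaborations of points the paper leaves implicit.
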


\begin{remark}\label{R:fib-ConeComplex} 
Using Theorem \ref{T:fibT-forget} and arguing as in \cite[Theorem~5.14]{API}, it can be proved that the fiber of the forgetful-stabilization morphism \eqref{PhiTrop} of generalized cone complexes 
$$\Phi^{\mathrm{trop}}\colon\JJ_{g,n}\longrightarrow \MM_{g,n}$$ 
over a point  $\ov \Gamma\in \MM_{g,n}$ is homeomorphic to  $\Jac_{\ov \Gamma}/\Aut(\ov \Gamma)$.

In particular, using Theorem \ref{T:Jac-Pic}\ref{T:Jac-Pic2}, it follows that, for any general universal stability condition $\phi\in V_{g,n}$, the fiber of the forgetfull-stabilization morphism 
\eqref{PhiTrop} of generalized cone complexes 
$$\Phi^{\mathrm{trop}}\colon\JJ_{g,n}(\phi)\longrightarrow \MM_{g,n}$$ 
is canonically homeomorphic to $\Pic^{|\phi|}(\ov \Gamma)/\Aut(\ov \Gamma)$. 
\end{remark}


\subsection{Canonical stability condition and break divisors}

For every hyperbolic pair $(g,n)$ and every degree $d\in \Z$, we can  define the \emph{log-canonical stability condition} $\phi^{can}\in V_{g,n}^d$ by 
\begin{equation*}
\phi^{can}_G(v)=\frac{d}{2g-2+n}\big(2h(v)-2+\val(v)\big)
\end{equation*}
for $G\in\calG\calS_{g,n}$ and $v\in V(G)$.

 
When $n=0$, as explained in \cite[Remark 5.14]{KP}, the stack $\calJbar_{g,0}(\phi^{can})$ is isomorphic to Caporaso's universal compactified Jacobian \cite{Cap94, Cap08}, which is studied in detail in \cite{Mel09}, \cite{BFV}, \cite{BMV}, \cite{MV14}, \cite{BMV}, \cite{BFMV}, \cite{CMKV2}, \cite{CMKV3} and is, in turn, isomorphic to Pandharipande's universal compactified Jacobian  \cite{Pandharipande} (see \cite{EP}). 
Note that in this case $\phi^{can}$ is general precisely when $d-g+1$ and $2g-2$ are coprime (see \cite[Proposition~6.2]{Cap94} and \cite[Remark~5.12]{KP}).

In the case $d=g$ and $n=0$, the tropical shadow of these constructions has been discovered independently in \cite{ABKS} under the name \emph{break divisor}. An admissible divisor $D$ on a quasi-stable graph $G$ of type $(g,0)$ is said to be a \emph{break divisor}, if there is a spanning tree $T$ of $G$ as well as a map $\phi\colon E(G)-E(T)\rightarrow V(G)$ such that $\phi(e)$ is adjacent to $e$ and
\begin{equation*}
D=\sum_{v\in V(G)}h(v)+\sum_{e\in E(G)-E(T)}\phi(e) \ .
\end{equation*}
Such a break divisor is automatically of degree $g$, since $g=b_1(G)+\sum_{v\in V(G)}h(v)$. By \cite[Lemma~5.13]{ChristPayneShen}, an admissible divisor is $\phi_G^{can}$-stable if and only it is a break divisor. So Theorem \ref{T:Jac-Pic} (and thus \cite{API}) also recovers the main result of \cite{ABKS} stating that every divisor of degree $g$ on a metric graph $\Gamma$ has a unique break divisor representative. In Figure \ref{figure_breakdivisorsthetagraph} we show the induced polyhedral decomposition of $\Pic_g(\Gamma)$ in the fiber over a stable tropical curve of genus whose underlying graph is a theta-graph (without vertex weights). 

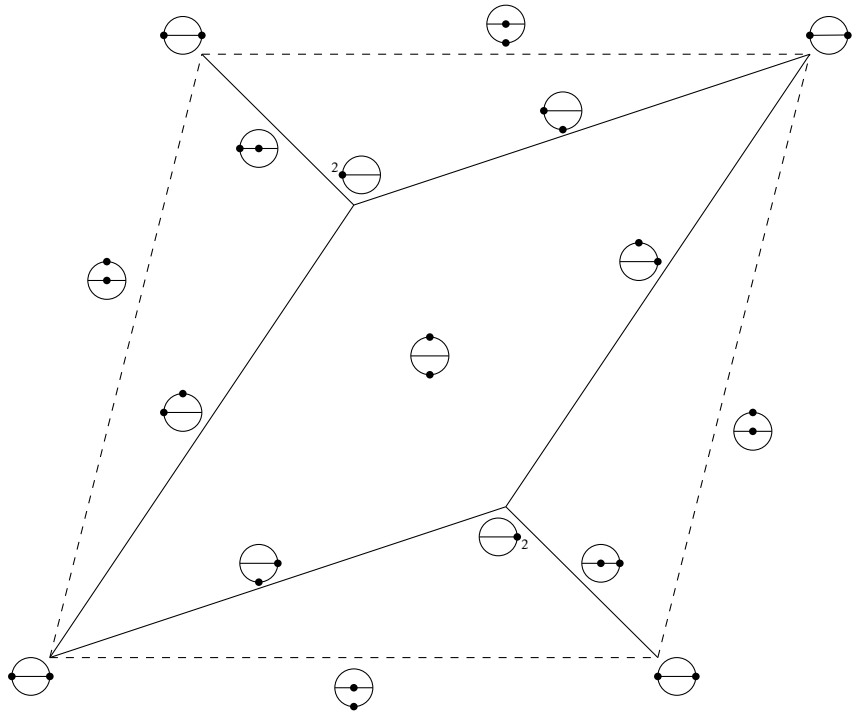
\begin{figure}[h]
\begin{tikzpicture}

\draw[dashed] (0,0) -- (2,8);
\draw(0,0) -- (4,6);
\draw[dashed] (0,0) -- (8,0);
\draw (0,0) -- (6,2);
\draw[dashed] (8,0) -- (10,8);
\draw (6,2) -- (10,8);
\draw (4,6) -- (10,8);
\draw[dashed] (2,8) -- (10,8);
\draw (2,8) -- (4,6);
\draw (8,0) -- (6,2);

\draw (-0.25,-0.25) circle (0.25); 
\draw (-0.5,-0.25) -- (0,-0.25);
\fill (-0.5,-0.25) circle (0.05);
\fill (0,-0.25) circle (0.05);

\draw (5,4) circle (0.25); 
\draw (4.75,4) -- (5.25,4);
\fill (5,4.25) circle (0.05);
\fill (5,3.75) circle (0.05);

\draw (10.25,8.25) circle (0.25); 
\draw (10,8.25) -- (10.5,8.255);
\fill (10,8.25) circle (0.05);
\fill (10.5,8.25) circle (0.05);

\draw (1.75,8.25) circle (0.25); 
\draw (1.5,8.25) -- (2,8.25);
\fill (1.5,8.25) circle (0.05);
\fill (2,8.25) circle (0.05);

\draw (8.25,-0.25) circle (0.25); 
\draw (8,-0.25) -- (8.5,-0.25);
\fill (8,-0.25) circle (0.05);
\fill (8.5,-0.25) circle (0.05);

\draw (0.75,5) circle (0.25); 
\draw (0.5,5) -- (1,5);
\fill (0.75,5) circle (0.05);
\fill (0.75,5.25) circle (0.05);

\draw (9.25,3) circle (0.25); 
\draw (9,3) -- (9.5,3);
\fill (9.25,3) circle (0.05);
\fill (9.25,3.25) circle (0.05);

\draw (4,-0.4) circle (0.25); 
\draw (3.75,-0.4) -- (4.25,-0.4);
\fill (4,-0.4) circle (0.05);
\fill (4,-0.65) circle (0.05);

\draw (6,8.4) circle (0.25); 
\draw (5.75,8.4) -- (6.25,8.4);
\fill (6,8.4) circle (0.05);
\fill (6,8.15) circle (0.05);

\draw (4.1,6.4) circle (0.25); 
\draw (3.85,6.4) -- (4.35,6.4);
\fill (3.85,6.4) circle (0.05);
\node[scale=0.5] (2) at (3.75,6.5) {$2$};

\draw (5.9,1.6) circle (0.25); 
\draw (5.65,1.6) -- (6.15,1.6);
\fill (6.15,1.6) circle (0.05);
\node[scale=0.5] (2) at (6.25,1.5) {$2$};

\draw (1.75,3.25) circle (0.25); 
\draw (1.5,3.25) -- (2,3.25);
\fill (1.5,3.25) circle (0.05);
\fill (1.75,3.5) circle (0.05);

\draw (7.75,5.25) circle (0.25); 
\draw (7.5,5.25) -- (8,5.25);
\fill (8,5.25) circle (0.05);
\fill (7.75,5.5) circle (0.05);

\draw (2.75,1.25) circle (0.25);
\draw (2.5,1.25) -- (3,1.25);
\fill (3,1.25) circle (0.05);
\fill (2.75,1) circle (0.05);

\draw (6.75,7.25) circle (0.25); 
\draw (6.5,7.25) -- (7,7.25);
\fill (6.5,7.25) circle (0.05);
\fill (6.75,7) circle (0.05);

\draw (2.75,6.75) circle (0.25); 
\draw (2.5,6.75) -- (3,6.75);
\fill (2.5,6.75) circle (0.05);
\fill (2.75,6.75) circle (0.05);

\draw (7.25,1.25) circle (0.25); 
\draw (7,1.25) -- (7.5,1.25);
\fill (7.5,1.25) circle (0.05);
\fill (7.25,1.25) circle (0.05);
\end{tikzpicture}
\caption{The break divisor decomposition of the Jacobian of the theta
  graph.}
\label{figure_breakdivisorsthetagraph}
\end{figure}

\begin{remark}
Let $X$ be a smooth projective curve over a non-Archimedean field $K$
and let $\calX$ be its stable model over the valuation ring $R$. The
fiber product $\calX\times_{\calMbar_g}\J_g(\phi^{can})$ is a proper
polystable model for the degree $g$ Jacobian $J_g(X)$ of $X$. One may
now argue as in \cite{BrandtUlirsch_sympow} and use the combinatorial
description of the strata from Section \ref{S:comp-univ} above to
identify the non-Archimedean skeleton associated to this polystable
model (in the sense of \cite{Berkovich_smooth=contractible}) with the
tropical Jacobian $J_g^{\mathrm{trop}}(\Gamma_X)$ of the tropicalization
$\Gamma_X$ of $X$. This constitutes another path towards the main
result of \cite{BR15} saying that \emph{the skeleton of the Jacobian
  is the Jacobian of the skeleton}.  
\end{remark}


\end{document}